\theoremstyle{plain}
\newtheorem{theorem}{\textrm{\textbf{Theorem}}}[section]
\newtheorem{utheorem}{\textrm{\textbf{Theorem}}}
\newtheorem{corollary}[theorem]{\textrm{\textbf{Corollary}}}
\newtheorem{proposition}[theorem]{\textrm{\textbf{Proposition}}}
\newtheorem{lemma}[theorem]{\textrm{\textbf{Lemma}}}
\theoremstyle{definition}
\newtheorem{definition}[theorem]{\textrm{\textbf{Definition}}}
\newtheorem{remark}[theorem]{\normalfont{\textit{Remark}}}
\theoremstyle{remark}
\numberwithin{equation}{section}
\def\sgn {\mathop{\rm sgn}}
\def\rk {\mathop{\rm rank}}
\def\R{\mathbb{R}}
\def\F{\mathbb{F}}
\def\Q{\mathbb{Q}}
\def\Z{\mathbb{Z}}
\def\N{\mathbb{N}}
\def\br{\mathbb{S}}
\def\bp{\mathbb{P}}
\def\ree{\mathop{\rm Re}}
\DeclareMathOperator{\Id}{\ensuremath{Id}}
\newcommand{\block}[1]{\ensuremath{{\rm diag} \left(#1 \right)}}
\begin{document}
\title[Preserving positivity for rank-constrained matrices]{Preserving
positivity\\ for rank-constrained matrices}

\thanks{The authors were partially supported by the following: US Air
Force Office of Scientific Research grant award FA9550-13-1-0043, US
National Science Foundation under grant DMS-0906392, DMS-CMG 1025465,
AGS-1003823, DMS-1106642, DMS-CAREER-1352656, Defense Advanced Research
Projects Agency DARPA YFA N66001-11-1-4131, the UPS Foundation,
SMC-DBNKY, and an NSERC postdoctoral fellowship}

\author{Dominique Guillot}
\address{Department of Mathematical Sciences, University of Delaware,
Newark, Delaware 19716}
\email{dguillot@udel.edu}

\author{Apoorva Khare}
\address{Departments of Mathematics and Statistics, Stanford University,
Stanford, California 94305}
\email{khare@iisc.ac.in}

\author{Bala Rajaratnam}
\address{Department of Statistics, University of California, Davis,
Caiifornia 95616}
\email{brajaratnam01@gmail.com}

\date{June 11, 2015 and, in revised form, September 11, 2015}

\subjclass[2010]{Primary 15B48; Secondary 26E05, 26A48}

\keywords{Entrywise positive maps, rank preserving maps, rank constraint,
absolutely monotonic functions, positive semidefiniteness, Loewner
ordering}

\vspace*{-1mm}\begin{abstract}
Entrywise functions preserving the cone of positive semidefinite matrices
have been studied by many authors, most notably by Schoenberg
[Duke Math.~J.~9, 1942] and Rudin [Duke Math.~J. 26, 1959].
Following their work, it is well-known that entrywise functions
preserving Loewner positivity in all dimensions are precisely the
absolutely monotonic functions. However, there are strong theoretical and
practical motivations to study functions preserving positivity in a fixed
dimension $n$. Such characterizations for a fixed value of $n$ are
difficult to obtain, and in fact are only known in the $2 \times 2$ case.
In this paper, using a novel and intuitive approach, we study entrywise
functions preserving positivity on distinguished submanifolds inside the
cone obtained by imposing rank constraints. These rank constraints are
prevalent in applications, and provide a natural way to relax the elusive
original problem of preserving positivity in a fixed dimension. In our
main result, we characterize entrywise functions mapping $n \times n$
positive semidefinite matrices of rank at most $l$ into positive
semidefinite matrices of rank at most $k$ for $1 \leq l \leq n$ and $1
\leq k < n$. We also demonstrate how an important necessary condition for
preserving positivity by Horn and Loewner [Trans.~Amer.~Math.~Soc.~136,
1969] can be significantly generalized by adding rank constraints.
Finally, our techniques allow us to obtain an elementary proof of the
classical characterization of functions preserving positivity in all
dimensions obtained by Schoenberg and Rudin.
\end{abstract}
\maketitle

\vspace*{-3mm}
\setcounter{tocdepth}{1}
\tableofcontents
\counterwithin{table}{section}

\section{Introduction and main results}\label{sec:intro}

\textit{This version contains a few edits to the published paper, which
are minor in nature and mostly clarify the statements of results. The
proofs remain virtually unchanged. A list of these edits is on the final
page.}\medskip

The study of entrywise functions mapping the space of positive
semidefinite matrices into itself has been the focus of a concerted
effort throughout the past century (see e.g. Schoenberg
\cite{Schoenberg42}, Rudin \cite{Rudin59}, Herz \cite{Herz63}, Horn
\cite{Horn}, Christensen and Ressel \cite{Christensen_et_al78}, Vasudeva
\cite{vasudeva79}, FitzGerald and Horn \cite{FitzHorn}, FitzGerald,
Micchelli, and Pinkus \cite{fitzgerald}, Hiai \cite{Hiai2009}, Guillot
and Rajaratnam \cite{Guillot_Rajaratnam2012b}, Guillot, Khare and
Rajaratnam \cite{GKR-crit-2sided,Guillot_Khare_Rajaratnam2012} and
others). Following the work of Schoenberg and Rudin, it is well-known
that functions $f:(-1,1) \rightarrow \mathbb{R}$ such that $f[A] :=
(f(a_{ij}))$ is positive semidefinite for all positive semidefinite
matrices $A = (a_{ij})$ of all dimensions with entries in $(-1,1)$ are
necessarily analytic with nonnegative Taylor coefficients; i.e., they are
absolutely monotonic on the positive real axis. The converse follows
easily from the Schur product theorem.

On the other hand, one is often interested in studying functions that
preserve positivity for a fixed dimension $n$. In this case, it is
unnecessarily restrictive to characterize functions that preserve
positivity in all dimensions. Results for fixed $n$ are available only
for $n=2$, in which case entrywise functions mapping $2 \times 2$
positive semidefinite matrices into themselves have been characterized by
Vasudeva (see \cite[Theorem 2]{vasudeva79}) in terms of multiplicatively
mid-convex functions.
Finding tractable descriptions of the functions that preserve positive
semidefinite matrices in higher dimensions is far more involved; see
\cite[Theorem 1.2]{Horn} for partial results for arbitrary but fixed $n
\geq 3$. To the authors' knowledge, no full characterization is known for
$n \geq 3$.


The primary goal in this paper is to investigate entrywise functions
mapping $n \times n$ positive semidefinite matrices of a fixed dimension
$n$ and rank at most $l$ into positive semidefinite matrices of rank at
most $k$ for given integers $1 \leq k,l \leq n$. Introducing such rank
constraints is very natural from a modern applications perspective.
Indeed, in high-dimensional probability and statistics, it is common to
apply entrywise functions to regularize covariance/correlation matrices
in order to improve their properties (e.g., condition number, Markov
random field structure, etc.); see \cite{bickel_levina, GKR-crit-2sided,
Guillot_Khare_Rajaratnam2012, Guillot_Rajaratnam2012,
Guillot_Rajaratnam2012b, hero_rajaratnam, Hero_Rajaratnam2012,
Li_Horvath, Zhang_Horvath}. In such settings, the rank of a
covariance/correlation matrix corresponds to the sample size used to
estimate it. Many modern-day applications require working with
covariance/correlation matrices arising from small samples, and so these
high-dimensional matrices are very often rank-deficient in practice.
Applying functions entrywise is a popular way to increase the rank of
these matrices. For many downstream applications, it is a requirement
that the regularized covariance/correlation matrices be positive
definite. It is thus very important and useful for applications to
understand how the rank of a matrix is affected when a given function is
applied to its entries, and whether Loewner positivity is preserved.

Our approach yields novel and explicit characterizations for functions
preserving positivity for a fixed dimension, under various rank
constraints. In particular, we show that a ``special family'' of matrices
of rank at most $2$ plays a fundamental role in preserving positivity. 
Furthermore, our techniques yield solutions to other characterization
problems, in particular, those involving rank constraints without the
positivity requirement. For instance, we provide characterizations of
entrywise functions mapping symmetric $n \times n$ matrices of rank at
most $l$ into matrices of rank at most $k$ (for any $1 \leq k,l < n$).

Finally, our methods can also be used to solve the original problem of
characterizing entrywise functions that preserve positivity \textit{for
all dimensions}, thereby providing a more direct and intuitive proof of
the results by Schoenberg, Rudin, Vasudeva, and others.\medskip

\noindent \textit{Notation.}
To state our main results, some notation and definitions are needed.
These are now collected together here, for the convenience of the
reader.

\begin{definition}\label{D11}
Let $\mathbb{R} \supset \mathbb{Z} \supset \mathbb{N}$ denote
the real numbers, the integers, and the positive integers, respectively.
Let $I \subset \mathbb{R}$. Define $\br_n(I)$ to be the set of $n \times
n$ symmetric matrices with entries in $I$, and $\bp_n(I) \subset
\br_n(I)$ to be the subset of symmetric positive semidefinite matrices.
Let $\rk A$ denote the rank of a matrix $A$. Now define: 
\begin{align}
\br_n^k(I) &:= \{A \in \br_n(I) : \rk A \leq k\}, \\
\bp_n^k(I) &:= \{A \in \bp_n(I) : \rk A \leq k\}. 
\end{align}
Next, if $f: I \subset \R \rightarrow \R$ and $A = (a_{ij})$ is a matrix
with entries in $I$, denote by $f[A]$ the matrix obtained by applying $f$
to every entry of $A$, i.e., $f[A] := (f(a_{ij}))$. Finally, denote by
$f[-] : \br_n(I) \to \br_n$ the map sending $A \in \br_n(I)$ to $f[A]$.
\end{definition}

When $I = \mathbb{R}$, we denote $\br_n^k(I)$ and
$\bp_n^k(I)$ by $\br_n^k$ and $\bp_n^k$ respectively. Note that
\begin{equation}
\br_n(I) = \br_n^n(I), \qquad \bp_n(I) = \bp_n^n(I), 
\end{equation}
and when $I \subset [0, \infty)$, $\br_n^1(I) = \bp_n^1(I)$.
Also, in what follows, given (possibly scalar) matrices $A_1, \dots, A_n$
of arbitrary orders, denote by $A_1 \oplus \dots \oplus A_n$ the
corresponding block diagonal matrix $\block{A_1, \dots, A_n}$. Note that
this differs from the Kronecker sum of matrices.

Next, given a set of vectors or matrices $A_1, \dots, A_m$ of equal
orders, denote their entrywise product by $A_1 \circ \cdots \circ A_m$,
which is a matrix of the same order with $(i,j)$th entry equal to
$\prod_{k=1}^m (A_k)_{ij}$. Given a vector or matrix $A = (a_{ij})$ and
$\alpha \in \R$ such that $a_{ij}^\alpha$ is defined for all $i,j$,
define the $\alpha$th Hadamard power of $A$ to be $A^{\circ \alpha} :=
(a_{ij}^\alpha)_{i,j}$. Given $\alpha \in \R$, define the even and odd
extensions of the power functions $f_\alpha(x) := x^\alpha$, to the
entire real line, as follows: 
\begin{align}
\begin{aligned}
& \phi_\alpha(0) = \psi_\alpha(0) := 0, \qquad \phi_\alpha(x) :=
|x|^\alpha,\\
& \psi_\alpha(x) := \sgn(x) |x|^\alpha, \qquad
\forall \alpha \in \R, \ x \in \R \setminus \{ 0 \}.
\end{aligned}
\end{align}

Finally, given a function $f : I \to \R$, and a vector or matrix $A =
(a_{ij})$ with entries $a_{ij} \in I \subset \R$, define $f[A] :=
(f(a_{ij}))_{i,j}$.

\begin{remark}
Note that if the entrywise function $f[-]$ maps $n \times n$ positive
semidefinite matrices of rank exactly $l$ into matrices of rank $k$ (for
$1 \leq l,k \leq n$), and $f$ is continuous, then $f[-]$ necessarily maps
$\bp_n^l$ into $\bp_n^k$.
Moreover, we observe that there are no (continuous) entrywise maps $f[-]$
sending $\bp_n^l$ into matrices of rank bounded below by $k \geq 2$,
since $f[c{\bf 1}_{n \times n}]$ has rank at most $1$. Here ${\bf 1}_{n
\times n}$ denotes the matrix with all entries equal to $1$. For these
reasons we will study functions sending $\bp_n^l$ to $\bp_n^k$.
\end{remark}

We now state the three main results of the paper. The first result
characterizes entrywise functions mapping rank $1$ matrices to rank at
most $k$ matrices, under the mild hypothesis that the function $f$ admits
at least $k$ nonzero derivatives of some orders at the origin.

\begin{utheorem}[Rank $1$, fixed dimension]\label{thm:sum_powers}
Let $0 < R \leq \infty$, $I = [0, R)$ or $(-R,R)$, and $f: I \rightarrow
\mathbb{R}$. Fix $1 \leq k < n$, and suppose $f$ admits at least $k$
nonzero derivatives at $0$.
\begin{enumerate}
\item Then $f[-] : \bp_n^1(I) \to \br_n^k$ if and only if $f$ is a
polynomial with exactly $k$ nonzero coefficients. 
\item Similarly, $f[-] : \bp_n^1(I) \to \bp_n^k$ if and only if $f$ is a
polynomial with exactly $k$ nonzero coefficients which are all positive. 

\item Suppose $f$ admits at least $n-1$ nonzero derivatives at zero, of
orders $0 \leq m_1 < \cdots < m_{n-1}$. If $f[-] : \bp_n^1(I) \to \bp_n$,
then writing $f$ in its Taylor expansion at $0$:
\begin{equation}
f(x) = \sum_{i=1}^{n-1} \frac{f^{(m_i)}(0)}{m_i!} x^{m_i} + x^{m_{n-1}}
h(x),
\end{equation}

\noindent the Taylor coefficients $\frac{f^{(m_i)}(0)}{m_i!}$ are
positive, and $h([0,R)) \subset [0,\infty)$.
\end{enumerate}
\end{utheorem}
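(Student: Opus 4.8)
The ``if'' direction in all three parts rests on a single identity: if $f(x)=\sum_{i=1}^{k}c_ix^{m_i}$ and $\mathbf{u}\in\R^n$ has $u_iu_j\in I$ for all $i,j$, then
\[
f[\mathbf{u}\mathbf{u}^\top]=\sum_{i=1}^{k}c_i\,\mathbf{u}^{\circ m_i}(\mathbf{u}^{\circ m_i})^\top,
\]
a sum of $k$ rank-one symmetric matrices, which is moreover positive semidefinite as soon as every $c_i>0$. Since every matrix in $\bp_n^1(I)$ is of the form $\mathbf{u}\mathbf{u}^\top$, this proves one implication in (1) and in (2). For the converse in (1), observe that padding with zeros and taking the principal $(k+1)\times(k+1)$ block shows that $f[-]\colon\bp_n^1(I)\to\br_n^k$ implies the scalar determinantal identity
\[
\det\big(f(u_iu_j)\big)_{i,j=1}^{k+1}=0\qquad\text{for all }\mathbf{u}\in\R^{k+1}\text{ with }u_iu_j\in I,
\]
and it then suffices to deduce, using the hypothesis on the derivatives at $0$, that $f$ is a polynomial with exactly $k$ nonzero coefficients; the ``if'' direction already verifies such $f$ qualify.

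To extract polynomiality I would pass to logarithmic coordinates on the positive part of $I$: writing $u_i=e^{a_i}$ and $g(s):=f(e^s)$, the identity says that every $(k+1)\times(k+1)$ Hankel-type determinant $\det\big(g(a_i+a_j)\big)$ vanishes over an open set of parameters, hence any $k+1$ translates of $g$ are linearly dependent, i.e.\ the translates of $g$ span a space of dimension at most $k$. By the classical description of finite-dimensional translation-invariant function spaces, such a $g$ is an exponential polynomial of degree $\le k$; the assumption that $f$ has at least $k$ nonzero derivatives at $0$ prevents the degree from dropping below $k$, so $g(s)=\sum_{i=1}^{k}c_ie^{\lambda_is}$ with distinct real $\lambda_i$ and nonzero $c_i$, that is $f(x)=\sum_{i=1}^{k}c_ix^{\lambda_i}$ for $x>0$. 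Since $f$ is differentiable to order $m_k$ at $0$ and finite there, no logarithmic factors may occur and each $\lambda_i$ is forced to be a nonnegative integer; thus $f$ agrees on $[0,R)$ with a polynomial having exactly $k$ nonzero coefficients, and in the case $I=(-R,R)$ one further application of the determinantal identity with mixed-sign $\mathbf{u}$ pins $f$ down on the negative axis. Part (2) follows from (1): for $\mathbf{u}$ with distinct positive entries the vectors $\mathbf{u}^{\circ m_1},\dots,\mathbf{u}^{\circ m_k}$ are linearly independent (a generalized Vandermonde argument), so $f[\mathbf{u}\mathbf{u}^\top]=\sum c_i\mathbf{u}^{\circ m_i}(\mathbf{u}^{\circ m_i})^\top\succeq0$ forces every $c_i\ge0$, hence $c_i>0$.

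For (3) I would run a rank-constrained version of the Horn--Loewner argument. Fix $\mathbf{u}$ with distinct positive entries, set $A_t:=t\,\mathbf{u}\mathbf{u}^\top\in\bp_n^1(I)$ for small $t>0$, and Taylor-expand to order $m_{n-1}$:
\[
f[A_t]=\sum_{i=1}^{n-1}a_{m_i}t^{m_i}\,\mathbf{u}^{\circ m_i}(\mathbf{u}^{\circ m_i})^\top+o(t^{m_{n-1}}),\qquad a_{m_i}:=\frac{f^{(m_i)}(0)}{m_i!}.
\]
Since $\mathbf{u}^{\circ m_1},\dots,\mathbf{u}^{\circ m_{n-1}}$ are linearly independent in $\R^n$, for each $i$ there is $\mathbf{y}$ orthogonal to every $\mathbf{u}^{\circ m_{i'}}$ with $i'\ne i$ but not to $\mathbf{u}^{\circ m_i}$; then $\mathbf{y}^\top f[A_t]\mathbf{y}=a_{m_i}(\mathbf{y}^\top\mathbf{u}^{\circ m_i})^2t^{m_i}+o(t^{m_i})\ge0$, which forces $a_{m_i}\ge0$ and hence $a_{m_i}>0$. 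To prove $h([0,R))\subset[0,\infty)$, fix $x_0\in(0,R)$ and take $\mathbf{u}_\varepsilon:=(\sqrt{x_0},\varepsilon b_2,\dots,\varepsilon b_n)$ with distinct positive $b_j$; writing $f[\mathbf{u}_\varepsilon\mathbf{u}_\varepsilon^\top]=\left(\begin{smallmatrix}f(x_0)&\mathbf{r}^\top\\\mathbf{r}&C\end{smallmatrix}\right)$, the block $C$ is positive definite for small $\varepsilon$ (its leading term, with $\mathbf{b}=(b_2,\dots,b_n)$, has the form $VDV^\top$ for a nonsingular generalized Vandermonde $V$ and a positive diagonal $D$), so the Schur complement inequality gives $f(x_0)\ge\mathbf{r}^\top C^{-1}\mathbf{r}$; expanding $\mathbf{r}$ and $C$ and applying the Vandermonde change of basis yields $\mathbf{r}^\top C^{-1}\mathbf{r}\to\sum_{i=1}^{n-1}a_{m_i}x_0^{m_i}$ as $\varepsilon\to0$, i.e.\ $h(x_0)\ge0$, and the value at $0$ is immediate from the Taylor remainder.

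The heart of the argument, and its main difficulty, is the middle step of (1): extracting genuine polynomiality of $f$ from the single determinantal identity under the very weak regularity hypothesis of merely $k$ nonzero derivatives at the origin. One must invoke the exponential-polynomial/finite-Hankel-rank machinery with minimal smoothness assumptions (using the behavior of $f$ near $0$ to rule out pathological solutions of the functional equation), track the degree count carefully so that ``at least $k$ derivatives'' upgrades to ``exactly $k$ coefficients,'' and handle both the negative arguments and the passage from a neighborhood of $0$ to all of $I$. The remaining steps are comparatively routine linear algebra and asymptotics.
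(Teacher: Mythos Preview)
Your route through logarithmic coordinates and the structure theory of finite-dimensional translation-invariant function spaces is genuinely different from the paper's, but it has a real gap in part~(1) that you yourself flag without resolving. The characterization ``translates of $g$ span a space of dimension $\le k$ $\Rightarrow$ $g$ is an exponential polynomial'' requires at minimum measurability of $g$ on the interval in question; without it, Hamel-basis constructions give pathological solutions. The hypothesis on $f$ is only that certain derivatives exist \emph{at the single point $0$}, which buys you continuity of $f$ on a neighborhood of $0$---but under $s=\log x$ that neighborhood becomes a neighborhood of $-\infty$, and you have no control over $g$ on any bounded interval. Even granting continuity, the inference ``$\det\big(g(a_i+a_j)\big)_{i,j=1}^{k+1}=0$ for all $(a_i)$ $\Rightarrow$ translates span dimension $\le k$'' is not immediate from the \emph{symmetric} determinantal identity alone when $n=k+1$; the clean argument uses $(k+2)\times(k+2)$ principal submatrices, which are available only when $n\ge k+2$. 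Finally, once you identify $f$ as a polynomial on a neighborhood of $0$, you still need a propagation argument to extend this to all of $I$.

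The paper avoids all of these issues by an iterative ``peeling'' argument that works entirely at the origin. One shows two stability facts: (i) if $g(x)=x^r$ then $\operatorname{rank} f[uu^\top]=\operatorname{rank}(f/g)[uu^\top]$ for $u$ with nonzero entries (and a limiting argument handles zeros), and (ii) if $h(0)\ne 0$ and $h[-]\colon\bp_n^1\to\br_n^k$, then $(h-h(0))[-]\colon\bp_{n-1}^1\to\br_{n-1}^{k-1}$ (proved by bordering with a zero row/column and expanding the determinant). Starting from $f$, one repeatedly divides by $x^{m_i-m_{i-1}}$ and subtracts the resulting constant $f^{(m_i)}(0)/m_i!$, using Taylor's theorem at $0$ to guarantee the needed limits exist; after $k-1$ steps one lands on a continuous function sending $\bp_{n-k+1}^1(I)\to\br_{n-k+1}^1$, which is classified directly as $c\phi_\alpha$ or $c\psi_\alpha$. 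This uses no global regularity of $f$ beyond what Taylor's theorem at $0$ provides, and it simultaneously handles all three parts; your Vandermonde/Schur-complement arguments for (2) and~(3) are correct in spirit and give an alternative route there, but for (1) the paper's inductive reduction is what actually closes the gap.
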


Note that throughout this paper, we take the \textit{derivatives of $f$}
to include the zeroth derivative function $f(x)$. For instance, according
to our convention, the function $f(x) = 1+x$ has two nonzero derivatives
at the origin.

The intuitive approach adopted in this paper to prove Theorem
\ref{thm:sum_powers} yields rich rewards in tackling more challenging
problems. One of the primary goals of this paper is to classify all
functions which take $n \times n$ matrices with rank at most $l$ to
matrices of rank at most $k$. Our main theorem in this paper completely
classifies the functions $f$ such that $f[-] : \bp_n^l(I) \to \br_n^k$ or
$\bp_n^k$ for $l \geq 2$, under the stronger assumption that $f$ is $C^k$
on $I$. Surprisingly, when $k \leq n-3$ and $f[-] : \bp_n^l([0,R)) \to
\bp_n^k$, the $C^k$ assumption is not required. 

\begin{utheorem}[Higher rank, fixed dimension]\label{Tltok}
Let $0 < R \leq \infty$ and $I = [0, R)$ or $(-R,R)$. Fix integers $n
\geq 2$, $0 \leq k < n-1$, and $2 \leq l \leq n$. Suppose $f \in C^k(I)$.
Then the following are equivalent: 
\begin{enumerate}
\item $f[A] \in \br_n^k$ for all $A \in \bp_n^l(I)$; 
\item $f(x) = \sum_{t=1}^r a_t x^{i_t}$ for some $a_t \in \R$ and some
distinct $i_t \in \Z_{\geq 0}$ such that
\begin{equation}\label{eqn:sum_binom}
\sum_{t=1}^r \binom{i_t+l-1}{l-1} \leq k. 
\end{equation}
\end{enumerate}
Similarly, $f[-]: \bp_n^l(I) \to \bp_n^k$ if and only if $f$ satisfies
(2) and $a_i \geq 0$ for all $i$. Moreover, if $I = [0,R)$ and
$k=0$ or $k \leq n-3$, then the assumption that $f \in C^k(I)$ is not
required.
\end{utheorem}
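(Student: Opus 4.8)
The plan is to prove the stated equivalence, its positive semidefinite analogue, and the sharpening for $I=[0,R)$ in the following stages.

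\emph{The easy direction $(2)\Rightarrow(1)$.} Suppose $f(x)=\sum_{t=1}^{r}a_t x^{i_t}$. For $A\in\bp_n^l(I)$ write $A=(\langle v_p,v_q\rangle)_{p,q}$ with $v_p\in\R^l$, possible since $A\succeq 0$ has rank $\le l$. Then $f[A]=\sum_{t=1}^{r}a_t A^{\circ i_t}$, and $A^{\circ i}=(\langle v_p^{\otimes i},v_q^{\otimes i}\rangle)_{p,q}$ is the Gram matrix of the symmetric tensors $v_p^{\otimes i}\in\mathrm{Sym}^i(\R^l)$, so $\rk A^{\circ i}\le\dim\mathrm{Sym}^i(\R^l)=\binom{i+l-1}{l-1}$, while $A^{\circ i}\succeq 0$ by the Schur product theorem. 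Subadditivity of rank gives $\rk f[A]\le\sum_{t=1}^{r}\binom{i_t+l-1}{l-1}\le k$; and if all $a_t\ge 0$ then $f[A]$ is a sum of positive semidefinite matrices, so $f[-]:\bp_n^l(I)\to\bp_n^k$.

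\emph{$(1)$ forces $f$ to be a polynomial.} Because $l\ge 2$, for each $a$ in the interior of $I$ (and $a=0$ if $I=[0,R)$) the rank-$\le 2$ matrix $A=a\,\mathbf{1}\mathbf{1}^{T}+\mathbf{b}\mathbf{b}^{T}$ lies in $\bp_n^l(I)$ for every sufficiently small $\mathbf b\in\R^n$. Applying $(1)$ and restricting $f[A]$ to a $(k{+}1)\times(k{+}1)$ principal submatrix yields
\[
\det\bigl(f(a+x_p x_q)\bigr)_{p,q=1}^{k+1}=0\qquad\text{for all small }x_1,\dots,x_{k+1}.
\]
Feeding this into a generalized Vandermonde / FitzGerald--Horn type analysis — rescaling the variables, peeling off successive Taylor coefficients of $g_a:=f(a+\,\cdot\,)$ (legitimate since $f\in C^k$), and invoking the non-vanishing of generalized Vandermonde determinants — one shows that $f$ coincides with a polynomial on a neighbourhood of each such $a$; the hypothesis $f\in C^k$ is exactly what excludes fractional-power and logarithmic impostor terms. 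As two such local polynomials agree on the overlap of their intervals of definition and $I$ is connected, $f$ is a single polynomial on $I$ (for $I=(-R,R)$ one treats the negative arguments similarly, realizing entries near any prescribed value by Gram matrices of suitably obtuse vectors in $\R^l$). I expect this passage from $C^k$ regularity to global polynomiality to be the principal obstacle; it uses the $C^k$ hypothesis and — via the rank-$2$ family $a\,\mathbf{1}\mathbf{1}^{T}+\mathbf{b}\mathbf{b}^{T}$ — the assumption $l\ge 2$.

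\emph{Determining the exponents.} Now $f(x)=\sum_{t=1}^{r}c_t x^{i_t}$ with all $c_t\ne 0$; set $N:=\sum_{t=1}^{r}\binom{i_t+l-1}{l-1}$ and suppose for contradiction that $N>k$. For $A=(\langle v_p,v_q\rangle)\in\bp_n^l(I)$ and $\psi(v):=(v^{\otimes i_1},\dots,v^{\otimes i_r})\in\bigoplus_{t}\mathrm{Sym}^{i_t}(\R^l)\cong\R^N$, one has $f[A]=\Psi C\Psi^{T}$, where $\Psi$ has rows $\psi(v_p)$ and $C:=\bigoplus_{t}c_t\,\Id$ is nondegenerate (block-scalar in orthonormal bases of the $\mathrm{Sym}^{i_t}(\R^l)$). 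A genericity argument then produces the contradiction: the image of $v\mapsto\psi(v)$ spans $\R^N$ already on any ball of positive radius, and — to cope with the mixed signs of the $c_t$ in the $\br_n^k$ case — the diagonal values $\psi(v)^{T}C\,\psi(v)=f(\|v\|^{2})$ are not identically zero since $f\not\equiv 0$; choosing $v_1,\dots,v_n$ in a small ball about a vector $\mathbf a$ with $\|\mathbf a\|^{2}\in I$ (so every entry of $A$ lies in $I$) one obtains $\rk f[A]=\min(n,N)>k$, contradicting $(1)$. In the positive semidefinite case $C\succ 0$, so $f[A]=(\Psi C^{1/2})(\Psi C^{1/2})^{T}\succeq 0$ and $\rk f[A]=\rk\Psi$, which equals $\min(n,N)$ for generic such $v_p$; this also yields the ``if and only if'' for $\bp_n^l(I)\to\bp_n^k$.

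\emph{Removing the $C^k$ hypothesis when $I=[0,R)$ and $k\le n-3$.} Here one starts from $f[-]:\bp_n^l([0,R))\to\bp_n^k$, and the slack $n-k\ge 3$ lets one recover the regularity used above before applying it. Substituting block-diagonal and low-rank matrices forces $f\ge 0$; then, since $f$ still preserves positivity on $3\times 3$ configurations embedded inside the ambient $n\times n$ matrices, a Vasudeva/Horn-style argument forces $f$ continuous, then monotone and convex on subintervals, and ultimately $C^\infty$, at which point the case already treated applies. This regularity bootstrap is classical but technical; it is the secondary obstacle.
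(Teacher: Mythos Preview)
Your easy direction and overall architecture are right, but two steps have genuine gaps.

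\textbf{The polynomial step on $(-R,R)$.} For $a<0$ the matrix $a\,\mathbf{1}\mathbf{1}^{T}+\mathbf{b}\mathbf{b}^{T}$ is \emph{not} positive semidefinite (its restriction to $\mathrm{span}(\mathbf{1},\mathbf{b})$ has determinant $a\bigl(n\|\mathbf{b}\|^2-(\sum_p b_p)^2\bigr)\le 0$), so it does not lie in $\bp_n^l(I)$ and hypothesis $(1)$ says nothing about it. Your proposed fix via ``obtuse Gram vectors'' cannot work either: diagonal entries of a Gram matrix are $\|v_p\|^2\ge 0$, so no Gram matrix has all entries near a prescribed $a<0$; and knowing $f$ is polynomial on $[0,R)$ together with $f\in C^k(-R,R)$ does not force $f$ to be polynomial on $(-R,0)$. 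The paper's device is different: the $2n\times 2n$ block matrix with diagonal blocks $|a|\mathbf{1}+uu^T$ and off-diagonal blocks $a\mathbf{1}+uu^T$ lies in $\bp_{2n}^2(I)$, and since $k<n-1$ the hypothesis $f[-]:\bp_n^l(I)\to\br_n^k$ propagates to $\bp_{2n}^l(I)\to\br_{2n}^k$ (Lemma~\ref{lem:increase_dim}), whence every $(k{+}1)$-minor of the off-diagonal block $f[a\mathbf{1}+uu^T]$ vanishes. This recovers the rank-$2$ family for all $a\in(-R,R)$, after which the Rolle/Taylor argument of Theorem~\ref{T2tok} applies.

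\textbf{The exponent step.} Your Veronese--Gram approach is a genuinely different route from the paper's, and in the positive case ($C\succ 0$) it is clean. But in the $\br_n^k$ case with indefinite $C$ the claim $\rk(\Psi C\Psi^T)=\min(n,N)$ is unjustified: when $N>n$ (e.g.\ $f(x)=x-x^2$, $l=3$, $k=3$, $n=5$, $N=9$) you need $C$ restricted to the row span of $\Psi$ to be nondegenerate, yet that row span is constrained to a secant variety of the Veronese, not generic in the Grassmannian, and your remark that $f(\|v\|^2)\not\equiv 0$ only controls the diagonal. The paper bypasses this by \emph{composition}: choose $\alpha\in\N^l$ so that the exponents $\alpha^T\mathbf m$ are pairwise distinct as $\mathbf m$ ranges over all multi-indices in the multinomial expansions (Proposition~\ref{Pgvm}); then $g_\alpha(x):=c\sum_j x^{\alpha_j}$ maps $\bp_n^1(I)$ into $\bp_n^l(I)$, so $(f\circ g_\alpha)[-]:\bp_n^1(I)\to\br_n^k$, and Theorem~\ref{thm:rank_one} forces $f\circ g_\alpha$ to have at most $k$ monomials --- but by construction it has exactly $N=\sum_t\binom{i_t+l-1}{l-1}$ distinct monomials, so $N\le k$. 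Specializing your $v_p$ to $(x_p^{\alpha_1},\dots,x_p^{\alpha_l})$ is exactly this, so the natural repair of your genericity claim \emph{is} the paper's reduction to rank one.
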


Note that when $l=1$, to obtain Theorem \ref{thm:sum_powers}, it suffices
to assume $f$ has $k$ nonzero derivatives of arbitrary orders at only the
origin. For higher rank $l>1$, we prove Theorem \ref{Tltok} under the
stronger assumption of $f$ being $C^k$ on all of $I$.

Our last main result concerns entrywise functions preserving positivity
for all dimensions, as in the classical case studied by Schoenberg,
Rudin, Vasudeva, and many others. We demonstrate that functions
preserving positivity over a small family of rank $2$ matrices of all
dimensions are automatically analytic with nonnegative Taylor
coefficients. By contrast, classical results are generally proved under
the far stronger assumption that the function preserves positivity for
all positive semidefinite matrices. Before we state the result, first
recall the notion of an absolutely monotonic function.

\begin{definition}
Let $I \subset \mathbb{R}$ be an interval with interior $I^\circ$. A
function $f \in C(I)$ is said to be \emph{absolutely monotonic} on $I$ if
$f \in C^\infty(I^\circ)$ and $f^{(k)}(x) \geq 0\ \forall x \in I^\circ$
and every $k \geq 0$. 
\end{definition}

\begin{utheorem}[Rank $2$, arbitrary dimension]\label{thm:rank2_AM}
Let $0 < R \leq \infty$, $I = (0,R)$, and $f: I \rightarrow \mathbb{R}$.
Then the following are equivalent: 
\begin{enumerate}
\item For all $n \geq 1$, $f[a {\bf 1}_{n \times n} + u u^T] \in \bp_n$
for every $a \in [0,R)$ and $u \in \R^n$ such that $a + u_iu_j \in I$.
\item For all $n \geq 1$, $f[A] \in \bp_n$ for every $A \in \bp_n(I)$;
\item The function $f$ is absolutely monotonic on $I$. 
\end{enumerate}
\end{utheorem}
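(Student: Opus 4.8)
The plan is to prove the cycle $(3)\Rightarrow(2)\Rightarrow(1)\Rightarrow(3)$, with essentially all the content in the last implication. For $(3)\Rightarrow(2)$: a function absolutely monotonic on $(0,R)$ is represented there by a power series $f(x)=\sum_{k\ge0}c_kx^k$ with every $c_k\ge0$, convergent on $[0,R)$ (classical, Bernstein); since $I=(0,R)\subset[0,\infty)$, for any $A\in\bp_n(I)$ we get $f[A]=\sum_k c_kA^{\circ k}$, a convergent series of positive semidefinite matrices by the Schur product theorem, so $f[A]\in\bp_n$. For $(2)\Rightarrow(1)$: if $a\in[0,R)$ and $u\in\R^n$ satisfy $a+u_iu_j\in I$ for all $i,j$, then $a\mathbf 1_{n\times n}+uu^T$ is a sum of two positive semidefinite matrices with all entries in $I$, so it lies in $\bp_n(I)$ and $(2)$ applies.

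For $(1)\Rightarrow(3)$ I would first extract the low-dimensional consequences. Taking $n=1$ gives $f\ge0$ on $I$. Taking $n=2$ with $a=0$ and $u=(\sqrt p,\sqrt q)$ gives the $2\times2$ determinant inequality $f(p)f(q)\ge f(\sqrt{pq})^2$ for all $p,q\in I$, while $n=2$ with $u=(\sqrt s,-\sqrt s)$ and $a=(x+y)/2$, $s=(y-x)/2$ shows $f$ is nondecreasing. Next I would rule out that $f$ vanishes on a proper subinterval: if $f\not\equiv0$ then by monotonicity $\{f=0\}$ is an interval $(0,x_1)$ (or $(0,x_1]$) with $x_1\in[0,R)$, and if $x_1>0$ one can choose $u_1$ near $\sqrt R$ and $u_2$ near $\sqrt{x_1}$ with $u_2^2<x_1<u_1u_2<u_1^2<R$, so that $f[uu^T]$ is a $2\times2$ symmetric matrix with a positive off-diagonal entry but a vanishing diagonal entry — not positive semidefinite, a contradiction. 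Hence either $f\equiv0$ (absolutely monotonic) or $f>0$ on $(0,R)$; in the latter case the inequality $f(p)f(q)\ge f(\sqrt{pq})^2$ says $\tilde g(t):=\log f(e^t)$ is midpoint convex, and being nondecreasing (hence locally bounded) it is convex, so $f$ is continuous on $(0,R)$.

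The heart of the argument is to upgrade continuity to $C^\infty$, and the crucial observation is that the family of matrices $a\mathbf 1_{n\times n}+uu^T$ is invariant under adding multiples of $\mathbf 1_{n\times n}$. Fixing a nonnegative mollifier $\phi_\delta$ supported in $[-\delta,\delta]$ with integral $1$, set $f_\delta:=f*\phi_\delta$, which is $C^\infty$ on $(\delta,R-\delta)$ and converges locally uniformly to $f$ on $(0,R)$. For $a\in[\delta,R-\delta)$ and $u$ with $a+u_iu_j\in(\delta,R-\delta)$ for all $i,j$, one has $f_\delta[a\mathbf 1_{n\times n}+uu^T]=\int_{-\delta}^{\delta}f\bigl[(a-t)\mathbf 1_{n\times n}+uu^T\bigr]\,\phi_\delta(t)\,dt\succeq0$, each integrand being positive semidefinite by $(1)$; so each smooth $f_\delta$ still obeys $(1)$ on the slightly smaller interval. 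Now fix $a$ and $k\ge0$, take $n=k+2$ and $v\in\R^n$ with distinct nonzero entries, and apply $f_\delta$ to $A_\epsilon:=a\mathbf 1_{n\times n}+\epsilon^2vv^T$ for small $\epsilon>0$; Taylor expanding $f_\delta$ about $a$ yields $f_\delta[A_\epsilon]=\sum_{m=0}^k\frac{f_\delta^{(m)}(a)}{m!}\epsilon^{2m}v^{\circ m}(v^{\circ m})^T+E_\epsilon$ with $E_\epsilon$ having entries $O(\epsilon^{2k+2})$. Since $v^{\circ0},\dots,v^{\circ k}$ are linearly independent (Vandermonde), pick $w\in\R^n$ orthogonal to $v^{\circ m}$ for $m<k$ with $\langle w,v^{\circ k}\rangle\ne0$; pairing $f_\delta[A_\epsilon]\succeq0$ with $w$, dividing by $\epsilon^{2k}$, and letting $\epsilon\to0^+$ forces $f_\delta^{(k)}(a)\ge0$. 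Thus $f_\delta$ is absolutely monotonic on $(\delta,R-\delta)$. Finally, fixing a small $c\in(0,R/2)$, Bernstein's theorem writes $f_\delta(x)=\sum_m c_m^{(\delta)}(x-c)^m$ with $c_m^{(\delta)}\ge0$ near $c$; the uniform bound $\sum_m c_m^{(\delta)}\rho^m=f_\delta(c+\rho)\le\sup_{[c,c+\rho]}f+1$ lets one extract, along some $\delta_j\to0$, limits $c_m^{(\delta_j)}\to c_m\ge0$, so $f(x)=\sum_m c_m(x-c)^m$ on $(0,R)$ with $c_m\ge0$ and $f^{(k)}(x)=\lim_j f_{\delta_j}^{(k)}(x)\ge0$ for every $k$; hence $f$ is absolutely monotonic on $(0,R)$. (One could instead conclude via the classical description of absolute monotonicity by nonnegativity of all iterated forward differences, which clearly survives the pointwise limit $f_\delta\to f$.)

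I expect the main obstacle to be exactly this passage from a merely continuous $f$ to a smooth one — without smoothness the Taylor-expansion/Vandermonde trick is unavailable — and what makes it work is the translation-invariance of the rank-$\le2$ family $a\mathbf 1_{n\times n}+uu^T$ under shifts by $t\mathbf 1_{n\times n}$, which lets ordinary mollification preserve hypothesis $(1)$. A secondary nuisance is excluding functions that vanish on a subinterval, handled cheaply by the ``$2\times2$ matrix with a zero on the diagonal but not off it'' obstruction.
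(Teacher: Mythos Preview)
Your proof is correct and follows essentially the same route as the paper's proof via Theorem~\ref{Thorn}: continuity from the $2\times 2$ consequences, additive mollification exploiting the translation invariance $a\mathbf{1}_{n\times n}+uu^T\mapsto (a-t)\mathbf{1}_{n\times n}+uu^T$ (this is exactly Equation~\eqref{Ehorn} in the paper), the Horn--Loewner Taylor/Vandermonde argument for the smooth case, and forward differences to pass to the limit. The paper also records a second proof (Section~\ref{Sam}) that replaces your additive mollification by the multiplicative convolution $f_\theta(x)=\int f(x/y)\theta(y)\,dy/y$ and replaces Horn's Vandermonde step by the rank-one machinery of Theorem~\ref{thm:sum_powers}(3); your approach is closer to the first.

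One small point: your primary limit argument via subsequential limits of Taylor coefficients $c_m^{(\delta_j)}\to c_m$ is not complete as written---you would still need to justify that $\sum_m c_m(x-c)^m$ actually equals $f(x)$ on all of $(0,R)$, and that the termwise limits of derivatives agree with $f^{(k)}$. Your parenthetical alternative (nonnegativity of all forward differences survives pointwise limits, then invoke the Bernstein--Widder characterization) is clean and is precisely what the paper does; I would promote that to the main argument.
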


Theorem \ref{thm:rank2_AM} is a significant refinement of the original
problem, in which one studies entrywise functions which preserve Loewner
positivity among positive semidefinite matrices of all orders, and with
no rank constraints. Moreover, condition (1) is a new and much simpler
characterization of preserving positivity for all $\bp_n$, in the sense
that it simplifies condition (2) significantly. The equivalence $(2)
\Leftrightarrow (3)$ has been known for some time in the literature in
related settings. Vasudeva showed in \cite{vasudeva79} that $(2)
\Leftrightarrow (3)$ for $I = (0, \infty)$, whereas Schoenberg and Rudin
showed the same result for $I = (-1,1)$. See Theorems \ref{Therz} and
\ref{vasudeva_rank2}.

\begin{remark}
Unlike Theorems \ref{thm:sum_powers} and \ref{Tltok},
Theorem \ref{thm:rank2_AM} makes no continuity or differentiability
assumptions on $f$. Also note that the family of rank 2 matrices in part
(1) of Theorem \ref{thm:rank2_AM} is a one-dimensional extension of the
set of rank 1 matrices. This is in some sense the smallest family of
matrices for which the result can hold. More precisely, the corresponding
result to Theorem \ref{thm:rank2_AM} for the smaller set of rank $1$
matrices is false. For example, $f(x) := x^\alpha$ for $\alpha > 0$ sends
$\bp_n^1([0,\infty))$ to $\bp_n^1$ for all $n$; however, $f$ is not
absolutely monotonic unless $\alpha \in \N$. In this sense, Theorem
\ref{thm:rank2_AM} provides minimal assumptions under which entrywise
functions preserving positivity are absolutely monotonic.
\end{remark}

The rest of the paper is organized as follows: we begin by reviewing
previous work on functions preserving positivity in Section \ref{Salter},
and show how several of these results can be extended to more general
settings. In Section \ref{Srank1} we develop a general three-step
approach for studying entrywise functions mapping $\bp_n^1$ to $\br_n^k$,
and use it to prove Theorem \ref{thm:sum_powers}. The results of Section
\ref{Srank1} are then extended in Sections \ref{Srank2} and
\ref{Srank2_2} to study the general case of entrywise functions mapping
$\bp_n^l$ into $\br_n^k$. Finally we demonstrate in Section \ref{Sam} how
our results and techniques can be used to obtain novel and intuitive
proofs of the classical results of Schoenberg and Rudin. Along the way,
we obtain several characterizations of entrywise functions preserving
Loewner positivity in a variety of settings.

\section{Previous results and extensions}\label{Salter}

We begin by reviewing known characterizations of functions preserving
positivity, and extending them to other settings (other domains,
Hermitian matrices). We first recall a fundamental result by Schoenberg
and Rudin, characterizing entrywise functions preserving \textit{all}
positive semidefinite matrices. This celebrated characterization has been
studied and generalized by many authors under various restrictions; only
the most general version is presented here. 

\begin{theorem}[see Schoenberg \cite{Schoenberg42}, Rudin \cite{Rudin59},
Herz \cite{Herz63}, Christensen and Ressel
\cite{Christensen_et_al78}]\label{Therz}
Given $0 < R \leq \infty$, $I = (-R,R)$, and $f : I \to \R$, the
following are equivalent:
\begin{enumerate}
\item $f[A] \in \bp_n$ for all $A \in \bp_n(I)$ and all $n \geq 1$. 

\item $f$ is analytic on the disc $D(0,R) := \{ z \in \mathbb{C} : |z| <
R \}$ and absolutely monotonic on $(0,R)$; i.e., $f$ has a convergent
Taylor series on $D(0,R)$ with nonnegative coefficients.
\end{enumerate}
\end{theorem}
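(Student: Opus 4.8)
The plan is to prove Theorem \ref{Therz} (the Schoenberg--Rudin characterization) by establishing the two implications separately, with the forward direction $(1)\Rightarrow(2)$ being the substantive one. The reverse direction $(2)\Rightarrow(1)$ is immediate: if $f(x)=\sum_{m\geq 0}c_m x^m$ with $c_m\geq 0$ and the series converges on $D(0,R)$, then for $A\in\bp_n(I)$ the matrix $f[A]=\sum_{m\geq 0}c_m A^{\circ m}$ is a convergent (entrywise, hence in any norm on a finite-dimensional space) sum of matrices, each of which is positive semidefinite by the Schur product theorem ($A^{\circ m}\in\bp_n$ as a Hadamard power of a PSD matrix, and $c_m\geq 0$). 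A closed cone argument then gives $f[A]\in\bp_n$.

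For the forward direction, the intended route — given the paper's stated philosophy — is to \emph{reduce} the hypothesis to the rank-constrained setting already handled. Specifically, hypothesis (1) of Theorem \ref{Therz} in particular forces $f[a\mathbf{1}_{n\times n}+uu^T]\in\bp_n$ for all $n$, all $a\in[0,R)$, and all $u$ with $a+u_iu_j\in I$, which is precisely hypothesis (1) of Theorem \ref{thm:rank2_AM} restricted to $I=(0,R)$. Hence Theorem \ref{thm:rank2_AM} yields that $f$ is absolutely monotonic on $(0,R)$, i.e., $f\in C^\infty((0,R))$ with $f^{(k)}\geq 0$ everywhere on $(0,R)$ and all $k\geq 0$. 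It remains to upgrade ``absolutely monotonic on $(0,R)$'' to ``analytic on $D(0,R)$ with nonnegative Taylor coefficients,'' and in particular to show $f$ extends analytically across $0$ and that the extension governs the behavior on $(-R,0)$ as well. This is a classical fact (Bernstein's theorem on absolutely monotonic functions): an absolutely monotonic function on $(0,R)$ is real-analytic there and its Taylor series about any point $x_0\in(0,R)$ has nonnegative coefficients; letting $x_0\to 0^+$ and using monotone convergence of the nonnegative derivative values, one obtains a power series $\sum c_m x^m$ with $c_m=\lim_{x_0\to0^+}f^{(m)}(x_0)/m!\geq 0$ converging on $(0,R)$, and by comparison the radius of convergence is at least $R$, giving an analytic function on $D(0,R)$. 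One finally checks this power series agrees with $f$ on all of $(-R,R)$: on $(0,R)$ by the Taylor expansion, at $0$ by continuity, and on $(-R,0)$ by invoking hypothesis (1) once more with $2\times 2$ matrices $\begin{pmatrix}1 & x\\ x & 1\end{pmatrix}$ or small perturbations thereof to pin down $f$ on the negative axis (alternatively, by a direct continuity/density argument that $f$ on $(-R,R)$ is determined once it is determined on a set with a limit point and is known to be restriction-compatible with PSD-preservation).

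The main obstacle is the last upgrade step — passing from the ``local'' analyticity on the open interval $(0,R)$ to global analyticity on the complex disc $D(0,R)$ with a single nonnegative-coefficient power series valid on the \emph{full} interval $(-R,R)$. Absolute monotonicity on $(0,R)$ a priori only controls $f$ on the positive axis, so one must argue that (i) the radius of convergence of the limiting series is exactly $R$ (not smaller), which follows from Cauchy--Hadamard applied to $c_m=\lim f^{(m)}(x_0)/m!$ together with convergence of $\sum c_m x_0^m=f(x_0)<\infty$ for $x_0$ arbitrarily close to $R$; and (ii) that the series represents $f$ on $(-R,0]$, which requires an extra input from hypothesis (1) since absolute monotonicity alone says nothing there. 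I expect the cleanest way to handle (ii) is to observe that both $f$ and its candidate power series are continuous on $(-R,R)$, agree on $(0,R)$, hence agree at $0$, and then to note that the power series automatically preserves positivity on all of $\bp_n((-R,R))$ (by the reverse direction already proved) — so $g:=f-(\text{series})$ is a continuous function on $(-R,R)$ vanishing on $[0,R)$, and one shows $g\equiv 0$ by feeding suitable low-rank matrices with negative entries into hypothesis (1), exactly the type of rank-constrained test matrices developed earlier in the paper. This keeps the argument within the elementary, rank-constrained framework the paper advertises, rather than invoking the full classical machinery.
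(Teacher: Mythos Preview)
The paper does not give its own proof of Theorem~\ref{Therz}; it is stated in Section~\ref{Salter} as a known result from the literature and is then \emph{used} as a black box (for instance in the proof of Theorem~\ref{thm:unif}). The paper's new contribution in this direction is the one-sided case only: Theorem~\ref{thm:rank2_AM} for $I=(0,R)$ and Theorem~\ref{thm:special2ToPn} for $I=[0,R)$, i.e., a reproof and sharpening of Vasudeva's theorem, not of the two-sided Schoenberg--Rudin theorem itself.

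Your proposal has a genuine gap at step~(ii), the extension from $(0,R)$ to $(-R,R)$. First, you assert that $f$ is continuous on all of $(-R,R)$, but nothing you invoke establishes this on $(-R,0]$: Theorem~\ref{thm:vasudeva:M2} yields continuity only on $I\cap(0,\infty)$, and absolute monotonicity on $(0,R)$ says nothing about negative arguments. Second, your plan to set $g:=f-F$ and force $g\equiv 0$ via test matrices cannot work as stated, since $g$ inherits no positivity-preservation property (the difference of two maps $\bp_n\to\bp_n$ need not preserve anything), so no minor or eigenvalue constraint on $g[A]$ is available. If instead you work directly with $f$, the rank-one matrices you mention give only inequalities: for $u=(1,\dots,1,-1)^T t$ one checks that the PSD condition on $f[uu^T]$ reduces, for \emph{every} $n$, to $|f(-t^2)|\le f(t^2)$, never to the needed equality $f(-t^2)=F(-t^2)$. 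Pinning down $f$ on the negative axis from its values on the positive axis is precisely what distinguishes Schoenberg--Rudin from Vasudeva, and the classical proofs (Rudin via positive definite sequences, Christensen--Ressel via semigroup methods, Herz via harmonic analysis) all introduce substantial additional structure at exactly this point. The rank-constrained machinery of the present paper does not supply that step --- consistent with the paper treating Theorem~\ref{Therz} as an external input rather than reproving it.
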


Similarly, the following result was shown by Vasudeva \cite{vasudeva79}
for $I = (0,\infty)$ and also follows from Horn \cite[Theorem 1.2]{Horn}.

\begin{theorem}[Vasudeva, {\cite[Theorem
6]{vasudeva79}}]\label{vasudeva_rank2}
Given $I = (0,\infty)$ and $f : I \to \R$, the following are equivalent:
\begin{enumerate}
\item $f[A] \in \bp_n$ for all $A \in \bp_n(I)$ and all $n \geq 1$.

\item $f$ can be extended analytically to $\mathbb{C}$ and is absolutely
monotonic on $I$.
\end{enumerate}
\end{theorem}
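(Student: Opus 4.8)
The plan is to prove $(2)\Rightarrow(1)$ directly from the Schur product theorem, and to deduce $(1)\Rightarrow(2)$ from the Schoenberg--Rudin theorem (Theorem~\ref{Therz}) by an affine change of variable, then promote the resulting local analyticity to an entire extension. For $(2)\Rightarrow(1)$: if $f$ is entire and absolutely monotonic on $I=(0,\infty)$, write $f(x)=\sum_{m\ge 0}c_m x^m$; letting $x\to 0^+$ in $f^{(m)}(x)\ge 0$ forces $c_m=f^{(m)}(0^+)/m!\ge 0$. Given $A\in\bp_n(I)$, every Hadamard power $A^{\circ m}$ is positive semidefinite by the Schur product theorem (the entries of $A$ lie in $(0,\infty)$), so each partial sum $\sum_{m=0}^N c_m A^{\circ m}$ is positive semidefinite, and the entrywise limit $f[A]$ is as well.

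For $(1)\Rightarrow(2)$, first fix $c>0$ and set $g_c\colon(-1,1)\to\R$, $g_c(x):=f(c(1+x))$, which is well defined since $c(1+x)\in(0,2c)\subset I$. For any $n$ and any $A\in\bp_n((-1,1))$ the matrix $B:=c\,{\bf 1}_{n\times n}+cA$ is positive semidefinite, has entries in $(0,2c)\subset I$, and satisfies $f[B]=g_c[A]$; hence $g_c[A]=f[B]\in\bp_n$ by $(1)$. Thus $g_c$ satisfies hypothesis $(1)$ of Theorem~\ref{Therz} with $R=1$, so $g_c$ has a Taylor series $g_c(x)=\sum_{m\ge 0}b_m^{(c)}x^m$ converging on $D(0,1)$ with all $b_m^{(c)}\ge 0$.

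Next I would transfer this to $f$. Since $f(y)=g_c((y/c)-1)$ on $(0,2c)$ and $g_c$ is analytic on $D(0,1)$, the function $f$ agrees on $(0,2c)$ with a function analytic on the disc $\{z:|(z/c)-1|<1\}$ of radius $c$ centered at $c$; as $c$ ranges over $(0,\infty)$ the intervals $(0,2c)$ exhaust $I$ and the pieces agree on overlaps by the identity theorem, so $f$ is real analytic on $(0,\infty)$. Moreover, for $y\in(c,2c)$ we have $(y/c)-1\in(0,1)$, so for every $m\ge 0$
\[
f^{(m)}(y)\;=\;c^{-m}g_c^{(m)}\!\left(\tfrac yc-1\right)\;=\;c^{-m}\sum_{j\ge m}b_j^{(c)}\tfrac{j!}{(j-m)!}\left(\tfrac yc-1\right)^{j-m}\;\ge\;0 .
\]
Since $\bigcup_{c>0}(c,2c)=(0,\infty)$, $f$ is absolutely monotonic (hence continuous) on $I$, giving half of $(2)$.

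The remaining point --- extending $f$ to an entire function --- is what I expect to be the main obstacle, because the change of variable only yields analyticity on the discs $\{z:|z-c|<c\}$, whose union is merely the right half-plane $\{\ree z>0\}$. To get an entire extension I would exploit the positivity of the Taylor coefficients: fix $x_0>0$ and expand $f$ at $x_0$; its coefficients $f^{(m)}(x_0)/m!$ are nonnegative, so if the radius of convergence $\rho$ were finite, Pringsheim's theorem would make $x_0+\rho$ a singular point of $f$, contradicting the analyticity of $f$ at every point of $(0,\infty)$ shown above. Hence $\rho=\infty$ for all $x_0$, so $f$ extends to an entire function, completing $(2)$. (Alternatively one can invoke Bernstein's theorem that a function absolutely monotonic on $(0,b)$ extends holomorphically to $D(0,b)$, and let $b\to\infty$.) The only delicate bookkeeping is ensuring the various local analytic continuations are mutually consistent and that absolute monotonicity glues across the overlapping intervals $(c,2c)$ --- both immediate from the identity theorem.
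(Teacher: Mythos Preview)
Your proof is correct and follows essentially the same route as the paper. The paper does not prove Theorem~\ref{vasudeva_rank2} directly (it is cited from Vasudeva), but its proof of the generalization Theorem~\ref{thm:unif} uses exactly your idea: an affine change of variable carrying $(-R,R)$ into the positive interval, followed by an appeal to Schoenberg--Rudin (Theorem~\ref{Therz}) to conclude absolute monotonicity on a subinterval, and then a sweep of the parameter to cover all of $I$; the Schur product theorem handles the converse in both treatments. The one place you add something the paper does not spell out for $I=(0,\infty)$ is the entire extension: the paper's Theorem~\ref{thm:unif} only asserts $(3)\Rightarrow(2)$ when $0\in I$ (invoking the classical Theorem~\ref{thm:abs_monotonic_equiv}), whereas you supply the Pringsheim/Bernstein argument directly --- this is a perfectly valid and self-contained way to close that gap.
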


Note that the assertions in Theorems \ref{Therz} and \ref{vasudeva_rank2}
are very similar, but for different domains of definition. We now extend
Theorem \ref{vasudeva_rank2} to general nonnegative intervals. 

\begin{theorem}\label{thm:unif}
Let $0 \leq a < b \leq \infty$. Assume $I = (a,b)$ or $I =
[a,b)$ and let $f: I \rightarrow \mathbb{R}$. Then each of the
following assertions implies the next one: 
\begin{enumerate}
\item $f[A] \in \bp_n$ for all $A \in \bp_n(I)$ and all $n \geq 1$;

\item The function $f$ can be extended analytically to $D(0,b)$ and 
\begin{equation}
f(z) = \sum_{n=0}^\infty c_n z^n, \qquad z \in D(0,b) 
\end{equation}
for some $c_n \geq 0$; 

\item $f$ is absolutely monotonic on $I$.
\end{enumerate}  
If furthermore, $I = [0,b)$, then $(3) \Rightarrow (2)$ and so all the
assertions are equivalent. 
\end{theorem}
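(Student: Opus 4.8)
We must establish $(1)\Rightarrow(2)\Rightarrow(3)$ for arbitrary $0\le a<b\le\infty$, and $(3)\Rightarrow(2)$ in the case $I=[0,b)$. The implication $(2)\Rightarrow(3)$ is immediate: since $a\ge 0$ we have $I^\circ\subseteq(0,b)$, so for $x\in I^\circ$ term-by-term differentiation of $f(z)=\sum_{n}c_nz^n$ gives $f^{(k)}(x)=\sum_{n\ge k}c_n\tfrac{n!}{(n-k)!}x^{n-k}\ge0$, each summand being nonnegative because $x>0$ and $c_n\ge0$; together with $f\in C(I)$ this is precisely absolute monotonicity on $I$. When $I=[0,b)$, the implication $(3)\Rightarrow(2)$ is Bernstein's theorem: an absolutely monotonic function on $[0,b)$ coincides with $\sum_{n}\tfrac{f^{(n)}(0)}{n!}x^n$, a series with nonnegative coefficients that converges throughout $[0,b)$, hence of radius of convergence at least $b$, and this series furnishes the analytic extension to $D(0,b)$.

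For $(1)\Rightarrow(2)$ I would first prove local analyticity. Fix $c\in(a,b)$ and $\delta>0$ with $(c-\delta,c+\delta)\subseteq(a,b)$; since $c>0$, for any $B\in\bp_n((-\delta,\delta))$ the matrix $cJ_n+B$ (where $J_n:=\mathbf1\mathbf1^T$) lies in $\bp_n((a,b))\subseteq\bp_n(I)$, so the translate $x\mapsto f(c+x)$ preserves positivity on $(-\delta,\delta)$ in all dimensions. Theorem~\ref{Therz} then shows $f(c+\cdot)$ is analytic on $D(0,\delta)$ with nonnegative Taylor coefficients, so $f$ is analytic in a neighbourhood of $c$ with $f^{(k)}(c)\ge0$ for every $k$. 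Since the Taylor series of $f$ at $c$ has nonnegative coefficients, Pringsheim's theorem forces a singularity at $c+\rho_c$, where $\rho_c$ is its radius of convergence; as $f$ is analytic throughout $(a,b)$ this gives $\rho_c\ge b-c$, so $f$ extends analytically to $\bigcup_{c\in(a,b)}D(c,b-c)=D(a,b-a)$ and is absolutely monotonic on $(a,b)$.

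It remains to upgrade this to a representation $f(z)=\sum_{n}c_nz^n$ with $c_n\ge0$, valid on $D(0,b)$. When $I$ has left endpoint $0$ this follows from the known results: for $I=(0,\infty)$ it is Theorem~\ref{vasudeva_rank2}; for $I=[0,b)$ one checks continuity at $0$ (a matrix of the form $xJ_3+{\rm diag}(0,0,s)$ pins $\lim_{x\to0^+}f(x)$ to $f(0)$), whence $f$ is absolutely monotonic on $[0,b)$ by the previous paragraph and Bernstein's theorem applies; and for $I=(0,b)$ with $b<\infty$ one extends $f$ to $[0,b)$ via $f(0):=\lim_{x\to0^+}f(x)$ — the limit existing because $f$ is nonnegative and nondecreasing, as one sees by applying $f[-]$ to suitable $2\times2$ positive semidefinite matrices — verifies by a limiting argument that the extension still preserves positivity, and reduces to the case $I=[0,b)$. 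For general $a>0$ I would reduce to an interval with left endpoint $0$: if $c\in(a,b)$ then $x\mapsto f(c+x)$ in fact preserves positivity on the \emph{wider} interval $(a-c,b-c)$ (for $B\in\bp_n((a-c,b-c))$ the matrix $cJ_n+B$ lies in $\bp_n((a,b))$), an interval containing $0$ in its interior; feeding this into positivity preservation of $f$ on $(a,b)$ for rank-one matrices $uu^{T}$ with $u_iu_j\in(a,b)$ — equivalently, writing $u_i=e^{v_i}$, for the rank $\le 2$ matrices $(f\circ\exp)[v\mathbf1^{T}+\mathbf1 v^{T}]$ — then forces the Taylor coefficients of $f$ at the origin to be nonnegative and the series to converge on all of $D(0,b)$; equivalently, one shows $f$ extends to a positivity preserver on $[0,b)$ and invokes the left-endpoint-$0$ case.

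The step I expect to be the principal obstacle is exactly this last one: passing from ``$f$ analytic on $D(a,b-a)$ with nonnegative Taylor coefficients at every interior point of $(a,b)$'' to ``$f(z)=\sum_n c_nz^n$ with $c_n\ge0$ on all of $D(0,b)$.'' Absolute monotonicity on $(a,b)$ by itself does not suffice — for instance $x\mapsto x-a$ is absolutely monotonic on $[a,b)$ yet fails $(2)$ (and, in fact, fails $(1)$) — so the nonnegativity of the coefficients at $0$ genuinely hinges on positivity preservation for matrices whose entries range across the whole interval and accumulate near its left endpoint.
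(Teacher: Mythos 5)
Your overall strategy differs from the paper's. The paper handles $(1)\Rightarrow(3)$ with a single affine change of variables $T:(-b,b)\to(a,b)$, $T(x)=\tfrac{b-a}{2b}x+\tfrac{a+b}{2}$; since $T[B]=\tfrac{b-a}{2b}B+\tfrac{a+b}{2}J_n$ is positive semidefinite whenever $B$ is (using $a\ge 0$), Theorem~\ref{Therz} applies directly to $g=f\circ T$, and absolute monotonicity of $f$ on $(a,b)$ follows by letting the right endpoint vary. You instead translate to each $c\in(a,b)$, apply Theorem~\ref{Therz} on a small symmetric interval, and then use Pringsheim's theorem to force the local Taylor series to converge all the way out to $b$; this yields the analytic extension to the disc $D(a,b-a)$ plus absolute monotonicity. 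Both routes are valid, and your Pringsheim argument cleanly packages the domain of analyticity; the paper does not use Pringsheim, and never claims analyticity past $D\bigl(\tfrac{a+b}{2},\tfrac{b-a}{2}\bigr)$ directly.

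Two concrete problems. First, the test matrix you propose for pinning $\lim_{x\to0^+}f(x)$ to $f(0)$ does not work: for $B=xJ_3+{\rm diag}(0,0,s)$ with $x>0$, every entry of $B$ is strictly positive, so $f[B]$ involves only values of $f$ at positive arguments, and $f[B]$ has two identical rows so its determinant vanishes identically. The only constraints you extract are $f(x)\ge 0$ and $f(x+s)\ge f(x)$, which never compare $f^+(0)$ with $f(0)$. The paper's matrix in Equation~\eqref{eqn:matrixA_continuity} has an off-diagonal zero (the $(2,3)$ entry) that persists for every $t>0$, so the limit as $t\to0^+$ mixes $f^+(0)$ and $f(0)$ in a $3\times3$ determinant equal to $-f^+(0)(f^+(0)-f(0))^2$, forcing equality. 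You need a matrix with this feature.

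Second, the passage from $D(a,b-a)$ to $D(0,b)$ when $a>0$ is, as you say, the principal obstacle, and what you write there is not a proof: $f$ is not defined near $0$, so there is no meaning to ``the Taylor coefficients of $f$ at the origin'' until one has already produced the extension to $D(0,b)$, and the rank-$\le 2$ matrices $(f\circ\exp)[v\mathbf1^T+\mathbf1 v^T]$ give no leverage beyond what the affine-shift trick already provides. It is worth observing that the published proof does not close this gap either: it establishes $(1)\Rightarrow(3)$ for all $a$, invokes $(3)\Rightarrow(2)$ only when $0\in I$, and never argues $(1)\Rightarrow(2)$ directly. Your example $x\mapsto x-a$ shows $(3)\not\Rightarrow(2)$ for $a>0$, so no route through $(3)$ can work in that regime. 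Thus your instinct that something genuinely new is needed there is correct; but neither your sketch nor the paper's argument supplies it, so this remains open in your write-up (and in a strict reading of the paper's proof as well).
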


\begin{proof}
Without loss of generality, we can assume $b < \infty$ (otherwise, the
result follows by considering bounded intervals contained in $I$). That
$(2) \Rightarrow (1)$ holds follows by the Schur product theorem
and the continuity of the eigenvalues.

We now prove (1) $\Rightarrow$ (3). Consider the function $g:
(-b,b) \rightarrow \mathbb{R}$ given by
\begin{equation}\label{eqn:defn_g}
g(x) := f\left(\frac{b-a}{2b} x + \frac{a+b}{2}\right). 
\end{equation} 
Assume first that $I = (a,b)$.
Consider the linear change of variables $T : (-b,b) \to (a,b)$, given by
\[
T(x) := \left( \frac{b-a}{2b} \right) x + \frac{a+b}{2}.
\]

\noindent Thus $T[B] \in \bp_n((a,b))$ for all $B \in \bp_n((-b,b))$.
This implies that $g[-] = (f \circ T)[-]$ maps $\bp_n((-b,b))$ into
$\bp_n$ for all $n \geq 1$.
Thus, by Theorem \ref{Therz}, $g$ is analytic on $D(0,b)$ and absolutely
monotonic on $(0,b)$. It follows that $f$ is analytic on $(a,b)$ and
absolutely monotonic on $((a+b)/2,b)$. Repeating the above construction
for all $I = (a,b_0)$ with $a < b_0 < b$, it follows that $f$ is
absolutely monotonic on $I = (a, b)$ as desired. 

We next assume that $I = [0, b)$ and (1) holds. Then, clearly, (1) also
holds for matrices with entries in $(0,b)$. Thus, from above, $f$ is
analytic and absolutely monotonic on $(0, b)$. To prove that $f$ is
continuous at $0$, consider the matrix
\begin{equation}\label{eqn:matrixA_continuity}
A = \begin{pmatrix}
2 & 1 & 1 \\ 1 & 2 & 0 \\ 1 & 0 & 2
\end{pmatrix} \in \bp_3, 
\end{equation}

\noindent and, for $t > 0$, let $A_t = tA$. Since $A_t \oplus {\bf
0}_{(n-3) \times (n-3)} \in \bp_n((0,b))$ for $t$ small enough, applying
$f$ entrywise, we conclude by (1) that $f[A_t] \in \bp_3$. Also, since
$f$ is absolutely monotonic on $(0,b)$, it is nonnegative and increasing
there, and so $f^+(0) := \lim_{x \rightarrow 0^+} f(x)$ exists and is
nonnegative. Moreover, 
\begin{equation}\label{eqn:f_plus}
\lim_{t \rightarrow 0^+} f[A_t] = \begin{pmatrix}
f^+(0) & f^+(0) & f^+(0)\\
f^+(0) & f^+(0) & f(0)  \\
f^+(0) & f(0) & f^+(0)
\end{pmatrix} \in \bp_3. 
\end{equation}

\noindent This is possible only if the principal minors of this matrix
are nonnegative. It follows that $0 \leq f(0) \leq f^+(0)$ and the
determinant, which equals $-f^+(0)(f^+(0) - f(0))^2$, is nonnegative. But
then $f^+(0) = f(0)$, i.e., $f$ is right-continuous at $0$. This proves
(1) $\Rightarrow$ (3) if $I = [0, b)$. The result for $I = [a,b)$ follows
from that for $[0, b-a)$ by using the translation $g(x) = f(x+a)$, as
above.

Finally, by standard results from classical analysis (see Theorem
\ref{thm:abs_monotonic_equiv}), the implication (3) $\Rightarrow$ (2)
holds when $0 \in I$. 
\end{proof}

When the function $f$ is analytic, Theorem \ref{Therz} can easily be
extended to complex-valued functions as follows:

\begin{theorem}
Suppose $0 < R \leq \infty$ and $f : D(0,R) \to \mathbb{C}$ is analytic.
The following are equivalent:
\begin{enumerate}
\item $f[A]$ is Hermitian positive semidefinite for all Hermitian
positive semidefinite matrices $A$ with entries in $D(0,R)$.

\item $f$ is absolutely monotonic on $D(0,R)$; i.e., $f(z) = \sum_{n \geq
0} a_n z^n$ for real scalars $a_n \geq 0$.
\end{enumerate}
\end{theorem}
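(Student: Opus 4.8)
The idea is to bootstrap off the real-variable characterization of Theorem~\ref{Therz}, transferring it across the real/complex divide. Since $f$ is already assumed analytic on $D(0,R)$, I would write $f(z) = \sum_{n \geq 0} a_n z^n$ with a priori \emph{complex} coefficients $a_n$; the whole content of the theorem is then that $(1)$ forces every $a_n$ to be real and nonnegative, while $(2)$ gives $(1)$ routinely via the Schur product theorem.

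For the implication $(1) \Rightarrow (2)$, the first step is to show that $f$ is real-valued on the real interval $(-R,R)$. For $x \in [0,R)$ the matrix $x \, {\bf 1}_{2 \times 2}$ is Hermitian positive semidefinite with entries in $D(0,R)$ (it equals $x u u^T$ for $u = (1,1)^T$), so $(1)$ says $f(x) \, {\bf 1}_{2 \times 2}$ is Hermitian, forcing $f(x) \in \R$. Differentiating the power series repeatedly at a point of $(0,R)$ then shows every $a_n \in \R$ (so, in particular, $f$ is real on all of $(-R,R)$, negative arguments included). The second step: any real symmetric $A \in \bp_n((-R,R))$ is in particular Hermitian positive semidefinite with entries in $D(0,R)$, so $(1)$ gives $f[A] \in \bp_n$; now $f|_{(-R,R)}$ satisfies the hypotheses of Theorem~\ref{Therz}, which yields that $f$ is absolutely monotonic on $(0,R)$, i.e. $a_n \geq 0$ for all $n$.

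For $(2) \Rightarrow (1)$, let $A$ be Hermitian positive semidefinite with entries in $D(0,R)$. Each Hadamard power $A^{\circ n}$ is Hermitian (as $\overline{a_{ji}} = a_{ij}$ gives $\overline{a_{ji}^{\,n}} = a_{ij}^{\,n}$) and positive semidefinite by the Schur product theorem, so the partial sums $\sum_{n=0}^{N} a_n A^{\circ n}$ lie in the (closed) cone of Hermitian positive semidefinite matrices. Because $A$ is a fixed finite matrix, the bound $|a_{ij}|^2 \leq a_{ii} a_{jj}$ together with $a_{ii} \in [0,R)$ confines all entries to a compact subset of $D(0,R)$, on which $\sum_n a_n z^n$ converges uniformly; hence the partial sums converge entrywise to $f[A]$, and by closedness of the cone $f[A] \in \bp_n$.

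I do not expect a genuine obstacle here, since the statement is essentially a complex-analytic repackaging of Theorem~\ref{Therz}. The only points that need a little care are the choice of test matrices that pins $f$ down to being real-valued on $(-R,R)$ (so that its Taylor coefficients are real and Theorem~\ref{Therz} becomes applicable), and checking that the Hadamard series $\sum_n a_n A^{\circ n}$ converges on a fixed finite matrix, so that Hermitian positive semidefiniteness is inherited by the limit.
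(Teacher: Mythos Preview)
Your proof is correct and follows essentially the same approach the paper indicates: reduce to the real case and invoke Theorem~\ref{Therz}, with the Schur product theorem handling the converse. The only cosmetic difference is that you extract $a_n \in \R$ by differentiating the real-valued restriction $f|_{[0,R)}$, whereas the paper's hint points to the uniqueness principle (i.e., $f$ real on $(0,R)$ forces $f(z) = \overline{f(\bar z)}$ on $D(0,R)$, hence $a_n = \overline{a_n}$); both arguments are equally short and yield the same conclusion.
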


The proof is an easy exercise using Theorem \ref{Therz} and the
uniqueness principle for analytic functions. Note that there also exist
nonanalytic functions preserving positivity in arbitrary dimensions; see
\cite[Theorem 3.1]{fitzgerald} for the complete classification of these
maps.


As mentioned earlier, very few results characterizing functions
preserving positivity on $\bp_n$ for a fixed dimension $n$ exist. To our
knowledge, the only such known result is for $n=2$ by Vasudeva
\cite[Theorem 2]{vasudeva79}, and it characterizes functions mapping
$\bp_2(0,\infty)$ to $\bp_2$. We now prove an extension of this result to
more general intervals.

\begin{theorem}\label{thm:vasudeva:M2}
Let $0 < b \leq \infty$, and $I = (a,b)$ for $|a| \leq b$, or $I = [a,b)$
for $-b \leq a \leq 0$. Given $f: I \rightarrow \mathbb{R}$, the
following are equivalent:
\begin{enumerate}
\item $f[A] \in \bp_2$ for every $2 \times 2$ matrix $A
\in \bp_2(I)$.
\item $f$ satisfies 
\begin{equation}\label{Emidconvex}
f(\sqrt{xy})^2 \leq f(x) f(y)  \qquad \forall x,y \in I \cap [0, \infty)
\end{equation}
and 
\begin{equation}\label{eqn:increasing}
|f(x)| \leq f(y) \qquad \forall\ |x| \leq y \in I.
\end{equation}
\end{enumerate}

\noindent In particular, if $(1)$ holds, then either $f \equiv 0$ on $I
\setminus \{ -b \}$ or $f(x) > 0$ for all $x \in I \cap (0, \infty)$.
Moreover $f$ is continuous on $I \cap (0,\infty)$.
\end{theorem}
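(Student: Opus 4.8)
The plan is to prove $(1) \Leftrightarrow (2)$ by directly unwinding positive semidefiniteness of $2 \times 2$ matrices, and then to read off the structural (``in particular'') assertions from condition $(2)$.

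For the equivalence, recall that a symmetric $2 \times 2$ matrix with diagonal entries $p,r$ and off-diagonal entry $q$ is positive semidefinite precisely when $p,r \geq 0$ and $q^2 \leq pr$; thus such a matrix $A$ lies in $\bp_2(I)$ iff $p,r \in I \cap [0,\infty)$, $q \in I$, and $|q| \leq \sqrt{pr}$, while $f[A] \in \bp_2$ iff $f(p),f(r) \geq 0$ and $f(q)^2 \leq f(p)f(r)$. For $(1) \Rightarrow (2)$: the choice $p=q=r=y$ gives $f(y) \geq 0$; the choice $p=r=y$, $q=x$ with $|x|\leq y$ gives $f(x)^2 \leq f(y)^2$, which is \eqref{eqn:increasing}; and the choice $p=x$, $r=y$, $q=\sqrt{xy}$ gives \eqref{Emidconvex} --- here one checks $\sqrt{xy} \in I$ since it lies between $\min(x,y)$ and $\max(x,y)$ and $I$ is an interval. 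For $(2) \Rightarrow (1)$: given $A \in \bp_2(I)$ with entries as above, \eqref{eqn:increasing} with $x=y=p$ gives $f(p)\geq 0$ (and likewise $f(r)\geq 0$), and \eqref{eqn:increasing} with $x=q$, $y=\sqrt{pr}$ (valid since $|q|\leq\sqrt{pr}\in I$) followed by \eqref{Emidconvex} yields $f(q)^2 \leq f(\sqrt{pr})^2 \leq f(p)f(r)$, so $f[A] \in \bp_2$.

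For the remaining assertions, assume $(2)$. Then \eqref{eqn:increasing} shows at once that $f$ is nonnegative (take $x=y$) and nondecreasing (take $0\leq x\leq y$) on $I\cap[0,\infty)$. The key step is the dichotomy: if $f(x_0)=0$ for some $x_0\in I\cap(0,\infty)$, then $f\equiv 0$ on $I\cap(0,\infty)$. Indeed, for $x_0 < x < \sqrt{x_0 b}$ put $y := x^2/x_0$, so that $y \in I\cap(0,\infty)$ and $x=\sqrt{x_0 y}$; then \eqref{Emidconvex} forces $f(x)^2 \leq f(x_0)f(y)=0$, and together with monotonicity this gives $f\equiv 0$ on $I\cap(0,\sqrt{x_0 b})$, a strictly larger interval. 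Iterating this (each time enlarging the interval, whose right endpoint tends to $b$) gives $f\equiv 0$ on $I\cap(0,\infty)$. In that case \eqref{eqn:increasing} propagates the vanishing to every $x\in I$ with $x\leq 0$, since then $|x|\in I\cap[0,\infty)$ --- the sole exception being $x=-b$, which can occur only when $a=-b$ and is genuinely unconstrained because $-b$ can never appear as an entry of a matrix in $\bp_2(I)$; hence $f\equiv 0$ on $I\setminus\{-b\}$. If instead $f$ has no zero in $I\cap(0,\infty)$, then $f>0$ there, and setting $F(w):=\log f(e^w)$ converts \eqref{Emidconvex} into midpoint convexity of $F$ on an open interval; since $F$ is also nondecreasing, the elementary fact that a monotone midpoint-convex function is convex applies, so $F$ --- hence $f$ --- is continuous on $I\cap(0,\infty)$.

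The $(1)\Leftrightarrow(2)$ part is essentially bookkeeping. The portion needing the most care is the dichotomy together with the endpoint analysis: one must verify that all the auxiliary points ($\sqrt{xy}$, $x^2/x_0$, $|x|$) stay in $I$, and correctly isolate the single value $f(-b)$ that condition $(2)$ leaves free. The continuity conclusion rests on the classical statement that monotone (equivalently, measurable) midpoint-convex functions are convex, for which I would include a short argument via density of dyadic rationals and monotonicity.
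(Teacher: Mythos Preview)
Your proof is correct and follows essentially the same route as the paper's: the same $2\times 2$ bookkeeping for $(1)\Leftrightarrow(2)$, the same dichotomy argument for the zero set (you iterate from $x_0$ to $\sqrt{x_0 b}$, while the paper takes $x_0=\sup\{x:f(x)=0\}$ and derives a contradiction---these are equivalent), and the same passage to $F(w)=\log f(e^w)$ for continuity. Your $x=y=p$ step for $f(p)\geq 0$ is in fact marginally cleaner than the paper's ``set $x=0$ in \eqref{eqn:increasing}'', since it does not require $0\in I$.
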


Note that the condition $|a| \leq b$ is assumed in Theorem
\ref{thm:vasudeva:M2} because no $2 \times 2$ matrix in $\bp_2(I)$ can
have any entry in $(-\infty, -b)$. Also see \cite[Lemma 2.1]{Hiai2009}
for the special case where $I = (0,R)$ for some $0 < R \leq \infty$.

\begin{proof}
Let $A = \begin{pmatrix} p & q\\q & r \end{pmatrix}$.
Clearly, (1) holds if and only if $f(q)^2 \leq f(p)f(r)$ whenever $q^2
\leq pr$ for $p,q,r \in I$, and $f(p) \geq 0$ for $p \in I \cap
[0,\infty)$. Thus, if (1) holds, then so does equation
\eqref{Emidconvex}. Equation \eqref{eqn:increasing} follows easily by
considering the matrix $\displaystyle \begin{pmatrix} y & x \\ x & y
\end{pmatrix}$ for $|x| \leq y \in I$. 

Conversely, assume (2) holds. Setting $x=0$ in \eqref{eqn:increasing}
shows that $f(y) \geq |f(0)| \geq 0$ whenever $y \in I \cap (0,\infty)$.
Now if $q^2 \leq pr$ with $p,r \geq 0$ and $p,q,r \in I$, then applying
\eqref{eqn:increasing} with $x=q$ and $y = |q|$, we obtain $f(q)^2 \leq
f(|q|)^2$. Therefore, by \eqref{eqn:increasing} and \eqref{Emidconvex}, 
\[
f(q)^2 \leq f(|q|)^2 \leq f(\sqrt{pr})^2 \leq f(p) f(r).
\]

\noindent This proves $(2) \Rightarrow (1)$. 

Next, suppose (1) holds and $f(x) = 0$ for some $x \in I \cap
(0,\infty)$. We claim that $f \equiv 0$ on $I \cap [0,b)$, which proves
via \eqref{eqn:increasing} that $f \equiv 0$ on $I \setminus \{ -b \}$.
To see the claim, first define $x_0 := \sup \{ x \in I \cap (0,\infty) \
: \ f(x) = 0 \}$.
Then $f$ vanishes on $I \cap [0,x_0)$ by \eqref{eqn:increasing}.
We now produce a contradiction if $x_0 < b$, which proves the claim, and
hence all of (2). Indeed if $x_0 < y \in I$, then choose any $x_1 \in I
\cap (x_0^2 / y, x_0)$. Thus, $\sqrt{x_1 y} \in (x_0,y) \subset I$, so by
\eqref{Emidconvex},
\[
f(\sqrt{x_1 y})^2 \leq f(x_1) f(y) = 0.
\]

\noindent This contradicts the definition of $x_0$, and proves the claim.

Finally, define $a' := \inf (I \cap (0,\infty))$, and $g(x) := \ln
f(e^x)$ for $x \in (\ln a',\ln b)$. It is clear that $g$ is nondecreasing
and mid(point)-convex on the interval $(\ln a', \ln b) \subset \R$,
whenever $f$ satisfies \eqref{Emidconvex}.
(By a midpoint convex function $g : J \to \R$ we mean $g((x+y)/2) \leq
(g(x) + g(y))/2$ for $x,y \in J$.)
Hence by \cite[Theorem 71.C]{roberts-varberg}, $g$ is necessarily
continuous (and hence convex) on $(\ln a', \ln b)$. We conclude that $f$
is continuous on $(a',b) = I \cap (0, \infty)$.
\end{proof}

We note that functions preserving other forms of positivity have been
studied by various authors in many settings, including by Ando and Hiai
\cite{ando-hiai}, Ando and Zhan \cite{AndoZhan99}, Bharali and Holtz
\cite{bharali}, Bhatia and Karandikar \cite{bhatia-rlk}, de Pillis
\cite{depillis_69}, Hansen \cite{Hansen92}, Marcus and Katz
\cite{Marcus_Katz_69}, Marcus and Watkins \cite{Marcus_watkins_71},
Michhelli and Willoughby \cite{micchelli_et_al_1979}, Thompson
\cite{Thompson_61}, Zhang \cite{Zhang_2012}, and in previous work
\cite{Guillot_Khare_Rajaratnam-CEC}--\cite{Guillot_Rajaratnam2012b} by
the authors.

\section{Preserving positivity under rank constraints I:\\
The rank $1$ case}\label{Srank1}

We begin by studying entrywise functions mapping $\bp_n^1$ into
$\br_n^k$. In Subsection \ref{S3step} we introduce a three-step approach
for studying entrywise functions $f[-]: \bp_n^1 \to \br_n^k$ for $1 \leq
k < n$ and use it to prove Theorem \ref{thm:sum_powers}. Next, in
Subsection \ref{subsec:rank1} we study entrywise functions $f[-]: \bp_n^1
\to \bp_n$ and demonstrate important connections with the Laplace
transform.
Finally, in Subsection \ref{S2sided}, we study generalizations of Theorem
\ref{thm:sum_powers} involving the two-sided extensions of the power
functions.

\subsection{A three-step approach: functions preserving positivity of
rank 1 matrices}\label{S3step}

This subsection is devoted to proving Theorem \ref{thm:sum_powers}. To do
so, we adopt the following general and intuitive three-step strategy:  
\begin{enumerate}
\item[(S1)] We begin by characterizing entrywise functions mapping
$\bp_n^1$ into $\br_n^1$.
\item[(S2)] Assuming $f(0) = 0$, we show that the rank of $f[A]$ remains
the same if $f$ is replaced by $f(x) / x^r$ for a suitable $r > 0$.
\item[(S3)] Assuming $f(0) \ne 0$, we prove that if $f$ maps $\bp_n^1$
into $\br_n^k$, then $f-f(0)$ maps $\bp_{n-1}^1$ into $\br_{n-1}^{k-1}$.
\end{enumerate}

\noindent Several of our characterization results (including Theorem
\ref{thm:sum_powers}) follow by repeatedly applying steps (S2) and (S3)
until the result reduces to characterizing functions mapping $\bp_n^1$
into $\br_n^1$; and then we apply (S1) to obtain the desired result. We
thus prove three propositions in this section, which pertain to the above
three steps. 

We start by recalling the following basic result from linear algebra,
which we shall invoke frequently in this paper (see e.g. \cite[Theorem
IV.16]{dickson}). Note that the result is seemingly presented in
loc.~cit.~only over any subfield of $\mathbb{C}$, but in fact holds over
any field.

\begin{lemma}[{\cite[Theorem IV.16]{dickson}}]\label{lem:rank_minors}
Let $A$ be a symmetric $n \times n$ matrix over any field, and let $1
\leq r \leq n$. Then the following are equivalent: 
\begin{enumerate}
\item $\rk A \leq r$; 
\item All $(r+1) \times (r+1)$ minors of $A$ (when defined) vanish;
\item All $(r+1) \times (r+1)$ and $(r+2) \times (r+2)$ principal minors
of $A$ (when defined) vanish.
\end{enumerate}
\end{lemma}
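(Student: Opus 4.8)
The plan is to treat the three implications separately. The equivalence $(1)\Leftrightarrow(2)$ is the standard, characteristic-free fact that the rank of any matrix (symmetric or not) equals the largest size of a non-vanishing minor, so I would simply invoke it. The implication $(1)\Rightarrow(3)$ is then immediate, since a principal submatrix of a matrix of rank $\le r$ has rank $\le r$ and hence every $(r+1)$- or $(r+2)$-rowed principal minor vanishes. So the whole content of the lemma is $(3)\Rightarrow(1)$, and it is here that symmetry gets used.

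For $(3)\Rightarrow(1)$ the key device is the Schur-complement determinant identity over an arbitrary field. If $S\subseteq\{1,\dots,n\}$ is such that the principal submatrix $A[S,S]$ is invertible and $C$ denotes the Schur complement of $A[S,S]$ in $A$, then $C$ is again symmetric, $\rk A = |S| + \rk C$, and for every $T\subseteq\{1,\dots,n\}\setminus S$ one has $\det A[S\cup T,\,S\cup T] = \det A[S,S]\cdot\det C[T,T]$; all of this comes from the block-LU factorization of $A[S\cup T,S\cup T]$ and holds over any field. I would package the inductive step as an extension lemma: \emph{if $A[S,S]$ is invertible and $|S|<\rk A$, then there is $S'\supseteq S$ with $A[S',S']$ invertible and $|S'|-|S|\in\{1,2\}$.} Indeed $\rk C = \rk A - |S| > 0$, so $C\neq 0$; if some diagonal entry $C_{ii}$ is nonzero, set $S'=S\cup\{i\}$, and otherwise use symmetry of $C$ to pick $i\neq j$ with $C_{ij}=C_{ji}\neq 0$, observe that the $2\times2$ principal minor of $C$ on $\{i,j\}$ equals $-C_{ij}^2\neq 0$ (a field has no zero divisors), and set $S'=S\cup\{i,j\}$. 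The determinant identity then shows $A[S',S']$ is invertible.

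To conclude, suppose for contradiction that $\rk A\ge r+1$. Starting from $S_0=\emptyset$ and iterating the extension lemma until the size reaches $\rk A$, I get a chain $\emptyset = S_0\subsetneq S_1\subsetneq\cdots\subsetneq S_N$ with each $A[S_i,S_i]$ invertible, $|S_N|=\rk A$, and $|S_{i+1}|-|S_i|\in\{1,2\}$. Since the sizes climb from $0$ to $\rk A\ge r+1$ in steps of at most $2$, the value $r+1$ is either attained as some $|S_i|$, giving a nonsingular $(r+1)$-rowed principal submatrix, or is skipped by a step of size $2$, which forces $|S_i|=r$, $|S_{i+1}|=r+2$ and hence a nonsingular $(r+2)$-rowed principal submatrix. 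Either way $(3)$ is contradicted, so $\rk A\le r$. One could equally well induct on $n$, splitting off a nonzero diagonal entry, or a nonzero $2\times2$ off-diagonal block when the diagonal vanishes; this is the same computation organized differently.

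The step I expect to be the main obstacle is keeping the argument genuinely characteristic-free. In characteristic $\neq 2$ one may diagonalize a symmetric matrix by congruence and read the rank off from principal minors almost for free, but that is unavailable in characteristic $2$; the proof above avoids it by using only the block-LU determinant identities over a general field and the fact that $C_{ij}^2\neq 0$ whenever $C_{ij}\neq 0$. The one other thing to watch is the arithmetic in the chain argument — that a jump of size $2$ over $r+1$ lands exactly on $r+2$ — which is precisely why both $r+1$ and $r+2$, and not $r+1$ alone, must be tested in $(3)$.
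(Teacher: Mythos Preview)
The paper does not actually prove this lemma: it is stated as a recalled fact with a citation to Dickson's \emph{Algebraic theories}, together with the remark that although Dickson works over subfields of $\mathbb{C}$, the result holds over any field. There is therefore nothing to compare against, and your write-up supplies precisely the characteristic-free argument that the paper asserts exists but does not give.

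Your proof is correct. The Schur-complement identities you invoke (rank additivity $\rk A = |S| + \rk C$ and the determinant factorization $\det A[S\cup T,S\cup T] = \det A[S,S]\cdot\det C[T,T]$) follow from the block-LU factorization and require only that $A[S,S]$ be invertible, so they hold over any field. The extension lemma is the heart of the matter and is exactly where symmetry enters: when all diagonal entries of $C$ vanish, symmetry gives a $2\times 2$ principal minor equal to $-C_{ij}^2$, and your observation that $C_{ij}^2\neq 0$ in any field is what makes the step characteristic-free. The chain argument is clean and makes transparent why both sizes $r+1$ and $r+2$ must be tested in (3): steps of size two can skip over $r+1$ but cannot skip over both $r+1$ and $r+2$.
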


We now characterize functions mapping $\bp_n^1$ into $\br_n^1$, as in the
first step (S1) outlined at the beginning of this subsection.

\begin{proposition}\label{prop:charac_rank1}
Let $0 < b \leq \infty$, and $I = (a,b)$ for $|a| \leq b$, or $I = [a,b)$
for $|a| < b$.
Let $f : I \to \R$. Then:
\begin{enumerate}
\item $f[A] \in \br_2^1$ for every $A \in \bp_2^1(I)$ if and only if
$f(\pm\sqrt{xy})^2 = f(x) f(y)$ for all $x, y \in I \cap [0,\infty)$ such
that $\pm \sqrt{xy} \in I$.

\item Suppose $0 \in I$, $f(p) = 0$ for some $p \in I \setminus \{ 0 \}$,
and $f[-] : \bp_n^1(I) \to \br_n^1$ for some $n \geq 2$. Then $f \equiv
0$ on $I$.
\end{enumerate}
\end{proposition}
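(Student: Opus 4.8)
For part (1): A matrix $A \in \bp_2^1(I)$ has the form $A = \begin{pmatrix} x & q \\ q & y \end{pmatrix}$ with $xy = q^2$, where $x, y \ge 0$ (since rank $\le 1$ forces the diagonal entries nonnegative) and $q = \pm\sqrt{xy}$. Now $f[A] \in \br_2^1$ means the single $2\times 2$ minor of $f[A]$ vanishes: $f(x)f(y) - f(q)^2 = 0$, i.e., $f(\pm\sqrt{xy})^2 = f(x)f(y)$. Conversely, any rank-$\le 1$ positive semidefinite $2\times 2$ matrix arises this way. So the plan is simply to note that "rank of a symmetric $2\times 2$ matrix is $\le 1$" is equivalent to "its determinant vanishes," and then parametrize $\bp_2^1(I)$ explicitly. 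One must be slightly careful about which sign of $\sqrt{xy}$ actually lies in $I$ — but the statement already hedges this with the "$\pm$" and the side condition "$\pm\sqrt{xy} \in I$," so this is routine bookkeeping.

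For part (2): Assume $0 \in I$, that $f(p) = 0$ for some $p \in I \setminus \{0\}$, and that $f[-]$ maps $\bp_n^1(I)$ into $\br_n^1$ for some $n \ge 2$. First I would reduce to the case $n = 2$: any $A \in \bp_2^1(I)$ can be padded as $A \oplus {\bf 0}_{(n-2)\times(n-2)} \in \bp_n^1(I)$, and $f[A \oplus {\bf 0}] = f[A] \oplus (f(0){\bf 1}_{(n-2)\times(n-2)})$ has rank $\le 1$; hence $f[A]$ itself has rank $\le 1$ (and if $f(0)\ne 0$ the block $f(0){\bf 1}$ already contributes rank $1$, forcing $f[A]$ to be a scalar multiple of ${\bf 1}$ — this will actually help pin down $f(0) = 0$). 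So part (1) applies: $f(\pm\sqrt{xy})^2 = f(x)f(y)$ whenever the relevant quantities lie in $I$. Taking $y = 0$ gives $f(0)^2 = f(0)f(x)$ for all $x \in I \cap [0,\infty)$, so either $f(0) = 0$ or $f$ is constant on $I \cap [0,\infty)$; in the latter case the constant is $0$ since $f(p) = 0$. Either way $f(0) = 0$. Now set $x_0 := \sup\{x \in I \cap (0,\infty) : f(x) = 0\}$. The multiplicative-midpoint relation from part (1) (taking $x, y$ with geometric mean in $(x_0, b)$ but $x < x_0$) propagates the vanishing upward exactly as in the proof of Theorem \ref{thm:vasudeva:M2}: if $x_0 < b$, pick $y \in I$ with $x_0 < y$ and $x_1 \in I \cap (x_0^2/y, x_0)$, so that $\sqrt{x_1 y} \in (x_0, y) \subset I$ and $f(\sqrt{x_1 y})^2 = f(x_1)f(y) = 0$, contradicting the definition of $x_0$. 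Hence $x_0 = b$, so $f \equiv 0$ on $I \cap [0,\infty)$; finally, for negative arguments, the relation $f(-\sqrt{xy})^2 = f(x)f(y) = 0$ (choosing suitable $x, y > 0$) forces $f$ to vanish on $I \cap (-\infty, 0)$ as well.

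The main obstacle will be the propagation argument in part (2) — ensuring that the midpoint-type identity really does push the vanishing set all the way up to $b$, and handling the edge cases where $I$ is a half-open interval or where $a$ is negative so that one must separately argue vanishing on the negative part of $I$. This is essentially the same mechanism as in the proof of Theorem \ref{thm:vasudeva:M2}, so I expect it to go through with only minor adjustments; the reduction from general $n$ to $n = 2$ and the derivation of $f(0) = 0$ are comparatively straightforward.
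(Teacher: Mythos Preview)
Your argument for part~(1) is correct and matches the paper's one-line proof. For part~(2) your approach differs from the paper's: the paper constructs an explicit increasing sequence $p_m \to b$ (via $p_1 := |p|$, $p_{m+1} := \sqrt[4]{p_m^3 b}$) along which $f$ vanishes, then kills $f(q)$ for arbitrary $q \in I$ using the rank-one matrix $\begin{pmatrix} p_m & q \\ q & q^2/p_m \end{pmatrix}$ with $p_m > |q|$. You instead reduce to $n=2$, derive $f(0)=0$, and run the sup argument from Theorem~\ref{thm:vasudeva:M2}.

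Your route works, but there are three small gaps to patch. First, $f[A \oplus \mathbf{0}]$ is not $f[A] \oplus f(0)\mathbf{1}$: the cross blocks are also $f(0)\mathbf{1}$, so the matrix is $\left(\begin{smallmatrix} f[A] & f(0)\mathbf{1} \\ f(0)\mathbf{1} & f(0)\mathbf{1} \end{smallmatrix}\right)$. This does not affect your conclusion that $\rk f[A] \le 1$ (it is a submatrix of a rank-one matrix), but the parenthetical about the block $f(0)\mathbf{1}$ ``contributing rank~1'' rests on the wrong structure. Second, when $p<0$ you assert ``the constant is $0$ since $f(p)=0$,'' but $p \notin I\cap[0,\infty)$; you should first use the matrix $\left(\begin{smallmatrix} |p| & p \\ p & |p| \end{smallmatrix}\right)$ to get $f(|p|)^2 = f(p)^2 = 0$ (the paper does exactly this to define $p_1 = |p|$). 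Third, and most importantly, the sup argument you borrow from Theorem~\ref{thm:vasudeva:M2} relied on the monotonicity \eqref{eqn:increasing} to conclude $f\equiv 0$ on $(0,x_0)$ \emph{before} choosing $x_1$; you do not have monotonicity here, so an arbitrary $x_1 \in (x_0^2/y, x_0)$ need not satisfy $f(x_1)=0$. The fix is easy: by definition of the supremum you can choose $x_1$ to be an actual zero of $f$ in $(x_0^2/y, x_0]$, and then $f(x_1)f(y)=0$ holds as needed. With these adjustments your proof is complete; the paper's explicit-sequence construction simply sidesteps these bookkeeping issues.
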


\begin{proof}\hfill
\begin{enumerate}
\item This follows immediately since $\det f[A] = 0$.

\item Suppose $f(p) = 0$ for some $p \in I \setminus \{ 0 \}$.
We first claim that there exists a sequence of positive numbers $p_m \in
I \cap (0,\infty)$ increasing to $b$ such that $f(p_m) = 0$. Define the
numbers $p_m$ and the matrices $A_m$ inductively as follows:
\begin{align*}
p_1 := &\ |p| \in (0,b), \qquad p_{m+1} := \sqrt[4]{p_m^3 b} \quad
\forall m \in \N,\\
A_m := &\ \begin{pmatrix} p_m & p_{m+1}\\ p_{m+1} & p_{m+1}^2 / p_m
\end{pmatrix} \oplus {\bf 0}_{(n-2) \times (n-2)}.
\end{align*}

\noindent It is easily verified that $p_m$ lies in $I$ and increases to
$b$, and $A_m \in \bp_n^1(I)$ for all $m$. Now applying $f$ entrywise to
the matrix $\begin{pmatrix} |p| & p \\ p & |p| \end{pmatrix} \oplus {\bf
0}_{(n-2) \times (n-2)} \in \bp_n^1(I)$ shows by (1) that $f(p_1) = 0$.
Next, using that $f[A_m] \in \br_n^1$ implies inductively that
$f(p_{m+1}) = 0$ for all $m$, which shows the claim.

Now let $q \in I$. Then $q \in I \cap (-p_m,p_m)$ for some $m$, in which
case applying $f$ entrywise to the rank $1$ matrix
$\begin{pmatrix} p_m & q \\ q & q^2/p_m \end{pmatrix} \oplus {\bf
0}_{(n-2) \times (n-2)} \in \bp_n^1(I)$ shows by (1) that $f(q) =
0$.\qedhere
\end{enumerate}
\end{proof}

\begin{remark}
Note that the functions that send $\bp_2^1(I)$ to $\bp_2^1$ instead of
the larger set $\br_2^1$ can be characterized by the same conditions as
in Proposition \ref{prop:charac_rank1}(1), together with the fact that $f
\geq 0$ on $I \cap [0,\infty)$. The proof is similar to the one above.
\end{remark}

We now present an elegant characterization of continuous functions
mapping $\bp_n^1(I)$ into $\br_n^1$. To state the result, we first define
the even and odd extensions of the power functions $f_\alpha(x) :=
x^\alpha$, to the entire real line, as follows: 
\begin{equation}\label{eqn:phi_psi_alpha}
\phi_\alpha(x) := |x|^\alpha, \qquad
\psi_\alpha(x) := \sgn(x) |x|^\alpha, \qquad \forall \alpha > 0, \ x \in
\R.
\end{equation}

\begin{lemma}\label{lem:continuous_rank1}
Let $0 < b \leq \infty$, and $I = (a,b)$ for $|a| \leq b$, or $I = [a,b)$
for $|a| < b$.
Let $n \geq 2$ and $f : I \to \R$ be continuous. Then the following are
equivalent: 
\begin{enumerate}
\item $f[A] \in \br_n^1$ for all $A \in \br_n^1(I)$; 
\item $f[A] \in \br_n^1$ for all $A \in \bp_n^1(I)$; 
\item There exists $c \in \R$ such that either $f \equiv c$ on $I$ or
$f(x) \equiv c \phi_\alpha(x)$ or $c \psi_\alpha(x)$ for some $\alpha >
0$.
\end{enumerate}
Moreover, $f[-] : \bp_n^1(I) \to \bp_n^1$ if and only if (3) holds with
$c \geq 0$.
\end{lemma}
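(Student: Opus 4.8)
The plan is to establish the cycle $(3)\Rightarrow(1)\Rightarrow(2)\Rightarrow(3)$, handling the moreover-statement alongside. The first two implications are quick: any $A\in\br_n^1(I)$ has the form $A=\varepsilon\,uu^T$ with $u\in\R^n$ and $\varepsilon\in\{0,\pm1\}$, so if $f\equiv c$ then $f[A]=c\,\mathbf{1}_{n\times n}$, if $f=c\phi_\alpha$ then $f[A]=c\,ww^T$ with $w_i=|u_i|^\alpha$, and if $f=c\psi_\alpha$ then $f[A]=c\varepsilon\,w'w'^T$ with $w'_i=\psi_\alpha(u_i)$; in every case $\rk f[A]\le1$, and when $A$ is positive semidefinite (so $\varepsilon=1$) and $c\ge0$ the same factorizations exhibit $f[A]$ as positive semidefinite. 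This proves $(3)\Rightarrow(1)$ and one direction of the moreover-statement, and $(1)\Rightarrow(2)$ holds since $\bp_n^1(I)\subseteq\br_n^1(I)$.

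For $(2)\Rightarrow(3)$ I would first reduce to $n=2$. Given $B\in\bp_2^1(I)$, write $B=vv^T$ and build $A=uu^T\in\bp_n^1(I)$ having $B$ as its leading $2\times2$ block: pad $v$ with zeros when $0\in I$, and otherwise (so $I\subseteq(0,\infty)$ and $v$ may be taken entrywise positive) pad by repeating the first coordinate of $v$; in both cases every entry of $A$ already occurs in $B$, so indeed $A\in\bp_n^1(I)$. Since a submatrix has no larger rank, $\rk f[B]\le\rk f[A]\le1$, whence $f[-]:\bp_2^1(I)\to\br_2^1$, and Proposition~\ref{prop:charac_rank1}(1) yields
\[ f(\pm\sqrt{xy})^2=f(x)f(y)\qquad\text{for all }x,y\in I\cap[0,\infty)\text{ with }\pm\sqrt{xy}\in I. \]
Next I would dispose of the degenerate case: if $f$ vanishes somewhere on $I\setminus\{0\}$, then $f\equiv0$ on $I$. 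When $0\in I$ this is Proposition~\ref{prop:charac_rank1}(2); when $0\notin I$ it follows from the functional equation, since then the zero set of $f$, already closed, is also open (if $f(p)=0$ then $f(\sqrt{py})=0$ for all $y$ near $p$, and $y\mapsto\sqrt{py}$ is a homeomorphism onto a neighborhood of $p$), hence is all of the connected set $I$. So either (3) holds with $c=0$, or $f$ is nowhere zero on $I\setminus\{0\}$; assume the latter.

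On the interval $I\cap(0,\infty)$ the continuous, nowhere-zero function $f$ has a constant sign, so taking absolute values and logarithms in the functional equation and substituting $x=e^s,\,y=e^t$ shows $s\mapsto\ln|f(e^s)|$ is midpoint-affine, hence affine by continuity (cf.~\cite{roberts-varberg}); therefore $f(x)=c\,x^\alpha$ on $I\cap(0,\infty)$ for some $c\ne0$ and some real $\alpha$. Taking $x=y$ in the $-\sqrt{xy}$ branch gives $f(z)^2=f(|z|)^2$ for $z\in I\cap(-\infty,0)$ — here the hypothesis $|a|\le b$ is exactly what forces $|z|\in I$ — so $f(z)=\pm c\,|z|^\alpha$ there, with a sign constant by continuity. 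Finally, if $0$ is a limit point of $I$, continuity of $f$ at $0$ forces $\alpha\ge0$ and fixes $f(0)$, and matching the two one-sided formulas identifies $f$ on all of $I$ with $c\phi_\alpha$ or $c\psi_\alpha$ (when $\alpha>0$) or with the constant $c$ (when $\alpha=0$); when $I$ is bounded away from $0$, $\phi_\alpha\equiv\psi_\alpha$ on $I$ and the same forms arise. This proves (3). The remaining half of the moreover-statement follows by evaluating on constant matrices, $f[x\,\mathbf{1}_{n\times n}]=f(x)\,\mathbf{1}_{n\times n}$: if $f[-]:\bp_n^1(I)\to\bp_n^1$ then $f\ge0$ on $I\cap[0,\infty)$, i.e.\ $c\ge0$ in each form. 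The hard part will be this last stage — passing from the multiplicative functional equation plus continuity to the exact power-function form, and then pinning down the parity ($\phi_\alpha$ versus $\psi_\alpha$) and the value at the origin; the $2\times2$ reduction is routine apart from the case split on whether $0\in I$.
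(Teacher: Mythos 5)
Your proposal follows the same overall scheme as the paper's own proof of this lemma: reduce to the functional equation of Proposition~\ref{prop:charac_rank1}(1), pass to $\ln|f|$ on the positive part of $I$ and use midpoint affinity plus continuity to obtain $f(x)=cx^\alpha$ there, determine $f$ on the negative part of $I$ via $f(z)^2=f(|z|)^2$, and pin down the sign and the value at the origin by continuity. You are slightly more careful than the paper in two places: you explicitly dispose of the vanishing case (supplying, when $0\notin I$ and Proposition~\ref{prop:charac_rank1}(2) is unavailable, the open-and-closed argument on the zero set), and you give a padding construction that works when $0\notin I$ (the paper pads by zeros, which presupposes $0\in I$). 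However, your concluding case analysis is not quite right when $0\notin I$. There, continuity and the functional equation only force $f(x)=cx^\alpha$ for some \emph{real} $\alpha$, and when $\alpha<0$ this $f$ does satisfy (1) and (2) --- applying a negative Hadamard power entrywise to a rank-one matrix with nonzero entries again produces a rank-one matrix --- but it is not of the form in (3), which requires $\alpha>0$. The phrase ``the same forms arise'' glosses over this. To be fair, the paper's own proof explicitly invokes ``$0\in I$ by assumption'' at the corresponding step (and its padding-by-zeros reduction also requires $0\in I$), even though the printed hypotheses of Lemma~\ref{lem:continuous_rank1} do permit $0\notin I$; so the gap lies as much in the lemma's statement as in your argument. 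Restricting to $0\in I$, as the paper implicitly does and as every application of the lemma requires, your proof is correct.
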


\begin{proof}
Clearly $(3) \Rightarrow (1) \Rightarrow (2)$ by (a variant
of) Proposition \ref{prop:charac_rank1}(1). We now show that $(2)
\Rightarrow (3)$. If (2) holds, then the function $f$ satisfies 
\begin{equation*}
f(\sqrt{xy}) = \pm\sqrt{|f(x)|} \sqrt{|f(y)|} \qquad \forall x,y \in I
\cap [0,\infty)
\end{equation*}

\noindent since $\bp_2^1(I)$ embeds into $\bp_n^1(I)$ via padding by
zeros. Since $f$ is continuous on $I \setminus \{ 0 \}$, it follows that
for every $0 \leq \lambda \leq 1$,
\begin{equation}\label{Econt}
f(x^\lambda y^{1-\lambda}) = \pm |f(x)|^\lambda |f(y)|^{1-\lambda} \qquad
\forall x,y \in I \cap (0,\infty). 
\end{equation}

\noindent Equivalently, the function $g(x) := \ln |f(e^x)|$ satisfies
\begin{equation}
g(\lambda x + (1-\lambda)y) = \lambda g(x) + (1-\lambda) g(y) \qquad
\forall x,y \in \ln (I \cap (0,\infty)).
\end{equation}

\noindent Thus, $g(x) = \alpha x + \beta$ for some constants
$\alpha,\beta \in \R$. As a consequence, $f(x) = c x^\alpha$ for all $x
\in I \cap (0,\infty)$, where $|c| = e^\beta$.

It remains to compute $f$ on $I \cap (-\infty,0)$. Suppose $x \in I \cap
(-\infty,0)$; then applying $f$ entrywise to the matrices
\begin{equation}\label{Erank1ex}
\begin{pmatrix} |x| & x\\ x & |x| \end{pmatrix} \oplus {\bf 0}_{(n-2)
\times (n-2)} \in \bp_n^1(I)
\end{equation}

\noindent shows that $f(x) = \pm f(|x|)$.
There are now two cases: first if
$f(x) = cx^\alpha$ on $I \cap (0,\infty)$, with $c=0$ or $\alpha = 0$,
then since $f$ is continuous on $I$, it is easy to check that $f$ is
constant on $I$. The second case is if $c \neq 0$ and
$\alpha \neq 0$. Then $\alpha >0$ as $f$ is continuous on $I$ and $0 \in
I$ by assumption. Moreover, on the interval $I \cap (-\infty,0)$, the
function $f(x) / |x|^\alpha$ is continuous and has image in $\{ -c, c
\}$, by the above analysis. Hence $f(x) / |x|^\alpha$ is constant on $I
\cap (-\infty,0)$.
Thus $f(x) \equiv c |x|^\alpha$ or $-c |x|^\alpha$ for all $0 > x \in I$,
which shows (3). Given these equivalences, it is clear that $f[-]:
\bp_n^1(I) \to \bp_n^1$ if and only if $c \geq 0$. 
\end{proof}

Lemma \ref{lem:continuous_rank1} addresses the first step (S1) outlined
at the beginning of this subsection. The following proposition will play
a central role later and addresses the second step (S2).

\begin{proposition}\label{prop:rank_onesided}
Let $0 < R \leq \infty$, $I = [0, R)$ or $(-R,R)$, and $f, g: I
\rightarrow \mathbb{R}$ such that $g(x)$ is nonzero whenever $x$ is
nonzero. Assume $c:= \lim_{x \rightarrow 0, x \in I} f(x) / g(x)$ exists
and define 
\begin{equation}\label{Ehat0}
h_c(x) := \begin{cases}
\displaystyle \frac{f(x)}{g(x)}, & x \not= 0\\
c, & x=0.
\end{cases}
\end{equation}
Fix integers $n \geq 2$ and $1 \leq k \leq n$.
\begin{enumerate}
\item Suppose $g[-]: \bp_n^1(I) \to \br_n^1$. If $f[-]: \bp_n^1(I) \to
\br_n^k$, then $h_c[-] : \bp_n^1(I) \to \br_n^k$.  The converse holds if
$f(0) = cg(0)$.
\item Suppose $g[-]: \bp_n^1(I) \to \bp_n^1$, and $f[-]: \bp_n^1(I) \to
\bp_n$. Then $c \geq 0$ and $h_c[A] \in \bp_n$ for all $A \in
\bp_n^1(I)$.
\end{enumerate} 
\end{proposition}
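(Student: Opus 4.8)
The plan is to exploit the fact that matrices in $\bp_n^1(I)$ have a very rigid structure: up to permutation and scaling, $A = v v^T$ for some vector $v$ with entries in $\sqrt{I \cap [0,\infty)} \cup (-\sqrt{I \cap [0,\infty)})$, together with possibly a zero row/column. So $f[A]$ and $g[A]$ both have entries that are functions of the products $v_i v_j$, and the relation $f = h_c \cdot g$ (away from $0$) translates into an \emph{entrywise} relation $f[A] = h_c[A] \circ g[A]$ on the support of $A$. Since $g[-]: \bp_n^1(I) \to \br_n^1$, we know $g[A]$ has rank at most $1$, hence (by Lemma \ref{lem:continuous_rank1}, or directly) $g[A] = w w^T$ for some vector $w$ off the zero coordinates. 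The key algebraic point is then that multiplying a rank-$\leq k$ matrix by a rank-$1$ matrix entrywise, where the rank-$1$ matrix has \emph{nonzero} entries, preserves the rank bound: if $B$ has rank $\leq k$ and $w$ has all entries nonzero, then $(\mathrm{diag}(w)) B (\mathrm{diag}(w)) = B \circ (w w^T)$ has the same rank as $B$. This is the engine of step (S2).

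First I would reduce to the case of a single rank-one matrix $A = A_0 \oplus {\bf 0}$ with $A_0 = v v^T$, $v$ having all coordinates nonzero (coordinates where $v_i = 0$ contribute a row/column on which $f$ and $g$ agree in a trivial way, namely $f(0)$ vs.\ $g(0)$, and these can be peeled off using the hypothesis $f(0) = cg(0)$ in the converse direction, or simply carried along in the forward direction since adding a single constant row/column changes rank by at most $1$ and we must be a bit careful here). On the support, write $A_0 = v v^T$ and set $w$ to be the vector with $w_i w_j = g(v_i v_j)$ — this is consistent precisely because $g[A_0]$ has rank $1$ and nonvanishing diagonal (as $g(x) \neq 0$ for $x \neq 0$, and $v_i^2 \in I \setminus \{0\}$). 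Then $f[A_0] = h_c[A_0] \circ (w w^T) = \mathrm{diag}(w) \, h_c[A_0] \, \mathrm{diag}(w)$, so $\rk f[A_0] = \rk h_c[A_0]$ since $\mathrm{diag}(w)$ is invertible. Thus $\rk f[A_0] \leq k \iff \rk h_c[A_0] \leq k$, which gives both directions of (1) once the zero-coordinate bookkeeping is handled.

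For the zero coordinates: in the forward direction, if $A$ has some zero coordinates, then $f[A]$ restricted to the nonzero block is $f[A_0]$, which has rank $\leq \rk f[A] \leq k$; so $h_c[A_0]$ has rank $\leq k$ by the above, and $h_c[A]$ is $h_c[A_0]$ bordered by rows/columns all equal to $c = h_c(0)$. If $c = 0$ these add nothing; if $c \neq 0$ one checks the bordered matrix still has rank $\leq k$ — here I would use that the bordering vector is $c \cdot {\bf 1}$ and compare with the structure of $f[A]$, which is bordered by $f(0) \cdot {\bf 1}$. This is the step I expect to be the main obstacle: when $f(0) \neq cg(0)$ the forward implication can genuinely increase the rank, which is why the converse needs the hypothesis $f(0) = cg(0)$, and the forward direction needs a mild argument that bordering by a single constant vector costs at most one unit of rank — but since we only claim $\rk h_c[A] \leq k$ (not $k-1$), and the worst case $A_0 = c' {\bf 1}$ already has all coordinates equal, a short case analysis suffices. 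Part (2) follows the same template: $h_c[A_0] = \mathrm{diag}(w)^{-1} f[A_0] \mathrm{diag}(w)^{-1}$ is a congruence of the positive semidefinite matrix $f[A_0]$ (using $f[-]:\bp_n^1(I)\to\bp_n$) by the \emph{real} invertible matrix $\mathrm{diag}(w)^{-1}$, hence $h_c[A_0] \in \bp_n$; nonnegativity of $c$ comes from taking $A = x({\bf 1}_{2\times 2}) \oplus {\bf 0}$ and letting $x \to 0^+$, forcing $h_c(0) = c = \lim f(x)/g(x) \geq 0$ since $g[-]:\bp_n^1(I)\to\bp_n^1$ forces $g \geq 0$ on $I \cap [0,\infty)$ (by the remark following Proposition \ref{prop:charac_rank1}) and $f \geq 0$ there too; the bordering by $c\cdot{\bf 1}$, $c \geq 0$, preserves positive semidefiniteness since $c\,{\bf 1}_{m\times m}$ is itself PSD and one checks the full bordered matrix is a sum/limit of PSD pieces, or invokes continuity in $c$.
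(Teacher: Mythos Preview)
Your core mechanism --- writing $f[A_0] = h_c[A_0]\circ g[A_0]$ on the nonzero block, and recognizing that Schur multiplication by a rank-one matrix with nonzero entries is a congruence $\mathrm{diag}(w)\,(\cdot)\,\mathrm{diag}(w)$ --- is exactly the paper's argument. The rank equality $\rk f[A_0]=\rk h_c[A_0]$ on the nonzero block, and the positivity transfer in part~(2) via congruence, are both correct. (One small imprecision: in part~(1) you only know $g[A_0]\in\br_n^1$, so $g[A_0]=\pm ww^T$; this does not affect the rank argument but your phrasing assumes the positive sign.)

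The genuine gap is in your treatment of the zero coordinates. Your bordering argument does not close: knowing $\rk h_c[A_0]\le k$ and then bordering by $c\,{\bf 1}$ can give rank $k+1$, and your appeal to ``we only claim $\rk h_c[A]\le k$ (not $k-1$)'' does not help when $\rk h_c[A_0]$ already equals $k$. In particular, consider the situation $f(0)=0$ but $c\neq 0$: then $\rk f[A]=\rk f[A_0]$ (the zero block contributes nothing to $f[A]$), so the hypothesis $\rk f[A]\le k$ gives you no information beyond $\rk f[A_0]\le k$, while $h_c[A]$ is $h_c[A_0]$ bordered by a \emph{nonzero} constant --- and you have offered no mechanism to rule out rank $k+1$ here. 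The same problem afflicts your part~(2) bordering: writing the bordered matrix as ``a sum/limit of PSD pieces'' requires $h_c[A_0]-c\,{\bf 1}\succeq 0$, which you have not established.

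The paper bypasses this entirely with a limiting argument that you do not mention: replace each zero coordinate $u_i=0$ by $\epsilon>0$ to obtain $u^\epsilon$ with all entries nonzero, so that every $(k+1)\times(k+1)$ minor of $h_c[u^\epsilon(u^\epsilon)^T]$ vanishes by the nonzero-block case; then let $\epsilon\to 0^+$ and use the \emph{continuity of $h_c$ at $0$} (which holds by the very definition of $c$) together with continuity of determinants to conclude that the corresponding minors of $h_c[uu^T]$ vanish. The same perturbation plus the closedness of $\bp_n$ handles part~(2). This is the missing idea.
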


\begin{proof}
We prove the result for $I = (-R,R)$; the proof is similar for $I =
[0,R)$. Note that (1) is trivial if $k=n$; thus, we assume that $k<n$ and
prove (1). First note that if $A,B \in \br_n$ and $B$ is a rank $1$
matrix with nonzero entries, then $B^{\circ (-1)} :=
(b_{ij}^{-1})_{i,j=1}^n$ also has rank $1$. Hence, since $\rk A \circ B
\leq (\rk A)(\rk B)$ (see \cite[Theorem 5.1.7]{Horn_and_Johnson_Topics})
we obtain
\begin{align}\label{eqn:rank_equal}
\begin{aligned}
\rk A = &\ \rk ((A \circ B) \circ B^{\circ (-1)}) \leq \rk (A \circ B)
\rk B^{\circ (-1)}\\
= &\ \rk A \circ B \leq \rk A \cdot \rk B = \rk A.
\end{aligned}
\end{align}

\noindent We conclude that $\rk A \circ B = \rk A$.

Now suppose $f,g$ satisfy the assumptions. Then by Proposition
\ref{prop:charac_rank1}(2) applied to $g$, $g(x) \not= 0$ for all $0 \neq
x \in I$ since $g \not\equiv 0$. Thus given $c \in \R$, and nonzero $u_i$
such that $u_i u_j \in I\ \forall i,j$, we have
\begin{equation}\label{eqn:h_c_schur}
h_c[u u^T] = \left(\frac{f(u_i u_j)}{g(u_i u_j)}\right)_{i,j=1}^n = f[u
u^T] \circ (1/g)[u u^T].
\end{equation}

\noindent It follows from \eqref{eqn:rank_equal} that
\begin{equation}\label{Erank}
\rk h_c[u u^T] = \rk f[u u^T] \leq k.
\end{equation}

Next, suppose $u \in \R^n$ such that $u_i u_j \in I\ \forall i,j$, and
let $0 < \epsilon < \sqrt{R}$. Define $u^\epsilon \in \R^n$ to be the
vector with coordinates $u_i + \epsilon \delta_{u_i, 0}$, where
$\delta_{a,b}$ denotes the Kronecker delta.
Hence by \eqref{Erank}, $\rk h_{c}[u^\epsilon u^{\epsilon T}] = \rk
f[u^\epsilon u^{\epsilon T}] \leq k$. Now, by Lemma
\ref{lem:rank_minors}, every $(k+1) \times (k+1)$ minor of
$h_{c}[u^\epsilon u^{\epsilon T}]$ is equal to zero for all $\epsilon >
0$. By the continuity of $h_{c}$ at $0$ and continuity of the determinant
function, it follows that the same is true for $h_{c}[u u^T]$. Thus, $\rk
h_{c}[u u^T] \leq k$ for all $u \in \R^n$ such that $u_i u_j \in I\
\forall i,j$, proving the first result.
Conversely, if $f(0) = c g(0)$ and $h_c[-]: \bp_n^1(I) \to \br_n^k$, then
using a similar argument as above, we obtain $f[-]: \bp_n^1(I) \to
\bp_n$. 

We now prove the second part. Note first that $f(x), g(x) \geq 0$ for all
$x \in I \cap [0, \infty)$ since $f[A], g[A] \in \bp_n$ for all $A \in
\bp_n^1(I)$. Thus, for every $\epsilon \in I \cap [0, \infty)$,
$(f/g)(\epsilon) \geq 0$ and so $c = \lim_{\epsilon \to 0^+}
(f/g)(\epsilon) \geq 0$. Now let $A = u u^T \in \bp_n^1(I)$. If $u_i \neq
0$ for all $i$, then $h_c[A] \in \bp_n$ by \eqref{eqn:h_c_schur} and the
Schur product theorem. The general case follows by a limiting argument,
replacing $u$ by $u^\epsilon$ as above. 
\end{proof}

Our last proposition in this section addresses the third step
(S3) outlined at the beginning of the present subsection.

\begin{proposition}\label{prop:general_rank_reduction}
Let $0 < R \leq \infty$, $I = [0, R)$ or $(-R,R)$, and $h : I \rightarrow
\mathbb{R}$ be such that $h(0) \ne 0$. Fix integers $n \geq 2$ and $1
\leq k,l \leq n$. Consider the following statements: 
\begin{enumerate}
\item $h[A] \in \br_n^k$ for all $A \in \bp_n^l(I)$;  
\item $(h-h(0))[A] \in \br_{n}^{k-1}$ for all $A \in
\bp_{n}^l(I)$. 
\item $(h-h(0))[A] \in \br_{n-1}^{k-1}$ for all $A \in
\bp_{n-1}^l(I)$. 
\end{enumerate}
Then $(2) \Rightarrow (1) \Rightarrow (3)$. If $k < n-1$ then $(1)
\Rightarrow (2)$. The implications $(1) \Rightarrow (2)$ (when $k < n-1$)
and $(1) \Rightarrow (3)$ also hold upon replacing the sets $\br_n^k,
\br_n^{k-1}, \br_{n-1}^{k-1}$ by $\bp_n^k, \bp_n^{k-1}, \bp_{n-1}^{k-1}$
respectively.
\end{proposition}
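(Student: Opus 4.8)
The plan is to prove the three implications $(2)\Rightarrow(1)$, $(1)\Rightarrow(3)$, and $(1)\Rightarrow(2)$ (the last under $k<n-1$) via the standard trick of expanding a $(k+1)\times(k+1)$ determinant along the ``constant'' term $h(0)$. The point is that for any matrix $B$ and the all-ones matrix $\mathbf{1}$, one has the multilinearity identity: if $C$ is obtained from $B$ by a single entrywise shift, then $\det$ of a $(k+1)$-subblock of $h[A]$ decomposes. More precisely, write $h[A] = (h-h(0))[A] + h(0)\,\mathbf{1}_{n\times n}$. For a $(k+1)\times(k+1)$ submatrix $M$ of $h[A]$, expanding $\det M$ by multilinearity in its rows (or columns) against the rank-one perturbation $h(0)\mathbf{1}$ gives $\det M = \det M' + h(0)\sum(\text{$k\times k$ minors of }M')$, where $M'$ is the corresponding submatrix of $(h-h(0))[A]$; all higher-order terms vanish because $\mathbf{1}$ has rank one, so at most one ``all-$h(0)$'' row can appear. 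This is the algebraic engine behind all three implications.

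For $(2)\Rightarrow(1)$: if $(h-h(0))[A]\in\br_n^{k-1}$ then by Lemma~\ref{lem:rank_minors} all $k\times k$ minors of $M'$ vanish and (a fortiori) $\det M' = 0$, so the displayed expansion gives $\det M = 0$ for every $(k+1)\times(k+1)$ submatrix $M$; hence $h[A]\in\br_n^k$ by Lemma~\ref{lem:rank_minors} again. For $(1)\Rightarrow(3)$: given $A\in\bp_{n-1}^l(I)$, form $\widetilde A := A \oplus (0) \in \bp_n^l(I)$ by padding with a zero row and column. Apply $h$ entrywise: the last row and column of $h[\widetilde A]$ equal $h(0)$ in every entry. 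By hypothesis $\rk h[\widetilde A]\le k$. Now subtract the last row from all others and the last column from all others: this row/column-reduces $h[\widetilde A]$ to a matrix whose top-left $(n-1)\times(n-1)$ block is exactly $(h-h(0))[A]$ and whose last row/column is $(0,\dots,0,h(0))$ with $h(0)\ne 0$; since these operations preserve rank, $\rk (h-h(0))[A] = \rk h[\widetilde A] - 1 \le k-1$, giving $(3)$.

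The implication $(1)\Rightarrow(2)$ under $k<n-1$ is where the rank budget matters, and I expect this to be the main obstacle requiring care. The idea: fix $A\in\bp_n^l(I)$, and take a $(k+1)\times(k+1)$ submatrix $M'$ of $(h-h(0))[A]$; we want $\det M' = 0$. Enlarge to the $(k+2)\times(k+2)$ picture using one extra index where we can ``insert a zero''—but $A$ itself may have no zero entry, so instead one argues on $\widetilde A = A\oplus(0)\in\bp_n^l(I)$ (legal precisely because $k+1 \le n-1 < n$, so there is room for the extra zero index without exceeding dimension $n$; here $l$ is unconstrained since $\rk\widetilde A = \rk A \le l$). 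Consider a $(k+2)\times(k+2)$ submatrix $N$ of $h[\widetilde A]$ that includes the padded index: its last row and column are all $h(0)$. By $(1)$, $\rk h[\widetilde A]\le k$, so $\det N = 0$ and moreover all $(k+1)\times(k+1)$ minors of $N$ vanish. Row/column-reducing $N$ against its all-$h(0)$ last row and column as in the previous paragraph turns it into $\begin{pmatrix} M' & 0 \\ 0 & h(0)\end{pmatrix}$ where $M'$ is an arbitrary $(k+1)\times(k+1)$ submatrix of $(h-h(0))[A]$; since $\rk$ is preserved and $h(0)\ne 0$, $\det M' \cdot h(0) = \det N = 0$, hence $\det M' = 0$. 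As $M'$ was arbitrary, $(h-h(0))[A]\in\br_n^{k-1}$ by Lemma~\ref{lem:rank_minors}. The hypothesis $k<n-1$ is exactly what guarantees $k+2 \le n$, so that the needed $(k+2)\times(k+2)$ submatrix of the $n\times n$ matrix $h[\widetilde A]$ actually exists.

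Finally, for the positivity versions: the row/column reduction $B \mapsto \begin{pmatrix} M' & 0\\ 0 & h(0)\end{pmatrix}$ used above is a congruence when applied symmetrically (subtract the last row from row $i$ and the last column from column $i$, for each $i$), so it preserves positive semidefiniteness, not merely rank. Hence in the $(1)\Rightarrow(2)$ and $(1)\Rightarrow(3)$ arguments, if additionally $h[A]\in\bp_n^k$ then the reduced matrix, and therefore its principal block $(h-h(0))[A]$ together with the scalar $h(0)$, is positive semidefinite; since $h(0) = h[A]_{11}\ge 0$ and is nonzero hence positive, we read off $(h-h(0))[A]\in\bp_\bullet^{k-1}$ directly. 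The main thing to get right is bookkeeping: which indices are padded, that the submatrix $M'$ ranges over \emph{all} $(k+1)$-subsets, and that the dimension inequalities ($k+1\le n-1$ for padding, $k+2\le n$ for the determinant trick in $(1)\Rightarrow(2)$) are respected.
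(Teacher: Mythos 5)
Your proofs of $(2) \Rightarrow (1)$ and $(1) \Rightarrow (3)$ are essentially correct. For $(2) \Rightarrow (1)$ the multilinearity expansion works, but it is heavier than needed: since $h[A] = (h-h(0))[A] + h(0)\,\mathbf{1}_{n\times n}$ and $\mathbf{1}_{n\times n}$ has rank one, the claim is immediate from subadditivity of rank. Your $(1) \Rightarrow (3)$ argument (pad by a zero index, row/column-reduce against the all-$h(0)$ border, use rank invariance) is clean and actually a little more transparent than the paper's version, which argues minor-by-minor; both are fine.

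The genuine gap is in $(1) \Rightarrow (2)$, and it is twofold. First, for $A \in \bp_n^l(I)$ your matrix $\widetilde A = A \oplus (0)$ is $(n+1) \times (n+1)$, so hypothesis (1) does not apply to it; the inequality $k+1 \leq n-1$ gives you room to pad a \emph{principal submatrix} of $A$ up to order $n$, not to enlarge $A$ itself. Second, even granting a valid $\widetilde A$, you show that $(k+1) \times (k+1)$ minors of $(h-h(0))[A]$ vanish, which only yields $\rk \leq k$, one short of the claimed $\rk \leq k-1$; and non-principal $(k+1) \times (k+1)$ submatrices of $(h-h(0))[A]$ cannot be reached this way anyhow, since padding an off-diagonal block $A_{I,J}$ with $I \neq J$ does not produce a symmetric positive semidefinite matrix. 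The paper resolves both issues simultaneously by invoking Lemma \ref{lem:rank_minors}(3): it suffices to show that all $k \times k$ and $(k+1) \times (k+1)$ \emph{principal} minors of $(h-h(0))[A]$ vanish. Given an index set $J$ with $|J| = k+i$, $i \in \{0,1\}$, one forms $A' := A_J \oplus \mathbf{0}_{(n-k-i)\times(n-k-i)} \in \bp_n^l(I)$ (here $k < n-1$ guarantees $n-k-i \geq 1$), applies (1) to $A'$, and reduces the leading $(k+i+1) \times (k+i+1)$ minor of $h[A']$ to conclude $h(0)\,\det (h-h(0))[A_J] = 0$. A cosmetic point in your positivity remark: $h(0) = h[A]_{11}$ holds only if $A_{11} = 0$; the correct way to see $h(0) > 0$ is $h[\mathbf{0}_{n\times n}] \in \bp_n^k$.
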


\begin{proof}
That $(2) \Rightarrow (1)$ is clear.
We now show that $(1) \Rightarrow (3)$.
Note that the statement is trivial if $k=n$ so we assume $1 \leq k < n$.
To do so, we fix $A \in \bp_{n-1}^l(I)$ and without loss of generality,
consider any $k \times k$ minor $M$ of $A$. Then $\begin{pmatrix} M &
{\bf 0}_{k \times 1}\\ {\bf 0}_{1 \times k} & 0\end{pmatrix}$ is a $(k+1)
\times (k+1)$-minor of the matrix $B := \begin{pmatrix} A & {\bf
0}_{(n-1) \times 1}\\ {\bf 0}_{1 \times (n-1)} & 0\end{pmatrix} \in
\bp_n^l(I)$. Consequently by applying (1) to $B$,
\[
\det \begin{pmatrix} h[M] & h(0) {\bf 1}_{k \times 1}\\ h(0) {\bf 0}_{1
\times k} & h(0)\end{pmatrix} = 0.
\]

\noindent Equivalently, subtracting the last column from every other
column, we obtain 
\begin{equation*}
\det \begin{pmatrix}
(h-h(0))[M] & h(0) {\bf 1}_{k \times 1} \\ {\bf 0}_{1 \times k} & h(0)
\end{pmatrix} = h(0) \cdot \det (h-h(0))[M] = 0. 
\end{equation*}

\noindent We conclude that $\det (h-h(0))[M] = 0$ for all $k \times k$
minors $M$ of $A$. In particular, $\rk (h-h(0))[A] < k$ by Lemma
\ref{lem:rank_minors}(2). This concludes the proof of $(1) \Rightarrow
(3)$.

We now show that $(1) \Rightarrow (2)$ when $k < n-1$. If (1) holds,
then by Lemma \ref{lem:rank_minors}(3), (2) holds if and only if every
principal $(k+i) \times (k+i)$ minor of $(h-h(0))[A]$ vanishes for each
$A \in \bp_n^l(I)$ and $i=0,1$. To show that this is indeed the case, fix
$A \in \bp_n^l(I)$
and a subset $J \subset \{ 1, \dots, n \}$ of $k+i$
indices. Without loss of generality, consider the principal submatrix
$A_J$ formed by the rows and columns of $A$ corresponding to $J$.
Let $A' := A_J \oplus {\bf 0}_{(n-k-i) \times (n-k-i)} \in \bp_n^l(I)$.
By (1) and Lemma \ref{lem:rank_minors}(3), the leading principal $(k+i+1)
\times (k+i+1)$ minor of $h[A']$ vanishes. In other words, 
\begin{equation*}
\det h[A_J \oplus {\bf 0}_{1 \times 1}] = \det \begin{pmatrix}
h[A_J] & h(0) {\bf 1}_{(k+i) \times 1} \\
h(0) {\bf 1}_{1 \times (k+i)} & h(0)
\end{pmatrix} = 0. 
\end{equation*}

\noindent As in the previous case, it follows that every principal $(k+i)
\times (k+i)$ minor of $(h-h(0))[A]$ vanishes for every $A \in
\bp_n^l(I)$ and $i=0,1$. Therefore, by Lemma \ref{lem:rank_minors}(3),
$(h-h(0))[A] \in \br_n^{k-1}$ for every $A \in \bp_n^l(I)$, which proves
(2). 

Finally, we show that $(1) \Rightarrow (2)$ (when $k<n-1$) and $(1)
\Rightarrow (3)$ when the $\br$-sets are replaced by the $\bp$-sets. We
first claim that for all $n_1, n_2 \in \N$, $c \in \R$, and $B \in
\bp_{n_1}(\R)$,
\begin{equation}\label{Eboyd}
B_c := \begin{pmatrix} B & c {\bf 1}_{n_1 \times n_2}\\ c {\bf 1}_{n_2
\times n_1} & c {\bf 1}_{n_2 \times n_2}\end{pmatrix} \in \bp_{n_1 +
n_2}(\R) \quad \Longleftrightarrow \quad c \geq 0,\ B - c {\bf 1}_{n_1
\times n_1} \in \bp_{n_1}(\R).
\end{equation}

\noindent The claim \eqref{Eboyd} is obvious for $c=0$; thus, we now
assume that $c \neq 0$. Let $C := c {\bf 1}_{n_2 \times n_2}$ and $D := c
{\bf 1}_{n_2 \times n_1}$.
Then $C^\dagger = \frac{1}{c \cdot n_2^2} \mathbf{1}_{n_2 \times n_2}$,
and we have the decomposition
\[
B_c = \begin{pmatrix} B & D^T\\D & C \end{pmatrix} =
\begin{pmatrix} \Id_{n_1} & D^T C^\dagger\\0 & \Id_{n_2} \end{pmatrix}
\begin{pmatrix} B - D C^\dagger D^T & 0\\0 & C \end{pmatrix}
\begin{pmatrix} \Id_{n_1} & 0\\ C^\dagger D & \Id_{n_2} \end{pmatrix},
\]

\noindent where $\Id_n$ denotes the $n \times n$ identity matrix. By
Sylvester's law of inertia, $B_c$ is positive semidefinite if and only if 
$B - D C^\dagger D^T = B - c \mathbf{1}_{n_1 \times n_1}$ and $C$ are
positive semidefinite. This proves the claim. 

Now suppose for the remainder of the proof that $h[-]:\bp_n^l(I) \to
\bp_n^k$. Observe that $h(0) > 0$ because $h[{\bf 0}_{n \times n}] \in
\bp_n^k$.
We now assume $1 \leq k < n-1$ and show that the modified statement of
$(3)$ holds, i.e., $(h-h(0))[A] \in \bp_{n-1}^{k-1}$ for all $A \in
\bp_{n-1}^l(I)$.
Indeed, given $A \in \bp_{n-1}^l(I)$, it follows from $(1) \Rightarrow
(3)$ that $(h - h(0))[A] \in \br_{n-1}^{k-1}$. Applying the claim
\eqref{Eboyd} with $B = h[A]$ and $c = h(0)$, it follows that $(h -
h(0))[A] \in \bp_{n-1}$ as well, proving that $(1) \Rightarrow (3)$ for
the $\bp$-sets. 

Finally, suppose $h[-]:\bp_n^l(I) \to \bp_n^k$ and $A \in \bp_{n}^l(I)$.
We now show that\break
$(h-h(0))[A] \in \bp_{n}^{k-1}$. Indeed,
it follows from the $(1) \Rightarrow (2)$ implication that $(h - h(0))[A]
\in \br_{n}^{k-1}$.
Since $(h-h(0))[A]$ is singular, it suffices to show that all its $n_1
\times n_1$ principal minors are nonnegative for $1 \leq n_1 \leq n-1$.
Let $C$ be any $n_1 \times n_1$ principal submatrix of $A$. Applying the
claim \eqref{Eboyd} with $B = h[C]$ and $c = h(0)$, it follows that
$(h-h(0))[C] \in \bp_{n_1}$. This concludes the proof.
\end{proof}

In the special case where $l=1$ and $k=2$, Proposition
\ref{prop:general_rank_reduction} immediately characterizes the functions
$f$ mapping $\bp_n^1$ to $\br_n^2$ under the assumption $f(0) \not= 0$:

\begin{corollary}
Let $n \geq 3$, $I = [0, R)$ or $(-R,R)$ for some $0 < R \leq \infty$,
$f: I \to \R$ be continuous and suppose $f(0) \neq 0$. Then the following
are equivalent: 
\begin{enumerate}
\item $f[A] \in \br_n^2$ for all $A \in \bp_n^1(I)$; 
\item $f(x) = a + b \phi_\alpha(x)$ or $f(x) = a + b \psi_\alpha(x)$ for
$a \neq 0$, $\alpha > 0$, and $b \in \R$. 
\end{enumerate}
\end{corollary}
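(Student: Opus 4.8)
The plan is to obtain this corollary directly from Proposition \ref{prop:general_rank_reduction} and Lemma \ref{lem:continuous_rank1}, which between them already contain the rank-reduction mechanism and the classification of continuous rank-$1$-preserving maps. The easy direction $(2) \Rightarrow (1)$ I would settle by a one-line computation: every $A \in \bp_n^1(I)$ has the form $uu^T$, and for $f = a + b\phi_\alpha$ one has $f[uu^T] = a\,{\bf 1}_{n\times n} + b\,vv^T$ with $v_i := \phi_\alpha(u_i)$ (using $\phi_\alpha(u_i)\phi_\alpha(u_j) = \phi_\alpha(u_iu_j)$), while for $f = a + b\psi_\alpha$ one gets $f[uu^T] = a\,{\bf 1}_{n\times n} + b\,ww^T$ with $w_i := \psi_\alpha(u_i)$; in either case $f[uu^T]$ is a sum of two matrices of rank at most $1$, hence has rank at most $2$. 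Note this direction does not even use $f(0) \neq 0$.

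For $(1) \Rightarrow (2)$ I would apply Proposition \ref{prop:general_rank_reduction} with $h = f$, $l = 1$, $k = 2$; this is legitimate because $n \geq 3$ gives $2 \leq n$ and $f(0) \neq 0$ by hypothesis. Its implication $(1) \Rightarrow (3)$ then yields $(f - f(0))[A] \in \br_{n-1}^1$ for all $A \in \bp_{n-1}^1(I)$. Since $f - f(0)$ is continuous and $n - 1 \geq 2$, Lemma \ref{lem:continuous_rank1} (applied with $n - 1$ in place of $n$) provides $c \in \R$ and $\alpha > 0$ such that $f - f(0)$ equals $c$, or $c\phi_\alpha$, or $c\psi_\alpha$ identically on $I$. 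In the first case, evaluating at $0 \in I$ forces $c = 0$, so $f \equiv f(0)$, which is of the required form with $a = f(0) \neq 0$ and $b = 0$ (any $\alpha > 0$); in the other two cases $f = f(0) + c\phi_\alpha$ or $f = f(0) + c\psi_\alpha$ on $I$, which is exactly $(2)$ with $a = f(0) \neq 0$ and $b = c$.

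I do not expect a real obstacle: the statement is essentially a repackaging of the two cited results in the case $l = 1$, $k = 2$. The only points requiring care are the dimension bookkeeping — noting that $n \geq 3$ is precisely what makes Proposition \ref{prop:general_rank_reduction} output a statement in dimension $n - 1 \geq 2$ to which Lemma \ref{lem:continuous_rank1} can be applied — and observing that the degenerate branch where $f - f(0)$ is constant is already covered by $(2)$ upon allowing $b = 0$.
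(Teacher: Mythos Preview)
Your proof is correct and follows essentially the same approach as the paper: for $(1)\Rightarrow(2)$ you invoke Proposition \ref{prop:general_rank_reduction} (its $(1)\Rightarrow(3)$ implication with $h=f$, $l=1$, $k=2$) and then Lemma \ref{lem:continuous_rank1}, exactly as the paper does. The only cosmetic difference is that for $(2)\Rightarrow(1)$ the paper simply cites Lemma \ref{lem:continuous_rank1}, whereas you spell out the underlying multiplicativity of $\phi_\alpha$ and $\psi_\alpha$; your added care in handling the constant branch of Lemma \ref{lem:continuous_rank1} is a welcome detail that the paper leaves implicit.
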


Recall that the power functions $\phi_\alpha, \psi_\alpha$ were defined
in equation \eqref{eqn:phi_psi_alpha}.

\begin{proof}
Clearly $(2) \Rightarrow (1)$ by Lemma \ref{lem:continuous_rank1}. To
show the converse, apply the implication $(1) \Rightarrow (3)$ of
Proposition \ref{prop:general_rank_reduction}, with $h$ replaced by $f$
to obtain that $(f - f(0))[-]: \bp_{n-1}^1(I) \to \br_{n-1}^1$. The
result now follows by Lemma \ref{lem:continuous_rank1}.
\end{proof}

We now have all the ingredients needed to prove the first main result of
the paper. 

\begin{proof}[Proof of Theorem \ref{thm:sum_powers}]
We show the result for $I = (-R,R)$; the proof is similar for $I =
[0,R)$. 
The proof proceeds by building up the polynomial function $f(x)$ step by
step, in a way that is similar to Horner's algorithm. In order to do so,
given $r \in \Z_{\geq 0}$, define an operator $T_r$ mapping any function
$h: I \to \R$ admitting at least $r$ left and right derivatives at zero,
via:
\begin{equation}
T_r(h)(x) := \begin{cases}
\frac{ h(x) - \frac{h^{(r)}(0)}{r!} \cdot x^r}{x^r} & \textrm{if }
x\not=0 \\
0 & \textrm{if } x=0. 
\end{cases}
\end{equation}

\noindent Denote by $f^{(m_1)}, f^{(m_2)}, \dots, f^{(m_k)}$ the first
$k$ derivatives of $f$ that are nonzero at $0$, with $0 \leq m_1 < \cdots
< m_k$ (where $k$ is taken to be $n$ for part (3) of the statement).
Here we define $f^{(m)} = f$ when $m=0$.
Also define $m_0 := 0$ for notational convenience.
Now inductively construct the function $M_i(x)$ for $0 \leq i \leq k-1$
via $M_0(x) := f(x)$ and $M_i(x) := T_{m_i - m_{i-1}} M_{i-1}(x)$ for $1
\leq i \leq k-1$. We now claim that
\begin{equation}\label{eqn:taylor_Mi}
M_i(x) = \sum_{j=i+1}^{k-1} \frac{f^{(m_j)}(0)}{m_j!} x^{m_j-m_i} +
O(x^{1+m_{k-1}-m_i}), \qquad \forall x \in I, \ 0 \leq i \leq k-1.
\end{equation}

\noindent For $i=0$, the claim is easily verified using Taylor's theorem.
Now apply the operators $T_{m_j - m_{j-1}}$ inductively to verify the
claim for each $0 \leq i \leq k-1$.

It follows from \eqref{eqn:taylor_Mi} that $M_i$ is continuous at zero
for all $0 \leq i \leq k-1$. Next, we claim that for $i=0, \dots, k-1$
and $A \in \bp_{n-i}^1(I)$, we have $M_i[A] \in \br_{n-i}^{k-i}$. We will
prove the claim by induction on $i \geq 0$. Clearly the result holds if
$i=0$. Now assume it holds for some $i-1 \geq 0$. By definition, for $x
\ne 0$, 
\begin{align}\label{eqn:M_itaylor}
M_i(x) = x^{-(m_i-m_{i-1})} \left(M_{i-1}(x) - \frac{f^{(m_i)}(0)}{m_i!}
x^{m_i-m_{i-1}}\right) = \frac{M_{i-1}(x)}{x^{m_i-m_{i-1}} } -
\frac{f^{(m_i)}(0)}{m_i!}.
\end{align}

\noindent By Propositions \ref{prop:rank_onesided} and
\ref{prop:general_rank_reduction} and the induction hypothesis, it
follows that $M_i[A] \in \br_{n-(i-1)-1}^{k-(i-1)-1} = \br_{n-i}^{k-i}$
for all $A \in \bp_{n-i}^1(I)$. This completes the induction and proves
the claim for all $0 \leq i \leq k-1$. In particular, $M_{k-1}[A] \in
\br_{n-k+1}^1$ for all $A \in \bp_{n-k+1}^1(I)$. Moreover, $M_{k-1}$ is
continuous on $I$ by \eqref{eqn:taylor_Mi}. We now complete the proofs of
the three parts separately. \medskip

\noindent \textit{Proof of (1).}
By Lemma \ref{lem:continuous_rank1}, $M_{k-1}(x) = a \phi_\alpha(x)$ or
$a \psi_{\alpha}(x)$ for some $\alpha > 0$ and $a \in \R$. Working
backwards, it follows that $f(x) = P(x) + c \phi_\gamma(x)$ or $f(x) =
P(x) + c \psi_\gamma(x)$ where $P$ is a polynomial with exactly $k-1$
nonzero coefficients, $c \in \R$, and $\gamma \geq m_{k-1}$. Let $m_k$ be
the least positive integer such that $m_k > m_{k-1}$ and $f^{(m_k)}(0)$
exists and is nonzero. Then we obtain that $\gamma = m_k$, so that $f$ is
a polynomial with exactly $k$ nonzero coefficients (and hence exactly $k$
nonzero derivatives at zero). This proves the first part of the
theorem.\medskip

\noindent \textit{Proof of (2).}
To show the second part, we claim that $M_i[A] \in \bp_{n-i}^{k-i}$ for
all $A \in \bp_{n-i}^1(I)$ and $0 \leq i \leq k-1$. Indeed, the result
clearly holds for $i=0$. Now assume the result holds for $i-1 \geq 0$.
Note first that by applying Proposition \ref{prop:rank_onesided}(2) to
$f(x) = M_{i-1}(x)$, $g(x) = x^{m_i-m_{i-1}}$ and $c = f^{(m_i)}(0)$, we
obtain that $f^{(m_i)}(0) \geq 0$. Moreover, by \eqref{eqn:M_itaylor} and
Propositions \ref{prop:rank_onesided} and
\ref{prop:general_rank_reduction}, it follows that $M_i[A] \in
\bp_{n-i}^{k-i}$ for all $A \in \bp_{n-i}^1(I)$.
Using a similar argument as in part (1) together with Lemma
\ref{lem:continuous_rank1}, it follows that $f$ is a polynomial with
nonnegative coefficients and exactly $k$ nonzero coefficients. \medskip

\noindent \textit{Proof of (3).}
For the third part, we obtain that $h(x) := M_{k-1}(x) = M_{n-1}(x)$
satisfies $h[-]: \bp_n^1(I) \to \bp_n$. Therefore, $h$ maps $I \cap
[0,\infty)$ to $[0,\infty)$. Working backwards and reasoning as in the
previous parts, it follows that $f(x) = P(x) + x^{m_{n-1}} h(x)$ for a
polynomial $P(x)$ with exactly $k-1$ positive coefficients, and $h: I \to
\R$ such that $h(I \cap [0,\infty)) \subset [0, \infty)$. 
\end{proof}

\begin{remark}
Part (3) of Theorem \ref{thm:sum_powers} provides a necessary condition
for a function to map every rank $1$ $n \times n$ positive semidefinite
matrix with positive entries to an $n \times n$ positive semidefinite
matrix.
Note that even when $f(x) = \sum_{i=0}^N c_i x^{\alpha_i}$ for $\alpha_i
\geq 0$, condition (3) does not imply that all the coefficients $c_i$ are
nonnegative. Indeed, the function $h$ (in the statement of the theorem)
could be a sum of powers containing some negative coefficients, as long
as $h$ is nonnegative on $I$. It can however be shown that the first and
last $n$ coefficients of $f$ have to be positive (see \cite{Fischer92}
for more details).
\end{remark}

An important special case of interest in the literature is to study which
analytic functions preserve positivity. The following result
characterizes the analytic entrywise maps $f[-]$ sending $\bp_n^1(I)$ to
$\br_n^k, \bp_n^k$. Note that the third part of Theorem
\ref{thm:rank_one} generalizes Theorem \ref{thm:sum_powers}(3). 

\begin{theorem}[Rank $1$, fixed and arbitrary
dimension]\label{thm:rank_one}
Let $0 < R \leq \infty$, $I = [0, R)$ or $(-R,R)$, and let $f: I
\rightarrow \mathbb{R}$ be analytic on $I$. Also fix $1 \leq k < n$ in
$\N$.
\begin{enumerate}
\item Then $f[A] \in \br_n^k$ for all $A \in \bp_n^1(I)$ if and only if
$f$ is a polynomial with at most $k$ nonzero coefficients.

\item Similarly, $f[A] \in \bp_n^k$ for all $A \in \bp_n^1(I)$ if and
only if $f$ is a polynomial with at most $k$ nonzero coefficients, all of
which are positive.

\item Furthermore, $f[A] \in \bp_n$ for all $A \in \bp_n^1(I)$ and all $n
\in \N$, if and only if $f$ is absolutely monotonic on $I$. 
\end{enumerate}
\end{theorem}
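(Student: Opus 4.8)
The plan is to prove Theorem~\ref{thm:rank_one} by reducing each part to the corresponding part of Theorem~\ref{thm:sum_powers}, using analyticity to upgrade ``exactly $k$ nonzero coefficients'' to ``at most $k$ nonzero coefficients'' and, in part (3), to pass between fixed dimension and all dimensions. For parts (1) and (2) the key observation is that an analytic function on $I$ automatically admits derivatives of all orders at $0$ (or at any interior point), so that if $f$ has \emph{fewer than} $k$ nonzero Taylor coefficients at $0$, then $f$ is already a polynomial with $< k$ nonzero coefficients and hence lands in $\br_n^{k-1} \subset \br_n^k$ (resp.\ $\bp_n^{k-1}$); if instead $f$ has \emph{at least} $k$ nonzero derivatives at $0$, then the hypotheses of Theorem~\ref{thm:sum_powers}(1),(2) are met and $f$ is a polynomial with exactly $k$ nonzero coefficients (all positive in case (2)). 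Conversely, a polynomial with at most $k$ nonzero (positive) coefficients sends $\bp_n^1(I)$ into $\br_n^k$ (resp.\ $\bp_n^k$) since on a rank-one matrix $uu^T$ one has $f[uu^T] = \sum_t a_t (u^{\circ i_t})(u^{\circ i_t})^T$, a sum of at most $k$ rank-one matrices. So (1) and (2) are essentially immediate bookkeeping once Theorem~\ref{thm:sum_powers} is in hand.

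The substance is in part (3). The direction $(\Leftarrow)$ is the easy one: if $f$ is absolutely monotonic on $I$ then $f(x) = \sum_{m \geq 0} c_m x^m$ with all $c_m \geq 0$ (using that $0 \in \overline{I}$ and the standard equivalence recorded as Theorem~\ref{thm:abs_monotonic_equiv}; when $I = (-R,R)$ analyticity gives a genuine power series on $D(0,R)$, when $I = [0,R)$ one uses right-continuity at $0$), and then for any $A = uu^T \in \bp_n^1(I)$ we get $f[A] = \sum_{m\geq0} c_m A^{\circ m}$, a convergent (entrywise, hence in any norm on a fixed finite-dimensional space) sum of positive semidefinite matrices $c_m A^{\circ m} \in \bp_n$, so $f[A] \in \bp_n$. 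For the direction $(\Rightarrow)$: fix $f$ analytic on $I$ with $f[-] : \bp_n^1(I) \to \bp_n$ for all $n$. The goal is to show all Taylor coefficients of $f$ at $0$ are nonnegative. The plan is to run the Horner-type rank-reduction machinery of Theorem~\ref{thm:sum_powers}(3) but without stopping: for each $N$, applying the argument in dimension $n = N+1$ shows that the first $N$ nonzero Taylor coefficients $f^{(m_i)}(0)/m_i!$ are positive and that the tail $h_N$ (with $f = P_N + x^{m_{N-1}} h_N$, $P_N$ a polynomial) satisfies $h_N(I \cap [0,\infty)) \subset [0,\infty)$. Letting $N \to \infty$, every nonzero Taylor coefficient of $f$ at $0$ is positive; since $f$ is analytic, these are all the coefficients, so $f$ has a power series at $0$ with nonnegative coefficients, i.e.\ $f$ is absolutely monotonic on $I$ (invoking Theorem~\ref{thm:abs_monotonic_equiv} again, together with right-continuity at $0$ when $I=[0,R)$, which here follows from $f$ being analytic on $I$).

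The main obstacle I anticipate is the bookkeeping in the $(\Rightarrow)$ direction of (3) when $f$ has only \emph{finitely many} nonzero Taylor coefficients at $0$ but is not manifestly a polynomial --- but analyticity rules this out: an analytic function whose Taylor series at an interior point is a polynomial equals that polynomial on a neighborhood, hence on all of $I$ by the identity theorem, and a polynomial mapping $\bp_n^1(I) \to \bp_n$ for all $n$ must (by part (2), say, or by inspecting $f[uu^T]$) have all coefficients nonnegative, hence is absolutely monotonic. So the genuinely infinite case is handled by the $N \to \infty$ limit above, and the finite case collapses cleanly. A secondary point requiring a small argument is that when $I = [0,R)$ we need $f$ to be right-continuous at $0$ for the classical characterization of absolute monotonicity to apply; but this is free, since $f$ is assumed analytic on the interval $I$ which includes $0$. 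I therefore expect the proof to be short, with the only real content being the observation that the inductive step of Theorem~\ref{thm:sum_powers}(3), which was run a bounded number of times there, may be iterated indefinitely when $f$ is analytic.
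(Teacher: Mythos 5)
Your proposal follows essentially the same route as the paper's proof: parts (1) and (2) reduce to Theorem~\ref{thm:sum_powers} by a case split on whether $f$ has at least $k$ nonzero derivatives at $0$ (trivial if not, by analyticity), part (3)'s sufficiency follows from the Schur product theorem, and part (3)'s necessity follows by applying Theorem~\ref{thm:sum_powers}(3) for each $n$ and invoking Theorem~\ref{thm:abs_monotonic_equiv}, with the $I=(-R,R)$ case handled by the identity theorem. Your extra paragraph noting that the case of an analytic $f$ with only finitely many nonzero Taylor coefficients collapses to the polynomial case is a small point the paper glosses over, but it does not change the argument.
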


\begin{proof}
First suppose $I = [0,R)$. The first two parts follow immediately from
Theorem \ref{thm:sum_powers} since $f$ is analytic (considering the cases
when $f$ has at least $k$ nonzero derivatives at the origin, and when it
does not).
Next, the sufficiency in the third part follows from the Schur product
theorem. To show the necessity, it suffices to show by standard results
from classical analysis (see Theorem \ref{thm:abs_monotonic_equiv}) that
$f(x) = \sum_{i=0}^\infty a_i x^i$ on $I$, with $a_i \geq 0$ for all $i$.
Now applying Theorem \ref{thm:sum_powers}(3), it follows that $f^{(i)}(0)
\geq 0$ for every $i \geq 0$; i.e., $f$ is absolutely monotonic on the
positive real axis. Finally, if $I = (-R,R)$, then the result follows
from the above analysis and the uniqueness principle for analytic
functions. 
\end{proof}

\subsection{Preserving positivity of rank 1 matrices and Laplace
transforms}\label{subsec:rank1}

We continue our study of rank constrained functions by exploring
functions mapping $\bp_n^1$ into $\bp_n$ for all $n \geq 1$. Such
functions can be characterized using the Laplace transform via the theory
of positive definite kernels, which we recall for the reader's
convenience.

\begin{definition}[{\cite[Chapter VI]{widder}}]
Let $I \subset \R$. A function $k : I \times I \rightarrow \R$ is a
\emph{positive definite kernel} on $I$ if for every finite sequence
$(x_i)_{i=1}^n \subset I$ of distinct numbers, the quadratic form 
\begin{equation}
Q(\xi) = \sum_{i=1}^n \sum_{j=1}^n k(x_i, x_j) \xi_i \xi_j \qquad (\xi
\in \mathbb{R}^n)
\end{equation}

\noindent is positive semidefinite. Equivalently, for every finite
sequence $(x_i)_{i=1}^n \subset I$ of distinct numbers, the matrix
$\left(k(x_i,x_j)\right)_{ij}$ is positive semidefinite. 
\end{definition}

Recall from classical results in analysis that positive definite kernels
can be characterized using the Laplace transform:

\begin{theorem}[{\cite[Chapter VI, Theorem 21]{widder}}]\label{thm:widder}
A function $f:(0,\infty) \rightarrow \mathbb{R}$ can be represented as
$f(x) = \int_{-\infty}^\infty e^{-\alpha x} d\mu(\alpha)$ for a positive
measure $\mu$ on $\mathbb{R}$ if and only if $f$ is continuous and the
kernel $k(x,y) := f(x+y)$ is positive definite on $(0,\infty)$. Moreover,
if $f$ can be written in the above form, then $f$ is analytic on
$(0,\infty)$.
\end{theorem}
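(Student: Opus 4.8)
The plan is to prove the two implications separately, with the classical moment‑problem machinery as the engine of the hard direction.

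\textbf{The easy direction and analyticity.} First I would check $(\Leftarrow\!\!/\!\!\Rightarrow)$ for the representation: if $f(x) = \int_\R e^{-\alpha x}\,d\mu(\alpha)$ for a positive measure $\mu$, then for distinct $x_1,\dots,x_n \in (0,\infty)$ and $\xi \in \R^n$, interchanging sum and integral gives
\[
\sum_{i,j} f(x_i + x_j)\,\xi_i \xi_j = \int_\R \Bigl( \sum_i \xi_i e^{-\alpha x_i} \Bigr)^2 d\mu(\alpha) \geq 0,
\]
so $k(x,y) := f(x+y)$ is a positive definite kernel. For continuity and analyticity I would use that $t \mapsto e^{-\alpha t}$ is convex, whence $|e^{-\alpha z}| = e^{-\alpha \ree z} \leq e^{-\alpha a} + e^{-\alpha b}$ whenever $\ree z \in [a,b] \subset (0,\infty)$, and the right‑hand side is $\mu$‑integrable since $f(a),f(b) < \infty$. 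Thus $\int_\R e^{-\alpha z}\,d\mu(\alpha)$ converges locally uniformly on the right half‑plane and, by Morera's theorem together with Fubini, is holomorphic there; restricting to the real axis yields both continuity of $f$ and the asserted analyticity on $(0,\infty)$.

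\textbf{The hard direction: reduction to a Stieltjes moment problem.} Now assume $f$ is continuous and $f(x+y)$ is positive definite on $(0,\infty)$. I would first dispose of the degenerate case: if $f(a) = 0$ for some $a>0$, the $2\times 2$ principal submatrix of $(f(x_i+x_j))$ at the nodes $a/2$ and $b$ forces $f(a/2+b)^2 \leq f(a)f(2b) = 0$ for every $b>0$, so $f$ vanishes on $(a/2,\infty)$, and by continuity and iteration $f \equiv 0$ (take $\mu = 0$). So assume $f > 0$ on $(0,\infty)$. Fix $c,h>0$. Taking nodes $x_i = c+ih$ (respectively $x_i = c + (i+\tfrac12)h$) shows that both Hankel matrices $(f(2c + (i+j)h))_{i,j=0}^N$ and $(f(2c + (i+j+1)h))_{i,j=0}^N$ are positive semidefinite for all $N$, being principal submatrices of positive definite kernel matrices. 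Equivalently, the functional $L$ on $\R[t]$ with $L(t^m) := f(2c+mh)$ satisfies $L(p^2) \geq 0$ and $L(t\,p^2) \geq 0$ for every polynomial $p$. Invoking the solution of the full Stieltjes moment problem — e.g.\ Haviland's theorem for the cone of polynomials nonnegative on $[0,\infty)$, or a Helly selection argument using the uniform mass bound $\rho(\R) = f(2c)$ and the tightness estimate $\rho(\{s>T\}) \leq f(2c+2h)/T^2$ — I get a positive measure $\rho = \rho_{c,h}$ on $[0,\infty)$ with $f(2c+mh) = \int_{[0,\infty)} s^m\,d\rho(s)$ for all $m \geq 0$; since $f>0$ all moments are positive, and I discard a possible atom at $s=0$ (it affects only $m=0$), keeping $\rho$ supported on $(0,\infty)$.

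\textbf{Globalizing the representation.} Next I would substitute $s = e^{-h\alpha}$: pushing $\rho$ forward under $s \mapsto \alpha = -h^{-1}\log s$ and reweighting by $e^{2c\alpha}$ produces a positive measure $\nu_{c,h}$ on $\R$ with $f(2c+mh) = \int_\R e^{-(2c+mh)\alpha}\,d\nu_{c,h}(\alpha)$ for all $m \geq 1$. The progression $\{2c+mh : m\geq 1\}$ is unbounded, so by the convexity/domination estimate of the first step the integral $\int_\R e^{-x\alpha}\,d\nu_{c,h}(\alpha)$ converges for every $x > 2c+h$ and defines there an analytic function agreeing with $f$ on a dense set; hence $f(x) = \int_\R e^{-x\alpha}\,d\nu_{c,h}(\alpha)$ for all $x > 2c+h$. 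Finally, letting $c,h \to 0$ gives such a representation on $(\varepsilon,\infty)$ for every $\varepsilon > 0$; by uniqueness of the bilateral Laplace transform (the integrals converge on the open interval $(0,\infty)$) all these measures coincide, yielding a single $\mu$ with $f(x) = \int_\R e^{-\alpha x}\,d\mu(\alpha)$ on $(0,\infty)$. I expect the main obstacle to lie in the two nontrivial classical inputs here: extracting an honest representing measure $\rho_{c,h}$ from the positive semidefiniteness of the finite Hankel matrices (the Stieltjes moment problem), and the bookkeeping needed to splice the representations over the various arithmetic progressions into one measure, which rests squarely on the uniqueness theorem for the two‑sided Laplace transform on an interval of convergence.
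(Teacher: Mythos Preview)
The paper does not prove this statement: it is quoted as a classical result from Widder's book and used as a black box, so there is no ``paper's own proof'' to compare against. Your easy direction and the analyticity argument are fine.

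In the hard direction, however, there is a genuine gap. After constructing $\nu_{c,h}$ you assert that the analytic function $g_{c,h}(x):=\int_{\R} e^{-x\alpha}\,d\nu_{c,h}(\alpha)$ agrees with $f$ ``on a dense set'' and hence equals $f$ on $(2c+h,\infty)$. But the set on which you actually have agreement is the arithmetic progression $\{2c+mh:m\geq 1\}$, which is discrete---not dense, and with no finite accumulation point. Since $f$ is only assumed continuous, nothing prevents $f$ and $g_{c,h}$ from differing off that progression: an analytic function is \emph{not} determined by its values on an arithmetic progression (e.g.\ $\sin(\pi x/h)$ vanishes on $h\Z$). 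Consequently the identity $f(x)=\int_{\R} e^{-x\alpha}\,d\nu_{c,h}(\alpha)$ for all $x>2c+h$ is unjustified, and your subsequent limiting argument and appeal to uniqueness of the bilateral Laplace transform never get off the ground.

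To repair this, one standard route is to let the mesh $h\to 0$ \emph{before} passing to a single interval: prove uniform bounds on the family $\{\nu_{c,h}\}_{h>0}$ (total mass $f(2c)$, tails controlled by higher moments), extract a weak limit $\nu$ via Helly's selection theorem, and then use continuity of $f$ together with the fact that $\bigcup_h\{2c+mh\}$ is dense to identify $\int e^{-x\alpha}\,d\nu(\alpha)$ with $f(x)$. This is essentially the compactness step you allude to for the Stieltjes problem, but it must be done at this later stage, not skipped. Alternatively, Widder's original argument proceeds through a different reduction (via completely monotone functions and Bernstein's theorem after a suitable transformation) that sidesteps the discretization issue entirely.
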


Using Theorem \ref{thm:widder}, we now easily obtain the following
characterization of entrywise functions defined on $(0,R)$, which map
$\bp_n^1((0,R))$ into $\bp_n$ for every $n \geq 1$.

\begin{theorem}\label{thm:rank1}
Given $0 < R \leq \infty$ and $f: (0,R) \rightarrow \mathbb{R}$, the
following are equivalent: 
\begin{enumerate}
\item $f$ is continuous on $(0,R)$ and $f[A] \in \bp_n$ for all $A \in
\bp_n^1((0,R))$ and all $n$;

\item $f$ is continuous on $(0,R)$ and the kernel $k(x,y) =
f(e^{-(x+y-\ln R)})$ is positive definite on $(0,\infty)$; 

\item $g(x) := f(e^{-x})$ is continuous on $(-\ln R, \infty)$ and the
kernel $k(x,y) := g(-\ln R + x+y)$ is positive definite on $(0,\infty)$;

\item There exists a positive measure $\mu$ such that the function $g(x)
:= f(e^{-x})$ can be represented as 
\begin{equation}
g(x) = \int_{-\infty}^\infty e^{- \alpha x} d\mu(\alpha) \qquad (x > -
\ln R); 
\end{equation}

\item There exists a positive measure $\mu$ such that 
\begin{equation}
f(x) = \int_{-\infty}^\infty x^{\alpha} d\mu(\alpha) \qquad (0 < x < R); 
\end{equation}
\end{enumerate}

\noindent In particular, $g(x) := f(e^{-x})$ is analytic on $(-\ln R,
\infty)$.
\end{theorem}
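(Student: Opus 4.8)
The plan is to prove Theorem~\ref{thm:rank1} by establishing the cycle of implications $(1) \Rightarrow (2) \Rightarrow (3) \Rightarrow (4) \Rightarrow (5) \Rightarrow (1)$, with the analyticity of $g$ falling out of Theorem~\ref{thm:widder} once $(4)$ is in hand. The key observation driving the equivalences is that a rank-one positive semidefinite matrix with entries in $(0,R)$ is exactly a matrix of the form $(u_i u_j)$ with all $u_i$ of the same sign and $u_i u_j \in (0,R)$; writing $u_i = e^{-x_i/2}$ (after absorbing a factor of $\sqrt R$) turns the condition $f[(u_i u_j)] \in \bp_n$ into the statement that $(f(e^{-(x_i+x_j)/2 + \ldots}))_{ij}$ is positive semidefinite, i.e.\ that a suitable kernel of the form $k(x,y) = g(x+y+\mathrm{const})$ is positive definite. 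This is the content of $(1) \Leftrightarrow (2) \Leftrightarrow (3)$, and each of these is just a change of variables, so I would dispatch them quickly, being careful only about the bookkeeping of the shift $\ln R$ and the domains $(0,\infty)$ versus $(-\ln R,\infty)$.

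The substantive step is $(3) \Rightarrow (4)$, and here I would invoke Theorem~\ref{thm:widder} directly: with $g(x) := f(e^{-x})$ continuous on $(-\ln R,\infty)$ and the kernel $k(x,y) = g(-\ln R + x + y)$ positive definite on $(0,\infty)$, set $\tilde g(x) := g(x - \ln R)$ so that $\tilde g$ is continuous on $(0,\infty)$ and $(x,y) \mapsto \tilde g(x+y)$ is positive definite on $(0,\infty)$. Theorem~\ref{thm:widder} then yields a positive measure $\nu$ on $\R$ with $\tilde g(x) = \int_{-\infty}^\infty e^{-\alpha x}\, d\nu(\alpha)$ for $x > 0$; substituting back, $g(x) = \tilde g(x + \ln R) = \int e^{-\alpha(x+\ln R)}\,d\nu(\alpha) = \int e^{-\alpha x}\, d\mu(\alpha)$ for $x > -\ln R$, where $d\mu(\alpha) := R^{-\alpha}\, d\nu(\alpha)$ is again a positive measure. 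The passage $(4) \Rightarrow (5)$ is then purely formal: substitute $x = e^{-y}$, i.e.\ $y = -\ln x$, so that $f(x) = g(-\ln x) = \int_{-\infty}^\infty e^{\alpha \ln x}\, d\mu(\alpha) = \int_{-\infty}^\infty x^\alpha\, d\mu(\alpha)$ for $0 < x < R$, valid since $-\ln x > -\ln R$.

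Finally, $(5) \Rightarrow (1)$: given the integral representation $f(x) = \int x^\alpha\, d\mu(\alpha)$, continuity of $f$ on $(0,R)$ follows from dominated convergence (on compact subintervals $[c,d] \subset (0,R)$, $x^\alpha$ is dominated by $\max(c^\alpha, d^\alpha)$ which is $\mu$-integrable because the representation converges), and for any $A = (u_i u_j) \in \bp_n^1((0,R))$ with $u_i > 0$ we have $f[A] = (f(u_i u_j)) = \int (u_i u_j)^\alpha\, d\mu(\alpha) = \int (u^{\circ\alpha})(u^{\circ\alpha})^T\, d\mu(\alpha)$, an integral of rank-one positive semidefinite matrices against a positive measure, hence positive semidefinite; the case of general $u_i$ (possibly of mixed sign, or zero) is handled by noting that $u_i u_j \in (0,R)$ forces all nonzero $u_i$ to have the same sign, and padding with zeros as in earlier arguments. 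The last assertion, analyticity of $g$, is the final clause of Theorem~\ref{thm:widder}.

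The main obstacle, such as it is, is not any single hard step but rather keeping the three changes of variables — $u_i \leftrightarrow e^{-x_i/2}$, the shift by $\ln R$, and $x \leftrightarrow e^{-y}$ — consistent with each other and with the stated domains, and ensuring the positivity of the transformed measure $d\mu(\alpha) = R^{-\alpha} d\nu(\alpha)$ is preserved (it is, since $R^{-\alpha} > 0$). One point that warrants a sentence of care is why continuity of $f$ may be \emph{assumed} in $(1)$ rather than derived: unlike the higher-dimensional setting, rank-one constraints alone do not force continuity, which is why it appears as a hypothesis in $(1)$, $(2)$, $(3)$ and is part of the conclusion only via the integral representation.
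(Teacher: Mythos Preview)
Your proposal is correct and follows essentially the same approach as the paper: both reduce to Theorem~\ref{thm:widder} via the substitution $g(x) = f(e^{-x})$ and the shift $\tilde g(x) = g(x - \ln R)$, with the measure transformation $d\mu(\alpha) = R^{-\alpha}\,d\nu(\alpha)$ appearing identically. The only cosmetic difference is that the paper closes the cycle via $(5) \Leftrightarrow (4) \Rightarrow (3) \Leftrightarrow (2) \Leftrightarrow (1)$ (getting continuity from the analyticity in Widder's theorem) rather than proving $(5) \Rightarrow (1)$ directly with a dominated-convergence argument, but the substance is the same.
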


\begin{proof}
Clearly, (5) $\Leftrightarrow$ (4). Now suppose (4) is true and consider
the function $h(x) := g(x-\ln R)$ for $x > 0$. Then 
\begin{equation*}
h(x) = \int_{-\infty}^\infty R^\alpha e^{- \alpha x} d\mu(\alpha) =
\int_{-\infty}^\infty  e^{- \alpha x} d\nu(\alpha) \qquad (x > 0), 
\end{equation*}

\noindent where $d\nu(\alpha) = R^\alpha d\mu(\alpha)$. By Theorem
\ref{thm:widder}, the kernel $h(x+y) = g(x+y-\ln R)$
is positive definite. This proves (3).  Conversely, if (3) is
true, then there exists a positive measure $\nu$ such that 
\begin{equation*}
h(x) = \int_{-\infty}^\infty  e^{- \alpha x} d\nu(\alpha) \qquad (x > 0). 
\end{equation*}

\noindent It follows that 
\begin{equation*}
g(x) = \int_{-\infty}^\infty  R^{-\alpha} e^{- \alpha x} d\nu(\alpha) =
\int_{-\infty}^\infty  e^{- \alpha x} d\mu(\alpha) \qquad (x > -\ln R), 
\end{equation*}

\noindent where $d\mu(\alpha) = R^{-\alpha} d\nu(\alpha)$. This proves
$(4) \Leftrightarrow (3)$. Clearly, $(3) \Leftrightarrow (2)$. Finally,
(2) is true if and only if the matrix 
$\left(f(R e^{-x_i} e^{-x_j})\right)_{ij}$
is positive semidefinite for every $x_1, \dots, x_n \geq 0$.
Equivalently, $f[A]$ is positive semidefinite for every positive
semidefinite matrix $A$ of rank $1$ with entries on $(0,R)$. Therefore,
$(2) \Leftrightarrow (1)$.
Finally, that $g$ is analytic also follows from Theorem \ref{thm:widder}.
\end{proof}

Recall that part (3) of Theorem \ref{thm:rank_one} provides a direct
characterization from first principles of the analytic maps sending
$\bp_n^1(I)$ to $\bp_n$. The same result can also be obtained using deep
results about the representability of functions as the Laplace transforms
of positive measures, and the uniqueness principle for the Laplace
transform (see \cite[Chapter VI, Theorem 6a]{widder}).

\begin{theorem}\label{thm:rank1_analytic}
Let $0 < R \leq \infty$, $I = [0, R)$ or $(-R,R)$, and let $f: I
\rightarrow \R$ be analytic on $I$. Then
$f[-] : \bp_n^1(I) \to \bp_n$ for all $n$
if and only if $f$ is absolutely monotonic on $I$. 
\end{theorem}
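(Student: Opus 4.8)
The plan is to deduce Theorem~\ref{thm:rank1_analytic} from the Laplace-transform characterization of Theorem~\ref{thm:rank1} together with the uniqueness principle for the bilateral Laplace transform, exactly as anticipated in the remark preceding the statement.

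For the ``if'' direction the argument is immediate: if $f$ is absolutely monotonic on $I$ then, being analytic, it has a Taylor expansion $f(x)=\sum_{m\geq 0} a_m x^m$ with $a_m\geq 0$ convergent on $I$ (Theorem~\ref{thm:abs_monotonic_equiv}), and for $u\in\R^n$ with $u_iu_j\in I$ for all $i,j$ one gets $f[uu^T]=\sum_{m\geq 0} a_m\,(u^{\circ m})(u^{\circ m})^T$, a nonnegative combination of rank-one positive semidefinite matrices converging entrywise, so $f[uu^T]\in\bp_n$; alternatively one may simply cite the Schur product theorem. This gives $f[-]:\bp_n^1(I)\to\bp_n$ for all $n$.

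For the converse I would assume $f$ is analytic on $I$ with $f[-]:\bp_n^1(I)\to\bp_n$ for all $n$, and exploit that $(0,R)\subseteq I$. Restricting shows $f$ is continuous on $(0,R)$ and $f[A]\in\bp_n$ for every $A\in\bp_n^1((0,R))$ and every $n$, so the equivalence $(1)\Leftrightarrow(5)$ in Theorem~\ref{thm:rank1} yields a positive measure $\mu$ on $\R$ with $f(x)=\int_{-\infty}^\infty x^\alpha\,d\mu(\alpha)$ for $0<x<R$, equivalently $g(t):=f(e^{-t})=\int_{-\infty}^\infty e^{-\alpha t}\,d\mu(\alpha)$ for $t>-\ln R$. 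Next I would use analyticity of $f$ at $0$: it gives a power series $f(x)=\sum_{m\geq 0} c_m x^m$ valid on $(0,\rho)$ for some $\rho>0$, whence $g(t)=\sum_{m\geq 0} c_m e^{-mt}=\int_{-\infty}^\infty e^{-\alpha t}\,d\nu(\alpha)$ for $t>-\ln\rho$, where $\nu$ is the (signed) measure assigning mass $c_m$ to each integer $m\geq 0$. The two representations of $g$ agree, with absolutely convergent integrals, on the overlap $t>\max(-\ln R,-\ln\rho)$, so the uniqueness principle for the Laplace transform (\cite[Chapter~VI, Theorem~6a]{widder}) forces $\mu=\nu$; since $\mu$ is positive, every $c_m=\mu(\{m\})\geq 0$ and $\mu$ is supported on the nonnegative integers.

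To finish I would observe that $f(x)=\sum_{m\geq 0} c_m x^m$ on $(0,R)$ with $c_m\geq 0$ forces this series to have radius of convergence $\geq R$; as $f$ and $\sum_m c_m x^m$ are both analytic on $I$ and agree on $(0,R)$, the identity principle makes them agree on all of $I$, so $f$ has nonnegative Taylor coefficients on $I$ and is therefore absolutely monotonic on $I$ (Theorem~\ref{thm:abs_monotonic_equiv})---understood, in the case $I=(-R,R)$, as in Theorems~\ref{Therz} and \ref{thm:rank_one}. The step I expect to be the main obstacle is the invocation of Laplace-transform uniqueness: I must make sure that $\mu$ and $\nu$ both represent $g$ by \emph{absolutely} convergent bilateral Laplace integrals over a common nondegenerate interval (the only delicate point being that $\rho$ is a priori unknown but may be taken positive, after which the overlap is automatic) and that uniqueness is applied in its form valid for signed, not merely positive, representing measures. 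The remaining ingredients---analytic continuation and the classical characterization of absolutely monotonic functions---are routine and already used elsewhere in the paper.
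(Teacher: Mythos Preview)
The proposal is correct and follows essentially the same route as the paper: both invoke Theorem~\ref{thm:rank1} to write $g(t)=f(e^{-t})$ as the bilateral Laplace transform of a positive measure $\mu$, compare with the Laplace representation coming from the Taylor series of $f$ at $0$, and apply Widder's uniqueness theorem to force $\mu$ to be the atomic measure $\sum c_m\delta_m$ with $c_m\geq 0$. The only cosmetic differences are that the paper first extends $g$ analytically to the half-plane $\{\ree z>-\ln R\}$ before invoking uniqueness (you work directly on a real overlap interval, which is enough for Widder's theorem), and that the paper assumes the power series converges on all of $I$ from the outset while you use a local radius $\rho$ and close with the identity principle.
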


\begin{proof}
Let $f(z) = \sum_{n=0}^\infty a_n z^n$ for every $z$ in an open set in
$\mathbb{C}$ containing $I$. Then 
\begin{equation*}
g(x) := f(e^{-x})= \sum_{n=0}^\infty a_n e^{-nx} \qquad (x > -\ln R). 
\end{equation*}

\noindent Using this power series representation, the function $g$ can be
extended analytically to every $z$ in the half-plane $\{z \in \mathbb{C}
: \ree z > -\ln R\}$, i.e., $g(z) = \sum_{n=0}^\infty a_n e^{-nz}$
whenever $\ree z > -\ln R$.
Since $f[A]$ is positive semidefinite for every positive semidefinite
matrix $A$ of rank $1$ with coefficients in $(0,R)$ then, by Theorem
\ref{thm:rank1}, there exists a positive measure $\mu$ such that 
\begin{equation*}
g(x) = \int_{-\infty}^\infty e^{-\alpha x} d\mu(\alpha) \qquad (x > -\ln
R). 
\end{equation*}

\noindent The function 
\begin{equation*}
\widetilde{g}(z) = \int_{-\infty}^\infty e^{-\alpha z} d\mu(\alpha)
\qquad (\ree z > -\ln R)
\end{equation*}

\noindent provides an analytic extension of $g$ to $\{z \in \mathbb{C} :
\ree z > -\ln R\}$. Since $g$ and $\widetilde{g}$ are both analytic and
coincide on $x > -\ln R$, by the uniqueness principle for analytic
functions, $g(z) = \widetilde{g}(z)$ for every $z \in \{w \in \mathbb{C}
: \ree w > -\ln R\}$. Now, since $g$ and $\widetilde{g}$ are both
bilateral Laplace transforms and coincide in a common strip of
convergence, by \cite[Chapter VI, Theorem 6a]{widder}, the two
representing measures must coincide. In other words, $\mu =
\sum_{n=0}^\infty a_n \delta_n$, where $\delta_n$ denotes the Dirac
measure at the integer $n$. Since $\mu$ is positive, it follows that $a_n
\geq 0$ for every $n \geq 0$ and so $f$ is absolutely monotonic. The
converse follows immediately from the Schur product theorem.
\end{proof}

\subsection{Two-sided extensions of power functions}\label{S2sided}

Thus far in this section, we have worked mostly with polynomials as the
continuous functions sending $\bp_n^1(I)$ to $\bp_n^k$ for integers $1
\leq k < n$, and $I = [0,R)$ or $(-R,R)$ for some $0 < R \leq \infty$.
The strategy in proving all of the characterizations obtained above in
this section was to subtract the ``lowest degree monomial in $f$'' and
obtain a function that sends $\bp_{n-1}^1(I)$ to $\bp_{n-1}^{k-1}$. The
final step classified the continuous maps sending $\bp_n^1(I)$ to
$\br_n^1$, and these are precisely the constants and scalar multiples of
the maps $\phi_\alpha, \psi_\alpha$ for $\alpha > 0$.

In this subsection, we take a closer look at the above steps, but under
less restrictive assumptions. Our main result in this subsection
generalizes Theorem \ref{thm:sum_powers} under more relaxed
differentiability hypotheses on $f$. To prove this result, we adopt the
three-step approach from Section \ref{S3step}.

\begin{theorem}\label{thm:thmA_2sided}
Let $0 < R \leq \infty$, $I = (-R,R)$, and $f: I \rightarrow \mathbb{R}$.
Fix $1 \leq k < n$, and let $0 \leq m_1 < m_2 < \dots < m_k$ denote the
orders of the first nonzero left and right derivatives of $f$ at $0$.
Assume moreover that $|f^{(m_i)}(0^+)| = |f^{(m_i)}(0^-)|$ for $1 \leq i
\leq k$, that $f(0) = 0$, and either $f$ is continuous on $I$ or $k > 1$.
\begin{enumerate}
\item Then $f[-] : \bp_n^1(I) \to \br_n^k$ if and only if $f(x) =
\sum_{i=1}^k c_i g_{m_i}(x)$ where $c_i \in \R \setminus \{0\}$, $0 \leq
m_1 < m_2 < \dots < m_k$ are integers, and $g_{m_i}(x) = \phi_{m_i}(x)$
or $\psi_{m_i}(x)$. 
\item Similarly, $f[-] : \bp_n^1(I) \to \bp_n^k$ if and only if $f$ is of
the same form with all $c_i > 0$. 

\end{enumerate}
In particular, $f$ has exactly $k$ nonzero left and right derivatives at
$0$.
\end{theorem}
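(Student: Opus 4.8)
The plan is to run the three-step strategy (S1)--(S3) of Section~\ref{S3step} -- the same skeleton as the proof of Theorem~\ref{thm:sum_powers} -- but now over $I=(-R,R)$ and while carefully bookkeeping signs and parities, so that each monomial peeled off is correctly recognized as a $\phi$-type or $\psi$-type power.

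\textbf{Easy direction.} If $f=\sum_{i=1}^k c_i g_{m_i}$ with each $g_{m_i}\in\{\phi_{m_i},\psi_{m_i}\}$, then for $A=uu^T\in\bp_n^1(I)$ one has $\phi_m[uu^T]=ww^T$ with $w_j=|u_j|^m$ and $\psi_m[uu^T]=vv^T$ with $v_j=\psi_m(u_j)$, using $\sgn(u_iu_j)=\sgn(u_i)\sgn(u_j)$; thus each $g_{m_i}[A]$ has rank one and is positive semidefinite, so $f[A]=\sum_i c_i\,g_{m_i}[A]$ lies in $\br_n^k$, and in $\bp_n^k$ whenever all $c_i>0$.

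\textbf{Hard direction.} Since $f(0)=0$, we have $m_1\ge 1$. Using Taylor's theorem at $0$ together with the hypothesis $|f^{(m_1)}(0^+)|=|f^{(m_1)}(0^-)|$, I would first read off the sign pattern: comparing the sign of $f^{(m_1)}(0^+)$ with that of $f^{(m_1)}(0^-)$ and with the parity of $m_1$ singles out $g_{m_1}\in\{\phi_{m_1},\psi_{m_1}\}$ and a constant $c_1:=f^{(m_1)}(0^+)/m_1!\ne0$ for which $f(x)=c_1g_{m_1}(x)+o(|x|^{m_1})$ as $x\to0$. Hence $h_1(x):=f(x)/g_{m_1}(x)$, with value $c_1$ at $0$, is well defined and continuous at $0$, and a short Taylor computation shows $h_1-c_1$ again obeys the magnitude condition and has its first $k-1$ nonzero one-sided derivatives at orders $m_2-m_1<\dots<m_k-m_1$. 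Since $g_{m_1}[-]:\bp_n^1(I)\to\br_n^1$ by Lemma~\ref{lem:continuous_rank1}, Proposition~\ref{prop:rank_onesided}(1) gives $h_1[-]:\bp_n^1(I)\to\br_n^k$, and then (as $h_1(0)=c_1\ne0$) Proposition~\ref{prop:general_rank_reduction}, $(1)\Rightarrow(3)$, yields $(h_1-c_1)[-]:\bp_{n-1}^1(I)\to\br_{n-1}^{k-1}$. Iterating this "divide by the appropriate $g$, subtract the resulting constant, lose a dimension" step $k-1$ times -- and using that $\phi$/$\psi$-powers compose, $g_ag_b=g_{a+b}$, so that unwinding reassembles everything into $\sum_{i=1}^k c_ig_{m_i}$ -- reduces the problem to a function $M$ with $M(0)=0$, mapping $\bp_{n-k+1}^1(I)\to\br_{n-k+1}^1$, having a single nonzero one-sided derivative (of some order $m'\ge1$) with matching magnitudes.

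\textbf{Classifying $M$ -- the main obstacle.} By Proposition~\ref{prop:charac_rank1} (and padding by zeros) $M(\sqrt{xy})^2=M(x)M(y)$ on $I\cap[0,\infty)$; if $M$ had a zero on $(0,R)$ it would vanish identically, contradicting the nonzero derivative, so $M$ has no zeros and constant sign on $(0,R)$, and $x\mapsto\ln|M(e^x)|$ satisfies Jensen's equation on $(-\infty,\ln R)$. This is exactly where continuity of $M$ is unavailable when $k>1$, so instead I would use the nonzero derivative: $M(x)\sim c\,x^{m'}$ as $x\to0^+$ forces $\ln|M(e^x)|=m'x+\ln|c|+o(1)$ as $x\to-\infty$, and a midpoint-affine function asymptotic to an affine one at an endpoint must be affine, giving $M(x)=c\,x^{m'}$ on $(0,R)$. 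On $(-R,0)$ the matrix $\begin{pmatrix}|x|&x\\x&|x|\end{pmatrix}$ forces $M(x)=\varepsilon(x)\,c\,|x|^{m'}$ with $\varepsilon(x)\in\{\pm1\}$; near $0^-$ the sign $\varepsilon$ is pinned by the left derivative together with $|f^{(m')}(0^-)|=|f^{(m')}(0^+)|$, and I would propagate it across $(-R,0)$ by a connectivity argument: vanishing of the $2\times2$ minors of $M[uu^T]$ for $3\times3$ matrices $uu^T$ with mixed sign patterns gives $\varepsilon(ab)=\varepsilon(ac)$, which, letting the parameters vary, shows $\varepsilon$ constant. Combining parity of $m'$ with this sign yields $M=c\,g_{m'}$ with $g_{m'}\in\{\phi_{m'},\psi_{m'}\}$, and unwinding gives $f=\sum_{i=1}^k c_ig_{m_i}$. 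This last step is what consumes dimension, and it is the reason the hypothesis "$f$ continuous, or $k>1$" appears: when $k=1$ there is no room to peel and the worst case leaves only $2\times2$ matrices, so the sign on $(-R,0)$ cannot be propagated and continuity must be assumed; for $k>1$ it can be dropped. Finally, part~(2) follows from the same peeling with Proposition~\ref{prop:rank_onesided}(2) invoked at each stage, which forces every $c_i>0$, exactly as in Theorem~\ref{thm:sum_powers}(2); and the closing "in particular" is immediate, since each $g_{m_i}$ contributes exactly one nonzero left and one nonzero right derivative, both at order $m_i$.
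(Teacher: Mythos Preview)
Your three-step peeling strategy is exactly the paper's: Taylor-expand to identify the leading $g_{m_1}$, then alternately apply Proposition~\ref{prop:rank_onesided} (divide by $g$) and Proposition~\ref{prop:general_rank_reduction} (subtract the constant, drop a dimension), reducing to a function $M$ sending $\bp_{n-k+1}^1(I)\to\br_{n-k+1}^1$ with one remaining nonzero one-sided derivative. The easy direction, the sign/parity bookkeeping for choosing $\phi$ versus $\psi$, and the handling of part~(2) via Proposition~\ref{prop:rank_onesided}(2) all mirror the paper.

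The difference is the final classification of $M$. The paper simply asserts that $M_{k-1}$ is continuous (when $f$ is continuous or $k>1$) and invokes Lemma~\ref{lem:continuous_rank1} directly. You instead argue without continuity: on $(0,R)$ you obtain $M(x)=cx^{m'}$ from Jensen's equation together with the Taylor asymptotic (a correct and rather nice observation---a midpoint-affine function with an affine asymptote at $-\infty$ is indeed affine, since an additive function with a finite limit at infinity vanishes); on $(-R,0)$ you pin the sign near $0$ by the left derivative and then propagate it by a connectivity argument using $3\times3$ rank-one matrices. This is more explicit than the paper about where continuity is really used.

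However, your connectivity step needs $n-k+1\ge 3$, i.e.\ $k\le n-2$. When $k=n-1>1$ you are left with only $2\times2$ matrices, and your sign-propagation argument on $(-R,0)$ no longer applies; yet the theorem's hypothesis ``$k>1$'' is satisfied and continuity of $f$ is not assumed. So your explanation of \emph{why} ``$k>1$'' suffices to drop continuity is incomplete in that boundary case. The paper sidesteps this by claiming $M_{k-1}$ is continuous whenever $k>1$, so that Lemma~\ref{lem:continuous_rank1} (which only needs $n-k+1\ge2$) finishes the job---though that continuity claim is itself stated without justification, since away from $0$ the peeled function inherits its regularity from $f$.
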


\begin{proof}
By Taylor's theorem, 
\begin{align}
& f(x) = \sum_{i=1}^k \frac{f^{(m_i)}(0^+)}{m_i!} x^{m_i} +o(x^{m_k})
\qquad (x \in (0,R)), \label{eqn:taylor_right} \\
& f(x) = \sum_{i=1}^k (-1)^{m_i}\frac{f^{(m_i)}(0^-)}{m_i!} x^{m_i} +
o(x^{m_k}) \qquad (x \in (-R, 0)). \label{eqn:taylor_left}
\end{align}

\noindent For $i=1, \dots, k$, define a function $g_{m_i}$ as follows: if
$f^{(m_i)}(0^+) = f^{(m_i)}(0^-)$, then $g_{m_i} = \phi_{m_i}$ if $m_i$
is even and $g_{m_i} = \psi_{m_i}$ if $m_i$ is odd.  If instead
$f^{(m_i)}(0^+) = -f^{(m_i)}(0^-)$, then $g_{m_i} = \psi_{m_i}$ if $m_i$
is even and $g_{m_i} = \phi_{m_i}$ if $m_i$ is odd. Using this notation,
equations \eqref{eqn:taylor_right} and \eqref{eqn:taylor_left} can be
rewritten as 
\begin{equation}\label{eqn:sum_g_i}
f(x) = \sum_{i=1}^k \frac{f^{(m_i)}(0^+)}{m_i!} g_{m_i}(x) +
o(x^{m_k})\qquad (x \in I). 
\end{equation}

\noindent Note that \eqref{eqn:sum_g_i} also holds at $x=0$ since $f(0) =
0$ by assumption. 

For $r \in \Z_{\geq 0}$, define an operator $T_r$ mapping any function
$h: I \to \R$ admitting at least $r$ left and right derivatives at zero,
via:
\begin{equation}
T_r(h)(x) := \begin{cases}
\frac{ h(x) - \frac{h^{(r)}(0^+)}{r!} \cdot g_r(x)}{g_r(x)} & \textrm{if }
x\not=0, \\
0 & \textrm{if } x=0,  
\end{cases}
\end{equation}
where 
\begin{equation}
g_r(x) := \begin{cases}
\phi_r(x) & \textrm{ if } f^{(m_i)}(0^+) = f^{(m_i)}(0^-) \textrm{ and }
m_i \textrm{ is even}, \\
\psi_r(x) & \textrm{ if } f^{(m_i)}(0^+) = f^{(m_i)}(0^-) \textrm{ and }
m_i \textrm{ is odd}, \\
\psi_r(x) & \textrm{ if } f^{(m_i)}(0^+) = - f^{(m_i)}(0^-) \textrm{ and
} m_i \textrm{ is even}, \\
\phi_r(x) & \textrm{ if }  f^{(m_i)}(0^+) = -f^{(m_i)}(0^-) \textrm{ and
} m_i \textrm{ is odd}. 
\end{cases}
\end{equation}

Now, inductively construct the function $M_i(x)$
for $0 \leq i \leq k-1$ via: $M_0(x) := f(x)$ and $M_i(x) := T_{m_i -
m_{i-1}} M_{i-1}(x)$ for $1 \leq i \leq k-1$. We have that 
\begin{equation}
M_i(x) = \sum_{j=i+1}^{k-1} \frac{f^{(m_j)}(0^+)}{m_j!} h_{m_j-m_i}(x) +
o(x^{1+m_{k-1}-m_i}), \qquad \forall x \in I, \ 0 \leq i \leq k-1, 
\end{equation}
where $h_{m_j-m_i}(x) = \phi_{m_j-m_i}(x)$ or $h_{m_j-m_i}(x) =
\psi_{m_j-m_i}(x)$. The rest of the proof is now similar to the proof of
Theorem \ref{thm:sum_powers}. Note that if $f$ is continuous or $k > 1$,
then $M_{k-1}(x)$ is continuous and sends $\bp_{n-k+1}^1(I)$ to
$\br_{n-k+1}^1$. Now apply Lemma \ref{lem:continuous_rank1} to conclude
the proof of (1). The second part of the theorem follows using an
argument similar to the one used in the proof of Theorem
\ref{thm:sum_powers}. 
\end{proof}

We now present an application that further illustrates the power of the
three-step approach described in this section. In order to do so, we
first extend the definition of the \textit{power functions} $\phi_\alpha,
\psi_\alpha$ introduced in \eqref{eqn:phi_psi_alpha} to also cover
negative powers as follows:
\begin{align}
\begin{aligned}
& \phi_\alpha(0) = \psi_\alpha(0) := 0, \qquad \phi_\alpha(x) :=
|x|^\alpha,\\
& \psi_\alpha(x) := \sgn(x) |x|^\alpha, \qquad
\forall \alpha \in \R, \ x \in \R \setminus \{ 0 \}.
\end{aligned}
\end{align}

\noindent It is easy to verify that for all $\alpha \in \R$, the power
maps $\phi_\alpha, \psi_\alpha$ are continuous except possibly at $0$, as
well as multiplicative. In fact, FitzGerald and Horn \cite{FitzHorn},
Bhatia and Elsner \cite{Bhatia-Elsner}, and Hiai \cite{Hiai2009} analyzed
the set of such power maps which preserve entrywise Loewner positivity
(as well as other properties such as monotonicity and convexity).
Recently in \cite{GKR-crit-2sided}, we have completed the classification
of these maps, which was initiated by FitzGerald and Horn in loc.~cit.

The following result generalizes a part of Theorem \ref{thm:rank_one} to
sums of (possibly non-integer) powers. 

\begin{theorem}\label{thm:sum_non_int_powers}
Fix $0 < R \leq \infty$, $I = [0, R)$ or $(-R,R)$, integers $n \geq 3$
and $1 \leq k < n$, and define the function
\begin{equation}
f(x) := c_0 + \sum_{j=1}^\infty c_j g_{\alpha_j}(x), \qquad x \in I,
\end{equation}

\noindent where $c_j, \alpha_j \in \mathbb{R}$ with $\alpha_j <
\alpha_{j+1}$ for $j \geq 1$, and $g_{\alpha_j} \equiv \phi_{\alpha_j}$
or $\psi_{\alpha_j}$ for all $j$. Assume that $f$ is continuous on $I
\setminus \{ 0 \}$. Then the following are equivalent: 
\begin{enumerate}
\item $f[A] \in \br_n^k$ for every $A \in \bp_n^1(I)$; 
\item $c_j \not= 0$ for at most $k$ values of $j$. 
\end{enumerate}
In particular, if $f[A] \in \bp_n^k$ for every $A \in \bp_n^1(I)$, then
$c_j \geq 0$ for all $j$. 
\end{theorem}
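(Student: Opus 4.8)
The plan is to establish the two implications separately: $(2)\Rightarrow(1)$ is immediate, while $(1)\Rightarrow(2)$ is obtained by the same three-step ``peeling'' strategy that drives the proof of Theorem~\ref{thm:sum_powers}, now carried out with the power maps $\phi_\alpha,\psi_\alpha$ in place of monomials (as in the proof of Theorem~\ref{thm:thmA_2sided}).

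For $(2)\Rightarrow(1)$: if at most $k$ of $c_0,c_1,c_2,\dots$ are nonzero, then $f=\sum_{t=1}^{m}d_t\eta_t$ with $m\le k$ and each $\eta_t$ equal to the constant $1$ or to some $\phi_{\beta_t}$ or $\psi_{\beta_t}$. Every $\eta_t$ is multiplicative on all of $\R$, so for any $u\in\R^n$ we have $\eta_t[uu^T]=w^{(t)}(w^{(t)})^T$ with $w^{(t)}_i:=\eta_t(u_i)$, hence $\rk\eta_t[uu^T]\le 1$. Since every element of $\bp_n^1(I)$ is of the form $uu^T$, $f[uu^T]=\sum_{t=1}^m d_t\,w^{(t)}(w^{(t)})^T$ is a sum of at most $k$ matrices of rank $\le 1$, so $f[A]\in\br_n^k$ for all $A\in\bp_n^1(I)$.

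For $(1)\Rightarrow(2)$: assume $f[A]\in\br_n^k$ for all $A\in\bp_n^1(I)$, write $f=\sum_{t\ge 1}d_t\eta_t$ with $d_t\ne 0$ and the $\eta_t$ distinct functions of the above type (after the harmless normalization of absorbing any $g_0$-term into the constant), and suppose for contradiction that there are at least $k+1$ of them; order their exponents as $\gamma_1<\gamma_2<\cdots$, with the constant term, if present, placed at exponent $0$. I would then peel one term at a time. If the lowest active exponent is $0$, i.e.\ $f(0)\neq 0$, apply the implication $(1)\Rightarrow(3)$ of Proposition~\ref{prop:general_rank_reduction} (with $l=1$) to pass to $(f-f(0))[-]\colon\bp_{n-1}^1(I)\to\br_{n-1}^{k-1}$, a map of the same shape with one fewer term; otherwise divide by $g:=g_{\gamma_1}\in\{\phi_{\gamma_1},\psi_{\gamma_1}\}$, after noting that $g_{\gamma_1}$ sends $\bp_n^1(I)$ into $\br_n^1$ for \emph{every} real $\gamma_1$ (because $g_{\gamma_1}[uu^T]=ww^T$ again, using $g_{\gamma_1}(0)=0$), so Proposition~\ref{prop:rank_onesided}(1) applies with this $g$ and $c:=\lim_{x\to 0}f(x)/g_{\gamma_1}(x)$, producing $h_c=d_1+\sum_{t\ge 2}d_t\,g_{\gamma_t-\gamma_1}$, a map $\bp_n^1(I)\to\br_n^k$ now with a nonzero constant term, which is then peeled via Proposition~\ref{prop:general_rank_reduction} as above. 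Each round lowers the number of terms, $n$, and $k$ each by $1$, and after the first round every surviving exponent is strictly positive. After $k-1$ rounds I reach a function $M$, continuous on $I$ with $M(0)=0$, with $M[-]\colon\bp_{n-k+1}^1(I)\to\br_{n-k+1}^1$ and at least two distinct positive-exponent terms. Since $n-k+1\ge 2$, Lemma~\ref{lem:continuous_rank1} forces $M\equiv c$, $c\phi_\alpha$, or $c\psi_\alpha$, so $M|_{(0,R)}$ is a single monomial — contradicting the linear independence of distinct generalized monomials on $(0,R)$. (In the borderline case where the lowest exponent of some intermediate iterate carries both a $\phi$- and a $\psi$-term, a direct $2\times 2$ computation with a mixed-sign rank-one matrix already exhibits a rank-$2$ submatrix, giving the contradiction at once.) Finally, the ``in particular'' clause follows by rerunning the identical peeling with the $\bp$-versions of Propositions~\ref{prop:rank_onesided} and~\ref{prop:general_rank_reduction}: Proposition~\ref{prop:rank_onesided}(2) and the positivity $h(0)>0$ built into Proposition~\ref{prop:general_rank_reduction} force each extracted coefficient to be $\ge 0$, so every $d_t\ge 0$.

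The main obstacle is verifying the standing hypothesis of Proposition~\ref{prop:rank_onesided} that the limit $c=\lim_{x\to 0}f(x)/g_{\gamma_1}(x)$ exists at each peeling stage (for both one-sided limits when $I=(-R,R)$), since the defining sum may be infinite and only conditionally convergent. I would settle this by an Abel summation-by-parts estimate: writing the tail as the generalized Dirichlet series $\sum_{t\ge 2}d_t\,g_{\gamma_t-\gamma_1}(x)$, convergence of the defining series at some interior point $x_0\in(0,R)$ makes the partial sums of $\sum_{t\ge 2}d_t\,g_{\gamma_t-\gamma_1}(x_0)$ bounded, and multiplying by the monotone factors $(x/x_0)^{\gamma_t-\gamma_1}\in(0,1)$ keeps the tail bounded by a constant times $(x/x_0)^{\gamma_2-\gamma_1}\to 0$, so $c=d_1$; the left-hand limit is identical since each $g_{\gamma_t-\gamma_1}$ is $\pm|x|^{\gamma_t-\gamma_1}$. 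The remaining points — that all exponents become positive after the first peel so the terminal $M$ really is continuous at $0$, and the bookkeeping of whether $\phi$ or $\psi$ is the correct divisor at each stage (dictated, as in Theorem~\ref{thm:thmA_2sided}, by the one-sided coefficients) — are routine once this is in place, while the purely linear-algebraic content is supplied verbatim by Lemma~\ref{lem:rank_minors} and the three propositions.
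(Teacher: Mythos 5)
Your proposal is correct and follows the same divide-and-subtract peeling strategy as the paper, driven by Propositions~\ref{prop:rank_onesided} and~\ref{prop:general_rank_reduction}, terminating in the classification of rank-one preservers, and invoking the linear independence of the generalized powers (Proposition~\ref{Tvandermonde}). The one place where you genuinely depart is the final step: the paper keeps its terminal function $f_{k-1}$ only continuous on $I\setminus\{0\}$ and so must use Lemma~\ref{lem:cont}, which for $n-k+1=2$ admits the extra sign-flipping function $K_c$ and forces the paper into a separate sub-argument to exclude it; you instead establish continuity of your terminal $M$ at the origin by the Abel-summation estimate on the tail, which lets you use the cleaner Lemma~\ref{lem:continuous_rank1} uniformly for all $n-k+1\geq 2$ and sidestep the $K_c$ case entirely. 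That Abel estimate, incidentally, is doing double duty: the paper silently appeals to the existence of the limits $c=\lim_{x\to 0}(\text{peeled function})/g_\gamma(x)$ when invoking Proposition~\ref{prop:rank_onesided}, and your argument is exactly what is needed to justify those limits — this is a point of rigor the paper leaves implicit, and making it explicit is a welcome clarification rather than a detour. Two minor remarks: the ``borderline case'' you flag, where a $\phi$- and a $\psi$-term share an exponent, cannot actually arise here because the theorem's exponents $\alpha_j$ are strictly increasing and each carries only one of $\phi_{\alpha_j},\psi_{\alpha_j}$, so that sentence can be dropped; and your normalization ``absorb any $g_0$-term into the constant'' needs a slight caveat since $g_0(0)=0\neq 1$, though it is harmless because the proof only evaluates the combination at $0$ through $f(0)$ and at nonzero $x$ where the distinction vanishes.
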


In order to carry out the first step of the three-step approach for sums
of two-sided powers, we need the following preliminary result, which is
analogous to Lemma \ref{lem:continuous_rank1} but without assuming
continuity at the origin.

\begin{lemma}\label{lem:cont}
Suppose $n \geq 3$, $0 < R \leq \infty$, $I = [0,R)$ or $(-R,R)$, and $f
: I \to \R$ is continuous except possibly at $0$. Then the following are
equivalent:
\begin{enumerate}
\item $f[-] : \bp_n^1(I) \to \br_n^1$.
\item There exists $c \in \R$ such that either $f \equiv c$ on $I$ or
$f(x) \equiv c \phi_\alpha(x)$ or $c \psi_\alpha(x)$ for some $\alpha \in
\R$.
\end{enumerate}

\noindent If instead $n = 2$, then $f[-] : \bp_2^1(I) \to \br_2^1$ if and
only if there exists $c \in \R$ such that either (2) holds, or $f \equiv
c$ on $I \cap [0,\infty)$ and $f \equiv -c$ on $I \cap (-\infty,0)$.
\end{lemma}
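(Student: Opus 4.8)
The plan is to mimic the structure of the proof of Lemma \ref{lem:continuous_rank1}, dropping continuity at the origin but keeping it on $I \setminus \{0\}$. The implication $(2) \Rightarrow (1)$ is immediate: each of the functions $c$, $c\phi_\alpha$, $c\psi_\alpha$ is multiplicative up to sign on $I \cap (0,\infty)$ and $\phi_\alpha,\psi_\alpha$ are multiplicative (and $\psi_\alpha(xy)=\psi_\alpha(x)\psi_\alpha(y)$ in sign as well), so a rank-one matrix $uu^T$ maps to a matrix whose entries factor as $v_iv_j$ up to an overall sign pattern, which has rank at most $1$; the padding-by-zeros observation then handles the general $A \in \bp_n^1(I)$ with some $u_i=0$. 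So the content is $(1) \Rightarrow (2)$.

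First I would restrict to $\bp_2^1(I) \hookrightarrow \bp_n^1(I)$ by padding with zeros, so that $(1)$ forces $\det f[A] = 0$ for every $A = \begin{pmatrix} p & q \\ q & r\end{pmatrix} \in \bp_2^1(I)$, i.e.\ $f(q)^2 = f(p)f(r)$ whenever $q^2 = pr$ with $p,r \in I \cap [0,\infty)$ and $q \in I$. Plugging in rank-one matrices with all positive entries gives $f(\sqrt{xy}) = \pm\sqrt{|f(x)|\,|f(y)|}$ on $I \cap (0,\infty)$; applying this once shows $f \geq 0$ or $f \leq 0$ cannot be concluded directly, but squaring gives the multiplicative relation $f(x)^2 f(y)^2 \cdots$ — more precisely $|f|$ is multiplicative on $I \cap (0,\infty)$ in the logarithmic variable. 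Since $f$ is continuous on $I \setminus \{0\} \supset I \cap (0,\infty)$, the function $g(x) := \ln|f(e^x)|$ is additive and continuous on $\ln(I \cap (0,\infty))$, hence linear: $g(x) = \alpha x + \beta$, so $|f(x)| = |c|\,x^\alpha$ on $I \cap (0,\infty)$ with $|c| = e^\beta$ (the case $f \equiv 0$ on $I \cap (0,\infty)$ being handled separately via Proposition \ref{prop:charac_rank1}(2) when $0 \in I$, or directly). Continuity on $(0,R)$ then forces the sign of $f$ to be constant there, giving $f(x) = c\phi_\alpha(x) = cx^\alpha$ on $I \cap (0,\infty)$ for a single real $\alpha$ and a single $c$; the degenerate subcase $c = 0$ gives $f \equiv 0$ on $(0,R)$, which by the $2\times 2$ relation forces $f \equiv 0$ on $I \setminus\{0\}$ when $n \geq 2$, and the subcase $\alpha = 0$ gives $f \equiv c$ there.

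Next I would compute $f$ on $I \cap (-\infty,0)$. For $x < 0$ in $I$, the matrix $\begin{pmatrix} |x| & x \\ x & |x|\end{pmatrix} \oplus \mathbf{0}_{(n-2)\times(n-2)} \in \bp_n^1(I)$ has rank one, so $f[-]$ of it must have rank $\leq 1$, forcing $f(x)^2 = f(|x|)^2$, i.e.\ $f(x) = \pm f(|x|) = \pm c|x|^\alpha$. For $n \geq 3$ this is where the extra index matters: take three negative (or mixed-sign) entries $u_1,u_2,u_3$ and look at the rank-one matrix $uu^T$; the $3\times 3$ matrix $f[uu^T]$ must have rank $\leq 1$, which forces the sign function $\varepsilon(x) := f(x)/(c|x|^\alpha) \in \{+1,-1\}$ on $I \cap (-\infty,0)$ to be multiplicative across products of coordinates, and combined with continuity of $f$ on $I \setminus \{0\}$ (hence of $\varepsilon$ on the connected set $I \cap (-\infty,0)$, where it takes values in $\{\pm 1\}$) it must be constant. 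This pins down $f(x) \equiv c|x|^\alpha = c\phi_\alpha(x)$ or $f(x) \equiv c\,\sgn(x)|x|^\alpha = c\psi_\alpha(x)$ on all of $I \setminus \{0\}$ (the constant-$\varepsilon$ value being $+1$ in the first case, $-1$ in the second), and since the right-hand sides vanish at $0$ by our extended definition while $f(0)$ is unconstrained by rank-one matrices through the origin only when... actually $f[\mathbf 0] $ has rank $\le 1$ automatically, so no constraint pins $f(0)$, but $\phi_\alpha(0)=\psi_\alpha(0)=0$ is our convention and $c\cdot$ constant is also allowed, so I must be slightly careful: the statement says "$f \equiv c$ on $I$ or $f(x) \equiv c\phi_\alpha$ or $c\psi_\alpha$", and these forms already fix the value at $0$; what actually happens is the $2\times 2$ and $3 \times 3$ constraints through a positive entry and the origin (e.g.\ $\begin{pmatrix} t & 0 \\ 0 & 0\end{pmatrix}$ gives nothing, but $uu^T$ with one zero coordinate and two equal positive ones does force relations involving $f(0)$) — so I would use such a matrix to conclude $f(0) = 0$ in the non-constant case, matching the $\phi_\alpha,\psi_\alpha$ convention. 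For $n = 2$ the $3\times 3$ argument is unavailable, so $\varepsilon$ on $I \cap(-\infty,0)$ need not agree with the sign convention tied to parity of $\alpha$, which is exactly why the $n=2$ clause allows the extra family $f \equiv c$ on $I\cap[0,\infty)$, $f\equiv -c$ on $I \cap(-\infty,0)$ (the $\alpha = 0$ specialization of an independent sign choice). The main obstacle is bookkeeping the sign function $\varepsilon$ correctly: showing it is globally constant for $n \geq 3$ (using that a $3\times 3$ rank-one Hadamard-type constraint rules out a nontrivial $\{\pm1\}$-valued multiplicative sign pattern, together with connectedness of $I \cap (-\infty,0)$), and verifying that the only residual freedom at $n=2$ is the stated one; everything else is a routine adaptation of Lemma \ref{lem:continuous_rank1}.
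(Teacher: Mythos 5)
Your overall three-phase plan (positive axis via the Jensen/Cauchy functional equation, negative axis via continuity of $f/|f|$ on the connected set $I\cap(-\infty,0)$, then pin down $f(0)$) is the same as the paper's, and Steps 1 and 2 are essentially correct. But there is a genuine gap in Step 3, where the argument actually fails in exactly the case that distinguishes $n\geq 3$ from $n=2$.

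You dismiss the matrix $\begin{pmatrix} t & 0\\ 0 & 0\end{pmatrix}$ as giving nothing, but in fact its image under $f[-]$ has the $2\times 2$ minor $f(0)\bigl(f(t)-f(0)\bigr)$, which is precisely the constraint the paper uses: when $\alpha\neq 0$ one can choose $t$ with $f(t)\neq f(0)$ and get $f(0)=0$, and when $\alpha=0$ it yields $f(0)\in\{0,c\}$. Your proposed replacement, $uu^T$ with $u=(s,s,0)^T$, gives exactly the same constraint $f(0)\bigl(f(s^2)-f(0)\bigr)=0$ and nothing more. In particular it does \emph{not} exclude the borderline case $\alpha=0$, $\varepsilon=-1$, $f(0)=c\neq 0$ (i.e. $f\equiv c$ on $[0,R)$, $f\equiv -c$ on $(-R,0)$): there $f[uu^T]=c\,\mathbf{1}_{3\times 3}$ has rank $1$, so the constraint is vacuous. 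This is exactly the case that must be ruled out for $n\geq 3$, and your matrix cannot see it because all its off-diagonal entries are nonnegative. The paper uses $u=(\sqrt{x},-\sqrt{x},0)^T$, producing a rank-one $A\in\bp_3^1(I)$ with a \emph{negative} entry; then $\det f[A]=-4cf(0)^2$, which forces $f(0)=0$ in the $\psi$-case whenever $n\geq 3$. So the idea you need but did not supply is to use a rank-one matrix whose entries take both signs as well as the value $0$.

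A secondary issue: your account of why the $n=2$ exception appears is off. You attribute it to the sign $\varepsilon$ on $(-\infty,0)$ not being pinned down when $n=2$, but $\varepsilon$ is a genuine free choice for \emph{all} $n$ (that is why both $\phi_\alpha$ and $\psi_\alpha$ appear in condition (2)); continuity on the connected set $I\cap(-\infty,0)$ already makes $\varepsilon$ constant, no $3\times 3$ matrix needed. The actual source of the $n=2$ exception is the value $f(0)$: for $\alpha=0$, $\varepsilon=-1$, $c\neq 0$, the assignment $f(0)=c$ is compatible with every $2\times 2$ rank-one constraint (no $2\times 2$ PSD rank-one matrix in $\bp_2^1(I)$ has simultaneously a zero entry and a negative entry, so $f(0)$ never interacts with $f|_{(-R,0)}$), but is killed by the $3\times 3$ mixed-sign matrix above as soon as $n\geq 3$.

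Finally, a minor imprecision worth noting: $g(x):=\ln|f(e^x)|$ is not additive but satisfies the Jensen/midpoint equation; combined with continuity on $(-\infty,\ln R)$ this gives affinity and hence $|f(x)|=|c|x^\alpha$ on $(0,R)$, which is what you want. The paper proves the same power form on $(0,R)$ by a more hands-on iterated-square-root argument that avoids appealing to the regularity theory of the Jensen equation; either route is fine here since $f$ is assumed continuous on $(0,R)$.
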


\begin{proof}
That $(2) \Rightarrow (1)$ (and the corresponding implication for $n=2$)
is clear. We now prove the converse implications. For ease of exposition,
we show this result in three steps. In Step $1$, we examine the behavior
of $f$ on $(0,R)$. In Step $2$, we study the possible values for $f$ on
$(-R,0)$ when $I = (-R,R)$. We conclude by showing in Step 3 that $f(0) =
0$.\medskip

\noindent \textit{Step 1.}
First note by Proposition \ref{prop:charac_rank1}(2) that if $f(a) = 0$
for some $a \in I \setminus \{ 0 \}$, then $f \equiv 0$ on $I$ and we are
done. Thus for the remainder of the proof, we will assume that $f$ is
nonzero on $(0,R)$ and $f[-] : \bp_n^1(I) \to \br_n^1$. Our next claim is
that there exist $c \neq 0$ and $\alpha \in \R$ such that $f(x) =
cx^\alpha$ on $(0,R)$.

To see why the claim holds, first note that $|f|[-]$ maps $\bp_n^1(I)$
into $\br_n^1$, which implies that $|f|[-] : \bp_2^1(I) \to \br_2^1$.
Now given $0 < a < b \in I \cap (0,\infty) = (0,R)$, one shows using
equation \eqref{Econt} that
\[
|f(\sqrt{ab} (b/a)^y)| = \sqrt{|f(a) f(b)|} \cdot \left|
\frac{f(b)}{f(a)} \right|^y, \qquad \forall y \in (-1/2,1/2).
\]

\noindent Define $\displaystyle \alpha := \frac{\ln |f|(b) - \ln
|f|(a)}{\ln(b) - \ln(a)}$. Then the previous equation yields:
\[
|f|(x) := c' x^\alpha\ \forall x \in (a,b) \subset I,
\qquad c' := \frac{\sqrt{|f(a) f(b)|}}{(ab)^{\alpha/2}} > 0.
\]

We now claim that $|f|(x) = c' x^\alpha$ for all $x \in (0,R)$. To see
this, first choose $k \in \N$ such that $\sqrt{ab} t_k \in (a,b)$, where
$t_k := \sqrt[k]{x/\sqrt{ab}}$. Then $x = t_k^k \sqrt{ab}$, and
$\sqrt{ab} t_k^m \in I$ for $0 \leq m \leq k+1$. Now define
\[
B_m := \begin{pmatrix} \sqrt{ab} t_k^m & \sqrt{ab} t_k^{m+1}\\ \sqrt{ab}
t_k^{m+1} & \sqrt{ab} t_k^{m+2} \end{pmatrix} \oplus {\bf 0}_{(n-2)
\times (n-2)} \in \bp_n^1(I), \qquad 0 \leq m \leq k-2.
\]

\noindent Since $f[B_0] \in \br_n^1$, we conclude by the above analysis
that
\[
|f|(\sqrt{ab} t_k^2) = \frac{|f|(\sqrt{ab} t_k)^2}{|f|(\sqrt{ab})} =
\frac{(c')^2 (\sqrt{ab} t_k)^{2\alpha}}{c' \sqrt{ab}^\alpha} = c'
(\sqrt{ab} t_k^2)^\alpha.
\]

\noindent Similar reasoning shows that $|f|(\sqrt{ab} t_k^{m+2}) = c'
(\sqrt{ab} t_k^{m+2})^\alpha$ whenever $0 \leq m \leq k-2$. In particular
by setting $m=k-2$, we obtain:
$|f|(x) = |f|(\sqrt{ab} t_k^k) = c' (\sqrt{ab} t_k^k)^\alpha = c'
x^\alpha$ for all $x \in (0,R)$. Now $f$ is continuous on $(0,R)$, whence
so is $f / |f| : (0,R) \to \{ \pm 1 \}$. Therefore $f/|f|$ is constant on
$(0,R)$, and we conclude that $f(x) = c x^\alpha$ for $x \in
(0,R)$ with $c \neq 0$.\medskip

\noindent \textit{Step 2.}
The previous step shows that $f(x) = x^\alpha$ for all $x \in (0,R)$. Now
assume $I = (-R,R)$. We claim that there exists a constant $\varepsilon
\in \{ \pm 1 \}$ such that $f(x) = \varepsilon c |x|\alpha$ for all $0 >
x \in I$. Indeed, given $0>x\in I$, applying $f$ entrywise to the matrix
$\begin{pmatrix} |x| & x\\ x & |x| \end{pmatrix} \oplus {\bf 0}_{(n-2)
\times (n-2)} \in \bp_n^1(I)$ shows that $f(x) = \pm c |x|^\alpha$. Once
again, $f/|f|$ is constant on $I \cap (-\infty,0)$, from which the claim
follows. \medskip

\noindent \textit{Step 3.} 
It remains to determine the value of $f(0)$. First suppose $\alpha \neq
0$. Then choose $x > 0$ such that $f(x) \neq f(0)$. Applying $f$
entrywise to the matrix $x {\bf 1}_{1 \times 1} \oplus {\bf 0}_{(n-1)
\times (n-1)} \in \bp_n^1(I)$ shows that $f(0) = 0$. Therefore $f \equiv
c \phi_\alpha$ or $c \psi_\alpha$ on $I$ for all $n \geq 2$, if $\alpha
\neq 0$.

Finally, suppose $\alpha = 0$. Applying $f$ entrywise to the matrix
$(R/2) {\bf 1}_{1 \times 1} \oplus {\bf 0}_{(n-1) \times (n-1)} \in
\bp_n^1(I)$ shows that $f(0) = 0$ or $f(0) = c$. This proves the
assertion (2) in all cases except when $n>2$, $I \cap (-\infty,0)$ is
nonempty, and $f \equiv c \psi_\alpha$ on $I \setminus \{ 0 \}$ with $c
\neq 0$. In this case, choose $x>0$ such that $\pm x \in I$, and apply
$f$ entrywise to the matrix
\[
A := \begin{pmatrix} x & -x & 0\\ -x & x & 0\\ 0 & 0 & 0 \end{pmatrix}
\oplus {\bf 0}_{(n-3) \times (n-3)} \in \bp_n^1(I).
\]

\noindent It follows that $f[A]$ has rank one, whence its leading
principal $3 \times 3$ minor must vanish. Now this minor equals $-4c
f(0)^2 = 0$, whence it follows that $f(0) = 0$. This concludes the proof.
\end{proof}

In order to prove Theorem \ref{thm:sum_non_int_powers}, we also need to
extend classical results about Vandermonde determinants to the odd and
even extensions of the power functions.

\begin{proposition}\label{Tvandermonde}
Fix $0 < R \leq \infty$ and $I = [0,R)$ or $(-R,R)$.
\begin{enumerate}
\item The functions $\{ \phi_\alpha, \psi_\alpha : \alpha \in \R \} \cup
\{ f \equiv 1 \}$ are linearly independent on $I = (-R,R)$, while on $I =
[0,R)$ the functions $\{ \phi_\alpha = \psi_\alpha = x^\alpha : \alpha
\in \R \} \cup \{ f \equiv 1 \}$ are linearly independent.

\item The functions in the previous part are also ``countably linearly
independent''. More precisely, suppose $f(x) = c_0 + \sum_{i=1}^\infty
(c_i \phi_{\alpha_i}(x) + d_i \psi_{\alpha_i}(x))$ with
$(c_i)_{i \geq 0}, (d_i)_{i \geq 1} \subset \R$ and $\alpha_1 < \alpha_2
< \dots$ a sequence of distinct real powers. If $f$ is convergent on $I$,
then $f \equiv 0$ on $I$ if and only if $c_i = d_i= 0$ for all $i$. 

\item Suppose $f(x)$ is a linear combination of at most $n$ functions
$\phi_\alpha, \psi_\alpha, 1$ on $(-R,R)$, or of the functions $x^\alpha,
1$ on $[0,R)$. Then $f[-]$ sends $\bp_n^1(I)$ to $\bp_n$ if and only if
all coefficients in $f$ are nonnegative.
\end{enumerate}
\end{proposition}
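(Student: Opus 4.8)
My plan is to prove the three parts in turn; the workhorse throughout is the classical nonsingularity of generalized Vandermonde matrices built from distinct real powers, together with the device of dividing by the lowest‑order term and letting $x\to 0^+$. \textbf{Part (1).} First I would reduce to the statement that for distinct reals $\gamma_1<\dots<\gamma_m$ the powers $x^{\gamma_1},\dots,x^{\gamma_m}$ are linearly independent on any nonempty open subinterval of $(0,\infty)$: in a vanishing combination, divide by $x^{\gamma_1}$ and let $x\to 0^+$ to kill the leading coefficient, then induct. For $I=[0,R)$ this essentially suffices, since there $\phi_\alpha=\psi_\alpha=x^\alpha$, and the constant function $1$ is separated from $\phi_0=x^0$ by evaluating at $x=0$, where $1$ has value $1$ while every $\phi_\alpha,\psi_\alpha$ has value $0$. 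For $I=(-R,R)$, restrict a relation $c_0+\sum_i(c_i\phi_{\alpha_i}+d_i\psi_{\alpha_i})\equiv 0$ separately to $(0,R)$ and to $(-R,0)$; using $\phi_\alpha(x)=|x|^\alpha$ and $\psi_\alpha(x)=\pm|x|^\alpha$ there, the one‑sided versions force $c_i+d_i=0$ and $c_i-d_i=0$ for every $\alpha_i\neq 0$, hence $c_i=d_i=0$, while evaluation at $0$ gives $c_0=0$, which also kills the $\alpha_i=0$ term.

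\textbf{Part (2).} The same peeling argument works with countably many terms; the one new point is the interchange of $\lim_{x\to 0^+}$ with an infinite sum. Evaluation at $0$ gives $c_0=0$, and on $(0,R)$ the identity becomes $\sum_{i\ge 1}e_ix^{\alpha_i}=0$ with $e_i:=c_i+d_i$; dividing by $x^{\alpha_1}$ shows $\sum_{i\ge 2}e_ix^{\alpha_i-\alpha_1}=-e_1$ for all $x\in(0,R)$. Fixing $x_0\in(0,R)$ and writing, for $0<x<x_0$, the tail as $\sum_{i\ge 2}\bigl(e_ix_0^{\alpha_i-\alpha_1}\bigr)(x/x_0)^{\alpha_i-\alpha_1}$, the factors $(x/x_0)^{\alpha_i-\alpha_1}$ are positive, nonincreasing in $i$, and bounded by $(x/x_0)^{\alpha_2-\alpha_1}\to 0$, while the partial sums of $e_ix_0^{\alpha_i-\alpha_1}$ are bounded because the series converges there; so Abel's inequality forces the tail to $0$, whence $e_1=0$. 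Induction gives $e_i=0$ for all $i$, and running the same argument on $(-R,0)$ gives $c_i-d_i=0$, so $c_i=d_i=0$.

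\textbf{Part (3).} The ``if'' direction is immediate: for $A=uu^T\in\bp_n^1(I)$ one has $\phi_\alpha[A]=vv^T$ with $v_i:=|u_i|^\alpha$ (and $v_i:=0$ if $u_i=0$), $\psi_\alpha[A]=ww^T$ with $w_i:=\sgn(u_i)|u_i|^\alpha$, and $1[A]=\mathbf 1_{n\times n}$, so any nonnegative combination of such functions carries every element of $\bp_n^1(I)$ into a nonnegative combination of rank‑one positive semidefinite matrices, hence into $\bp_n$. For ``only if'', write $f=\sum_{t=1}^r c_tg_t$ in its (unique, by part (1)) expansion into distinct functions from $\{\phi_\alpha,\psi_\alpha,1\}$ or $\{x^\alpha,1\}$, so $r\le n$. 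To each power term $g_t$ of order $\gamma_t$ associate the ``column function'' $(x,\sigma)\mapsto\sigma^{\mu_t}x^{\gamma_t}$ on $(0,\infty)\times\{\pm1\}$, with $\mu_t=1$ for a $\psi$ and $0$ otherwise, and keep the constant $1$ separately; as in part (1) these are linearly independent, so I can pick $r$ distinct data points and hence a vector $u\in\R^n$ (one coordinate set to $0$ if the constant $1$ and some $\phi_0,\psi_0$ both occur, the rest nonzero with suitable absolute values and signs, the remaining coordinates padded arbitrarily and the whole vector rescaled so that $uu^T\in\bp_n^1(I)$) for which the generators $(|u_i|^\alpha)_i$, $(\sgn(u_i)|u_i|^\alpha)_i$, $\mathbf 1$ of $\phi_\alpha[uu^T],\psi_\alpha[uu^T],1[uu^T]$ are linearly independent. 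Then $f[uu^T]=\sum_{t=1}^r c_t\,r^{(t)}(r^{(t)})^T\in\bp_n$ with the $r^{(t)}$ linearly independent, so taking $z^{(s)}\in\R^n$ dual to the $r^{(t)}$ yields $c_s=(z^{(s)})^Tf[uu^T]z^{(s)}\ge 0$ for each $s$. (Alternatively one can obtain ``only if'' by inducting on $r$: peel the lowest‑order power via Proposition \ref{prop:rank_onesided}(2)---which also yields nonnegativity of its coefficient---and remove the resulting constant term through the $\bp$‑set version of $(1)\Rightarrow(3)$ in Proposition \ref{prop:general_rank_reduction}.)

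\textbf{Main obstacle.} In part (2) the sole difficulty is the limit/sum interchange, which the Abel estimate handles cleanly. In part (3) the crux is organizing a \emph{single} test vector $u$ so that all $r\le n$ rank‑one generators are simultaneously linearly independent; this is precisely where the hypothesis $r\le n$ is used, and it is here that one must disentangle the ``coincidences'' in which the constant $1$ shares its order with a $\phi_0$ or $\psi_0$ term (resolved by the zero coordinate) or a $\phi_\alpha$ shares its order with a $\psi_\alpha$ term (resolved by using both signs) — so the passage to $(-R,0)$ from part (1) reappears here as the use of negative entries.
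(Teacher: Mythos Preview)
Your proof is correct and tracks the paper's approach closely in spirit, with a few worthwhile differences. In part~(1) the paper simply cites the nonsingularity of generalized Vandermonde matrices (Proposition~\ref{Pgantmacher}) or Dedekind's independence theorem, while your divide-and-let-$x\to0^+$ argument is essentially an inductive unpacking of the same fact. In part~(2) you are actually \emph{more careful} than the paper: the paper just writes $c_1=\lim_{x\to0^+}x^{-\alpha_1}f(x)$ and asserts the induction, whereas your Abel-summation bound legitimately justifies why the tail $\sum_{i\ge2}e_ix^{\alpha_i-\alpha_1}\to0$ under the sole hypothesis that the original series converges pointwise. In part~(3) both you and the paper use the same dual-vector trick $c_s=(z^{(s)})^Tf[uu^T]z^{(s)}\ge0$; the difference is that for $I=(-R,R)$ the paper builds an explicit $(2|S|+1)\times(2|S|+1)$ block matrix $\Psi$ from the Vandermonde matrix $V$ and extracts a nonsingular $l\times l$ submatrix, whereas you invoke the abstract principle that linearly independent functions admit separating evaluation points---less constructive but equally valid, and it sidesteps the case bookkeeping (and a small typo in the paper's inequality $1+2|S|\ge n$, which should read $\ge l$). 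Your parenthetical alternative via Propositions~\ref{prop:rank_onesided}(2) and~\ref{prop:general_rank_reduction} is a genuinely different inductive route not taken in the paper.
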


We also recall the following well-known fact, which is repeatedly used
below.

\begin{proposition}[{\cite[Chapter XIII, \S8, Example
1]{Gantmacher_Vol2}}]\label{Pgantmacher}
Given real numbers $\alpha_1 < \cdots < \alpha_n$ and $0 < x_1 < \cdots <
x_n$ for some $n \geq 1$, the generalized Vandermonde matrix
$(x_i^{\alpha_j})$ is totally positive. In other words, every square
minor has a positive determinant.
\end{proposition}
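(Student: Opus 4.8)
The plan is to reduce the statement to the strict positivity of the full $n \times n$ generalized Vandermonde determinant, and then to prove that by a nonvanishing-plus-connectedness argument. First I would observe that any $m \times m$ minor of $(x_i^{\alpha_j})_{i,j=1}^n$ is obtained by retaining the rows indexed by $i_1 < \dots < i_m$ and the columns indexed by $j_1 < \dots < j_m$, and thus equals $\det(x_{i_p}^{\alpha_{j_q}})_{p,q=1}^m$, which is again a generalized Vandermonde determinant formed from the increasing positive reals $x_{i_1} < \dots < x_{i_m}$ and the increasing reals $\alpha_{j_1} < \dots < \alpha_{j_m}$. Hence it suffices to show that $D := \det(x_i^{\alpha_j})_{i,j=1}^n > 0$ for every $n \geq 1$, every $0 < x_1 < \dots < x_n$, and every $\alpha_1 < \dots < \alpha_n$. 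Substituting $t_i := \ln x_i$ turns this into the equivalent claim that $D(t,\alpha) := \det(e^{\alpha_j t_i})_{i,j=1}^n > 0$ whenever $t_1 < \dots < t_n$ are arbitrary reals and $\alpha_1 < \dots < \alpha_n$.

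Next I would show that $D(t,\alpha)$ never vanishes on this domain. If $D(t,\alpha) = 0$, there is a nonzero $c \in \R^n$ with $\sum_{j=1}^n c_j e^{\alpha_j t_i} = 0$ for $i = 1, \dots, n$; that is, the exponential polynomial $g(s) := \sum_{j=1}^n c_j e^{\alpha_j s}$ has the $n$ distinct real zeros $t_1 < \dots < t_n$. I would rule this out by proving, by induction on $n$, that a nonzero exponential polynomial with $n$ distinct exponents has at most $n-1$ real zeros: the case $n=1$ is immediate, and for the inductive step one multiplies $g$ by $e^{-\alpha_1 s}$ (which preserves all zeros), differentiates, and applies Rolle's theorem to pass from $\geq n$ zeros of $c_1 + \sum_{j \geq 2} c_j e^{(\alpha_j - \alpha_1)s}$ to $\geq n-1$ zeros of $\sum_{j \geq 2} c_j(\alpha_j - \alpha_1) e^{(\alpha_j - \alpha_1)s}$, a nonzero exponential polynomial with the $n-1$ distinct exponents $\alpha_j - \alpha_1$, which contradicts the inductive hypothesis. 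Therefore $D(t,\alpha) \neq 0$ throughout.

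Finally I would pin down the sign. The domain $\{(t,\alpha) : t_1 < \dots < t_n,\ \alpha_1 < \dots < \alpha_n\}$ is a product of two open convex cones, hence connected, and $D$ is continuous and nowhere zero on it, so $D$ has constant sign there. Evaluating at $\alpha = (0,1,\dots,n-1)$ and any admissible $t$ gives the ordinary Vandermonde determinant $\prod_{1 \leq i < k \leq n}(e^{t_k} - e^{t_i}) > 0$, so $D > 0$ everywhere; together with the first reduction this yields total positivity. The main obstacle is the zero-counting lemma in the second step; the only subtlety there is ensuring that the shifted exponents $\alpha_j - \alpha_1$ remain distinct (true since the $\alpha_j$ are) and are nonzero for $j \geq 2$, and that the differentiated polynomial is genuinely nonzero (which follows from $g$ having at least two zeros), so that the inductive hypothesis applies cleanly. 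An alternative to the last two paragraphs is a direct induction that factors a suitable root out of an auxiliary one-variable function, but the Rolle-plus-connectedness route seems the most transparent.
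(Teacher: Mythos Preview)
The paper does not supply its own proof of this proposition; it is stated with a citation to Gantmacher and used as a known fact. Your argument is correct and is in fact the classical one (essentially the proof found in Gantmacher and in P\'olya--Szeg\H{o}): reduce to the full determinant, show nonvanishing via the Rolle-type lemma that a nonzero exponential polynomial with $n$ distinct frequencies has at most $n-1$ real zeros, and fix the sign by connectedness and the ordinary Vandermonde specialization. One minor remark: your justification that the differentiated polynomial is nonzero is a bit terse; the cleanest way to phrase it is to note that if $h'(s)\equiv 0$ then $h$ is the constant $c_1$, and since $g=e^{\alpha_1 s}h$ is assumed nonzero one has $c_1\neq 0$, so $h$ (hence $g$) has no zeros at all, contradicting the assumed $n\geq 2$ zeros. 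With that clarification the induction goes through without appealing separately to linear independence of exponentials.
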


\begin{proof}[Proof of Proposition~\ref{Tvandermonde}]\hfill\medskip

\noindent \textit{Proof of (1).}
This part can be proved using Proposition \ref{Pgantmacher} or using a
generalization of the Dedekind Independence Theorem to arbitrary
semigroups; see \cite[Chapter II, Theorem 12]{artin_galois}.\medskip

\noindent \textit{Proof of (2).}
First suppose that $f(x) = c_0 + \sum_{i =1}^\infty c_i
\phi_{\alpha_i}(x)$ is convergent and identically zero on $I = [0,R)$.
Then $c_0 = f(0) = 0$, and moreover,
$c_1 = \lim_{x \to 0^+} x^{-\alpha_1} f(x)$ $= \lim_{x \to 0^+} 0 = 0$.
Similarly one shows inductively that $c_i = 0$ for all $i$.
Now suppose $f(x) = c_0 + \sum_{i =1}^\infty (c_i \phi_{\alpha_i}(x) +
d_i \psi_{\alpha_i}(x))$ is convergent and identically zero on $I =
(-R,R)$. Once again, $c_0 = f(0) = 0$, and by considering $f$ on $[0,R)$,
we obtain that $c_i + d_i = 0$ for all $i$. Similarly, considering $g(x)
:= f(-x)$ on $[0,R)$, we obtain that $c_i - d_i = 0$ for all $i$. It
follows that $c_i = d_i = 0$, which concludes the proof.\medskip

\noindent \textit{Proof of (3).}
Fix $f(x) = \sum_{i=1}^l c_i f_i(x)$, where $l \leq n$ and each $f_i(x)$
is of the form:
\begin{itemize}
\item $\phi_\alpha(x)$ or $\psi_\alpha(x)$ or $1$, for some $\alpha \in
\R$, if $I = (-R,R)$;
\item $x^\alpha$ or $1$ for some $\alpha \in \R$, if $I = [0,R)$.
\end{itemize}

\noindent It is easy to show that every $f(x)$ of the above form with all
$c_i \geq 0$ sends $\bp_n^1(I)$ to $\bp_n^k$. To show the converse,
assume first that $I = [0,R)$ and $f(x) = \sum_{i=1}^l c_i x^{\alpha_i}$
with all $c_i \neq 0$. Now choose any $0 < x_1 < \cdots < x_l < \sqrt{R}$
and define ${\bf x} := (x_1, \dots, x_l, {\bf 0}_{n-l})^T \in \R^n$; then
the $l$ vectors ${\bf x}^{\circ \alpha_i} := (x_1^{\alpha_i}, \dots,
x_l^{\alpha_i}, {\bf 0}_{n-l})^T$ are linearly independent by Proposition
\ref{Pgantmacher}. Therefore for every $j$, there exists a vector
$\beta_j \in \mathbb{R}^n$ such that $\langle \beta_j, {\bf x}^{\circ
\alpha_i}\rangle = \delta_{ij}$. Since $f[{\bf x} {\bf x}^T] \in
\bp_n^k$, it follows that $\beta_j^T f[{\bf x} {\bf x}^T] \beta_j = c_j
\geq 0$. A similar construction can be carried out when $f(x) = c_0 +
\sum_{i=1}^{l-1} c_i x^{\alpha_i}$. This proves the result for $I =
[0,R)$.

Finally, we show the result for $I = (-R,R)$. Given $f = \sum_{i=1}^l c_i
f_i$ as above, let $S := \{ \alpha \in \R : f_i = \phi_\alpha$ or
$\psi_\alpha \}$. Then $1 + 2|S| \geq n \geq l$. Now fix $0 < x_1 < \dots
< x_{|S|} < \sqrt{R}$; then the matrix
\begin{equation}
\Psi({\bf x},{\bf \alpha}) := \begin{pmatrix}
V & V & {\bf 0}_{|S| \times 1}\\
V & -V & {\bf 0}_{|S| \times 1}\\
{\bf 1}_{1 \times |S|} & {\bf 1}_{1 \times |S|} & 1
\end{pmatrix}
\end{equation}

\noindent is nonsingular. Here, $V$ is the generalized Vandermonde matrix
\[
V = (\phi_{\alpha_i}(x_j))_{i,j =1}^{|S|} = (\phi_{\alpha_i}(-x_j))_{i,j
=1}^{|S|}  = (\psi_{\alpha_i}(x_j))_{i,j =1}^{|S|},
\]

\noindent and it is nonsingular by Proposition \ref{Pgantmacher}. Now
consider the $l \times (1 + 2|S|)$ submatrix $B$ formed by the $l$ rows
of $\Psi({\bf x}, {\bf \alpha})$ which correspond to the nonzero
coefficients $c_i$. Then $B$ has column rank $l$. Choose an $l \times l$
submatrix $B'$ of $B$ that is nonsingular. The result now follows by
proceeding as in the $I = [0,R)$ case.
\end{proof}

We now have all the ingredients to prove Theorem
\ref{thm:sum_non_int_powers}. 

\begin{proof}[Proof of Theorem \ref{thm:sum_non_int_powers}]
Clearly $(2) \Rightarrow (1)$. Now assume $(1)$ holds. If $c_j$ is
nonzero for at most $k-1$ values of $j \geq 0$ then we are done; thus
suppose that $c_j$ is nonzero for at least $k$ values of $j$. To simplify
the proof, we will assume $c_0, \dots, c_{k-1} \not=0$; the proof of the
general case is similar. Using the three-step approach as in Section
\ref{S3step}, $(f - f(0))[-] : \bp_{n-1}^1(I) \to \br_{n-1}^{k-1}$ by
Proposition \ref{prop:general_rank_reduction}. Now applying Proposition
\ref{prop:rank_onesided}(1) with $g(x) := g_{\alpha_1}(x), c := c_1$, we
conclude that $f_1(x) := c_1 + \sum_{j=2}^\infty c_j
h_{\alpha_j-\alpha_1}(x)$ satisfies $f_1[-]: \bp_n^1(I) \to
\br_{n-1}^{k-1}$, where $h_{\alpha_j-\alpha_1}(x) :=
g_{\alpha_i}(x)/g_{\alpha_1}(x)$ is of the form
$\phi_{\alpha_j-\alpha_1}(x)$ or $\psi_{\alpha_j-\alpha_i}(x)$.
Continuing inductively in this manner, we arrive at $f_{k-1} : I \to \R$,
of the form
$f_{k-1}(x) = c_{k-1} + \sum_{j = k}^\infty c_j {\widetilde h}_{\alpha_j
- \alpha_{k-1}}(x)$
where $\widetilde{h}_{\alpha_j - \alpha_{k-1}}(x) = \phi_{\alpha_j -
\alpha_{k-1}}(x)$ or $\psi_{\alpha_j - \alpha_{k-1}}(x)$, $c_{k-1} \neq
0$, and $f_{k-1}[-] : \bp_{n-k+1}^1(I) \to \br_{n-k+1}^1$. Moreover,
$f_{k-1}$ is continuous on $I \setminus \{ 0 \} = (0,R)$ by construction.
There are now two cases:
\begin{enumerate}
\item The first case is when $k < n-1$. Then $n-k+1 \geq 3$, so by Lemma
\ref{lem:cont}, $f_{k-1}$ is either a constant or a scalar multiple of
$\phi_\alpha$ or $\psi_\alpha$ for some $\alpha \in \R$. Evaluating at
the origin shows that $f_{k-1} \equiv c_{k-1}$. Now applying Proposition
\ref{Tvandermonde} shows that $c_j = 0$ for all $j \geq k$. This
concludes the proof of the first equivalence.

\item The other case is when $k=n-1$, i.e., $n-k+1=2$. Then by Lemma
\ref{lem:cont}, either $f_{k-1}$ is a constant or a scalar multiple of
$\phi_\alpha$ or $\psi_\alpha$ for some  $\alpha \in \R$ (in which case
the same reasoning as in the previous case yields the result), or else $I
= (-R,R)$ and $f_{k-1} \equiv K_c$ on $I$, where $K_c(x) \equiv c \neq 0$
on $[0,R)$ and $K_c \equiv -c$ on $(-R,0)$. We now show that this latter
possibility cannot occur. Indeed, suppose by contradiction that
\[
f_{k-1}(x) = c_{k-1} + \sum_{j = k}^\infty c_j {\widetilde h}_{\alpha_j -
\alpha_{k-1}}(x) \quad \equiv \quad K_c(x) \ (c \neq 0).
\]

\noindent Evaluating both sides at zero yields: $c_{k-1} = c$, so that
when restricted to $[0,R)$, we obtain
\[
\sum_{j = k}^\infty c_j {\widetilde h}_{\alpha_j - \alpha_{k-1}}(x)
\equiv 0, \qquad x \in [0,R).
\]

\noindent Using Proposition \ref{Tvandermonde}(2) on $[0,R)$, we conclude
that $c_j = 0$ for all $j \geq k$, so that $f_{k-1} \equiv c_{k-1}$ on
$I$, which contradicts our assumption that $f_{k-1} \equiv K_c$ on $I$.
\end{enumerate}

\noindent The final assertion is shown similarly using Proposition
\ref{prop:rank_onesided}(2) instead of Proposition
\ref{prop:rank_onesided}(1). 
\end{proof}

\section{Preserving positivity under rank constraints II:\\
The special rank $2$ case}\label{Srank2}

Recall that in Section \ref{Srank1}, we had studied functions mapping rank
$1$ matrices into $\bp_n^k$. We now study the entrywise functions mapping
$\bp_n^2$ to $\bp_n^k$. More precisely, we study functions which preserve
positivity on a class of \textbf{special rank 2 matrices}, i.e., matrices
of the form
\begin{equation}\label{Especialrank2}
a {\bf 1}_{n \times n} + u u^T, \qquad a \in \R,\ u \in \R^n. 
\end{equation}

\noindent (We abuse notation slightly here, as the matrix $a {\bf 1}_{n
\times n} + u u^T$ is of rank at most 1 if $a=0$.) As we demonstrate in this
section, preserving positivity on these special rank 2 matrices greatly
constrains the possible entrywise functions. We begin by generalizing a
previous result by Horn \cite[Theorem 1.2]{Horn} (attributed to Loewner),
which provides a necessary condition for an entrywise function to
preserve positivity on special rank $2$ matrices. To our knowledge Horn's
result is the only known result in the literature involving entrywise
functions preserving positivity for matrices of a fixed dimension.

\begin{theorem}[Necessary conditions, fixed dimension]\label{Thorn}
Suppose $0 < R \leq \infty$, $I = (0,R)$, and $f : I \to \R$. Fix $2 \leq
n \in \N$ and suppose that $f[A] \in \bp_n$ for all $A \in \bp_n^2(I)$ of
the form $A = a {\bf 1}_{n \times n} + u u^T$, with $a \in [0,R), u \in
[0,\sqrt{R-a})^n$. Then $f \in C^{n-3}(I)$,
\[
f^{(k)}(x) \geq 0, \qquad \forall x \in I,\ 0 \leq k \leq n-3,
\]

\noindent and $f^{(n-3)}$ is a convex nondecreasing function on $I$. In
particular, if $f \in C^{n-1}(I)$, then $f^{(k)}(x) \geq 0$ for all $x
\in I, 0 \leq k \leq n-1$.
\end{theorem}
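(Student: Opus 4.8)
The plan is to adapt the argument of R.\,A.\,Horn, exploiting the fact that it only needs matrices of the form $a\mathbf{1}_{n\times n}+uu^T$, being explicit about the hypotheses consumed at each stage. First I would handle $f$ itself. From the $2\times2$ principal submatrices of $f[A]$ for $A=a\mathbf{1}_{2\times2}+uu^T$ one reads off $f\ge0$ on $I$ and $f\bigl(a+\sqrt{(p-a)(r-a)}\bigr)^2\le f(p)f(r)$ for all admissible $0\le a\le\min(p,r)$; letting $a\to0^+$ gives multiplicative midpoint-convexity, so by the dichotomy in the proof of Theorem~\ref{thm:vasudeva:M2} either $f\equiv0$ on $I$ (and the theorem is vacuous) or $f>0$ throughout $I$; letting instead $a\uparrow\min(p,r)$ gives that $f$ is nondecreasing, and midpoint-convexity together with monotonicity forces continuity of $f$ on $I$, exactly as in Theorem~\ref{thm:vasudeva:M2}. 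This settles $n\le3$, so assume from now on $n\ge4$ and $f>0$.

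For the derivatives, the engine is the identity $a\mathbf{1}_{n\times n}+uu^T=[\,\mathbf{1}_n\ \ u\,]\,\block{a,\,1}\,[\,\mathbf{1}_n\ \ u\,]^T$: choosing $u=\mu\mathbf{1}_n+\nu s$ with $s$ a fixed vector of distinct entries realizes, up to the entrywise range restriction, every matrix $X\mathbf{1}\mathbf{1}^T+\alpha(\mathbf{1}s^T+s\mathbf{1}^T)+\beta ss^T$ whose $2\times2$ Gram matrix $\left(\begin{smallmatrix}X&\alpha\\\alpha&\beta\end{smallmatrix}\right)$ is positive semidefinite. Fixing $X\in I$ and replacing $s$ by $\varepsilon s$, one obtains, for every such $(X,\alpha,\beta)$ and all sufficiently small $\varepsilon>0$, a positive semidefinite matrix $f[A(\varepsilon)]$ with $A(\varepsilon)=X\mathbf{1}\mathbf{1}^T+\varepsilon\alpha(\mathbf{1}s^T+s\mathbf{1}^T)+\varepsilon^2\beta ss^T$. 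Expanding $f$ about $X$ (in divided differences, so as not to presuppose smoothness), the $\varepsilon^m$-coefficient of $f[A(\varepsilon)]$ is a symmetric matrix of rank $\le m+1$ supported on the span of the rank-one blocks $(s_i^{\,a}s_j^{\,m-a})_{i,j}$, with coefficients that are divided differences of $f$ of order $\le m$; the $\varepsilon^0$- and $\varepsilon^1$-coefficients moreover compress to $0$ on $\mathbf{1}^\perp$. Performing an iterated Schur complement on $A(\varepsilon)$ — first along the $\mathbf{1}$-direction carrying the leading block, then successively along the rank-one directions $s,\,s^{\circ2},\dots$ of the Vandermonde flag that survive at each stage — peels off successive powers of $\varepsilon$ and produces a finite chain of scalar inequalities among the divided differences of $f$ at $X$ and the free parameters $\alpha,\beta$. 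Optimizing each inequality over $(\alpha,\beta)$ with $\left(\begin{smallmatrix}X&\alpha\\\alpha&\beta\end{smallmatrix}\right)\succeq0$ should then yield precisely $f^{(k)}(X)\ge0$ for $0\le k\le n-3$ together with the statements that $f^{(n-3)}$ is nondecreasing and convex on $I$; and when $f\in C^{n-1}(I)$ one may Taylor-expand outright and read off $f^{(k)}(X)\ge0$ for $0\le k\le n-1$.

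The main obstacle, and the part requiring genuine care, is twofold: (i) establishing $f\in C^{n-3}(I)$ with no differentiability assumed — here one must run the divided-difference form of the argument and show that the Schur-complement inequalities bound and monotonize the divided differences, forcing those of order $\le n-3$ to converge, i.e.\ the corresponding derivatives to exist and be continuous; and (ii) the combinatorial bookkeeping of exactly which divided differences survive each compression and at which power of $\varepsilon$, so that after the optimization one lands on the sharp range of derivatives ($k\le n-1$) and no more, matching Horn's output for all positive semidefinite matrices. A partial shortcut for the signs alone, once sufficient smoothness is available: applying the block decomposition \eqref{Eboyd} to $f[a\mathbf{1}_{n\times n}+vv^T]$ with $v=(u_1,\dots,u_{n-1},0)$ shows that $g_a:=f(a+\cdot)-f(a)$ maps $\bp_{n-1}^1([0,R-a))$ into $\bp_{n-1}$ with $g_a(0)=0$, and then Theorem~\ref{thm:sum_powers}(3) gives $f^{(k)}(a)\ge0$ for $0\le k\le n-2$; but the last derivative (equivalently, the convexity of $f^{(n-3)}$) and the continuity of the lower derivatives still require the direct compression argument.
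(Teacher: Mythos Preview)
Your continuity argument in the first paragraph is fine and essentially matches the paper's Step~3, though the paper does it more economically: it shows that \emph{every} $B\in\bp_2(I)$ (not just those arising as $2\times2$ blocks of special matrices) embeds as a principal submatrix of some $\widetilde B=a'\mathbf 1_{n\times n}+uu^T\in\bp_n^2(I)$, and then invokes Theorem~\ref{thm:vasudeva:M2} wholesale.

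For the derivatives, however, you are missing the key simplification and what you propose instead is left genuinely incomplete. The paper does not attempt your iterated Schur-complement/divided-difference program at all. Instead it mollifies: once $f$ is continuous, set
\[
f_\varepsilon(x)\;:=\;\frac1\varepsilon\int_{-\varepsilon}^0 f(x-t)\,\theta_\varepsilon(t)\,dt,
\]
and observe that $f_\varepsilon[A]=\int\theta_\varepsilon(t)\,f[A-t\mathbf 1_{n\times n}]\,dt$. The crucial point is that $A-t\mathbf 1_{n\times n}$ is \emph{again} of the special form $a'\mathbf 1_{n\times n}+uu^T$ with $a'=a-t$, so the hypothesis applies to it and $f_\varepsilon[A]\in\bp_n$. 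Thus $f_\varepsilon\in C^\infty$ satisfies the same hypothesis, and now Horn's original smooth-case argument gives $f_\varepsilon^{(k)}\ge0$ for $0\le k\le n-1$; in particular all finite differences of $f_\varepsilon$ of order $\le n-1$ are nonnegative. Letting $\varepsilon\to0^+$ transfers this to $f$, and then the Boas--Widder theorem \cite{Boas-Widder} converts nonnegativity of finite differences of order $\le n-1$ into $f\in C^{n-3}$ with $f^{(n-3)}$ convex and nondecreasing. This cleanly resolves both of the obstacles you flag --- the regularity $f\in C^{n-3}$ and the combinatorics --- without ever touching divided differences or iterated compressions.

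Your ``partial shortcut'' via \eqref{Eboyd} and Theorem~\ref{thm:sum_powers}(3) is correct as far as it goes, but as you note it presupposes smoothness (indeed, Theorem~\ref{thm:sum_powers}(3) requires $n-1$ nonzero derivatives at the base point), so it cannot replace the mollification step; and it does not by itself reach $k=n-1$ or the convexity of $f^{(n-3)}$.
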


\begin{remark}
Note that Theorem \ref{Thorn} generalizes \cite[Theorem 1.2]{Horn} by
weakening the hypotheses in the following three ways: (1) $f$ is no
longer assumed to be continuous; (2) $f$ is assumed to preserve Loewner
positivity on a far smaller subset of matrices in $\bp_n^2(I)$; (3) the
entries of the matrices can come from $(0,R)$ instead of $(0,\infty)$,
for any $0 < R \leq \infty$.
\end{remark}

\begin{proof}[Proof of Theorem \ref{Thorn}]
For the sake of exposition, we carry out the proof in three
steps.\medskip

\noindent \textit{Step 1: Smooth case.}
First suppose that $f \in C^\infty(I)$ is smooth on $I = (0,R)$. The
result is then true for all $0 < R \leq \infty$, by repeating the
argument in the proof of \cite[Theorem 1.2]{Horn} on $I$, but using
$0<a<R$ now.\medskip

\noindent \textit{Step 2: Continuous case.}
Next, suppose $f$ is continuous but not necessarily smooth on $I =
(0,R)$. Given any probability distribution $\theta \in C^\infty(-1,0)$
with compact support in $(-1,0)$, let $\theta_\varepsilon(x) := \theta(x
\varepsilon^{-1})$ for $\varepsilon > 0$. Consider the function
$f_\varepsilon : (0,R-\varepsilon) \to \R$, given by
\begin{equation}\label{Ehorn}
f_\varepsilon(x) := \frac{1}{\varepsilon} \int_{-\varepsilon}^0 f(x-t)
\theta_\varepsilon(t)\ dt \in C^\infty(0,R-\varepsilon).
\end{equation}

\noindent Fix $0 < \varepsilon_0 < R$, and choose $A = a {\bf 1}_{n
\times n} + u u^T\in \bp_n(0,R - \varepsilon_0)$ with $a \in [0,R -
\varepsilon_0)$. Then equation \eqref{Ehorn} shows that for $0 <
\varepsilon \leq \varepsilon_0$,
\[
f_\varepsilon[A] = \int_{-\varepsilon}^0 \theta_\varepsilon(t) f[A - t
{\bf 1}_{n \times n}]\ dt \in \bp_n
\]

\noindent by assumption on $f$. Then $f_\varepsilon$ is smooth and
satisfies the other assumptions of the theorem on $I = (0,R -
\varepsilon_0)$. Therefore, by the previous step, all of the derivatives
of $f_\varepsilon$ are nonnegative on $(0, R - \varepsilon_0)$. In
particular, all the finite differences of $f_\varepsilon$ are
nonnegative. Since the finite differences of $f_\varepsilon$ converge to
the finite differences of $f$ as $\varepsilon \to 0^+$, it follows that
the finite differences of $f$ are also nonnegative on $ (0,R -
\varepsilon_0)$. Hence by \cite[Theorem, p.~497]{Boas-Widder}, $f \in
C^{n-3}(0,R-\varepsilon_0)$. The result now follows for $I = (0,R -
\varepsilon_0)$ by carrying out the steps at the end of the proof of
\cite[Theorem 1.2]{Horn}. (We remark that the continuity of $f$ is needed
in loc.~cit.) Finally, the result holds on all of $I = (0,R)$ because
$\varepsilon_0$ was arbitrary.\medskip

\noindent \textit{Step 3: General case.}
It remains to show that every function $f$ satisfying the hypotheses is
necessarily continuous on $I = (0,R)$. Consider any $a,b,c \in I$ such
that
\[
B := \begin{pmatrix} a & b\\b & c\end{pmatrix} \in \bp_2(I).
\]

\noindent We first claim that there exists $\widetilde{B} \in \bp_n(I)$
of the form $a' {\bf 1}_{n \times n} + u u^T$, whose principal $2 \times
2$ submatrix is $B$. To show the claim, define
\[
M_1 := \begin{pmatrix} a {\bf 1}_{(n-1) \times (n-1)} & b {\bf 1}_{(n-1)
\times 1}\\ b {\bf 1}_{1 \times (n-1)} & b^2 a^{-1}\end{pmatrix},
\qquad M_2 := (c - b^2 a^{-1}) E_{n,n},
\]

\noindent where $E_{n,n}$ is the elementary matrix, with $(i,j)$th entry
equal to $1$ if $i=j=n$ and $0$ otherwise. Now let $\widetilde{B} := M_1
+ M_2$. Note that $\widetilde{B} \in \bp_n^2(I)$ since $M_1,M_2 \in
\bp_n^1$.
Moreover, $\widetilde{B}$ contains $B$ as a principal submatrix. We now
show that $\widetilde{B}$ is indeed of the form $a' {\bf 1}_{n \times n}
+ u u^T$. There are two sub-cases:
\begin{enumerate}
\item If $a+c \leq 2b$, then $ac \geq b^2 \geq (a+c)^2/4$, whence $a=c$
by the arithmetic mean-geometric mean inequality. It follows that $b =
(a+c)/2 = a$, so that $\widetilde{B} = a {\bf 1}_{n \times n}$ is of the
desired form.

\item If $a+c > 2b$, set $a' := \frac{ac-b^2}{a+c-2b}$. It is easy to
verify that $0 \leq a' < \min(a,c)$. Therefore $\widetilde{B} = a' {\bf
1}_{n \times n} + u u^T$ is of the desired form, where $u :=
(\sqrt{a-a'}, \dots, \sqrt{a-a'}, \sqrt{c-a'})^T$.
\end{enumerate}

Finally, suppose $f[A] \in \bp_n$ for all $A \in \bp_n^2(I)$ of the form
$a' {\bf 1}_{n \times n} + u u^T$. Setting $A = \widetilde{B}$ for $B \in
\bp_2(I)$, we conclude that $f[B] \in \bp_2$ for all $B \in \bp_2(I)$. By
Theorem \ref{thm:vasudeva:M2}, $f$ is continuous on $I$, and the proof is
now complete.
\end{proof}

\begin{remark}
An immediate consequence of Theorem \ref{Thorn} is that for all
noninteger values $t \in (0,n-2)$, there exists $A \in \bp_n^2(I)$ of
the form $a' {\bf 1}_{n \times n} + u u^T$, such that $A^{\circ t} :=
((a_{ij}^t))$ is not in $\bp_n$. This strengthens \cite[Corollary
1.3]{Horn}. A specific example of such a matrix $A$ was constructed in
\cite[Theorem 2.2]{FitzHorn}. More generally for any $2 \leq l \leq n$,
one can produce examples of matrices $A \in \bp_n(I)$ of rank exactly $l$
such that $A^{\circ t} \not\in \bp_n$; see \cite[Section
6]{GKR-crit-2sided} for more details. 
\end{remark}

\begin{remark}\label{Thorn_implies_vasudeva}
Note that applying Theorem \ref{Thorn} for all $n \in \N$ easily yields a
generalization of Theorem \ref{vasudeva_rank2} for any interval $I =
(0,R)$. Thus, Theorem \ref{Thorn} immediately implies Theorem
\ref{thm:rank2_AM}. Later in Section \ref{Sam}, we will provide an
alternate, elementary proof of Theorem \ref{thm:rank2_AM}.
\end{remark}

Note that Theorem \ref{thm:rank2_AM} follows immediately from Theorem
\ref{Thorn}. In Section \ref{Sam}, we also provide an intuitive proof of
Theorem \ref{thm:rank2_AM} that uses the rank techniques developed in
this paper to prove Theorems \ref{thm:sum_powers} and \ref{Tltok}.

Recall that Theorem \ref{thm:sum_powers} shows that functions mapping
$\bp_n^1(I)$ to $\bp_n^k$ under some differentiability assumptions were
polynomials of arbitrary degree. We now show that the rank $2$ situation
is far more restrictive than the rank $1$ case, and it requires no
assumptions on $f$ if $k \leq n-3$.

\begin{theorem}[Special rank $2$, fixed dimension]\label{T2tok}
Suppose $0 < R \leq \infty$, $I = [0,R)$ or $(-R,R)$, and $f \in C^k(I)$
for some $1 \leq k < n$. 
\begin{enumerate}
\item Then the following are equivalent: 
\begin{enumerate}
\item $f[a {\bf 1}_{n \times n} + u u^T] \in \br_n^k$ for all $a \in
I$ and all $u \in \R^n$ with $a + u_i u_j \in I$;
\item $f$ is a polynomial of degree at most $k-1$.
\end{enumerate}

\item Similarly, when $I = [0,R)$, we have $f[a {\bf 1}_{n \times n} + u
u^T] \in \bp_n^k$ for all $a \in [0,R)$ and all $u \in \R^n$ with $a +
u_i u_j \in I$, if and only if $f$ is a polynomial of degree at most
$k-1$ with nonnegative coefficients. Moreover if $k \leq n-3$, the
assumption that $f \in C^k(I)$ is not required.
\end{enumerate}
\end{theorem}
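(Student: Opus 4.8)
The implication $(b)\Rightarrow(a)$, together with its positive counterpart, is a direct computation: if $f(x)=\sum_{m=0}^{k-1}c_mx^m$, then expanding the Hadamard powers via $(a{\bf 1}_{n\times n}+uu^T)^{\circ m}=\sum_{r=0}^{m}\binom{m}{r}a^{m-r}(u^{\circ r})(u^{\circ r})^T$ shows that $f[a{\bf 1}_{n\times n}+uu^T]$ lies in the linear span of the $k$ rank-one matrices $(u^{\circ r})(u^{\circ r})^T$ with $0\le r\le k-1$ (here $u^{\circ 0}={\bf 1}$), hence has rank $\le k$; and when $a\ge 0$ and every $c_m\ge 0$ the same expansion, with the Schur product theorem, shows the combination is positive semidefinite. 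So the content is the forward direction.

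The crucial reduction for $(a)\Rightarrow(b)$ is that for each fixed $a\in I$ the translate $g_a(y):=f(a+y)$, defined on $I-a$, satisfies $g_a[uu^T]=f[a{\bf 1}_{n\times n}+uu^T]$, so that $g_a[-]:\bp_n^1(J)\to\br_n^k$ for every subinterval $J\subseteq I-a$ containing $0$; shrinking $J$ to $(-\delta,\delta)$ (or $[0,\delta)$ when $I=[0,R)$, $a=0$) places us in the hypothesis of Theorem~\ref{thm:sum_powers}(1), \emph{provided} $g_a$ has at least $k$ nonzero derivatives at $0$. The plan is to deduce $f^{(k)}\equiv 0$ on $I$, which gives $(b)$. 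Put $\Omega:=\{a\in I:f^{(k)}(a)\ne 0\}$, open since $f\in C^k$. First I would show no $a\in\Omega$ can have $g_a$ with $\ge k$ nonzero derivatives at $0$: otherwise Theorem~\ref{thm:sum_powers}(1) forces $f=Q(\cdot-a)$ near $a$ for a polynomial $Q$ with exactly $k$ nonzero coefficients; $\deg Q\le k-1$ contradicts $a\in\Omega$ outright, while $\deg Q\ge k$ is excluded by shifting to a generic nearby base point $a'$, since then $Q(a'-a+\cdot)$ has $\deg Q+1>k$ nonzero coefficients, contradicting the conclusion of Theorem~\ref{thm:sum_powers}(1) applied to $g_{a'}$. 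Hence every $a\in\Omega$ has at most $k-1$ nonzero derivatives of $g_a$ at $0$, so (as $f^{(k)}(a)\ne 0$) at least one of $f^{(0)}(a),\dots,f^{(k-1)}(a)$ vanishes. Thus any connected component $\Omega_1$ of $\Omega$ equals the union of the finitely many relatively closed sets $\{a\in\Omega_1:f^{(i)}(a)=0\}$, $0\le i\le k-1$; by the Baire category theorem one of them has nonempty interior, so $f^{(i)}\equiv 0$, hence $f^{(k)}\equiv 0$, on some open subinterval of $\Omega_1$ --- impossible. So $\Omega=\varnothing$, $f^{(k)}\equiv 0$ on $I$, and $f$ is a polynomial of degree $\le k-1$.

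For part $(2)$: when $f\in C^k(I)$ is assumed, the argument above gives $f=Q:=\sum_{m=0}^{k-1}c_mx^m$ on all of $I$, and the $c_m$ are nonnegative by the argument in the proof of Proposition~\ref{Tvandermonde}(3) --- evaluate $\beta_j^Tf[uu^T]\beta_j=c_j\ge 0$ for $u$ with distinct small positive coordinates, where $\beta_j$ is dual to the linearly independent vectors $u^{\circ 0},\dots,u^{\circ(k-1)}$ (Proposition~\ref{Pgantmacher}). When instead $I=[0,R)$ and $k\le n-3$ with no smoothness hypothesis, Theorem~\ref{Thorn} applies (the hypothesis $f[a{\bf 1}_{n\times n}+uu^T]\in\bp_n^k\subseteq\bp_n$ covers all $u$ with nonnegative coordinates), giving $f\in C^{n-3}((0,R))\subseteq C^k((0,R))$ together with $f^{(j)}\ge 0$ there; the main argument then runs with base points in $(0,R)$ to give $f=Q$ on $(0,R)$, and finally $f(0)=c_0$ is obtained by feeding the special rank-two matrices $t\epsilon{\bf 1}_{n\times n}+t\epsilon ww^T$ with $w=(1,-1,0,\dots,0)$ into $f$ and letting $t\to 0^+$: the limiting $3\times 3$ principal minor equals $-c_0(f(0)-c_0)^2\ge 0$, which together with $0\le f(0)\le c_0$ (read off from a small matrix having a zero entry) forces $f(0)=c_0$, so $f=Q$ on all of $[0,R)$.

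\textbf{Main obstacle.} The routine pieces are the easy direction and the translation-to-rank-one reduction; the real difficulty is upgrading the \emph{local} information produced by Theorem~\ref{thm:sum_powers} to a \emph{global} conclusion. The delicate step is handling base points where the translate $g_a$ has too few nonzero derivatives for Theorem~\ref{thm:sum_powers} to apply directly --- this is precisely where the "generic shift of a polynomial" argument (to exclude high-degree local polynomials) and the Baire-category covering of $\Omega$ (to propagate $f^{(k)}=0$) are needed, and keeping the bookkeeping of "number of nonzero derivatives" correct across these steps is the subtle point.
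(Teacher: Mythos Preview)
Your argument is correct, and the easy direction together with the translation $g_a(y)=f(a+y)$ matches the paper exactly. The forward direction $(a)\Rightarrow(b)$, however, is handled quite differently.

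The paper's route is shorter and avoids both the polynomial-shift step and the Baire argument. It argues by contradiction: if $f^{(k)}(a_k)\ne 0$ for some $a_k\in I$, then by continuity $f^{(k)}$ is nonvanishing on an open subinterval $I_k$. A repeated application of Rolle's theorem then shows that $f^{(i)}$ has at most $k-i$ roots in $I_k$ for each $0\le i<k$; removing this finite set of points leaves some $a_0\in I_k$ at which $f^{(0)}(a_0),\dots,f^{(k)}(a_0)$ are all nonzero. At that single base point, Proposition~\ref{PThmA_at_a} (equivalently Theorem~\ref{thm:sum_powers}(1) applied to $g_{a_0}$, which now has $k+1\ge k$ nonzero derivatives at $0$) produces a $u$ with $\rk f[a_0{\bf 1}_{n\times n}+uu^T]\ge k+1$, contradicting~(a). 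So the paper goes directly to a generic point via Rolle, whereas you first show that \emph{no} point in $\Omega$ can be generic (the shift argument), and then use Baire to rule out $\Omega$ being covered by the zero sets of lower derivatives. Both are valid; the Rolle approach is more elementary and closer to one line, while your method would generalize more readily to settings where Rolle-type counting is unavailable.

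For part~(2) when $k\le n-3$, the paper simply remarks that Theorem~\ref{Thorn} supplies the needed $C^k$ regularity and leaves it at that. Your treatment is more careful: Theorem~\ref{Thorn} only gives $f\in C^{n-3}((0,R))$, so the main argument runs on $(0,R)$ and one must separately pin down $f(0)$. Your limiting $3\times 3$ principal-minor computation (using a genuine special rank-two matrix, not an arbitrary $\bp_3$ matrix) correctly closes this small gap.
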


\begin{remark}
Note that part (2) of Theorem \ref{T2tok} is stated only for $I = [0,R)$
since $a {\bf 1}_{n \times n} + uu^T \not\in \bp_n^2$ in general if $a <
0$. When $I = (-R,R)$ and $f$ is analytic on $I$, Theorem \ref{T2tok}(2)
also holds for $I = (-R,R)$ and follows immediately by the uniqueness
principle from the $I = [0,R)$ case. The result also holds if $I =
(-R,R)$ and $f$ admits at least $k$ nonzero derivatives at the origin,
since in that case $f$ is a polynomial by Theorem \ref{thm:sum_powers}.  
\end{remark}

The following result is crucially used in the proof of Theorem
\ref{T2tok}, as well as in later sections.

\begin{proposition}\label{PThmA_at_a}
Let $a \in \R$, $n \geq 2$, $1 \leq k \leq n$, $0 < R \leq \infty$, $I =
(a-R,a+R)$, and $f: I \to \R$. Suppose $f$ admits at least $k$ nonzero
derivatives at $a$. Then there exists $u \in \R^n$ such that $a + u_i u_j
\in I$ and $f[a {\bf 1}_{n \times n} + u u^T]$ has rank at least $k$.
\end{proposition}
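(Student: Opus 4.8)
The plan is to exhibit an explicit one-parameter family $u = \varepsilon w$ of vectors, with $w$ having distinct positive coordinates, such that $f[a{\bf 1}_{n\times n} + uu^T]$ has rank at least $k$ for all small $\varepsilon > 0$; the Taylor expansion of $f$ at $a$ will produce a rank-$k$ ``main term'' and the scaling will render the Taylor remainder negligible on the relevant scale. Let $0 \le m_1 < m_2 < \cdots < m_k$ be the orders of the first $k$ nonzero derivatives of $f$ at $a$ and put $c_s := f^{(m_s)}(a)/m_s! \neq 0$, so that by Taylor's theorem $f(a+t) = \sum_{s=1}^k c_s t^{m_s} + r(t)$ with $r(t) = o(t^{m_k})$ as $t \to 0$. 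Fix $w = (w_1,\dots,w_n) \in \R^n$ with $0 < w_1 < \cdots < w_n$; for $\varepsilon > 0$ small we have $a + \varepsilon^2 w_i w_j \in I$ for all $i,j$, and setting $u := \varepsilon w$ we obtain the decomposition
\[
f[a{\bf 1}_{n\times n} + uu^T] = A_\varepsilon + E_\varepsilon, \qquad A_\varepsilon := \sum_{s=1}^k c_s \varepsilon^{2m_s} (w^{\circ m_s})(w^{\circ m_s})^T,\quad (E_\varepsilon)_{ij} := r(\varepsilon^2 w_i w_j).
\]

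The proof then comes down to comparing two scales. On one hand, $\|E_\varepsilon\| = o(\varepsilon^{2m_k})$: since $r(t) = o(t^{m_k})$ and the finitely many products $w_i w_j$ lie in a compact subset of $(0,\infty)$, we get $\max_{i,j}|r(\varepsilon^2 w_i w_j)| = o(\varepsilon^{2m_k})$, hence $\|E_\varepsilon\| \le \|E_\varepsilon\|_F \le n\max_{i,j}|(E_\varepsilon)_{ij}| = o(\varepsilon^{2m_k})$. On the other hand, the $k$-th largest singular value of $A_\varepsilon$ stays above a fixed multiple of $\varepsilon^{2m_k}$: writing $A_\varepsilon = VD_\varepsilon V^T$ with $V := (w_i^{m_s})_{1\le i\le n,\,1\le s\le k} \in \R^{n\times k}$ and $D_\varepsilon := \block{c_1\varepsilon^{2m_1},\dots,c_k\varepsilon^{2m_k}}$, the matrix $V$ has rank $k$ (its $k\times k$ submatrix on the first $k$ rows is a generalized Vandermonde matrix, nonsingular by Proposition~\ref{Pgantmacher}), so $V^{+}V = \Id_k$ and $D_\varepsilon = V^{+}A_\varepsilon (V^{+})^T$; elementary singular-value inequalities then give $\sigma_k(A_\varepsilon) \ge \sigma_k(V)^2\,\sigma_{\min}(D_\varepsilon)$, and $\sigma_{\min}(D_\varepsilon) = |c_k|\varepsilon^{2m_k}$ once $\varepsilon$ is small enough (the exponent $2m_k$ being the largest). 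Thus $\sigma_k(A_\varepsilon) \ge c_*\,\varepsilon^{2m_k}$ with $c_* := \sigma_k(V)^2|c_k| > 0$.

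Combining these via Weyl's perturbation inequality for singular values,
\[
\sigma_k\bigl(f[a{\bf 1}_{n\times n} + uu^T]\bigr) = \sigma_k(A_\varepsilon + E_\varepsilon) \ge \sigma_k(A_\varepsilon) - \|E_\varepsilon\| \ge c_*\,\varepsilon^{2m_k} - o(\varepsilon^{2m_k}) > 0
\]
for all sufficiently small $\varepsilon > 0$. Hence $f[a{\bf 1}_{n\times n} + uu^T]$ has at least $k$ nonzero singular values, i.e.\ rank at least $k$, with $u = \varepsilon w$ and $a + u_iu_j \in I$, which is the assertion. The one delicate point — the step I would double-check most carefully — is the scale matching: the perturbation $E_\varepsilon$ is \emph{not} small relative to the largest entries of $A_\varepsilon$ (which are of order $\varepsilon^{2m_1}$), but it \emph{is} small relative to $\sigma_k(A_\varepsilon) \asymp \varepsilon^{2m_k}$, precisely because the Taylor remainder sits one full order below the last retained monomial $c_k t^{m_k}$. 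This also shows why a direct $k\times k$-minor computation is awkward (the determinant of the main block is of order $\varepsilon^{2(m_1+\cdots+m_k)}$, not $\varepsilon^{2m_k}$, so the cross terms are harder to control); the remaining ingredients (rank of the generalized Vandermonde block, the singular-value estimates) are routine.
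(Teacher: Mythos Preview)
Your proof is correct, and it takes a genuinely different route from the paper's. The paper argues by contradiction: if no such $u$ existed, then the shifted function $g(x):=f(a+x)$ would send $\bp_n^1((-R,R))$ into $\br_n^{k-1}$, and Theorem~\ref{thm:sum_powers}(1) would force $g$ to be a polynomial with exactly $k-1$ nonzero coefficients, contradicting the hypothesis on the derivatives. So the paper treats the proposition as a corollary of the main rank-one characterization already established via the three-step machinery of Section~\ref{S3step}.

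Your argument is direct and self-contained: Taylor-expand, write the main term as $VD_\varepsilon V^T$ with $V$ a generalized Vandermonde block (rank $k$ by Proposition~\ref{Pgantmacher}), bound $\sigma_k(A_\varepsilon)$ below via $D_\varepsilon = V^+ A_\varepsilon (V^+)^T$ and the submultiplicativity of singular values, and absorb the Peano remainder with Weyl's perturbation inequality. The scale matching you flag is exactly the point, and you have it right: the remainder is $o(\varepsilon^{2m_k})$ while $\sigma_k(A_\varepsilon)\gtrsim\varepsilon^{2m_k}$. What your approach buys is independence from Theorem~\ref{thm:sum_powers}---it uses only elementary linear algebra---and a quantitative, constructive conclusion (an explicit family of $u$'s, with an explicit lower bound on $\sigma_k$). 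What the paper's approach buys is brevity once Theorem~\ref{thm:sum_powers} is in hand, and it highlights that the proposition is logically downstream of the rank-one theory rather than a separate analytic fact.
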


\begin{proof}
Suppose to the contrary that $f[a {\bf 1}_{n \times n} + u u^T]$ has rank
less than $k$ for all $u \in \R^n$ such that $a + u_i u_j \in I$. Define
$g: (-R,R) \to \R$ by $g(x) := f(a+x)$. By hypothesis, $g[-]:
\bp_n^1((-R,R)) \to \br_n^{k-1}$.
Moreover, $g$ admits at least $k$ nonzero derivatives at $0$. Thus, by
Theorem \ref{thm:sum_powers}(1), the function $g$ is a polynomial with
exactly $k-1$ nonzero coefficients, which is impossible. Therefore, there
exists $u \in (-R,R)^n$ such that $g[uu^T] = f[a{\bf 1}_{n \times n} + u
u^T]$ has rank at least $k$. 
\end{proof}

We now have all the ingredients to prove Theorem \ref{T2tok}. 

\begin{proof}[Proof of Theorem \ref{T2tok}]
We begin by proving the first set of equivalences.\medskip

\noindent $\bf{(a) \Rightarrow (b)}$
Clearly, $(b)$ holds if $f^{(k)} \equiv 0$ on $I$. Thus, assume there is
a point $a_{k} \in I$ such that $f^{(k)}(a_k) \ne 0$. By continuity,
there is an open interval $I_k \subset I$ such that $f^{(k)}$ has no
roots in $I_k$. It follows by repeatedly applying Rolle's Theorem that
$f^{(i)}$ has at most $k-i$ roots in $I_k$ for all $0 \leq i < k$. Now
pick any point $a_0 \in I_k$ which is not one of these finitely many
roots of $f^{(i)}$ for any $0 \leq i \leq k$, i.e., $f^{(0)}(a_0),
f^{(1)}(a_0), \dots f^{(k)}(a_0) \ne 0$.
Therefore, by Proposition \ref{PThmA_at_a}, there exists $A = a {\bf
1}_{n \times n} + u u^T \in \bp_n^2(I)$ such that $f[A]$ has rank at
least $k+1$. This is impossible by assumption. Thus $f^{(k)} \equiv 0$
and $f$ is a polynomial of degree at most $k-1$, proving $(b)$.
\smallskip

\noindent $\bf{(b) \Rightarrow (a)}$
Conversely, suppose $f(x) = \sum_{m=0}^{k-1} c_m x^m$. Then we compute
for $a \in I$ and $u \in \R^n$ such that $a + u_i u_j \in I$:
\begin{align}
f[a {\bf 1}_{n \times n} + u u^T]_{ij} = &\ \sum_{m=0}^{k-1} c_m (a + u_i
u_j)^m = \sum_{m=0}^{k-1} \sum_{l=0}^m c_m \binom{m}{l} a^{m-l} u_i^l
u_j^l \notag\\
= &\ \sum_{l=0}^{k-1} u_i^l u_j^l \sum_{m=l}^{k-1} c_m \binom{m}{l}
a^{m-l} = \sum_{l=0}^{k-1} u_i^l u_j^l d_l, \label{Especial2}
\end{align}

\noindent say. Therefore
$\displaystyle f[a {\bf 1}_{n \times n} + u u^T] = \sum_{l=0}^{k-1} d_l
u^{\circ l} (u^{\circ l})^T$,
where $u^{\circ l} := (u_1^l, u_2^l, \dots, u_n^l)^T$. In particular,
$f[a {\bf 1}_{n \times n} + u u^T]$ has rank at most $k$.

We now prove the second set of equivalences. Clearly if $f$ is a
polynomial of degree $\leq k-1$ with nonnegative coefficients, then $f[a
{\bf 1}_{n \times n} + u u^T] \in \bp_n^k$ for all $a \geq 0$ and $u \in
\R^n$ such that $a + u_i u_j \in [0,R)$, by the calculation in equation
\eqref{Especial2}. Conversely if (1) holds, the first set of equivalences
already shows that $f$ is a polynomial of degree $\leq k-1$.
That the coefficients of $f$ are nonnegative follows by Theorem
\ref{thm:rank_one}.
Finally, if $k \leq n-3$ then the condition that $f \in C^k(I)$ actually
follows by Theorem \ref{Thorn}, and hence does not need to be assumed.
\end{proof}

\begin{remark}\label{rem:weakT2tok}
Note that the implication $(a) \Rightarrow (b)$ in Theorem \ref{T2tok}
also holds under the weaker assumption that $f[a{\bf 1}_{n \times n} +
uu^T] \in \br_n^k$ for all $a \in I$ and $u \in
(-\epsilon(a),\epsilon(a))^n$ where $0 < \epsilon(a) < \sqrt{R-|a|}$.
This observation will be important later in proving Theorem \ref{Tltok}. 
\end{remark}

Recall by Theorem \ref{T2tok} that polynomials of degree at most $k-1$
take special rank 2 matrices to $\br_n^k$. We now show that this
behavior is not shared by arbitrary linear combinations of powers - for
instance, if there is even one noninteger power involved.

\begin{proposition}\label{P49}
Fix $0 < R \leq \infty$, integers $n \geq 2$ and $m \geq 1$, and suppose
$\alpha_1 < \cdots < \alpha_m \in \R$ with $\alpha_i \notin \{ 0, 1,
\dots, n-2 \}$ for at least one $i$. Define $f(x) = \sum_{i=1}^m c_i
x^{\alpha_i}$, with $c_i \ne 0$. Then there exist $a \in (0,R)$ and $u
\in (-\epsilon,\epsilon)^n$ where $\epsilon :=
\min(\sqrt{a},\sqrt{R-a})$, such that $f[a {\bf 1}_{n \times n} + uu^T]$
has full rank.
\end{proposition}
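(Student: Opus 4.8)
The plan is to expand $f$ about a base point $a\in(0,R)$, which turns $f[a\mathbf 1_{n\times n}+uu^T]$ into a superposition of the rank-one matrices $u^{\circ l}(u^{\circ l})^T$, and then to show that for a well-chosen $a$ and a suitably scaled $u$, at least $n$ of these pieces survive with nonzero weight and are ``spread out'' enough — via a generalized Vandermonde determinant — to produce full rank. For $a\in(0,R)$ and $|x|<a$ we have $(a+x)^{\alpha_i}=\sum_{l\ge 0}\binom{\alpha_i}{l}a^{\alpha_i-l}x^l$, so setting
\[
d_l(a):=\sum_{i=1}^m c_i\binom{\alpha_i}{l}a^{\alpha_i-l}
\]
gives $f(a+x)=\sum_{l\ge 0}d_l(a)x^l$, and hence $f[a\mathbf 1_{n\times n}+uu^T]=\sum_{l\ge 0}d_l(a)\,u^{\circ l}(u^{\circ l})^T$ whenever all $|u_iu_j|<a$.

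First I would count the $d_l$ that do not vanish identically on $(0,R)$. As the powers $a^{\alpha_1},\dots,a^{\alpha_m}$ are linearly independent (Proposition~\ref{Tvandermonde}(1)) and every $c_i\neq 0$, we have $d_l\equiv 0$ exactly when $\binom{\alpha_i}{l}=0$ for all $i$, i.e.\ when every $\alpha_i$ lies in $\{0,1,\dots,l-1\}$. Using the hypothesis that some $\alpha_{i_0}\notin\{0,1,\dots,n-2\}$, one checks that $d_l\not\equiv 0$ for at least $n$ values of $l$: if $\alpha_{i_0}$ is not a nonnegative integer this holds for every $l$, and if all the $\alpha_i$ are nonnegative integers then $\alpha_{i_0}\ge n-1$ and $d_l\not\equiv 0$ for $l=0,1,\dots,\alpha_{i_0}$. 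Let $p_1<\dots<p_n$ be the $n$ smallest such indices. Each $d_{p_j}$ is a nonzero linear combination of at most $m$ distinct real powers of $a$, which form a Chebyshev system on $(0,\infty)$ by Proposition~\ref{Pgantmacher}, so $d_{p_j}$ has finitely many zeros in $(0,R)$; hence I can fix $a\in(0,R)$ with $d_{p_1}(a),\dots,d_{p_n}(a)$ all nonzero, and set $\epsilon:=\min(\sqrt a,\sqrt{R-a})>0$.

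Next, fix $v:=(1,2,\dots,n)^T$ and, for real $s$ near $0$, put $M(s):=f[a\mathbf 1_{n\times n}+s\,vv^T]=\sum_{l\ge 0}d_l(a)s^l\,v^{\circ l}(v^{\circ l})^T$; its entries $f(a+sv_iv_j)$ are analytic in $s$ near $0$, so $\det M(s)$ is analytic near $0$. To find its order of vanishing, truncate: fix any $N>p_1+\dots+p_n$ and let $M_N(s):=\sum_{l=0}^{N-1}d_l(a)s^l\,v^{\circ l}(v^{\circ l})^T=V_N\Delta_N(s)V_N^T$, where $V_N:=(v_i^{\,l})_{1\le i\le n,\ 0\le l\le N-1}$ and $\Delta_N(s):=\block{d_0(a)s^0,\dots,d_{N-1}(a)s^{N-1}}$. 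Since the degree-$k$ Taylor coefficient of a determinant of analytic matrix entries uses only the entries' coefficients up to degree $k$, we get $\det M(s)\equiv\det M_N(s)\pmod{s^{N}}$. By the Cauchy--Binet formula, $\det M_N(s)=\sum_S (\det V_N^{(S)})^2\prod_{l\in S}d_l(a)s^l$, the sum over $n$-element subsets $S\subset\{0,\dots,N-1\}$ with $V_N^{(S)}$ the submatrix of $V_N$ on the columns indexed by $S$. Among the terms with $d_l(a)\neq 0$ for every $l\in S$, the one of lowest order in $s$ is the unique choice $S=\{p_1,\dots,p_n\}$, of order $p_1+\dots+p_n<N$, with coefficient $\big(\det (v_i^{\,p_j})_{i,j=1}^n\big)^2\prod_{j=1}^n d_{p_j}(a)$, which is nonzero since the generalized Vandermonde $(v_i^{\,p_j})$ is nonsingular (Proposition~\ref{Pgantmacher}) and each $d_{p_j}(a)\neq 0$. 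Hence $\det M(s)\not\equiv 0$ near $0$.

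Since $\det M(s)$ is analytic and not identically zero near $0$, its zeros there are isolated, so I may pick $s_0\in(0,\epsilon^2/n^2)$ with $\det M(s_0)\neq 0$; then $u:=\sqrt{s_0}\,v$ has $|u_i|<\epsilon$ for all $i$ and $a+u_iu_j\in(a-\epsilon^2,a+\epsilon^2)\subset(0,R)$, and $f[a\mathbf 1_{n\times n}+uu^T]=M(s_0)$ has full rank, as desired. The step I expect to be the main obstacle is the third one: verifying that the lowest-order Cauchy--Binet contribution to $\det M(s)$ really has the claimed order and is not cancelled. Passing to the genuinely polynomial truncation $M_N(s)$ reduces this to a finite computation, after which the non-cancellation follows from total positivity of generalized Vandermonde matrices (Proposition~\ref{Pgantmacher}) together with the choice of $a$ making the relevant $d_{p_j}(a)$ nonzero.
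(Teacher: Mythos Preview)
Your proof is correct. The first half --- locating an $a\in(0,R)$ at which the Taylor coefficients $d_0(a),\dots,d_{n-1}(a)$ of $x\mapsto f(a+x)$ are all nonzero --- coincides with the paper's argument, since $d_l(a)=f^{(l)}(a)/l!$. (A small simplification: $d_l\equiv 0$ means every $\alpha_i\in\{0,\dots,l-1\}$, which forces $d_{l'}\equiv 0$ for all $l'\ge l$; hence $\{l:d_l\not\equiv 0\}$ is always an initial segment of $\Z_{\ge 0}$, and your $p_1,\dots,p_n$ are simply $0,1,\dots,n-1$.)

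The second half is a genuinely different route. The paper, having found such an $a$, invokes Proposition~\ref{PThmA_at_a}, whose proof is by contradiction through Theorem~\ref{thm:sum_powers} and the three-step machinery of Section~\ref{S3step}. You instead argue constructively: set $u=\sqrt{s}\,(1,2,\dots,n)^T$, expand $\det f[a\mathbf 1_{n\times n}+uu^T]$ via Cauchy--Binet, and identify the unique lowest-order term --- corresponding to $S=\{0,1,\dots,n-1\}$ --- whose coefficient is the square of a generalized Vandermonde determinant times $\prod_j d_j(a)\ne 0$. This is more elementary and self-contained (it does not rely on Theorem~\ref{thm:sum_powers}), at the cost of a longer explicit computation; in effect you give a direct proof of the case of Proposition~\ref{PThmA_at_a} needed here, and make the vector $u$ explicit where the paper obtains it only nonconstructively.
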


\begin{proof}
We first claim that there exists an open interval $(p,q) \subset (0,R)$
such that $f, f', \dots, f^{(n-1)}$ are all nonzero on $(p,q)$. Indeed,
let $I_0 := (0,R)$, and note that for any $x_1 < \cdots < x_m$ in $I_0$,
the matrix $((x_j^{\alpha_i}))_{i,j=1}^m$ is nonsingular by Proposition
\ref{Pgantmacher}. Hence there exists $j$ such that $f(x_j) = \sum_i c_i
x_j^{\alpha_i} \neq 0$. We conclude by continuity of $f$ that $f =
f^{(0)}$ is nonzero on a nonempty open interval $I_1 \subset I_0$.
Repeatedly applying the above arguments, we obtain a nested sequence of
nonempty open intervals on which all sufficiently low-degree derivatives
of $f$ are nonzero. This shows the existence of the interval $I_{n-1} =
(p,q)$. (We need at least one $\alpha_i$ to not lie in $\{ 0, \dots, n-2
\}$, otherwise $f^{(n-1)} \equiv 0$.)
Finally, fix $a := (p+q)/2$ and $\epsilon = (q-p)/2$. The result then
follows by Proposition \ref{PThmA_at_a}.
\end{proof}

Proposition \ref{PThmA_at_a} also has the following important
consequence, which will be useful later. Recall that $f_\alpha(x) :=
x^\alpha$ for $\alpha > 0$. 

\begin{corollary}\label{Cnonint}
Let $\alpha \in (0,\infty)$ and $n \geq 2$. For $u \in \R^n$, define $A_u
:= {\bf 1}_{n \times n} + u u^T$. Then the following are equivalent: 
\begin{enumerate}
\item There exists $u\in \R^n$ (in fact in $(-1,1)^n$) such that the
matrix $f_\alpha[A_u]$ is nonsingular.
\item Either $\alpha \in \N \cap [n-1,\infty)$ or $\alpha \not\in \N$. 
\end{enumerate} 
\end{corollary}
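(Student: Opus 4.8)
The plan is to recognize condition (2) as the statement ``$x^\alpha$ has at least $n$ nonzero derivatives at the point $1$,'' and then to deduce the equivalence from Proposition \ref{PThmA_at_a} in the direction $(2)\Rightarrow(1)$ and from the explicit expansion \eqref{Especial2} (from the proof of Theorem \ref{T2tok}) in the direction $(1)\Rightarrow(2)$. Throughout, the matrices under consideration have the form $A_u = {\bf 1}_{n\times n}+uu^T$, i.e.\ they are special rank $2$ matrices with the parameter $a$ equal to $1$, and we only consider $u$ for which the entries $1+u_iu_j$ are positive so that $f_\alpha[A_u]$ is defined.

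First I would record the elementary computation $f_\alpha^{(j)}(x) = \alpha(\alpha-1)\cdots(\alpha-j+1)\,x^{\alpha-j}$, so that $f_\alpha^{(j)}(1) = \prod_{i=0}^{j-1}(\alpha-i)$ vanishes exactly when $\alpha \in \{0,1,\dots,j-1\}$. Consequently, if $\alpha \notin \N$ then every derivative of $f_\alpha$ at $1$ is nonzero, while if $\alpha \in \N$ the nonzero derivatives of $f_\alpha$ at $1$ are precisely those of orders $0,1,\dots,\alpha$, i.e.\ there are exactly $\alpha+1$ of them. Since $\alpha > 0$, it follows that $f_\alpha$ admits at least $n$ nonzero derivatives at $1$ if and only if either $\alpha \notin \N$, or $\alpha \in \N$ with $\alpha \geq n-1$ --- which is exactly condition (2).

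For $(2)\Rightarrow(1)$, I would apply Proposition \ref{PThmA_at_a} with $a=1$, $k=n$, and $R=1$, so that $I=(0,2)$ and $f=f_\alpha$ is well-defined on $I$. By the previous paragraph the hypothesis of the proposition (at least $n$ nonzero derivatives at $a=1$) holds, so it produces $u \in \R^n$ with $1+u_iu_j \in (0,2)$ for all $i,j$ --- taking $i=j$ this forces $0 \leq u_i^2 < 1$, hence $u \in (-1,1)^n$ as claimed --- such that $f_\alpha[{\bf 1}_{n\times n}+uu^T]=f_\alpha[A_u]$ has rank at least $n$, i.e.\ is nonsingular. For $(1)\Rightarrow(2)$ I would argue by contraposition: if (2) fails then $\alpha$ is an integer with $1 \leq \alpha \leq n-2$, so $f_\alpha(x)=x^\alpha$ is a polynomial of degree $\leq n-2$, and Equation \eqref{Especial2} gives $f_\alpha[A_u] = \sum_{l=0}^{\alpha} d_l\, u^{\circ l}(u^{\circ l})^T$ for suitable scalars $d_l$ and every $u \in \R^n$; this matrix has rank at most $\alpha+1 \leq n-1 < n$, so it is singular for all $u$, and (1) fails. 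The only step requiring any thought is the first one --- identifying condition (2) with ``$f_\alpha$ has $\geq n$ nonzero derivatives at $1$'' --- after which both implications are immediate applications of results already in hand; a minor point worth checking explicitly is that the vector $u$ returned by Proposition \ref{PThmA_at_a} automatically lies in $(-1,1)^n$, which follows from the diagonal constraint $1+u_i^2 \in (0,2)$.
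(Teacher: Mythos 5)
Your proof is correct and follows essentially the same route as the paper: both directions hinge on identifying condition (2) with ``$f_\alpha$ has at least $n$ nonzero derivatives at $1$,'' using Proposition \ref{PThmA_at_a} (with $a=1$, $k=n$, $R=1$) for $(2)\Rightarrow(1)$ and the binomial expansion of $f_\alpha[A_u]$ into $\alpha+1$ rank-one summands for the contrapositive of $(1)\Rightarrow(2)$. The only difference is cosmetic --- the paper writes out the expansion $f_\alpha[A_u]=\sum_{k=0}^\alpha\binom{\alpha}{k}f_k[u]f_k[u]^T$ directly rather than citing \eqref{Especial2}, and leaves the ``at least $n$ nonzero derivatives'' claim and the containment $u\in(-1,1)^n$ implicit, both of which you verify explicitly.
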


\begin{proof}
Clearly, if $\alpha \in \N$, then for any $u \in \R^n$ such that $1+u_i
u_j \in (0,\infty)$, the matrix $f_\alpha[A_u] = \sum_{k=0}^\alpha
\binom{\alpha}{k}f_k[u]f_k[u]^T$ has rank at most $1+\alpha$. Therefore
if (1) holds, then $\alpha \not\in \N \cap (0,n-1)$ and so $(1)
\Rightarrow (2)$. Conversely, suppose $\alpha \in \N \cap [n-1,\infty)$
or $\alpha \not\in \N$. Then the function $f(x) = x^\alpha$ admits at
least $n$ nonzero derivatives at $x = 1$. Thus, by Proposition
\ref{PThmA_at_a}, there exists $u \in \R^n$ such that $1 + u_i u_j \in
(0,2)$ for all $i,j$ and $f_\alpha[A_u]$ has full rank. This shows that
$(2) \Rightarrow (1)$ and concludes the proof. 
\end{proof}

\section{Preserving positivity under rank constraints III:\\
The higher rank case}\label{Srank2_2}

The goal of this section is to study entrywise functions mapping
$\bp_n^l$ to $\bp_n^k$ for general $1 \leq k,l \leq n$. The $l=1$ case
has been explored in Section \ref{Srank1}, so we assume throughout this
section that $l>1$. Note by the results shown in Section \ref{Srank2}
that $C^k$ functions sending special rank 2 matrices to $\br_n^k$
automatically have to be polynomials. Using the aforementioned results,
in Subsection \ref{Sltok} we prove Theorem \ref{Tltok}, which classifies
the entrywise maps $f$ which are $C^k$ and send $\bp_n^l$ to $\br_n^k$.
We then show in Subsections \ref{Sk<l} and \ref{Sk<2l} that the
assumptions on $f$ can be relaxed even further if the rank ``does not
double''. Namely, we obtain a complete classification of the entrywise
maps sending $\bp_n^l$ to $\br_n^k$ for the special regime where $0 \leq
k < \min(n,2l)$, under either continuity assumptions on $f$ or no
assumption at all.

\subsection{Proof of the main Theorem \ref{Tltok}}\label{Sltok}

Before proceeding to the proof of Theorem \ref{Tltok}, we need some
preliminary results. 

\begin{proposition}\label{Pgvm}
Fix a field $\F$ of characteristic zero, as well as $N,l \in \N$. Let
${\bf m}_i = (m_{ij}) \in \F^l$ be distinct vectors for $1 \leq i \leq
N$. Then:
\begin{enumerate}
\item For any $r_1, \dots, r_{l-1} \in \N$, there exists $\alpha =
(\alpha_1, \dots, \alpha_l) \in \N^l \subset \F^l$, such that
$\alpha_{i+1} > r_i \alpha_i$ for all $0<i<l$, and $\alpha^T {\bf m}_i$
are pairwise distinct.

\item Suppose ${\bf m}_i \in \Z_{\geq 0}^l$ are distinct for all $i$, and
$0 < v_1 < \dots < v_N \in \Q \subset \F$. Then there exists $\alpha \in
\N^l$, such that defining $u_j := {\bf v}^{\circ \alpha_j}$ for $1 \leq j
\leq l$, the vectors
\[
{\bf w}_i := u_1^{\circ m_{i1}} \circ \cdots \circ u_l^{\circ m_{il}}
\]
are $\F$-linearly independent for $1 \leq i \leq N$. If $\F = \R$ then
the result holds even if we assume that ${\bf m}_i \in [0,\infty)^l$ for
all $i$.
\end{enumerate}
\end{proposition}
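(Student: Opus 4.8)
The two parts are closely related: part (1) is essentially the "distinctness of linear forms" statement that makes the generalized Vandermonde construction in part (2) work. The plan is to prove (1) first by a greedy/induction argument on the coordinates of $\alpha$, then feed it into (2) via Proposition \ref{Pgantmacher}.

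For part (1), I would build $\alpha = (\alpha_1,\dots,\alpha_l)$ coordinate by coordinate, but it is cleaner to fix the \emph{ratios} and argue as follows. Choose $\alpha_1 := 1$, and having chosen $\alpha_1 < \dots < \alpha_j$ with the required growth $\alpha_{i+1} > r_i\alpha_i$, I want to pick $\alpha_{j+1} \in \N$ with $\alpha_{j+1} > r_j \alpha_j$ such that the partial forms $\sum_{t=1}^{j+1}\alpha_t m_{it}$ remain as separated as possible. The key observation: for two distinct vectors ${\bf m}_i \neq {\bf m}_{i'}$, the function $\alpha \mapsto \alpha^T({\bf m}_i - {\bf m}_{i'})$ is a nonzero linear form in $\alpha$ (since ${\bf m}_i - {\bf m}_{i'} \neq 0$ and $\F$ has characteristic zero), so it vanishes on a proper affine subspace; as $\alpha_{j+1}$ ranges over the infinitely many integers exceeding $r_j\alpha_j$, at most finitely many choices can make $\alpha^T{\bf m}_i = \alpha^T{\bf m}_{i'}$ \emph{and} still be forced — more carefully, once the last coordinate where ${\bf m}_i$ and ${\bf m}_{i'}$ differ is "activated," a generic choice of the corresponding $\alpha$-coordinate separates them permanently. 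Concretely: process the coordinates $l, l-1, \dots, 1$, and observe that for each pair $(i,i')$ there is a largest index $s$ with $m_{is} \neq m_{i's}$; it suffices that $\alpha_s(m_{is}-m_{i's}) \neq -\sum_{t>s}\alpha_t(m_{it}-m_{i't})$, and for fixed choices of $\alpha_t$ ($t>s$) this excludes at most one value of $\alpha_s$ per pair, hence finitely many overall, so a valid $\alpha_s > r_{s-1}\alpha_{s-1}$ (and large enough to dominate) exists. Iterating downward over $s = l, l-1,\dots,1$ (choosing $\alpha_s$ last in increasing order of $s$ after all larger indices are fixed) yields the desired $\alpha$, and rescaling/relabeling gives the strict growth condition $\alpha_{i+1}>r_i\alpha_i$.

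For part (2), assume ${\bf m}_i \in \Z_{\geq 0}^l$ are distinct and $0<v_1<\dots<v_N \in \Q$. Apply part (1) to obtain $\alpha \in \N^l$ such that the exponents $\beta_i := \alpha^T{\bf m}_i = \sum_j \alpha_j m_{ij} \in \Z_{\geq 0}$ are pairwise distinct. Setting $u_j := {\bf v}^{\circ \alpha_j}$, the $k$-th coordinate of ${\bf w}_i = u_1^{\circ m_{i1}}\circ\cdots\circ u_l^{\circ m_{il}}$ is $\prod_j (v_k^{\alpha_j})^{m_{ij}} = v_k^{\sum_j \alpha_j m_{ij}} = v_k^{\beta_i}$. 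Hence the $N\times N$ matrix $({\bf w}_i)_k = (v_k^{\beta_i})_{i,k}$ is, up to transpose, a generalized Vandermonde matrix in the distinct positive reals $v_k$ with distinct real exponents $\beta_i$; by Proposition \ref{Pgantmacher} (or classically, nonvanishing of the generalized Vandermonde determinant) its determinant is nonzero, so the ${\bf w}_i$ are $\F$-linearly independent. For the $\R$-case with ${\bf m}_i \in [0,\infty)^l$, the only change is that the $\beta_i$ are now distinct nonnegative reals rather than integers: part (1) still produces $\alpha \in \N^l$ with the $\alpha^T{\bf m}_i$ distinct (the character-zero argument is unaffected by ${\bf m}_i$ being non-integral), and Proposition \ref{Pgantmacher} applies verbatim to real exponents, so the conclusion goes through.

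The main obstacle I anticipate is making the greedy argument in part (1) fully rigorous with the \emph{growth constraints} $\alpha_{i+1} > r_i\alpha_i$ imposed simultaneously with the separation constraints — one must be careful about the order in which coordinates are fixed, since the growth condition is a lower bound that chains forward while the separation condition for a pair $(i,i')$ is most naturally handled at the top differing coordinate. The clean resolution is to note that both constraints are "avoid finitely many bad values and then take $\alpha_s$ large," so they are compatible: pick $\alpha_1$, then $\alpha_2 > r_1\alpha_1$, etc., and at each stage the separation conditions involving coordinate $s$ as the top differing index only forbid finitely many values, which we skip over while still satisfying $\alpha_s > r_{s-1}\alpha_{s-1}$. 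Everything downstream (part (2)) is then a short computation plus an invocation of Proposition \ref{Pgantmacher}.
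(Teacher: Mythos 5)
Your proof is correct, and part~(2) matches the paper's argument (apply part~(1), observe $({\bf w}_i)_k = v_k^{\alpha^T{\bf m}_i}$, invoke Proposition~\ref{Pgantmacher}). Part~(1), however, takes a genuinely different route. The paper first proves a small lemma on $\Q$-convex sets in a characteristic-zero vector space: such a set is contained in a finite union of proper subspaces if and only if it is contained in a single one. It then constructs a $\Q$-convex ``box'' $C$ whose points automatically satisfy the growth condition, observes that $C$ cannot lie in any proper subspace of $\F^l$ (hence not in the union of the hyperplanes $({\bf m}_i - {\bf m}_j)^\perp$), picks a point of $C$ avoiding those hyperplanes, and rescales to land in $\N^l$. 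Your approach is a direct greedy argument: choose $\alpha_1, \alpha_2, \dots$ in increasing index order, and at step $s$ pick $\alpha_s > r_{s-1}\alpha_{s-1}$ avoiding the finitely many values forbidden by pairs $(i,i')$ whose \emph{largest} differing coordinate is $s$; since for each such pair the equation $\sum_{t\le s}\alpha_t(m_{it}-m_{i't}) = 0$ has a unique solution in $\alpha_s$, and pairs with largest differing index $s'<s$ were already handled at step $s'$, this terminates. Your version is more elementary and avoids the auxiliary lemma; the paper's version is more structural and makes the $\Q$-convexity ideas explicit for possible reuse. One cosmetic note: in your initial description you wrote the forbidden value as $-\sum_{t>s}\alpha_t(m_{it}-m_{i't})$, which vanishes by the definition of $s$; the correct sum is over $t<s$, which is exactly why the increasing-index order you settle on in your ``clean resolution'' is the right one (and the decreasing-index framing at the start would not work, since the condition would depend on coordinates not yet chosen).
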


\begin{remark}
Note that the second part generalizes the nonsingularity of generalized
Vandermonde determinants, but in $\Q$ (and hence, every field of
characteristic zero). This is because in the special case of $l=1$, we
can choose $\alpha = \alpha_1 = 1$. Moreover, we will show below that
Proposition \ref{Pgvm}(2) is in fact equivalent to the nonsingularity of
generalized Vandermonde determinants.
\end{remark}

\begin{proof}[Proof of Proposition \ref{Pgvm}]
We first claim that if $V$ is a vector space
over a field $\F$ of characteristic zero, and $C$ is any $\Q$-convex
subset of $V$, then the following are equivalent:
\begin{enumerate}
\item $C$ is contained in a proper subspace of $V$.
\item $C$ is contained in a finite union of proper subspaces of $V$.
\end{enumerate}

Clearly (1) implies (2). Conversely, suppose $C$ is not contained in any
proper subspace of $V$. We show that (2) also fails to hold, by induction
on the number $n$ of proper subspaces of $V$. This is clearly true for
$n=1$. Next, suppose $C$ is not contained in a finite union of $n-1$
proper subspaces of $V$ and let $V_1,\dots,V_n$ be proper subspaces of
$V$. Fix elements $v_1 \in V_1 \setminus \bigcup_{i>1} V_i$ and $v_2 \in
V_2 \setminus (V_1 \cup \bigcup_{i>2} V_i)$, and consider the infinite
set $\{ (1/n) v_1 + ((n-1)/n) v_2 : n \in \N \} \subset C$. If $C \subset
\bigcup_{i=1}^n V_i$, then at least two elements of this infinite set lie
in some $V_i$, in which case we obtain that $v_1, v_2 \in V_i$. This is
false by assumption. It therefore follows by induction that $C$ is not
contained in a finite union of proper subspaces of $V$, and so $(2)$
implies $(1)$.

We now show the first part. Given $r_i$ as above, define
\[
r_0 := 0, \qquad N := \prod_{j=0}^{l-1} (1 + r_j),
\]
\[
C := \Q^l \cap \times_{i=1}^l \left( N^{-1} r_i \prod_{j=0}^{i-1} (1 +
r_j), N^{-1} \prod_{j=0}^i (1 + r_j) \right) \subset \F^l,
\]

\noindent where $\times_{i=1}^n$ denotes an $n$-fold Cartesian product of
intervals. Now clearly, $C$ is $\Q$-convex. Moreover, it is easy to check
that if $C$ is contained in any $\F$-vector subspace $V_0 \subset \F^l$,
then the standard basis $\{ e_i : 1 \leq i \leq l \}$ is contained in
$V_0$, whence $V_0 = \F^l$. Hence by the previous part, $C$ is not
contained in a finite union of proper subspaces of $\F^l$. In particular,
$C$ is not contained in the orthogonal complements to the vectors ${\bf
m}_i - {\bf m}_j$ (for all $i \neq j$ in $\Q^l$). Take any point in $C$
that is not contained in the union of these orthogonal complements, and
rescale it by a sufficiently large integer $M \in \N$. This provides the
desired vector $\alpha \in \N^l$.

Now fix $0 < v_1 < \dots < v_N \in \Q \subset \F$ and let $\alpha \in
\N^l$ be as in the above analysis. Since $\alpha^T {\bf m}_i$ are
distinct, the generalized Vandermonde matrix $B:= (v_t^{\alpha^T {\bf
m}_i})_{t,i=1}^N$ is nonsingular by Proposition \ref{Pgantmacher}.
Now define $u_j := (v_1^{\alpha_j}, \dots, v_N^{\alpha_j})^T$ for $1 \leq
j \leq l$. Then the linearly independent columns of $B$ are precisely
${\bf w}_i$ as defined in the statement, which concludes the proof. The
assertion for $\F = \R$ is similarly proved.
\end{proof}

The next two results are technical tools which will be useful in proving
Theorem \ref{Tltok} when $I = (-R,R)$. 

\begin{lemma}\label{lem:increase_dim}
Let $0 < R \leq \infty$, $I = [0, R)$ or $(-R,R)$, and $f: I \rightarrow
\mathbb{R}$. Fix $n \geq 2$, $0 \leq k < n-1$,
and $1 \leq l \leq n$. Then the following are equivalent: 
\begin{enumerate}
\item $f[-]: \bp_n^l(I) \to \br_n^k$.
\item $f[-]: \bp_{n'}^l(I) \to \br_{n'}^k$ for all $n' \geq \max(k+2,l)$. 
\end{enumerate}
\end{lemma}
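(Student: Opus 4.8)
The plan is to establish the two implications separately. For $(2) \Rightarrow (1)$ I would simply note that the hypothesis $k < n-1$ forces $k+2 \le n$ (as $k,n$ are integers), while $l \le n$ by assumption, so $n \ge \max(k+2,l)$. Thus $n$ is itself one of the dimensions $n'$ to which $(2)$ applies, and the implication is immediate.

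The content lies in $(1) \Rightarrow (2)$. Fix $n' \ge \max(k+2,l)$ and $A \in \bp_{n'}^l(I)$; I must show $\rk f[A] \le k$. The key step is to invoke Lemma~\ref{lem:rank_minors}(3): it suffices to prove that every $(k+1)\times(k+1)$ and every $(k+2)\times(k+2)$ \emph{principal} minor of $f[A]$ vanishes. Each such minor has the form $\det f[A_J]$ for a principal submatrix $A_J$ of $A$ with $|J| \in \{k+1,k+2\}$, and crucially $|J| \le k+2 \le n$. This last inequality — which is precisely what the hypothesis $k < n-1$ guarantees — is what makes the argument run, since it lets me embed $A_J$ into dimension $n$.

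Concretely, for each such $J$ I would set $B := A_J \oplus {\bf 0}_{(n-|J|)\times(n-|J|)}$. Since $0 \in I$, we have $B \in \bp_n(I)$, and $\rk B = \rk A_J \le \rk A \le l$, so $B \in \bp_n^l(I)$. Applying hypothesis $(1)$ gives $\rk f[B] \le k$. But $f[A_J]$ is the leading principal submatrix of $f[B]$, hence $\rk f[A_J] \le \rk f[B] \le k < |J|$, so $\det f[A_J] = 0$. As this holds for every $J$ of size $k+1$ and $k+2$, Lemma~\ref{lem:rank_minors}(3) yields $\rk f[A] \le k$, completing $(1) \Rightarrow (2)$.

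I do not expect a genuine obstacle; the only point requiring care is to route the argument through principal minors (part (3) of Lemma~\ref{lem:rank_minors}) rather than through all $(k+1)\times(k+1)$ minors (part (2)). A direct attack on an arbitrary $(k+1)\times(k+1)$ minor would require fitting its up to $2(k+1)$ distinct row and column indices into an $n\times n$ matrix, which is impossible once $2k+2 > n$; restricting to principal minors caps the relevant index set at size $k+2 \le n$ and dissolves the difficulty.
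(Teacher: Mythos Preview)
Your proof is correct and follows essentially the same approach as the paper: both use Lemma~\ref{lem:rank_minors}(3) together with zero-padding to reduce to dimension $n$. Your version is marginally cleaner in that you pad each $(k+1)\times(k+1)$ or $(k+2)\times(k+2)$ principal submatrix $A_J$ directly to dimension $n$, whereas the paper first handles $n' \le n$ separately and then, for $n' > n$, routes through $n \times n$ principal submatrices of $A'$; the underlying idea is the same.
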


\begin{proof}
Clearly $(2) \Rightarrow (1)$. Suppose $(1)$ holds. If $n' \leq n$ then
clearly $f[-]: \bp_{n'}^l(I) \to \br_{n'}^k$. Now suppose $n' \geq
\max(n+1,k+2,l)$ and let $A' \in \bp_{n'}^l(I)$. Note that every $n
\times n$ principal submatrix of $A'$ belongs to $\bp_n^l(I)$. It follows
by Lemma \ref{lem:rank_minors} that all $(k+1) \times (k+1)$ and $(k+2)
\times (k+2)$ principal minors of $f[A']$ vanish. Again using Lemma
\ref{lem:rank_minors}, we conclude that $f[A'] \in \br_{n'}^k$.  
\end{proof}

\begin{corollary}\label{cor:sp_rank2_neg}
Let $0 < R \leq \infty$, $I = (-R,R)$, and $f: I \rightarrow \mathbb{R}$.
Fix $n \geq 2$ and $0 \leq k < n-1$. Suppose $f[-]: \bp_n^2(I) \to
\br_n^k$. Then $f[a{\bf 1}_{n \times n}+u_iu_j] \in \br_n^k$ for all $a <
0$ and all $u \in \R^n$ such that $\pm a + u_i u_j \in I \ \forall i,j$. 
\end{corollary}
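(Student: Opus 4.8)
The plan is to exhibit $f[a\,{\bf 1}_{n\times n}+uu^T]$ — which is \emph{not} positive semidefinite when $a<0$, but whose rank we nonetheless want to bound — as an \emph{off-diagonal block} of $f[B]$ for a suitable $B\in\bp_{2n}^2(I)$. The key observation is that a matrix of rank at most $k$ has \emph{every} submatrix (not only every principal submatrix) of rank at most $k$; so it suffices to produce $f[a\,{\bf 1}_{n\times n}+uu^T]$ as such a submatrix. This is exactly why the hypothesis $\pm a+u_iu_j\in I$ (both signs) is natural: the ``$+a$'' entries will occupy the off-diagonal blocks and the ``$-a$'' entries the diagonal blocks of a positive semidefinite gadget.

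\textbf{Construction.} Set $b:=\sqrt{-a}>0$ and define $v,w\in\R^{2n}$ by $v:=(b,\dots,b,-b,\dots,-b)^T$ (with $n$ entries of each sign) and $w:=(u_1,\dots,u_n,u_1,\dots,u_n)^T$. Let $B:=vv^T+ww^T$, a sum of two rank-one positive semidefinite matrices, hence $B\in\bp_{2n}^2(\R)$. A direct inspection of the four index blocks shows: for $i,j$ in the same half of $\{1,\dots,2n\}$ one has $B_{ij}=-a+u_{i'}u_{j'}$, and for $i,j$ in opposite halves one has $B_{ij}=a+u_{i'}u_{j'}$, where $i',j'$ denote the reductions modulo $n$. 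Since $\pm a+u_{i'}u_{j'}\in I$ for all indices by assumption, we get $B\in\bp_{2n}^2(I)$.

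\textbf{Conclusion.} Next I would invoke Lemma \ref{lem:increase_dim} with $l=2$: since $f[-]:\bp_n^2(I)\to\br_n^k$ with $1\le k<n-1$, and since $2n\ge\max(k+2,2)$ (note $k\le n-2$ forces $k+2\le n<2n$), it follows that $f[-]:\bp_{2n}^2(I)\to\br_{2n}^k$, so in particular $f[B]\in\br_{2n}^k$. Finally, the submatrix of $f[B]$ formed by rows $\{1,\dots,n\}$ and columns $\{n+1,\dots,2n\}$ has $(i,j)$-entry $f(B_{i,n+j})=f(a+u_iu_j)$, i.e.\ it equals $f[a\,{\bf 1}_{n\times n}+uu^T]$; being a submatrix of a matrix of rank $\le k$, it lies in $\br_n^k$, which is the assertion.

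\textbf{Main obstacle.} There is no deep difficulty here; the only points requiring care are (i) ensuring the doubled dimension $2n$ clears the threshold $\max(k+2,2)$ demanded by Lemma \ref{lem:increase_dim}, which the hypothesis $k<n-1$ guarantees, and (ii) correctly using the rank bound for a \emph{general} submatrix rather than a principal one — which is legitimate precisely because, with $a<0$, the matrix $a\,{\bf 1}_{n\times n}+uu^T$ can never occur as a principal submatrix of a positive semidefinite matrix, forcing the off-diagonal-block construction.
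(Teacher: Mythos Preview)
Your proof is correct and follows essentially the same approach as the paper: the construction $B=vv^T+ww^T\in\bp_{2n}^2(I)$ is exactly the paper's matrix (the paper writes it as $\mathbf{x}\mathbf{x}^T+\mathbf{y}\mathbf{y}^T$ with the opposite sign convention on $v$, which of course gives the same outer product), and both arguments invoke Lemma~\ref{lem:increase_dim} to pass to dimension $2n$ before reading off the desired matrix as the off-diagonal block. The only cosmetic difference is that you conclude directly via ``a submatrix of a rank-$\le k$ matrix has rank $\le k$'', whereas the paper phrases this through Lemma~\ref{lem:rank_minors} (vanishing of all $(k+1)\times(k+1)$ minors); these are equivalent.
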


\begin{proof}
Let $u \in \R^n$ such that $\pm a + u_iu_j \in (-R,R) \ \forall i,j$.
Define 
\begin{equation*}
\mathbf{x} := \begin{pmatrix}-\sqrt{|a|} {\bf 1}_{n \times 1} \\
\sqrt{|a|} {\bf 1}_{n \times 1}\end{pmatrix} \in \R^{2n}, \qquad
\mathbf{y} := \begin{pmatrix} u \\ u\end{pmatrix} \in \R^{2n}, 
\end{equation*} 

\noindent and consider the matrix
\begin{equation*}
A := \mathbf{x}\mathbf{x}^T + \mathbf{y}\mathbf{y}^T = \begin{pmatrix}
|a| {\bf 1}_{n \times n} + uu^T & a {\bf 1}_{n \times n} + uu^T \\
a {\bf 1}_{n \times n} + uu^T & |a| {\bf 1}_{n \times n} + uu^T
\end{pmatrix}  \in \bp_{2n}^2(I). 
\end{equation*}

\noindent By Lemma \ref{lem:increase_dim}, we have $f[-]: \bp_{2n}^2(I)
\to \br_{2n}^k$ and so $f[A] \in \br_{2n}^k$. Thus, by Lemma
\ref{lem:rank_minors}, all $(k+1) \times (k+1)$ minors of $f[A]$ vanish.
In particular, all $(k+1) \times (k+1)$ minors of the upper right block
of $f[A]$ vanish. Thus, $f[a {\bf 1}_{n \times n} + u u^T] \in \br_n^k$.  
\end{proof}

We now combine the analysis in this and previous sections to prove our
second main result.

\begin{proof}[Proof of Theorem \ref{Tltok}]
If $k=0$ then the result is immediate to prove, so we suppose henceforth
that $k \geq 1$.

We first prove that $(2) \Rightarrow (1)$. Let $i \geq 0$ and $A =
\sum_{j=1}^l u_j u_j^T \in \bp_n^l(I)$. Then 
\begin{equation}
A^{\circ i} = \sum_{m_1 + \dots + m_l = i} \binom{i}{m_1, \dots, m_l}
{\bf w}_{\bf m} {\bf w}_{\bf m}^T, \qquad {\bf w}_{\bf m} := u_1^{\circ
m_1} \circ \cdots \circ u_l^{\circ m_l},
\end{equation}

\noindent where $\displaystyle \binom{i}{m_1, \dots, m_l}$ denotes the
multinomial coefficient. Note that there are exactly $\binom{i+l-1}{l-1}$
terms in the previous summation. Therefore $\rk A^{\circ i} \leq
\binom{i+l-1}{l-1}$, and so $(1)$ easily follows from $(2)$. 

Conversely, suppose (1) holds. If $I = [0,R)$, then $f$ is a polynomial
of degree at most $k-1$ by Theorem \ref{T2tok}. Similarly, if $I =
(-R,R)$, then an application of Corollary \ref{cor:sp_rank2_neg} shows
that $f[a{\bf 1}_{n \times n}+uu^T] \in \br_n^k$ for all $a \in (-R,R)$
and all $u \in \R^n$ such that $\pm a + u_i u_j \in (-R,R)$. Thus $f$ is
also a polynomial of degree at most $k-1$ by Theorem \ref{T2tok} and
Remark \ref{rem:weakT2tok}. 

Now denote by ${\bf m}_1, \dots, {\bf m}_N$ the collection of vectors
$(\Z_{\geq 0} \cap [0,k-1])^l$. By Proposition \ref{Pgvm}
with all $r_i = 1$, there exists $\alpha \in \N^l$ with distinct
coordinates such that $\alpha^T {\bf m}_i$ are pairwise distinct for $1
\leq i \leq N$. Let $g_\alpha: [0,\infty) \to \R$ be defined by 
\begin{equation}\label{E52}
g_\alpha(x) := R \cdot \frac{x^{\alpha_1} + \dots +
x^{\alpha_l}}{R^{\alpha_1} + \dots + R^{\alpha_l}}. 
\end{equation}
Note that $g_\alpha[-]: \bp_n^1(I) \to \bp_n^l(I)$ and so $f
\circ g_\alpha [-]: \bp_n^1(I) \to \br_n^k$. Thus, by Theorem
\ref{thm:rank_one}, the polynomial $f \circ g_\alpha$ is a linear
combination of at most $k$ integer powers. On the other hand, 
writing $f(x) = \sum_{t=1}^r a_t x^{i_t}$ for some distinct
integers $i_t \in [0, k-1]$ with all $a_t$ nonzero,
by the choice of $\alpha$, the function $f \circ g_\alpha$ is a linear
combination of exactly $\sum_{t=1}^r \binom{i_t+l-1}{l-1}$ distinct
integer powers. Therefore (2) follows since the power functions $\{ x^n :
n \in \Z_{\geq 0} \} \cup \{f \equiv 1\}$ are linearly independent on
$[0,\infty)$ by Proposition~\ref{Tvandermonde}.

We now prove the second set of equivalences. Clearly if $f$ is a
polynomial with nonnegative coefficients which satisfies assertion (2) in
the theorem, then $f[-] : \bp_n^l(I) \to \bp_n^k$ by the calculation in
equation \eqref{Especial2} (and the Schur product theorem). Conversely if
(1) holds, the first set of equivalences already shows that $f$ is a
polynomial of degree $\leq k-1$ satisfying \eqref{eqn:sum_binom}. That
the coefficients of $f$ are nonnegative follows by Theorem
\ref{thm:rank_one}.
Finally, if $k \leq n-3$ then the condition that $f \in C^k(I)$ actually
follows by Theorem \ref{Thorn}, and hence does not need to be assumed.
\end{proof}

\begin{remark}
Note that if $l > 1$, Theorem \ref{Tltok} immediately provides a
constraint on the degree of a polynomial $p(x)$ mapping $\bp_n^l$ to
$\bp_n^k$. Indeed, the degree must satisfy $\binom{\deg(p) +l-1}{l-1}
\leq k$. On the other hand, the degree can be arbitrary when $l=1$, by
Theorem \ref{thm:sum_powers}.
\end{remark}

Recall that Theorem \ref{thm:sum_powers} shows that under appropriate
differentiability hypotheses, entrywise functions mapping $\bp_n^1(I)$
into $\br_n^k$ are precisely the set of polynomials with $k$ nonzero
coefficients. Similarly, Theorem \ref{Tltok} shows that an analytic
function maps $\bp_n^l(I)$ to $\br_n^k$ if and only if it satisfies
equation \eqref{eqn:sum_binom}. We now prove that the conclusion of the
theorems are optimal in the following precise sense. 

\begin{proposition}
Let $0 < R \leq \infty$, $I = [0, R)$ or $(-R,R)$, and $f: I \rightarrow
\mathbb{R}$. Fix $1 \leq k < n$. Then: 
\begin{enumerate}
\item If $f$ is a polynomial with $k$ nonzero coefficients, then there
exists a matrix $A \in \bp_n^1(I)$ such that $f[A]$ has rank exactly $k$. 
\item If $f(x) = \sum_{t=1}^r a_{i_t} x^{i_t}$ with $a_{i_t} \ne 0$, and
$i_t \in \Z_{\geq 0}$ satisfying equation \eqref{eqn:sum_binom}, then
there exists $A \in \bp_n^l(I)$ such that $f[A]$ has rank exactly
$\sum_{t=1}^r \binom{i_t+l-1}{l-1}$. 
\end{enumerate}
\end{proposition}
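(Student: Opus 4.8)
The plan is to turn the rank inequalities underlying Theorems \ref{thm:sum_powers} and \ref{Tltok} into \emph{equalities} by exhibiting explicit test matrices together with a generalized Vandermonde configuration. Since part (1) is essentially the $l=1$ instance of part (2), I would concentrate on part (2) and then give the (easier, self-contained) argument for part (1).

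\textbf{Setup for part (2).} Write $f(x) = \sum_{t=1}^r a_{i_t} x^{i_t}$ with all $a_{i_t} \neq 0$ and $0 \leq i_1 < \cdots < i_r$. First I would recall the Hadamard-power identity already used in the proof of Theorem \ref{Tltok}: if $A = \sum_{j=1}^l u_j u_j^T$ then, for each $i \in \Z_{\geq 0}$,
\begin{equation}
A^{\circ i} = \sum_{\substack{{\bf m} \in \Z_{\geq 0}^l \\ m_1 + \cdots + m_l = i}} \binom{i}{m_1,\dots,m_l}\, {\bf w}_{\bf m}{\bf w}_{\bf m}^T, \qquad {\bf w}_{\bf m} := u_1^{\circ m_1} \circ \cdots \circ u_l^{\circ m_l}.
\end{equation}
Hence $f[A] = \sum_{t=1}^r a_{i_t} A^{\circ i_t} = \sum_{t,{\bf m}} d_{t,{\bf m}}\, {\bf w}_{\bf m}{\bf w}_{\bf m}^T$ is a sum of $N := \sum_{t=1}^r \binom{i_t+l-1}{l-1}$ rank-one matrices with coefficients $d_{t,{\bf m}} := a_{i_t}\binom{i_t}{{\bf m}} \neq 0$; and the exponent vectors ${\bf m}$ occurring here are pairwise distinct (those coming from distinct $i_t$ have distinct coordinate sums). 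Enumerate them as ${\bf m}_1,\dots,{\bf m}_N \in \Z_{\geq 0}^l$.

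\textbf{Choice of test matrix.} The crux is to pick $u_1,\dots,u_l \in \R^n$ so that ${\bf w}_{{\bf m}_1},\dots,{\bf w}_{{\bf m}_N}$ are linearly independent, and this is exactly what Proposition \ref{Pgvm} provides. Concretely, I would apply Proposition \ref{Pgvm}(1) with all $r_j = 1$ to obtain $\alpha = (\alpha_1,\dots,\alpha_l) \in \N^l$ with strictly increasing coordinates and with the integers $\alpha^T {\bf m}_1,\dots,\alpha^T {\bf m}_N$ pairwise distinct. Then pick $0 < v_1 < \cdots < v_n$ and set $u_j := \epsilon_j\,(v_1^{\alpha_j},\dots,v_n^{\alpha_j})^T$ for small scalars $\epsilon_j > 0$. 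The $s$-th coordinate of ${\bf w}_{\bf m}$ then equals $v_s^{\alpha^T{\bf m}}$ up to a fixed nonzero factor, so the $n \times N$ matrix $W := [\,{\bf w}_{{\bf m}_1}\mid\cdots\mid {\bf w}_{{\bf m}_N}\,]$ has top $N\times N$ block equal to a nonsingular generalized Vandermonde matrix $(v_s^{\alpha^T{\bf m}_i})_{s,i=1}^N$ by Proposition \ref{Pgantmacher} (using $N \leq k < n$), whence $W$ has full column rank $N$. Shrinking the $\epsilon_j$ forces every entry of $A := \sum_{j=1}^l u_j u_j^T$ into $(0,R)$, so $A \in \bp_n^l(I)$ whether $I = [0,R)$ or $(-R,R)$. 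Finally $f[A] = WDW^T$ with $D$ the $N\times N$ diagonal matrix whose nonzero entries are the $d_{t,{\bf m}}$; since $D$ is invertible and $W$ has full column rank $N$ (so $WD$ and $W^T$ both have rank $N$), $\rk(f[A]) = \rk(WDW^T) = N = \sum_{t=1}^r \binom{i_t+l-1}{l-1}$, proving part (2).

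\textbf{Part (1) and the main difficulty.} For part (1), write $f(x) = \sum_{t=1}^k a_{m_t}x^{m_t}$ with $0 \leq m_1 < \cdots < m_k$ and $a_{m_t} \neq 0$, take $u := (u_1,\dots,u_n)^T$ with $0 < u_1 < \cdots < u_n$ small enough that $u_iu_j \in I$, and observe $f[uu^T] = \sum_t a_{m_t}(u^{\circ m_t})(u^{\circ m_t})^T = UDU^T$, where $U := [\,u^{\circ m_1}\mid\cdots\mid u^{\circ m_k}\,]$ has full column rank $k$ (its top $k\times k$ block being a nonsingular generalized Vandermonde) and $D := \block{a_{m_1},\dots,a_{m_k}}$ is invertible; hence $\rk(f[uu^T]) = k$. (This is also the $l=1$ case of part (2).) The only genuinely nontrivial ingredient is the linear independence of the power-product vectors ${\bf w}_{\bf m}$, and this has already been isolated as Proposition \ref{Pgvm}; the remaining work — the multinomial expansion of $A^{\circ i}$, the remark that each coefficient $a_{i_t}\binom{i_t}{{\bf m}}$ is nonzero (so that $\rk(WDW^T) = \rk W$), and the rescaling keeping the entries of $A$ inside $I$ — is routine bookkeeping.
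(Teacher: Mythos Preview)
Your argument is correct and follows essentially the same route as the paper. The only cosmetic differences are that the paper invokes Proposition \ref{Pgvm}(2) directly to obtain the linearly independent vectors ${\bf w}_{\bf m}$ (whereas you cite Proposition \ref{Pgvm}(1) and reconstruct the Vandermonde argument of part (2) inline), and the paper justifies $\rk(WDW^T)=N$ via Sylvester's rank inequality rather than the full-column-rank/invertible-$D$ observation.
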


\begin{proof}
It suffices to show the result for $I = [0,R)$. To prove (1), let $f(x) =
\sum_{i=1}^k c_i x^{m_i}$ with $c_i \ne 0$ and $m_i \in \N$. Let $v \in
I^n$ be a vector with distinct components and let $A = v v^T \in
\bp_n^1(I)$. Clearly $\rk f[A] \leq k$ since $f[A]$ is a sum of $k$ rank
$1$ matrices. Now, by Proposition \ref{Pgantmacher}, the vectors
$v^{\circ m_1}, \dots, v^{\circ m_k}$ are linearly independent. Denote by
$U$ the $k \times n$ matrix whose columns are $v^{\circ m_1}, \dots,
v^{\circ m_k}$, and let $C$ be the $k \times k$ diagonal matrix with
diagonal entries $c_1, \dots, c_k$. Note that $f[A] = U^T C U$. Clearly
the matrices $U^T C$ and $U$ have rank $k$. Thus, by Sylvester's rank
inequality, $\rk f[A] = \rk U^TCU \geq \rk U^T C + \rk U - k = k$. It
follows that $\rk f[A] = k$. This proves (1). 

To prove (2), first note that by Proposition \ref{Pgvm}(2), there exist
$u_1, \dots, u_l \in I^n$ such that the vectors 
\begin{equation}\label{eqn:vectors_multinom}
\{ u_1^{\circ a_1} \circ \dots \circ u_l^{\circ a_l} : a_1, \dots, a_l
\in \Z_{\geq 0}, a_1+\dots+a_l = i_t, t=1, \dots, r\}
\end{equation}

\noindent are linearly independent. Note that there are $\sum_{t=1}^r
\binom{i_t+l-1}{l-1}$ such vectors. Define $A  := \sum_{i=1}^l u_i u_i^T
\in \bp_n^l(I)$. Expanding $f[A]$ using the multinomial theorem, we
obtain a linear combination of the vectors in
\eqref{eqn:vectors_multinom} with nonzero coefficients. Using the same
argument as in the first part, it now follows that $f[A]$ has rank
$\sum_{t=1}^r \binom{i_t+l-1}{l-1}$, as desired. 
\end{proof}

\subsection{The regime $1 \leq k < l$}\label{Sk<l}

Recall that the characterization obtained in Theorem \ref{Tltok} was
obtained under the assumption that $f \in C^k$. Surprisingly, this
assumption can be relaxed significantly if additional constraints are
known on $(l,k)$. In this Subsection and Subsection \ref{Sk<2l}, we study
the cases where $1 \leq k < l$ and $l \leq k < 2l$ respectively. We now
demonstrate that when $k < l$, \textit{no} assumption on $f$ is required
in order to obtain the conclusion of Theorem \ref{Tltok}. 

\begin{theorem}\label{Tk<l}
Suppose $0 < R \leq \infty$, $I = [0,R)$ or $(-R,R)$, and $f : I \to \R$
with $f \not\equiv 0$.
Fix integers $n \geq 3$ and $1\leq k < l \leq n$. Suppose $1 \leq k <
n-1$ when $I = (-R,R)$.
Then the following are equivalent:
\begin{enumerate}
\item $f[A] \in \br_n^k$ for every $A \in \br_n^l(I)$; 
\item $f[A] \in \br_n^k$ for every $A \in \bp_n^l(I)$;
\item $f \equiv c$ on $I$ for some $c \ne 0$.
\end{enumerate}
Moreover, $f[-]: \bp_n^l(I) \to \bp_n^k$ if and only if $f \equiv c$ for
some $c > 0$. 
\end{theorem}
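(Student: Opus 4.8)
The plan is to establish the chain $(3) \Rightarrow (1) \Rightarrow (2) \Rightarrow (3)$, together with the final positivity refinement. The implication $(3) \Rightarrow (1)$ is immediate, since a constant function $c$ applied entrywise to any matrix yields $c\,{\bf 1}_{n \times n}$, which has rank $\leq 1 \leq k$; and $(1) \Rightarrow (2)$ is trivial because $\bp_n^l(I) \subset \br_n^l(I)$. So the entire content of the theorem lies in $(2) \Rightarrow (3)$, which is where I expect the main obstacle. The idea is to exploit the fact that $l > k$ gives us genuinely more "room" among the rank-$\leq l$ positive semidefinite matrices than $f[-]$ is allowed to use in the target: the identity matrix and small perturbations of it are available, and plugging these in forces $f$ to be constant.

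The key step is the following. First, I would reduce to the case of small $n$ by padding with zeros: if $A \in \bp_m^l(I)$ for some $m \leq n$, then $A \oplus {\bf 0}_{(n-m)\times(n-m)} \in \bp_n^l(I)$, so applying (2) and Lemma \ref{lem:rank_minors} shows $f[A] \oplus (f(0){\bf 1}) \in \br_n^k$ forces all $(k+1)\times(k+1)$ minors of $f[A \oplus {\bf 0}]$ to vanish; in particular $f[A] \in \br_m^k$ whenever $m \leq n$ (and more care is needed when $m$ is close to $n$, but since $k < l \leq n$ we have $k+1 \leq n$, so the argument goes through for $m = k+1$). Thus it suffices to show $f$ is a nonzero constant using only matrices of size $k+1$. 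Now consider, for $x \in I$ and $t \in I$ with $t$ near $x$, matrices of the form $t\,{\bf 1}_{(k+1)\times(k+1)} + D$ where $D$ is a small diagonal perturbation making the matrix lie in $\bp_{k+1}^{k+1}(I) = \bp_{k+1}(I) \subset \bp_{k+1}^l(I)$ (here we use $l \geq k+1$). Applying $f$ entrywise produces a matrix whose off-diagonal entries are all $f(t)$ and whose diagonal entries are $f$ evaluated at the perturbed values; demanding this $(k+1)\times(k+1)$ matrix have rank $\leq k$, i.e.\ be singular, yields a determinantal constraint. Expanding the determinant of a matrix of the form $f(t){\bf 1}_{(k+1)\times(k+1)} + \mathrm{diag}(\delta_1,\dots,\delta_{k+1})$ (where $\delta_i = f(x_i) - f(t)$ for perturbed arguments $x_i$), one computes it equals $\big(\prod_i \delta_i\big) + f(t)\sum_i \prod_{j \neq i}\delta_j = \big(\prod_i \delta_i\big)\big(1 + f(t)\sum_i \delta_i^{-1}\big)$ when all $\delta_i \neq 0$; setting this to zero and letting the perturbations vary forces, after a short argument, that $f$ cannot take more than one value near any point, hence $f$ is locally constant.

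To finish, I would argue that a locally constant function on the interval $I$ is globally constant, and that the constant is nonzero because $f \not\equiv 0$ by hypothesis. One subtlety: the determinantal vanishing gives information only when enough of the $\delta_i$ are nonzero, so the cleanest route is to suppose $f$ takes two distinct values $f(x) \neq f(y)$ with $x, y \in I$, build a rank-$\leq l$ positive semidefinite matrix of size $k+1$ whose entries all lie in $\{x, y\}$ in a pattern (e.g.\ a block pattern, or $x$ on the diagonal and $y$ off-diagonal after rescaling via Theorem \ref{thm:vasudeva:M2}-type $2\times 2$ constraints) that remains in $\bp_{k+1}(I)$, and check directly that $f$ applied to it has rank $k+1$, contradicting (2). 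For the positivity refinement, note that if $f[-] : \bp_n^l(I) \to \bp_n^k$ then in particular $f[-]$ maps into $\br_n^k$, so $f \equiv c$ by the equivalence just proved; and $f(0)\,{\bf 1}_{n\times n} = f[{\bf 0}_{n\times n}] \in \bp_n^k$ forces $c = f(0) \geq 0$, while $c \neq 0$ gives $c > 0$. Conversely $c > 0$ clearly sends everything into $\bp_n^1 \subseteq \bp_n^k$. The main obstacle is the explicit construction in $(2)\Rightarrow(3)$ of a rank-$\leq l$ PSD matrix whose $f$-image provably has rank $k+1$; the determinant computation sketched above is the natural tool, and the role of the hypothesis $k < l$ is precisely to guarantee that the full-rank $(k+1)\times(k+1)$ identity-type perturbations are admissible inputs.
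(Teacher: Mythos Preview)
Your core idea---exploit $l>k$ by feeding in $(k+1)\times(k+1)$ full-rank positive semidefinite blocks, then pad by zeros---is exactly the mechanism the paper uses. But there are two issues, one minor and one serious.

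\textbf{For $I=[0,R)$:} Your ``check directly that $f$ applied has rank $k+1$'' does not work with a single test matrix. If you put $x$ on the diagonal and $y<x$ off-diagonal, then $f$ applied has determinant $(f(x)-f(y))^k(f(x)+kf(y))$, which vanishes if $f(x)=-kf(y)$ even when $f(x)\neq f(y)$. You can rescue this by also using the matrix with diagonal $(x,\dots,x,y)$ and off-diagonal $y$ (still PSD since $x>y\geq 0$), whose image has determinant $f(y)(f(x)-f(y))^k$; combining the two forces $f(y)=0$ and then $f(x)=0$, a contradiction. So your approach is salvageable here but needs both matrices, not one. The paper instead splits on whether $f(0)=0$: if so, the single matrix $a\,\Id_{k+1}\oplus{\bf 0}$ immediately gives $f(a)^{k+1}=0$; if not, Proposition~\ref{prop:general_rank_reduction} reduces to $(f-f(0))$ with $(n-1,k-1)$ and one recurses. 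This is cleaner than your determinant expansion.

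\textbf{For $I=(-R,R)$: a genuine gap.} Every matrix of the form $t{\bf 1}_{(k+1)\times(k+1)}+D$ with $t\geq 0$ and $D\geq 0$ has all entries nonnegative, so your construction gives no information about $f$ on $(-R,0)$. Matrices with entries in $\{x,y\}$ for $x<0<y$ are only PSD under restrictive inequalities like $y\geq k|x|$, so your ``cleanest route'' does not cover arbitrary negative $x$. The paper handles this with an additional idea you are missing: for $a\in[0,R)$ it forms the Kronecker-type block
\[
\widetilde{A}_a=\begin{pmatrix}A_a & -A_a\\ -A_a & A_a\end{pmatrix}\in\bp_{2n}^l((-R,R)),\qquad A_a:=a\,\Id_{k+1}\oplus{\bf 0},
\]
uses Lemma~\ref{lem:increase_dim} (which needs $k<n-1$, explaining that hypothesis) to pass to $\bp_{2n}^l$, and then reads off that both $f[A_a]$ and $f[-A_a]$ have vanishing leading $(k+1)\times(k+1)$ minor. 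This forces $f(a)=f(-a)=0$ when $f(0)=0$, covering all of $(-R,R)$. Your proposal has no analogue of this step.
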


\begin{proof}
Clearly, $(3) \Rightarrow (1) \Rightarrow (2)$. We first show that $(2)
\Rightarrow (3)$ if $I = [0,R)$. Suppose first $f(0) = 0$. Observe that
$f[-] : \bp_n^{k+1}(I) \to \br_n^k$, so for all $a \in I \cap
[0,\infty)$, we have $f[a \Id_{k+1} \oplus\ {\bf 0}_{(n-k-1) \times
(n-k-1)}] \in \br_n^k$ if $(2)$ holds. Thus its leading principal $(k+1)
\times (k+1)$ minor vanishes, which shows that $f \equiv 0$ on $I \cap
[0,\infty)$, which contradicts (2) if $I = [0,R)$ and $f(0) = 0$.
Therefore $f(0) \ne 0$.
Now the $(1) \Rightarrow (3)$ implication of Proposition
\ref{prop:general_rank_reduction} shows that $(f - f(0))[-]:
\bp_{n-1}^l(I) \to \br_{n-1}^{k-1}$. It follows from the argument above
that $f - f(0) \equiv 0$ on $I = [0,R)$. This proves $(2) \Rightarrow
(3)$ if $I = [0,R)$. 

Now suppose $I = (-R,R)$ and $k < n-1$. Given $a \in [0,R)$, define the
following matrices:
\[
A_a := a \Id_{k+1} \oplus\ {\bf 0}_{(n-k-1) \times (n-k-1)} \in
\bp_n^l(I), \qquad \widetilde{A}_a := \begin{pmatrix} A_a & -A_a \\ -A_a
& A_a \end{pmatrix}.
\]

\noindent Note that $\widetilde{A}_a$ is the Kronecker product of $B :=
\begin{pmatrix}1 & -1 \\ -1 & 1\end{pmatrix}$ and $A_a$, so its
eigenvalues are the products of the eigenvalues of $B$ and $A_a$. It
follows that $\widetilde{A}_a \in \bp_{2n}^l(I)$. Now assume $(2)$ holds.
By Lemma \ref{lem:increase_dim}, we have $f[-]: \bp_{2n}^l(I) \to
\br_{2n}^k$. Thus, $f[\widetilde{A}_a] \in \br_{2n}^k$.
Therefore by Lemma \ref{lem:rank_minors}, the $(k+1) \times (k+1)$
principal minors of both $f[A_a]$ and $f[-A_a]$ vanish. It follows that
$f \equiv 0$ on $I = (-R,R)$, proving (3) if $I = (-R,R)$ and $f(0) = 0$.
Now suppose $f(0) \ne 0$. Then Proposition
\ref{prop:general_rank_reduction} shows that $(f - f(0))[-]:
\bp_{n-1}^l(I) \to \br_{n-1}^{k-1}$ and the result follows. 

Finally, if $f[-]: \bp_n^l(I) \to \bp_n^k$, then in particular $f[-]:
\bp_n^l(I) \to \br_n^k$ and so $f \equiv c$. It follows easily that $c >
0$. 
\end{proof}

\subsection{The regime $l \leq k < 2l$}\label{Sk<2l}

We now study the regime $l \leq k < 2l$. Theorem \ref{Tlto2l} classifies
all continuous functions $f$ which send $\bp_n^l$ into $\br_n^k$ with $l
\leq k < 2l$ and yields the same classification as in Theorem
\ref{Tltok}.

\begin{theorem}\label{Tlto2l}
Fix $0 < R \leq \infty$ and integers $2 < l < k < \min(2l,n-1)$. Suppose
$I = [0, R)$ or $(-R,R)$, and $f : I \to \R$ is continuous. The following
are equivalent:
\begin{enumerate}
\item $f[A] \in \br_n^k$ for every $A \in \br_n^l(I)$; 
\item $f[A] \in \br_n^k$ for every $A \in \bp_n^l(I)$;
\item There exists $c_1 \in \R$ such that $f(x) = f(0) + c_1 x$ for all
$x \in I$. 
\end{enumerate}
Moreover, $f[-] : \bp_n^l(I) \to \bp_n^k$ if and only if $f(0) \geq 0$
and $f(x) = f(0) + c_1 x$ for some $c_1 \geq 0$ and all $x \in I$. Also,
if $f[-] : \bp_n^l(I) \to \bp_n^k$, $I = [0,R)$, and $k \leq n-3$, the
continuity assumption is not required. 
\end{theorem}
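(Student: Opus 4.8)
The implications $(3) \Rightarrow (1) \Rightarrow (2)$ are immediate: if $f(x) = f(0) + c_1 x$ then $f[A] = f(0)\,{\bf 1}_{n \times n} + c_1 A$ is symmetric of rank at most $1 + l \le k$ (since $l < k$) for every $A \in \br_n^l(I)$, so $f[A] \in \br_n^k$; and $(1) \Rightarrow (2)$ holds since $\bp_n^l(I) \subseteq \br_n^l(I)$. The content is $(2) \Rightarrow (3)$, and the plan is to first deduce from $(2)$ that $f$ is a polynomial, and then invoke Theorem \ref{Tltok} to pin down its degree. Since $l > 2$, every special rank two matrix $a {\bf 1}_{n \times n} + u u^T$ has rank at most $2 \le l$ and so lies in $\bp_n^l(I)$; hence $(2)$ forces $f[a{\bf 1}_{n \times n} + u u^T] \in \br_n^k$ for all admissible $a \ge 0, u$, and --- when $I = (-R,R)$ --- also for $a < 0$, by Corollary \ref{cor:sp_rank2_neg}. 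By Lemma \ref{lem:increase_dim} the same holds in every dimension $n' \ge \max(k+2,l)$.

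The key observation is that for each fixed base point $a_0 \in I$ one has $f[a_0 {\bf 1}_{n' \times n'} + u u^T] = g[u u^T]$ with $g := f(a_0 + \cdot)$, so the hypothesis becomes $g[-] : \bp_{n'}^1(I - a_0) \to \br_{n'}^k$ for all $n'$. Writing $u_i = e^{s_i}$ for positive coordinates and $\tilde g := g \circ \exp$, the matrices $g[uu^T] = (\tilde g(s_i + s_j))$ are the finite sections of the Hankel-type kernel $(s,t) \mapsto \tilde g(s+t)$, which therefore has rank at most $k$. By the classical fact that a continuous function whose translates span a finite-dimensional space is an exponential polynomial, $\tilde g(s) = \sum_j q_j(s) e^{\gamma_j s}$ with $\sum_j (\deg q_j + 1) \le k$, so $g(x) = \sum_j q_j(\ln x)\, x^{\gamma_j}$ near $0^+$; in particular $f$ is real-analytic on a right neighborhood of each $a_0 \in I$, and since these neighborhoods cover $I^\circ$, $f$ is real-analytic on $I^\circ$. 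To upgrade this local expansion in powers and log-powers to a genuine polynomial, I would compare the expansions around two base points $a_0 < a_1$ with $a_1$ inside the domain of the expansion around $a_0$: on a common interval bounded away from $a_0$, $f$ equals the analytic function $\sum_j q_j(\ln(x-a_0))(x-a_0)^{\gamma_j}$, whose Taylor series about $a_1$ has infinitely many nonzero coefficients unless that function is a polynomial; but this series must coincide with the finite expression $\sum_j \tilde q_j(\ln(x-a_1))(x-a_1)^{\tilde\gamma_j}$, and by linear independence of the functions $t^{\gamma}(\ln t)^d$ this is possible only if all $\tilde\gamma_j \in \Z_{\ge 0}$ and all $\tilde q_j$ are constant --- forcing a finite Taylor series. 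Hence each local expansion is in fact a polynomial; by connectedness of $I^\circ$ these polynomials agree, and by continuity $f$ is a single polynomial on $I$.

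Now Theorem \ref{Tltok} applies (a polynomial is $C^k$), giving $f(x) = \sum_t a_t x^{i_t}$ with $\sum_t \binom{i_t + l - 1}{l-1} \le k$; since $l \ge 3$, the power $i_t = 2$ already contributes $\binom{l+1}{2} = \tfrac{l(l+1)}{2} \ge 2l > k$, so only $x^0$ and $x^1$ survive and $f(x) = f(0) + c_1 x$, which is $(3)$. The main obstacle is exactly the Hankel-kernel step: converting mere continuity into analyticity. (When $I = [0,R)$ and $k \le n-3$ this detour is unnecessary, since Theorem \ref{Tltok} then applies with no regularity hypothesis; the argument above is needed precisely when $I = [0,R),\ k = n-2$ or $I = (-R,R)$.)

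For the ``Moreover'' statements: if $f[-] : \bp_n^l(I) \to \bp_n^k$ then a fortiori $f[-] : \bp_n^l(I) \to \br_n^k$, so the equivalence already proved gives $f(x) = f(0) + c_1 x$; applying $f$ to ${\bf 0}_{n \times n}$ forces $f(0) \ge 0$, and since $f$ is then a polynomial sending rank one positive semidefinite matrices into $\bp_n$, Theorem \ref{thm:rank_one} forces its coefficients, and hence $c_1$, to be nonnegative. The converse follows from the computation in \eqref{Especial2} together with the Schur product theorem. Finally, if $I = [0,R)$ and $k \le n-3$, then $f$ carrying the special rank two matrices into $\bp_n$ already implies $f \in C^{n-3}(I) \subseteq C^k(I)$ by Theorem \ref{Thorn}, so the continuity hypothesis is superfluous in that regime.
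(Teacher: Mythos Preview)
Your overall strategy is sound and genuinely different from the paper's, but there is a real gap in the ``upgrade to polynomial'' step. You assert that the Taylor series of $f$ about $a_1$ ``must coincide with the finite expression $\sum_j \tilde q_j(\ln(x-a_1))(x-a_1)^{\tilde\gamma_j}$'' and then invoke linear independence of the functions $t^{\gamma}(\ln t)^d$. The problem is that the Taylor series is an \emph{infinite} sum of integer powers while the log-power expansion is a finite sum of generalized powers, so finite linear independence does not directly apply. A clean fix: since $\tilde g_1$ is an exponential polynomial it satisfies $P(D)\tilde g_1 = 0$ for some nonzero polynomial $P$ of degree at most $k$, hence $g_1$ satisfies the Euler equation $P(x\,d/dx)g_1 = 0$ on $(0,R'')$; applying this to the convergent Taylor expansion $g_1(x) = \sum_m c_m x^m$ (valid because $f$ is analytic at $a_1$, which you correctly deduced from the expansion at $a_0$) yields $c_m P(m) = 0$ for all $m$, so at most $k$ Taylor coefficients survive and $g_1$ is a polynomial. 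You should also note that the $\gamma_j$ in the exponential-polynomial representation may be complex (the translates argument does not force them real), though this does not affect the Euler-ODE conclusion.

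By contrast, the paper avoids the entire Hankel/exponential-polynomial detour with a single explicit test matrix. Assuming first $f(0)=0$ and picking $c\in I$ with $f(c)\neq 0$, one sets $u=(a,b)^T$ and
\[
A_{k,l}(a,b,c) := \underbrace{uu^T \oplus \cdots \oplus uu^T}_{k-l+1} \ \oplus\ c\,\Id_{2l-k-1}\ \oplus\ {\bf 0}_{(n-k-1)\times(n-k-1)} \in \bp_n^l(I).
\]
The vanishing of the leading $(k+1)\times(k+1)$ minor of $f[A_{k,l}]$ gives $f(c)^{2l-k-1}\bigl(f(a^2)f(b^2)-f(ab)^2\bigr)^{k-l+1}=0$, forcing $f[-]:\bp_2^1(I)\to\br_2^1$. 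Lemma~\ref{lem:continuous_rank1} then yields $f(x)=c_1 x^\alpha$, and Corollary~\ref{Cnonint} together with Theorem~\ref{Tltok} pin down $\alpha=1$; the case $f(0)\neq 0$ is handled by Proposition~\ref{prop:general_rank_reduction}, and $I=(-R,R)$ by the doubling trick of Lemma~\ref{lem:block_sum}. This is more elementary than your route (no appeal to the Anselone--Korevaar/Engert theorem on translation-finite functions), and it works dimension-by-dimension without needing to pass to all $n'$. What your approach buys is a structural explanation --- the rank bound on Hankel sections genuinely characterizes exponential polynomials --- but for this particular theorem the paper's direct construction is both shorter and self-contained.
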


Theorem \ref{Tlto2l} addresses the general case $2 < l < k <
\min(2l,n-1)$. Whether or not the theorem holds for the remaining
possible values for $l,k$, and $n$ is discussed in Remark
\ref{rem:Tlto2l_other_cases}.

\begin{remark}
Note that when $1 \leq k < 2l$, the combinatorial condition
\eqref{eqn:sum_binom} implies that $f$ is constant or linear. Theorems
\ref{Tk<l} and \ref{Tlto2l} thus show that the $C^k$ assumption in
Theorem \ref{Tltok} can be replaced by continuity when $1 \leq k < 2l$,
without changing the characterization. 
\end{remark}

In order to prove Theorem \ref{Tlto2l}, we need the following two
preliminary results. 

\begin{lemma}\label{lem:block_sum}
Let $A, B, C$ be three positive semidefinite matrices of dimension $n_1,
n_2$, and $n_3$ respectively. Then the $2(n_1 + n_2 + n_3) \times 2(n_1 +
n_2 + n_3)$ matrix 
\begin{equation}
M := \begin{pmatrix}
A \oplus B \oplus C & A \oplus -B \oplus C \\
A \oplus -B \oplus C & A \oplus B \oplus C
\end{pmatrix}
\end{equation}
is positive semidefinite and $\rk M = \rk A + \rk B + \rk C$.
(Recall here that $A \oplus B \oplus C = \block{A,B,C}$ denotes a block
diagonal matrix.)
\end{lemma}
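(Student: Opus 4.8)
The plan is to recognize $M$, after a suitable reordering of coordinates, as a (block-)direct sum of three Kronecker products, each of which is transparently positive semidefinite and has an easily computed rank, so that both assertions drop out at once.

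Concretely, I would set $N := n_1 + n_2 + n_3$ and split each of the two index-blocks $\{1,\dots,N\}$ of $M$ into the three ranges corresponding to $A$, $B$, and $C$. Let $P$ be the permutation matrix that reorders the $2N$ coordinates into the six groups ($A$-top, $A$-bottom, $B$-top, $B$-bottom, $C$-top, $C$-bottom). Reading off the sub-blocks of $M$ — the $(A,A)$ sub-block equals $A$ in all four displayed $N\times N$ blocks, the $(B,B)$ sub-block equals $B$ on the two diagonal blocks and $-B$ on the two off-diagonal blocks, the $(C,C)$ sub-block equals $C$ throughout, and all mixed sub-blocks vanish — one finds
\begin{equation}
P M P^T = \left( \begin{pmatrix} 1 & 1 \\ 1 & 1 \end{pmatrix} \otimes A \right) \ \oplus\ \left( \begin{pmatrix} 1 & -1 \\ -1 & 1 \end{pmatrix} \otimes B \right) \ \oplus\ \left( \begin{pmatrix} 1 & 1 \\ 1 & 1 \end{pmatrix} \otimes C \right).
\end{equation}
Both $2\times 2$ matrices appearing here are positive semidefinite of rank $1$ (their eigenvalues are $2$ and $0$). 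Invoking the standard facts that a Kronecker product of positive semidefinite matrices is positive semidefinite and that $\rk(X \otimes Y) = \rk X \cdot \rk Y$, each of the three summands is positive semidefinite, of rank $\rk A$, $\rk B$, and $\rk C$ respectively. A block-diagonal matrix built from positive semidefinite blocks is positive semidefinite with rank equal to the sum of the ranks, so $P M P^T$ — and hence $M$, since conjugation by the permutation matrix $P$ preserves both rank and positive semidefiniteness — is positive semidefinite with $\rk M = \rk A + \rk B + \rk C$.

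There is essentially no obstacle here; the only point needing a little care is writing down $P$ so that $PMP^T$ has the stated block-diagonal form, which is pure bookkeeping. If one prefers to avoid Kronecker products, the same conclusion follows directly: with $D_1 := A \oplus B \oplus C$ and $D_2 := A \oplus (-B) \oplus C$ one has $D_1 + D_2 = 2A \oplus {\bf 0} \oplus 2C$ and $D_1 - D_2 = {\bf 0} \oplus 2B \oplus {\bf 0}$, both positive semidefinite, so for $x = (y,z)$ the identity $x^T M x = \tfrac12 (y+z)^T (D_1+D_2)(y+z) + \tfrac12 (y-z)^T(D_1-D_2)(y-z)$ yields positive semidefiniteness, while the column space of $M$ is spanned by the two linearly independent families $\{(v,v) : v \in \operatorname{Col}(A \oplus {\bf 0} \oplus C)\}$ and $\{(w,-w) : w \in \operatorname{Col}({\bf 0} \oplus B \oplus {\bf 0})\}$, giving the rank count.
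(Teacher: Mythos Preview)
Your proof is correct and takes a genuinely different route from the paper's. The paper establishes positive semidefiniteness via the Schur complement: assuming $A,B,C$ are invertible, it computes that the Schur complement of the lower-right $N\times N$ block in $M$ is zero, hence $M$ is PSD; the singular case is handled by replacing $A,B,C$ with $A+\epsilon\Id$, etc., and passing to the limit. For the rank, the paper argues (tersely) from a linear dependence between the two halves of the columns of $M$.

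Your permutation-plus-Kronecker decomposition is cleaner: it handles positive semidefiniteness and the rank count simultaneously and needs no invertibility hypothesis or limiting argument. It also makes the structure of $M$ transparent. (Amusingly, the paper itself uses the same Kronecker-product observation in the proof of Theorem~\ref{Tk<l}, just not here.) Your alternative paragraph via the quadratic-form identity $x^TMx=\tfrac12(y+z)^T(D_1+D_2)(y+z)+\tfrac12(y-z)^T(D_1-D_2)(y-z)$ is also fine and is essentially a hands-on version of the same idea without the Kronecker language.
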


\begin{proof}
To compute $\rk M$, note that the first half and the second half of the
columns of $M$ are linearly dependent. It follows easily that $\rk M =
\rk A + \rk B + \rk C$. To prove that $M$ is positive semidefinite,
suppose first that $A, B$, and $C$ are invertible. Clearly, $A \oplus B
\oplus C$ is positive definite. The Schur complement of the (2,2) block
$A \oplus B \oplus C$ in $M$ is 
\begin{align*}
S &= (A \oplus B \oplus C) - (A \oplus -B \oplus C)(A^{-1} \oplus B^{-1}
\oplus C^{-1})(A \oplus -B \oplus C) \\
&= (A \oplus B \oplus C) - (\Id_{n_1} \oplus -\Id_{n_2} \oplus \Id_{n_3})
(A \oplus -B \oplus C) \\
&= {\bf 0}_{(n_1 + n_2 + n_3)\times (n_1 + n_2 + n_3)}. 
\end{align*}
It follows that $M$ is positive semidefinite (see \cite[Appendix
A.5.5]{BoydVan_Convex}). Finally, if $A$, $B$, or $C$ is not invertible,
then the result follows by replacing $(A,B,C)$ by $(A + \epsilon
\Id_{n_1}, B + \epsilon \Id_{n_2}, C + \epsilon \Id_{n_3})$ for $\epsilon
> 0$, applying the above argument to the resulting block matrix $M$, and
letting $\epsilon \to 0^+$. 
\end{proof}

The next preliminary result demonstrates that applying $\phi_1$ entrywise
to rank $l$ matrices can double the rank. 

\begin{proposition}\label{Pphi1}
Let $0 < R \leq \infty$ and let $I = (-R,R)$. Fix integers $n > l \geq 2$
and $1 \leq k < \max(2l,n)$. Then there exists a matrix $A \in
\bp_n^l(I)$ such that $\phi_1[A] \not\in \br_n^k$.
\end{proposition}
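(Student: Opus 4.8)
The plan is to reduce everything to one explicit linear‑algebra statement about the matrix $\bigl(|\cos(\theta_i-\theta_j)|\bigr)$. First observe that $\br_n^k$ is the full space of symmetric $n\times n$ matrices as soon as $k\ge n$, so the assertion is non‑vacuous only for $k\le n-1$; it therefore suffices to produce a \emph{single} $A\in\bp_n^l(I)$ with $\phi_1[A]$ of full rank $n$. Since $l\ge 2$, such an $A$ may be taken of rank $\le 2$. Concretely I would fix unit vectors $p_i=(\cos\theta_i,\sin\theta_i)\in\R^2$ for $1\le i\le n$, pick $c\in(0,R)$, and set $A:=c\,(\langle p_i,p_j\rangle)_{i,j=1}^n=c\,(\cos(\theta_i-\theta_j))_{i,j=1}^n$. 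Then $A$ is $c$ times a Gram matrix, hence $A\in\bp_n^2(I)\subseteq\bp_n^l(I)$ (its entries lie in $[-c,c]\subset(-R,R)$), and $\phi_1[A]=cM$ with $M:=\bigl(|\cos(\theta_i-\theta_j)|\bigr)_{i,j=1}^n$ symmetric; as $c\ne 0$, $\rk\phi_1[A]=\rk M$. So it is enough to show: \emph{for every $n$ there exist $\theta_1,\dots,\theta_n$ with $M$ nonsingular.}

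For this I would induct on $n$, the case $n=1$ being trivial. Suppose $\theta_1,\dots,\theta_{n-1}$ give a nonsingular $M_{n-1}$, and write $w(\phi):=\bigl(|\cos(\theta_i-\phi)|\bigr)_{i=1}^{n-1}$. By the Schur complement, the bordered matrix $M_n=\begin{pmatrix}M_{n-1}&w(\theta_n)\\ w(\theta_n)^T&1\end{pmatrix}$ is nonsingular exactly when $g(\phi):=w(\phi)^TM_{n-1}^{-1}w(\phi)\ne 1$ at $\phi=\theta_n$, so I must find $\theta_n$ with $g(\theta_n)\ne 1$. Suppose instead $g\equiv 1$. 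Each map $\phi\mapsto|\cos(\theta_i-\phi)|$ is real‑analytic except where $\phi\equiv\theta_i+\tfrac\pi2\pmod\pi$, where it has a genuine corner: near $\phi=\theta_k+\tfrac\pi2$ it behaves to leading order like $|\phi-\theta_k-\tfrac\pi2|$, while for $i\ne k$ (using that the $\theta_i$ are distinct) $|\cos(\theta_i-\phi)|$ is smooth there, with value $|\sin(\theta_i-\theta_k)|$ at $\phi=\theta_k+\tfrac\pi2$. Expanding $g$ at that point and using that $1,\ t,\ |t|$ are linearly independent as germs at $0$, constancy of $g$ forces the coefficient of the corner term to vanish, i.e.\ $\sum_{j\ne k}(M_{n-1}^{-1})_{kj}\,|\sin(\theta_j-\theta_k)|=0$ for every $k$; in matrix form $\bigl(M_{n-1}^{-1}\circ S\bigr){\bf 1}=0$ with $S:=\bigl(|\sin(\theta_i-\theta_j)|\bigr)_{i,j=1}^{n-1}$. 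Hence it suffices to choose $\theta_1,\dots,\theta_{n-1}$, inside the (open) set where $M_{n-1}$ is nonsingular, so that this last relation \emph{fails}; then no $\theta_n$ can make $M_n$ singular, and stability under small perturbations yields an open set of good $(\theta_1,\dots,\theta_n)$, which is what the induction needs.

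The hard part will be exactly this genericity input: showing $\bigl(M_{n-1}^{-1}\circ S\bigr){\bf 1}$ (equivalently, $\det M_n$ as a function of $\theta_n$) is not identically zero. The obstacle is that $|\cos|$ is only \emph{piecewise} real‑analytic — its sign pattern jumps across the walls $\theta_i-\theta_j=\pm\tfrac\pi2$ — so one must argue inside a single chamber (a region on which all $\sgn\cos(\theta_i-\theta_j)$ are fixed) rather than appeal to global analyticity. I would settle this by one of two routes. Route 1: an explicit anchor — for equally spaced $\theta_j=(j-1)\pi/n$ the matrix $M$ is a symmetric circulant whose eigenvalues are finite sums of the Fourier coefficients of $|\cos|$, namely $b_0=\tfrac2\pi$ and $b_m=\tfrac{4(-1)^{m+1}}{\pi(4m^2-1)}$ (all nonzero), and a short estimate shows every eigenvalue is nonzero, so $M$ is nonsingular and the induction can be run off this base case. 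Route 2: exploit that $|\cos t|=\tfrac2\pi+\sum_{m\ge1}b_m\cos 2mt$ has all Fourier coefficients nonzero, so $M=W\Lambda W^{T}$ with $\Lambda$ an invertible diagonal operator and $W$ the evaluation matrix of the linearly independent system $\{1,\cos2t,\sin2t,\dots\}$ at the distinct points $\theta_i$ (hence of full row rank $n$); generic position of the $n$‑dimensional row space of $W$ relative to $\Lambda$ then forces $M$ nonsingular. Either way, once this one ingredient is in hand, the reduction of the first paragraph completes the proof.
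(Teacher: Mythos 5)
Your reduction is correct and genuinely different from the paper's argument: it does suffice, as you observe, to find $\theta_1,\dots,\theta_n$ for which $M:=\bigl(|\cos(\theta_i-\theta_j)|\bigr)_{i,j=1}^n$ is nonsingular, and since the resulting witness $A=c\,(\cos(\theta_i-\theta_j))$ has rank at most $2$, this would handle all $k\le n-1$ uniformly with a single construction. But you have left the one nontrivial ingredient open, and both of your proposed routes have gaps exactly there. In the inductive route, the derivation that $g\equiv 1$ forces $(M_{n-1}^{-1}\circ S)\mathbf 1=\mathbf 0$ is fine, but you must then exhibit angles for which this relation \emph{fails} --- a genericity claim of essentially the same nature and difficulty as the original nonsingularity statement, which you do not prove; and ``stability under small perturbations'' does not automatically carry the strengthened hypothesis $(M_n^{-1}\circ S_n)\mathbf 1\ne\mathbf 0$ along to the next inductive step. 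Route~1 contains a misstatement and an unfinished estimate: the circulant eigenvalues $\lambda_j=\sum_{m=0}^{n-1}|\cos(m\pi/n)|\,\omega^{jm}$ are \emph{not} finite sums of the Fourier coefficients $b_m$ of $|\cos|$ (they are finite sums of sampled values, which by aliasing equal \emph{infinite} alternating sums of the $b_m$), and the asserted ``short estimate'' is not supplied. The claim is in fact true, and the clean way to close Route~1 is a direct geometric-series computation over the two sign-chambers of $m\mapsto\cos(m\pi/n)$, which yields, for $0\le j\le n-1$,
\[
\lambda_j=\tfrac{(-1)^j}{2}\Bigl(\cot\tfrac{(2j+1)\pi}{2n}-\cot\tfrac{(2j-1)\pi}{2n}\Bigr)\ \ (n\ \mathrm{even}),\qquad
\lambda_j=\tfrac{(-1)^j}{2}\Bigl(\csc\tfrac{(2j+1)\pi}{2n}-\csc\tfrac{(2j-1)\pi}{2n}\Bigr)\ \ (n\ \mathrm{odd});
\]
nonvanishing then follows because $\cot$ is strictly decreasing on $(0,\pi)$, respectively because for $n$ odd the two angles $\tfrac{(2j\pm1)\pi}{2n}\in(0,\pi)$ (for $j\ge1$) are distinct and never supplementary, while $\lambda_0>0$ trivially. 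You would need to produce such a computation, or an equivalent estimate, for the argument to be a proof.

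For comparison, the paper sidesteps the analytic input entirely with a finite certificate: it exhibits explicit $A_4\in\bp_4^2(\R)$ and $A_6\in\bp_6^3(\R)$ whose entrywise absolute values have all leading principal minors nonzero, forms $M:=A_4\oplus\cdots\oplus A_4\oplus\mathbf 0$ (replacing one $A_4$ block by $A_6$ when $l$ is odd) so that $\rk M=l$ and $\rk\phi_1[M]=2l$, and for $l<n<2l$ passes to a leading $n\times n$ principal submatrix, whose $\phi_1$-image is nonsingular by the minor condition. That route is purely computational and self-contained. Yours is conceptually attractive and, once the eigenvalue calculation above is written out, complete; as submitted it has a hole precisely at the step that carries the mathematical content.
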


\begin{proof}
It suffices to prove the result for $I = \R$ since $\phi_\alpha(ax) =
a^\alpha \phi_\alpha(x)$ for all $a > 0$. Suppose first that $l > k$. Let
$A \in \bp_n^l([0,R))$ have rank exactly $l$. Then $\phi_1[A] = A \not\in
\br_n^k$ and so $\phi_1$ does not map $\bp_n^l(I)$ to $\br_n^k$ if $l >
k$.  Now suppose $2 \leq l \leq k < \max(2l,n)$. Let 
\begin{equation}
A_4 := \begin{pmatrix}
8 & 4 & -2 & 6 \\
4 & 4 & 2 & 4 \\
-2 & 2 & 5 & 0 \\
6 & 4 & 0 & 5
\end{pmatrix}, \qquad 
A_6 := \begin{pmatrix}
9 & 7 & 7 & 1 & 1 & -3 \\
7 & 6 & 5 & 3 & 2 & -2 \\
7 & 5 & 6 & -1 & 0 & 2 \\
1 & 3 & -1 & 9 & 5 & 1 \\
1 & 2 & 0 & 5 & 3 & 1 \\
-3 & -2 & -2 & 1 & 1 & 3
\end{pmatrix}.
\end{equation}

\noindent It is not difficult to verify that $A_4 \in \bp_4^2(\R)$, $A_6
\in \bp_6^3(\R)$, and all the leading principal minors of $\phi_1[A_4]$
and $\phi_1[A_6]$ are nonzero. 

Suppose first $l$ is even, say $l = 2a$ for some integer $a > 0$. If $n
\geq 4a = 2l$, then the matrix
\begin{equation}\label{Epadding2}
M := B_a \oplus {\bf 0}_{(n-4a)\times(n-4a)}, \qquad B_a :=
\underbrace{A_4 \oplus \cdots \oplus A_4}_{a\ {\rm times}},
\end{equation}

\noindent satisfies $M \in \bp_n^l(\R)$, and $\phi_1[M] \in \br_n^{2l}
\setminus \br_n^{2l-1}$. This proves that $\phi_1[-]$ does not send
$\bp_n^l(I)$ to $\br_n^k$ if $n \geq 2l$ and $l$ is even. Now suppose $l
< n < 2l$. Let $M$ be the leading $n \times n$ principal submatrix of
$B_a$, i.e., the submatrix formed by its first $n$ rows and columns. Then
$M \in \bp_n^l(\R)$ since $B_a$ has rank $l$. Moreover, since every
leading principal submatrix of $\phi_1[A_4]$ is nonsingular, it follows
that $\phi_1[M]$ is also nonsingular, i.e., $\phi_1[M] \in \br_n^n
\setminus \br_n^{n-1}$. This proves the result for $n < 2l$ if $l$ is
even. 

Now suppose $l$ is odd, say $l = 2a + 1$ for some integer $a \geq 0$. For
$n \geq 4a+2 = 2l$, consider the matrix $M := B_{a-1} \oplus A_6 \oplus
{\bf 0}_{(n-4a-2)\times(n-4a-2)}$.
Then $\rk M = 2(a-1) + 3 = 2a+1 = l$. Thus $M \in \bp_n^l(\R)$. However,
$\rk \phi_1[M] = 4(a-1) + 6 = 4a+2 = 2l$. This shows that $\phi_1[-]$
does not send $\bp_n^l(I)$ to $\br_n^k$ if $n \geq 4a + 2 = 2l$ and $l$
is odd. If $l < n < 2l$, then the result follows by considering a leading
principal submatrix of $B_{a-1} \oplus A_6$ as in the case of even $l$. 
\end{proof}

With the above results in hand, we can now prove Theorem \ref{Tlto2l}. 

\begin{proof}[Proof of Theorem \ref{Tlto2l}]
Clearly, $(3) \Rightarrow (1) \Rightarrow (2)$. We first show that $(2)
\Rightarrow (3)$ if $I = [0,R)$, $f(0) = 0$, and $2 < l \leq k <
\min(2l,n-1)$. This assertion clearly holds if $f$ is constant on $I$.
Now suppose $f$ is not constant on $I$. Fix $c \in I$ such that $f(c) \ne
0$ and choose arbitrary $a,b\in \R$ such that $a^2, ab, b^2 \in I$. Let
$u := (a, b)^T$ and define
\begin{align}\label{eqn:Akl}
\begin{aligned}
C_j := &\ \underbrace{u u^T \oplus \cdots \oplus u u^T}_{j\ {\rm
times}},\\
A_{k,l}(a,b,c) := &\ C_{k-l+1} \oplus c \Id_{2l-k-1} \oplus\
\mathbf{0}_{(n-k-1) \times (n-k-1)} \in \bp_n^l(I).
\end{aligned}
\end{align}

\noindent Then $f[A_{k,l}(a,b,c)] \in \br_n^k$ by hypothesis, whence its
leading principal $(k+1) \times (k+1)$ submatrix is singular, i.e.,
\begin{equation}\label{eqn:det_Ak}
f(c)^{2l-k-1} (f(a^2) f(b^2) - f(ab)^2)^{k-l+1} = 0
\end{equation}

\noindent (using that $f(0) = 0$). Since $a,b$ are arbitrary and $f(c)
\ne 0$, it follows that $f[-] : \bp_2^1(I) \to \br_2^1$. Since $f$ is not
constant, it follows by Lemma \ref{lem:continuous_rank1} that $f(x) = c_1
f_\alpha(x) = c_1 x^\alpha$ for some $c_1 \in \R$, $\alpha > 0$, and all
$x \in I = [0,R)$.

We now show that $\alpha = 1$. First, if $\alpha \notin \{ 1, \dots, n-1
\}$, then Corollary \ref{Cnonint} implies that there exists $u \in \R^n$
such that $A_u := {\bf 1}_{n \times n} + u u^T \in \bp_n((0,\infty))$ and
$f_\alpha[A_u]$ is nonsingular (which contradicts the assumptions). We
conclude that $\alpha \in \{ 1, \dots, n-1 \}$. By Theorem \ref{Tltok}
applied on $I = [0,R)$, it follows that $\binom{\alpha+l-1}{l-1} \leq k$.
If $\alpha \geq 2$ we verify that $\binom{\alpha+l-1}{l-1} \geq
\binom{2+l-1}{l-1} = l(l+1)/2 \geq 2l$ since $l > 2$. Thus, $f \equiv c_1
x$ on $I$ and $(3)$ follows. 

Now suppose that $I = [0,R)$ and $f(0) \ne 0$. Then Proposition
\ref{prop:general_rank_reduction} shows that $(f - f(0))[-]:
\bp_{n-1}^l(I) \to \br_{n-1}^{k-1}$. If $k=l$, then it follows that $f -
f(0)$ is constant by Theorem \ref{Tk<l} and so $f \equiv f(0)$. If $k >
l$, then the above reasoning shows that $f - f(0) = c_1 x$ for some $c_1
\in \R$. Thus $f(x) = f(0) + c_1 x$ for some $c_1 \in \R$ and all $x \in
I$.  

Next, we prove that $(2) \Rightarrow (3)$ when $I = (-R,R)$. The result
clearly holds if $f$ is constant. Thus, assume $f$ is nonconstant.
Suppose first $f(0) = 0$. We use a technique similar to the one used in
the proof of Theorem \ref{Tk<l} for $I = (-R,R)$. Let $c \in I$ be such
that either $f(c) \ne 0$ or $f(-c) \ne 0$ and let $a,b \in \R$ such that
$a^2, b^2, ab \in I$. Consider the matrix  
\begin{equation}
\widetilde{A} := \begin{pmatrix}A_{k,l}(a,b,|c|) & A_{k,l}(a,b,-|c|) \\
A_{k,l}(a,b,-|c|) & A_{k,l}(a,b,|c|)\end{pmatrix}, 
\end{equation}

\noindent with $A_{k,l}(x,y,z)$ as in \eqref{eqn:Akl}. By Lemma
\ref{lem:block_sum}, we have $\widetilde{A} \in \bp_{2n}^l(I)$. Also, by
Lemma \ref{lem:increase_dim}, we have $f[-]: \bp_{2n}^l(I) \to
\br_{2n}^k$. Thus, $f[\widetilde{A}] \in \br_{2n}^k$. Therefore, by Lemma
\ref{lem:rank_minors}, the $(k+1) \times (k+1)$ principal minors of
$A_{k,l}(a,b,|c|)$ and $A_{k,l}(a,b,-|c|)$ both vanish. Computing the
determinant of these two matrices as in \eqref{eqn:det_Ak}, we conclude
that $f(a^2)(b^2)-f(ab)^2 = 0$ for all $a,b \in \R$ such that $a^2, b^2,
ab \in I$. It follows by Proposition \ref{prop:charac_rank1} that $f[-] :
\bp_2^1(I) \to \br_2^1$. Since $f$ is not constant, by Lemma
\ref{lem:continuous_rank1}, we have that either $f(x) = c_1
\phi_\alpha(x)$ or $f(x) = c_1 \psi_\alpha(x)$ for some $c_1 \in \R$,
$\alpha > 0$, and all $x \in I = (-R,R)$. Now since $f[-]: \bp_n^l(I) \to
\br_n^k$, then in particular $f[-]: \bp_n^l([0,R)) \to \br_n^k$.
Therefore, by the previous part $f \equiv c_1 x$ on $[0,R)$. It follows
that $\alpha = 1$, i.e., $f \equiv c_1 \phi_1$ or $f \equiv c_1 \psi_1$
on $I = (-R,R)$. By Proposition \ref{Pphi1}, the function $\phi_1$ does
not map $\bp_n^l(I)$ into $\br_n^k$. Thus, $f(x)  = c_1 \psi_1(x) = c_1
x$ for all $x \in I$. This proves the result for $I = (-R,R)$ if $f(0) =
0$. If $f(0) \ne 0$, then applying Proposition
\ref{prop:general_rank_reduction} shows that $(f - f(0))[-]:
\bp_{n-1}^l(I) \to \br_{n-1}^{k-1}$ and the result easily follows. 

Now suppose $f[-] : \bp_n^k(I) \to \bp_n^k$. Then in particular $f[-] :
\bp_n^k(I) \to \br_n^k$ and so $f(x) = f(0) + c_1 x$ for all $x \in I$.
It follows easily that $f(0), c_1 \geq 0$. The converse is obvious.
Finally, if $k \leq n-3$ and $f[-] : \bp_n^k([0,R)) \to \bp_n^k$, then by
Theorem \ref{Thorn}, we do not need any continuity assumption on $f$. 
\end{proof}

\begin{remark}\label{rem:Tlto2l_alternate}
When $I = (-R,R)$, the proof of Theorem \ref{Tlto2l} depends on  Lemmas
\ref{lem:increase_dim} and \ref{lem:block_sum}. We now provide a direct
argument that avoids using these lemmas. Let $a,b \in \R$ such that $a^2,
b^2, ab \in I$ and let $u := (a,b)^T$. Let $A := C_{(k+1)/2} \oplus {\bf
0}_{(n-k-1) \times (n-k-1)}$ if $k$ is odd, and $A := C_{(k+2)/2} \oplus
{\bf 0}_{(n-k-2) \times (n-k-2)}$ if $k$ is even, where $C_j$ was defined
in \eqref{eqn:Akl}. Note that $n \geq k+2$ since $k < n-1$ by hypothesis
and so $A$ is well-defined. Also, since $k < 2l$, then $l \geq (k+1)/2$
if $k $ is odd, and $l \geq (k+2)/2$ if $k$ is even. Therefore, $A \in
\bp_n^l(I)$, which implies that $f[A] \in \br_n^k$ if $k$ is odd, and
$f[A] \in \br_n^{k+1}$ if $k$ is even. It follows that $f[-]: \bp_2^1(I)
\to \br_2^1$. The proof can now be concluded by using the rest of the
argument in the proof of Theorem \ref{Tlto2l}. Note that when $k$ is odd
and $n = k-1$, the proof given above is also valid.
\end{remark}

\begin{remark}\label{rem:Tlto2l_other_cases}
Various cases were left unresolved in Theorem \ref{Tlto2l} in order to
simplify the statement of the theorem. We now address each one of them
separately.\medskip

\noindent \textit{Case 1: $l=k$.}
When $l=k$ and $f(0) = 0$, the arguments used in the proof of Theorem
\ref{Tlto2l} show that $f(x) = c_1 x$ for some $c_1 \in \R$. The converse
also clearly holds. If $f(0) \ne 0$, Proposition
\ref{prop:general_rank_reduction} shows that $(f-f(0))[-]: \bp_{n-1}^l
\to \br_{n-1}^{k-1} = \br_{n-1}^{l-1}$, and Theorem \ref{Tk<l} implies
that $f \equiv f(0)$.\medskip

\noindent \textit{Case 2: $k=2l$ and $f(0) \ne 0$.}
In this case, Proposition \ref{prop:general_rank_reduction} shows that
$(f - f(0))[-]: \bp_{n-1}^l \to \br_{n-1}^{k-1} = \br_{n-1}^{2l-1}$. We
conclude by Theorem \ref{Tlto2l} that $f(x) = f(0) + c_1 x$ for some $c_1
\in \R$ and all $x \in I$.\medskip

\noindent \textit{Case 3: $(l,k) = (2,3)$.}
If $l=2$, $k=3$, and $n \geq 4$, the result becomes slightly different.
First, if $f(0) \ne 0$, then $f - f(0) \equiv c_1 x$ by Proposition
\ref{prop:general_rank_reduction} and the $l=k$ case. Now suppose $f(0) =
0$. The proof of Theorem \ref{Tlto2l} shows that either $f \equiv c_1
\phi_\alpha$ or $f \equiv c_1 \psi_\alpha$ on $I$ for some $\alpha \in
\{1,\dots,n-1\}$. If $\alpha > 2$, then $\binom{\alpha+l-1}{l-1} =
\binom{\alpha + 1}{1} = \alpha + 1 \geq 4$, and Theorem \ref{Tltok}
applied on $[0,R)$ implies that $c_1 \phi_\alpha$ or $c_1 \psi_\alpha$
cannot send $\bp_n^2(I)$ to $\br_n^3$ if $c_1 \neq 0$. Thus $\alpha = 1$
or $2$. By Theorem \ref{Tltok}, the functions $f \equiv c_1 x, c_1 x^2$
do map $\bp_n^2(I)$ to $\br_n^3$. We now claim that $\phi_1$ and $\psi_2$
don't. This is clear for $\phi_1$ by Proposition \ref{Pphi1}. To prove
that $\psi_2[-]$ does not send $\bp_n^2$ to $\br_n^3$, let $B_4 := (\cos
\frac{(i-j)\pi}{4})_{i,j=1}^4$ be the matrix constructed in \cite[Section
1]{Bhatia-Elsner}. Then one easily verifies that $\psi_2[B_4]$ is
nonsingular. Therefore, if $l=2$, $k=3$, and $f(0) = 0$, then $f \equiv
c_1 x$ or $f \equiv c_1 x^2$ on $I$.\medskip

\noindent \textit{Case 4: $k = n-1$.}
In Theorem \ref{Tlto2l}, we assume $k < \min(2l,n-1)$. It is natural to
ask if the assumption can be relaxed to $k < \min(2l,n)$. Note that when
$I = [0,R)$, the proof of Theorem \ref{Tlto2l} goes through for $k <
\min(2l,n)$. The result also holds when $I = (-R,R)$ and $k$ is odd; see
Remark \ref{rem:Tlto2l_alternate}.
However, the result fails in general when $I = (-R,R)$ and $k$ is even.
For instance, the $(2) \Rightarrow (3)$ implication in Theorem
\ref{Tlto2l} does not hold if $k=l=2$, $n=3$, and $I = (-R,R)$. In fact,
we claim that every function $f$ such that $f \equiv 0$ on $[-R/2,R)$
automatically satisfies $(2)$ when $n=3$. To show the claim, first
suppose that at least one of $r,s,t$ lies in $[-R/2,R)$. Now given any
matrix
\[
A = \begin{pmatrix} a & r & s\\ r & b & t\\s & t & c\end{pmatrix} \in
\bp_3^2(I),
\]

\noindent it is clear that $\det f[A] = 2 f(r) f(s) f(t) = 0$, so $f[A]
\in \br_3^2$ and $(b)$ holds.

Suppose instead that $r,s,t \in (-R, -R/2)$. We show that $A$ (of the
above form) cannot lie in $\bp_3^2(I)$ - in fact, not even in $\bp_3(I)$.
(This shows more generally that if $B \in \bp_n(I)$, then for every
principal $3 \times 3$ submatrix $A$ of $B$, at least one off-diagonal
entry lies in $[-R/2,R)$.) First compute:
\begin{align*}
4\det A = &\ 4abc - 4at^2 - 4bs^2 - 4cr^2 + 8rst \leq 4abc - (a+b+c)R^2 -
R^3\\
< &\ 4 (a+b+c)^3/27 - (a+b+c)R^2 - R^3,
\end{align*}

\noindent by the arithmetic mean-geometric mean inequality.
Let $u := (a+b+c)/3$ and define $g(x) := 4 x^3 - 3xR^2 - R^3$. Note that
if $A \in \bp_3^2$ then $u \in [0,R)$.
The above computations thus show that if $r,s,t \in (-R,-R/2)$, then $4
\det A < g(u)$, with $u \in [0,R)$. Note that $g'(x) = 12(x^2-(R/2)^2)$,
which is nonpositive on $[0,R/2]$ and positive on $(R/2,R)$. Thus $g(x)$
is decreasing on $[0,R/2]$ and increasing on $[R/2,R]$; moreover, $g(0) <
0 = g(R)$. Hence we get
\[
4 \det A < g(u) \leq 0,
\]

\noindent which shows that if $r,s,t \in (-R,-R/2)$ then $A \notin
\bp_3$.
\end{remark}

In Case 4 of Remark \ref{rem:Tlto2l_other_cases}, we demonstrated that if
$k=l=2$, $n=3$, and $I = (-R,R)$, any function $f$ such that $f \equiv 0$
on $[-R/2,R)$ maps $\bp_3^2(I)$ into $\br_3^2$. We now prove that the
conclusion of Theorem \ref{Tlto2l} holds if $f \not\equiv 0$ on
$[-R/2,R)$ and $k < \min(2l,n)$. Note that all the cases where $k < n-1$
have already been considered in Theorem \ref{Tlto2l} under more general
hypotheses. 

\begin{theorem}\label{Tlto2l_ext}
Fix $0 < R \leq \infty$ and integers $2 < l < k < \min(2l,n)$. Suppose $I
= (-R,R)$, $f : I \to \R$ is continuous, and $f \not\equiv 0$ on
$[-R/2,R)$. Then the following are equivalent:
\begin{enumerate}
\item $f[A] \in \br_n^k$ for every $A \in \br_n^l(I)$; 
\item $f[A] \in \br_n^k$ for every $A \in \bp_n^l(I)$;
\item There exists $c_1 \in \R$ such that $f(x) = f(0) + c_1 x$ for all
$x \in I$. 
\end{enumerate}
Moreover, $f[-] : \bp_n^l(I) \to \bp_n^k$ if and only if $f(0) \geq 0$
and $f(x) = f(0) + c_1 x$ for some $c_1 \geq 0$ and all $x \in I$. Also,
if $f[-] : \bp_n^l(I) \to \bp_n^k$, $I = [0,R)$, and $k \leq n-3$, the
continuity assumption is not required. 
\end{theorem}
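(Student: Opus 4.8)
The plan is to run the proof of Theorem~\ref{Tlto2l} in the only genuinely new regime, $k=n-1$, replacing every step that there passes to dimension $2n$ by a construction living in dimension exactly $n$, since Lemma~\ref{lem:increase_dim} is unavailable when $k=n-1$. As always $(3)\Rightarrow(1)\Rightarrow(2)$ is immediate, and if $f$ is constant then (3) holds with $c_1=0$; so assume $f$ nonconstant, and since $k<n-1$ is already covered by Theorem~\ref{Tlto2l}, assume $k=n-1$ (so $2<l<k=n-1<2l$, whence $n\le 2l$). If $f(0)\neq 0$, Proposition~\ref{prop:general_rank_reduction} gives $(f-f(0))[-]:\bp_{n-1}^l(I)\to\br_{n-1}^{k-1}$, and since $(f-f(0))(0)=0$ this is the $f(0)=0$ instance of the present theorem (or of Theorems~\ref{Tk<l}, \ref{Tlto2l}, or a case of Remark~\ref{rem:Tlto2l_other_cases}) in dimension $n-1$; one then closes by downward induction on $n$, carrying the non-vanishing hypothesis through the boundary $k-1=(n-1)-1$ exactly as in the proofs of Theorems~\ref{Tk<l} and \ref{Tlto2l}. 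Thus the substantive case is $f(0)=0$, in which $f\not\equiv 0$ on $[-R/2,R)$ forces $f$ nonconstant and will be used again below.

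The core step deduces $f[-]:\bp_2^1(I)\to\br_2^1$ from $(2)$. Using $C_j$ and $u=(a,b)^T$ as in \eqref{eqn:Akl}: when $k$ is odd, take $A:=C_{(k+1)/2}\in\bp_n^l(I)$, an $n\times n$ matrix of rank $(k+1)/2\le l$; since $f(0)=0$, $f[A]$ is block diagonal, so $f[A]\in\br_n^{n-1}$ forces $f(a^2)f(b^2)-f(ab)^2=0$ for all admissible $a,b$. When $k$ is even, one first rules out $f\equiv 0$ on $[0,R)$: otherwise pick $c\in[-R/2,0)$ with $f(c)\neq 0$ and (by continuity) with $2|c|<R$, set $T:=3|c|\,\Id_3-|c|\,{\bf 1}_{3\times 3}$ (positive semidefinite of rank $2$) and $M:=\begin{pmatrix}|c| & c\\ c & |c|\end{pmatrix}$ (positive semidefinite of rank $1$), and note that $A:=T\oplus M^{\oplus(k-2)/2}\in\bp_n^l(I)$ has $f[A]$ equal to the direct sum of $f(c)({\bf 1}_{3\times 3}-\Id_3)$ and $(k-2)/2$ copies of $\begin{pmatrix}0 & f(c)\\ f(c) & 0\end{pmatrix}$, hence nonsingular, contradicting $(2)$. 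Granting $f\not\equiv 0$ on $[0,R)$, pick $d\in[0,R)$ with $f(d)\neq 0$ and apply $(2)$ to $A:=C_{k/2}\oplus d\,\Id_1\in\bp_n^l(I)$ (rank $(k+2)/2\le l$, dimension $n$) to again obtain $f(a^2)f(b^2)=f(ab)^2$. In every case this gives $f[-]:\bp_2^1(I)\to\br_2^1$ by Proposition~\ref{prop:charac_rank1}(1).

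Now Lemma~\ref{lem:continuous_rank1} together with nonconstancy of $f$ gives $f=c\,\phi_\alpha$ or $f=c\,\psi_\alpha$ on $I$ for some $c\neq 0$, $\alpha>0$, and it remains to force $\alpha=1$. If $\alpha\notin\N$ or $\alpha\ge n-1$, Corollary~\ref{Cnonint} (rescaled so as to land in $(0,R)$) produces a matrix $\epsilon({\bf 1}_{n\times n}+uu^T)\in\bp_n^2((0,R))\subseteq\bp_n^l(I)$ whose $f$-image is nonsingular. If $\alpha\in\{2,\dots,n-2\}$, then $f|_{[0,R)}=c x^\alpha$; choosing $n$ of the $\binom{\alpha+l-1}{l-1}\ge\binom{l+1}{2}=l(l+1)/2\ge 2l\ge n$ multi-indices $(m_1,\dots,m_l)$ with $\sum_j m_j=\alpha$ and applying Proposition~\ref{Pgvm}(2) yields $u_1,\dots,u_l\in(0,R)^n$ for which $A:=\sum_j u_ju_j^T\in\bp_n^l(I)$ and $f[A]=c\,A^{\circ\alpha}$ has rank $n$; either outcome contradicts $(2)$. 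Hence $\alpha=1$, and since Proposition~\ref{Pphi1} shows $\phi_1$ does not map $\bp_n^l(I)$ into $\br_n^k$, we conclude $f=c_1\psi_1$, i.e.\ $f(x)=c_1 x$, which is (3). For the ``Moreover'' part, $f[-]:\bp_n^l(I)\to\bp_n^k$ implies $(2)$, hence $f(x)=f(0)+c_1 x$, while applying $f$ to rank one matrices and invoking Theorem~\ref{thm:rank_one} forces $f(0),c_1\ge 0$; and if $k\le n-3$, the continuity of $f$ is automatic by Theorem~\ref{Thorn}.

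The step I expect to be the main obstacle is the even-$k$ boundary case. Because $n=k+1$, there is no room to pass to dimension $2n$, so the ``doubling'' arguments used for $I=(-R,R)$ in Theorem~\ref{Tlto2l} (via Lemmas~\ref{lem:increase_dim} and \ref{lem:block_sum}) cannot be used, and one must instead manufacture the dimension-tight rank-gaining block $T$ above. It is exactly here — and only when $k$ is even — that the hypothesis $f\not\equiv 0$ on $[-R/2,R)$ is indispensable: it is precisely what rules out the Case~4 pathologies of Remark~\ref{rem:Tlto2l_other_cases}, where a function vanishing on $[-R/2,R)$ satisfies $(2)$ without being affine. A secondary difficulty is organizing the $f(0)\neq 0$ reduction as an induction that does not lose the non-vanishing hypothesis as the parity of $k-1$ flips.
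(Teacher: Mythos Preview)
Your proof is correct and follows the paper's overall outline (reduce to $k=n-1$, establish $f[-]:\bp_2^1\to\br_2^1$, then force $\alpha=1$), but the crucial even-$k$ step is organized differently. The paper first case-splits on whether $f$ already sends $\bp_2^1$ to $\br_2^1$; in the bad case it takes a witness $\widehat{A_2}\in\bp_2^1(I)$ with $\rk f[\widehat{A_2}]=2$ and pads any $A_3\in\bp_3^2(I)$ up to dimension $n$ via $A_3\oplus\widehat{A_2}^{\oplus(n-3)/2}$, thereby reducing to $f[-]:\bp_3^2\to\br_3^2$ and working in dimension $3$ thereafter (using the same $3\times 3$ matrix as your $T$, there called $B(x_0)$, to produce $c>0$ with $f(c)\neq 0$, and then $A_{2,2}(a,b,c)$). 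You instead stay in dimension $n$ throughout, padding $T$ directly with copies of the rank-$1$ block $M$; this is more direct and avoids the case split, while the paper's route has the elegance of collapsing everything to a fixed small dimension. For ruling out integer $\alpha\ge 2$, the paper simply invokes the argument of Theorem~\ref{Tltok} on $[0,R)$, whereas you rebuild the needed rank-$n$ example from Proposition~\ref{Pgvm}(2); these are equivalent, and your version sidesteps the cosmetic issue that Theorem~\ref{Tltok} is stated only for $k<n-1$.

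Your worry about the $f(0)\neq 0$ reduction is unnecessary: since the only genuinely new case is $k=n-1$ \emph{even}, passing to $g=f-f(0)$ lands you at $k'=k-1$ odd with $k'=(n-1)-1$, which is already handled by Remark~\ref{rem:Tlto2l_alternate} (or, if $l=k'$, by Case~1 of Remark~\ref{rem:Tlto2l_other_cases}) without any non-vanishing hypothesis on $g$. So the induction terminates after one step and there is nothing to carry.
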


\begin{proof}
Clearly $(3) \Rightarrow (1) \Rightarrow (2)$. The implication $(2)
\Rightarrow (3)$ was already proved in greater generality when $k < n-1$
or $k = n-1$ and $k$ is odd (see Theorem \ref{Tlto2l} and Remark
\ref{rem:Tlto2l_other_cases}). Thus, it suffices to prove $(2)
\Rightarrow (3)$ under the assumption that $k = n-1$ and $k$ is even. 

Let $2 < l < n-1 < 2l$, let $k := n-1$ be even, and suppose $(2)$ holds.
Also, suppose first that $f(0) = 0$. We consider two cases:\medskip

\noindent \textit{Case 1: $f[-]: \bp_2^1(I) \to \br_2^1$.}
In this case, following the argument in the proof of Theorem
\ref{Tlto2l}, we conclude that $f(x) = c_1 x$ for all $x \in I$ and some
$c_1 \in \R$.\medskip

\noindent \textit{Case 2: $f[-]: \bp_2^1(I) \not\to \br_2^1$.}
In this case, there exists $\widehat{A_2} \in \bp_2^1(I)$ such that
$f[\widehat{A_2}]$ has rank $2$. Note that $l \geq (n+1)/2$ and so $f[-]:
\bp_n^{\frac{n+1}{2}}(I) \to \br_n^{n-1}$. Also note that $n \geq 5$ and
$n$ is odd, say $n = 3 + 2a$ for some $a \geq 1$. Now given $A_3 \in
\bp_3^2(I)$, consider the matrix 
\begin{equation*}
A = A(\widehat{A_2},A_3) := A_3 \oplus \underbrace{\widehat{A_2} \oplus
\cdots \oplus \widehat{A_2}}_{a\ {\rm times}} \in \bp_n^{2+a}(I) =
\bp_n^{\frac{n+1}{2}}(I). 
\end{equation*}

\noindent Since $f[A(\widehat{A_2},A_3)] \in \br_n^{n-1}$ for all $A_3
\in \bp_3^2(I)$, it follows that $f[-]: \bp_3^2(I) \to \br_3^2$. Since $f
\not\equiv 0$ on $[-R/2,R)$, we now claim that there exists $c \in (0,R)$
such that $f(c) \neq 0$. The claim is clear if $f \not\equiv 0$ on
$(0,R)$; otherwise let $x_0 \in (-R/2,0)$ such that $f(x_0) \neq 0$.
Define the block diagonal matrix
\begin{align}\label{Elmatrix}
B(x_0) :=  x_0 {\bf 1}_{3 \times
3} - 3 x_0 \Id_3 \in \bp_3^2(I),
\end{align}

\noindent Since $f[-]: \bp_3^2(I) \to \br_3^2$, the matrix $f[B(x_0)]$
has determinant zero, i.e.,
\[
0 = \det f[B(x_0)] = \left(f(-2 x_0) + 2 f(x_0)\right)\left(f(-2x_0) -
f(x_0)\right)^2.
\]

\noindent Hence either $f(-2x_0) = -2 f(x_0) \ne 0$ or $f(-2x_0) = f(x_0)
\ne 0$ This proves the claim with $c := -2x_0$. Now considering the
matrix $A_{2,2}(a,b,c)$ as in \eqref{eqn:Akl}, we conclude that $f[-]:
\bp_2^1(I) \to \br_2^1$ and so $f(x) = c_1 x$ for some $c_1 \in \R$ as in
Case 1. This proves the result if $f(0) = 0$. Note that the proof goes
through for $f(0) = 0$ even if $l = k$. 

If $f(0) \ne 0$, then applying Proposition
\ref{prop:general_rank_reduction} shows that $(f - f(0))[-]:
\bp_{n-1}^l(I) \to \br_{n-1}^{k-1}$ and the result easily follows using
the above analysis.\medskip

Now suppose $f[-] : \bp_n^l(I) \to \bp_n^k$. Then in particular $f[-] :
\bp_n^l(I) \to \br_n^k$ and so $f(x) = f(0) + c_1 x$ for all $x \in I$.
It follows easily that $f(0), c_1 \geq 0$. The converse is obvious.
Finally, if $k \leq n-3$ and $f[-] : \bp_n^k([0,R)) \to \bp_n^k$, then by
Theorem \ref{Thorn}, we do not need any continuity assumption on $f$. 
\end{proof}

\section{Preserving positivity and absolutely monotonic functions}\label{Sam}

In the final section of this paper, we return to the original problem
studied by Schoenberg, Rudin, Horn, Vasudeva, Hiai and others, i.e., the
characterization of entrywise functions mapping $\bp_n(I)$ to $\bp_n$ for
all $n$.
We demonstrate a stronger result, namely, that preserving positivity on
$\bp_n^2$ (in fact on all special rank $2$ matrices) for all $n \geq 1$
is equivalent to preserving positivity on $\bp_n$ for all $n \geq 1$. In
particular, we provide a new proof of a generalization of the result by
Vasudeva \cite{vasudeva79}. 

First recall some classical results about absolutely monotonic functions.
Define the $m$-th forward difference of a function $f$, with step $h>0$
at the point $x$, to be
\begin{equation}
\Delta^m_h[f](x) := \sum_{i=0}^m (-1)^i \binom{m}{i} f(x+(m-i)h). 
\end{equation}

We now state two important results about absolutely monotonic functions
which will be needed to prove Theorem \ref{thm:rank2_AM}.

\begin{theorem}[see {\cite[Chapter IV, Theorem
7]{widder}}]\label{thm:abs_monotonic_equiv}
Let $0 < R \leq \infty$ and let $f: [0,R) \rightarrow \R$. Then the
following are equivalent:
\begin{enumerate}
\item The function $f$ is absolutely monotonic on $[0,R)$.
\item The function $f$ can be extended analytically to the disc $D(0,R)
\subset \mathbb{C}$, and $f(z) = \sum_{i=0}^\infty a_i z^i $ for some
$a_i \geq 0$.
\item For every $m \geq 1$, $\Delta^m_h[f](x) \geq 0$ for all $x$ and $h$
such that 
\begin{equation*}
0 \leq x < x+h < \dots < x+mh < R. 
\end{equation*}
\end{enumerate}
\end{theorem}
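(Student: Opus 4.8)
The plan is to prove the cycle $(2) \Rightarrow (1) \Rightarrow (3) \Rightarrow (2)$; this is a classical circle of ideas (essentially S.~Bernstein's theorem, cf.~Widder, Chapter~IV), and the only substantive implication is $(3) \Rightarrow (2)$. First, $(2) \Rightarrow (1)$ is immediate: if $f(z) = \sum_{i \geq 0} a_i z^i$ with all $a_i \geq 0$ converges on $D(0,R)$, then term-by-term differentiation inside the disc of convergence gives $f^{(k)}(x) = \sum_{i \geq k} a_i \frac{i!}{(i-k)!}\,x^{i-k} \geq 0$ for all $x \in (0,R)$ and all $k \geq 0$, and $f$ is continuous on $[0,R)$; hence $f$ is absolutely monotonic.

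For $(1) \Rightarrow (3)$ I would use the integral representation of the forward difference: if $f \in C^m$ on an open interval containing $[x, x+mh]$, then
\[
\Delta^m_h[f](x) = \int_0^h \!\!\cdots \int_0^h f^{(m)}(x + t_1 + \cdots + t_m)\, dt_1 \cdots dt_m,
\]
which follows by induction on $m$ from $\Delta^1_h[f](x) = \int_0^h f'(x+t)\,dt$ together with $\Delta^m_h = \Delta^{m-1}_h \circ \Delta^1_h$. When $x > 0$ the integrand is nonnegative because $f^{(m)} \geq 0$ on $(0,R)$, so $\Delta^m_h[f](x) \geq 0$; the case $x = 0$ then follows since $x \mapsto \Delta^m_h[f](x)$ is continuous on $[0,R)$ (as $f$ is) and one lets $x \to 0^+$.

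The heart of the argument is $(3) \Rightarrow (2)$, which I would carry out in three stages. \textbf{Stage 1 (continuity).} The hypotheses $\Delta^1_h[f] \geq 0$ and $\Delta^2_h[f] \geq 0$ say precisely that $f$ is nondecreasing and midpoint convex on $[0,R)$; being monotone, $f$ is locally bounded, so midpoint convexity upgrades to genuine convexity, hence continuity, on $(0,R)$, and comparing $\lim_{x \to 0^+} f(x) \le f(0)$ (from convexity) with $f(0) \le \lim_{x \to 0^+} f(x)$ (from monotonicity) yields continuity at $0$ as well. \textbf{Stage 2 (smoothness, i.e.\ $(3) \Rightarrow (1)$).} Using the full family $\Delta^m_h[f] \geq 0$, one shows that for each $j$ the normalized difference quotient $h^{-j}\,\Delta^j_h[f](x)$ is monotone in $h$ (expressing the difference at two scales as a nonnegative combination of higher-order differences), hence decreases to a finite limit as $h \to 0^+$; one then identifies this limit with $f^{(j)}(x)$ and argues inductively (each $f^{(j)}$ being itself convex, hence $C^1$) that $f \in C^\infty(0,R)$ with all derivatives nonnegative. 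This bootstrapping is the main technical obstacle, and is where I would lean on the classical treatment in Widder, Chapter~IV.

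\textbf{Stage 3 (analyticity).} Once $f$ is absolutely monotonic, expand $f$ by Taylor's formula around an arbitrary $x_0 \in [0,R)$; all the coefficients $f^{(k)}(x_0)/k!$ are nonnegative, and for $x_0 < x < r < R$ the integral remainder $R_N$ of order $N$ satisfies, using that $f^{(N)}$ is nondecreasing,
\[
0 \le R_N(x) \le \frac{f^{(N)}(x)}{N!}(x - x_0)^N \le f(r)\left(\frac{x - x_0}{r - x}\right)^{\!N},
\]
where the last inequality comes from $f(r) \ge R_N(r) \ge \frac{f^{(N)}(x)}{N!}(r - x)^N$. Choosing $r < R$ with $r > 2x - x_0$ forces $R_N(x) \to 0$, so $f$ equals its Taylor series around $x_0$ on a nonempty right-neighborhood of $x_0$; since $x_0$ was arbitrary, $f$ is real-analytic on $[0,R)$. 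Finally, the Taylor series of $f$ at $0$ has nonnegative coefficients, so by Pringsheim's theorem its radius of convergence must be at least $R$ (a smaller radius $\rho < R$ would make $\rho$ a singular point of the sum, contradicting that $f$ — which agrees with the sum near $0$ and hence, by the identity theorem for real-analytic functions, on $[0,\rho)$ — is analytic at $\rho$). Therefore $f(z) = \sum_{i \ge 0} \frac{f^{(i)}(0)}{i!} z^i$ on $D(0,R)$ with nonnegative coefficients, which is $(2)$.
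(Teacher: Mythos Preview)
The paper does not prove this theorem; it is quoted as a classical result from Widder's book (Chapter~IV, Theorem~7) and used as a black box. So there is no in-paper proof to compare against, and your proposal should be judged on its own merits as a reconstruction of the classical argument.

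Your cycle $(2) \Rightarrow (1) \Rightarrow (3) \Rightarrow (2)$ is the standard one, and the two easy implications are handled cleanly: the integral formula for $\Delta^m_h[f]$ is correct and gives $(1) \Rightarrow (3)$ immediately, and $(2) \Rightarrow (1)$ is trivial. Your Stage~1 (continuity at $0$ from monotonicity plus midpoint convexity) is also fine: from $\Delta^2_h[f](0) \ge 0$ one gets $f(h) \le \tfrac12(f(0)+f(2h))$, and letting $h \to 0^+$ yields $L \le f(0)$ for $L = \lim_{x \to 0^+} f(x)$, while monotonicity gives $f(0) \le L$. Your Stage~3 (Taylor remainder estimate plus Pringsheim) is the standard route to analyticity and is correct.

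The one place your argument is genuinely incomplete is Stage~2, and you say so yourself. The specific claim that $h \mapsto h^{-j}\Delta^j_h[f](x)$ is monotone in $h$, justified by ``expressing the difference at two scales as a nonnegative combination of higher-order differences,'' is not quite the right mechanism and would need care to make precise. The classical argument (Widder, or Boas--Widder, which the paper also cites) is closer to: nonnegativity of all $\Delta^m_h$ forces convexity of each $\Delta^{m-1}_h[f]$, hence one-sided differentiability, and an induction on the order shows $f \in C^\infty$ with $f^{(k)} \ge 0$. Since you explicitly defer to Widder at exactly this point, this is a gap in exposition rather than a genuine error in strategy.
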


Theorem \ref{thm:abs_monotonic_equiv} can be used to show the following
useful result.

\begin{lemma}\label{lem:limit_abs_monotonic}
Let $0 < R \leq \infty$ and let $f_n: [0,R) \rightarrow \R$, $n \geq 1$,
be a sequence of absolutely monotonic functions and assume $f_n(x)
\rightarrow f(x)$ for every $x \in [0,R)$. Then $f$ is absolutely
monotonic on $[0,R)$.
\end{lemma}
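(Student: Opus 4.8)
The plan is to reduce absolute monotonicity to a criterion that is manifestly stable under pointwise limits, namely the nonnegativity of all forward differences, and then pass to the limit termwise. Concretely, I would invoke the equivalence $(1)\Leftrightarrow(3)$ in Theorem \ref{thm:abs_monotonic_equiv}: to prove that $f$ is absolutely monotonic on $[0,R)$ it suffices to show that $\Delta^m_h[f](x)\ge 0$ for every admissible triple, i.e.\ whenever $0\le x<x+h<\dots<x+mh<R$. The point of working with $\Delta^m_h$ rather than with derivatives or with the Taylor series is that $\Delta^m_h[f](x)$ is, by its very definition, a \emph{finite} linear combination $\sum_{i=0}^m(-1)^i\binom{m}{i}f(x+(m-i)h)$ of values of $f$, so it is continuous in $f$ with respect to pointwise convergence.

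The execution is then short. Fix $m$ and an admissible $(x,h)$. Since each $f_n$ is absolutely monotonic on $[0,R)$, Theorem \ref{thm:abs_monotonic_equiv} (again the direction $(1)\Rightarrow(3)$) gives $\Delta^m_h[f_n](x)\ge 0$ for every $n$. Because $f_n(y)\to f(y)$ for each of the finitely many points $y=x,\ x+h,\dots,x+mh$ appearing in the difference, we get
\[
\Delta^m_h[f](x)=\sum_{i=0}^m(-1)^i\binom{m}{i}f(x+(m-i)h)=\lim_{n\to\infty}\Delta^m_h[f_n](x)\ \ge\ 0 .
\]
Since $m$, $x$, and $h$ were arbitrary subject to the admissibility constraint, $f$ satisfies condition (3) of Theorem \ref{thm:abs_monotonic_equiv}, hence is absolutely monotonic on $[0,R)$. (If desired, one can additionally note directly that $f\ge 0$ on $(0,R)$ as the pointwise limit of the nonnegative functions $f_n$, which are nonnegative there by absolute monotonicity.)

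There is essentially no hard step here; the only thing to be careful about is that the finite-difference criterion is a genuine \emph{characterization} of absolute monotonicity with no hidden regularity hypothesis on $f$ — and this is exactly what Theorem \ref{thm:abs_monotonic_equiv} supplies, so no separate continuity argument for $f$ is needed. The one route I would deliberately avoid is trying to argue via condition (2), i.e.\ via the power series $f_n(z)=\sum_i a_{n,i}z^i$ with $a_{n,i}\ge 0$: that would require extracting uniform bounds on the coefficients and invoking a normal-families or Vitali-type argument to get an analytic limit, which is a needless detour. The finite-difference approach sidesteps all of that, so the ``obstacle'' is merely remembering to phrase the conclusion through Theorem \ref{thm:abs_monotonic_equiv}(3) rather than through derivatives.
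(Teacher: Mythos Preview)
Your proof is correct and follows essentially the same approach as the paper: apply Theorem \ref{thm:abs_monotonic_equiv} to get $\Delta^m_h[f_n](x)\ge 0$ for each $n$, pass to the limit pointwise (since the forward difference is a finite linear combination of values), and conclude via the $(3)\Rightarrow(1)$ direction of the same theorem. The paper's version is just more terse.
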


\begin{proof}
By Theorem \ref{thm:abs_monotonic_equiv}, the forward differences
$\Delta^k_h[f_n](x)$ of $f_n$ are nonnegative for all integers $n \geq 0$
and for all $x$ and $h$ such that $0 \leq x < x+h < \dots < x+nh < R$.
Since $f$ is the pointwise limit of the sequence $f_n$, the same is true
for $\Delta^k_h[f](x)$. As a consequence, by Theorem
\ref{thm:abs_monotonic_equiv}, the function $f$ is absolutely monotonic.
\end{proof}

Recall that Vasudeva \cite{vasudeva79} proved that functions mapping all
positive semidefinite matrices with positive entries into themselves are
absolutely monotonic (see Theorem \ref{vasudeva_rank2}). Theorem
\ref{thm:rank2_AM} strengthens Vasudeva's result by working only with
special rank $2$ matrices. As we noticed before, Theorem
\ref{thm:rank2_AM} follows immediately from Theorem \ref{Thorn} (see
Remark \ref{Thorn_implies_vasudeva}). Using the techniques developed
above, we now provide a more transparent and elementary proof of Theorem
\ref{thm:rank2_AM}. 

\begin{proof}[Proof of Theorem \ref{thm:rank2_AM}]
That $(3) \Rightarrow (2)$ follows from the Schur product theorem, and
clearly $(2) \Rightarrow (1)$. We now show that $(1) \Rightarrow (3)$.
Assume that $f \in C^\infty(I)$ and let $0 < a < R$.
Define $f_a: [0, R-a) \rightarrow \mathbb{R}$ by $f_a(x) := f(a+x)$. Then
\begin{equation}\label{eqn:f_a}
f_a[A] \in \bp_n \textrm{ for every } A \in \bp_n^1([0, R-a)). 
\end{equation}

Denote by $\{m_1, m_2,\dots, m_{k(a)}\} \subset \{0,1,\dots,n-1\}$ the
(possibly empty) set of indices between $0$ and $n-1$ such that
$f_a^{(i)}(0) \not= 0$ if and only if $i = m_j$ for some $j$. By Theorem
\ref{thm:sum_powers}(3), we have $f_a^{(m_j)}(0) > 0$ for all $1 \leq j
\leq k(a)$. Consequently, $f^{(i)}(a) = f_a^{(i)}(0)\geq 0$ for all $0
\leq i \leq n-1$, for all $n \in \N$, and all $a \in I$. Since $f$ is
smooth, it follows from \cite[Chapter IV]{widder} that $f$ has a power
series representation with nonnegative coefficients, which proves the
result when $f \in C^\infty(I)$.

Now assume $f$ is not necessarily smooth and let $0 < b < R$. First note
by Step 3 of the proof of Theorem \ref{Thorn} that $f$ is continuous on
$I$. Now given any probability distribution $\theta \in
C^\infty(b/R,\infty)$ with compact support in $(b/R, \infty)$, let 
\begin{equation}
f_\theta(x) := \int_{b/R}^\infty f(xy^{-1}) \theta(y) \frac{dy}{y},
\qquad 0 < x < b.
\end{equation}

\noindent Then $f_\theta \in C^\infty(0,b)$. Suppose $A \in
\bp_n^1(0,b)$. Then, for every $\beta \in \mathbb{R}^n$, 
\begin{align*}
\beta^T (f_\theta)[A]\beta = &\  \sum_{i,j=1}^{n}  \beta_i
\beta_j\int_{b/R}^\infty f(a_{ij}y^{-1}) \theta(y)\frac{dy}{y} = \ \
\int_{b/R}^\infty \sum_{i,j=1}^{n}  \beta_i \beta_j f(a_{ij}y^{-1})
\theta(y)\frac{dy}{y} \\
= &\ \ \int_{b/R}^\infty \beta^T f[y^{-1}A] \beta \theta(y)\frac{dy}{y} .
\end{align*}

\noindent Note that the integrand is nonnegative for every $y > 0$. It
thus follows that $\beta^T (f_\theta)[A] \beta \geq 0$. Since $\beta$ is
arbitrary, $f_\theta[A] \in \bp_n$. 

Now consider a sequence $\theta_m \in C^\infty(\R)$ of probability
distributions with compact support in $(b/R,\infty)$ such that $\theta_m$
converges weakly to $\delta_1$, the Dirac measure at $1$. Note that such
a sequence can be constructed since $b/R < 1$. By the first part of the
proof, $f_{\theta_m}$ is absolutely monotonic on $(0,b)$ for every $m
\geq 1$.
Therefore by Theorem \ref{thm:abs_monotonic_equiv}, the forward
differences $\Delta^k_h[f_{\theta_m}](x)$ of $f_{\theta_m}$ are
nonnegative for $l \geq 0$ and all $x$ and $h$ such that $0 \leq x < x+h
< \dots < x+lh < R$. 
Since $f$ is continuous, $f_{\theta_m}(x) \rightarrow f(x)$ for every $x
\in (0,b)$. Therefore $\Delta^k_h[f](x) \geq 0$ for all such $x$ and $h$
as well. As a consequence, by Theorem \ref{thm:abs_monotonic_equiv}, the
function $f$ is absolutely monotonic on $(0,b)$. Since this is true for
every $0 < b < R$, it follows that $f$ is absolutely monotonic on $I$.
\end{proof}

It is natural to ask if results similar to Theorem \ref{thm:rank2_AM}
hold when $I = [0,R)$. In other words, can one characterize functions
that preserve positivity for all positive semidefinite matrices, or for
positive semidefinite matrices of rank at most 2? Before answering this
question, we first point out a subtle difference between functions that
are absolutely monotonic on $(0,R)$ and $[0,R)$.

\begin{remark}\label{rem:abs_mon_at_0}
Recall that $f$ is absolutely monotonic on $[0,R)$ if and only if
its derivatives are all nonnegative on $(0,R)$ and $f$ is continuous at
$0$. If instead $f: [0,R) \to \R$ satisfies $f[A] \in \bp_n$ for all $A
\in \bp_n([0,R))$, then $f$ is absolutely monotonic, nonnegative, and
nondecreasing on $(0,R)$.
Therefore $f$ has (at most) a removable discontinuity at $0$. Redefining
$f$ at $0$ to be $\lim_{x \rightarrow 0^+} f(x)$, we get that $f$ is
absolutely monotonic on $[0,R)$.
\end{remark}

We now prove two characterization results analogous to Theorem
\ref{thm:rank2_AM}, but for $I = [0,R)$.

\begin{theorem}\label{thm:special2ToPn}
Suppose $0 < R \leq \infty$, $I = [0,R)$, and $f: I \rightarrow
\mathbb{R}$. Then the following are equivalent:
\begin{enumerate}[(a)]
\item For all $n \geq 1$, $f[a {\bf 1}_{n \times n} + u u^T] \in \bp_n$
for every $a \in I$ and $u \in [0, \sqrt{R-a})^n$.

\item The function $f$ is absolutely monotonic on $(0,R)$ and $0 \leq
f(0) \leq f^+(0) := \lim_{x \to 0^+} f(x)$.
\end{enumerate}

Similarly, the following are equivalent.
\begin{enumerate}[(1)]
\item $f$ is right-continuous at $0$, and for all $n \geq 1$, $f[a {\bf
1}_{n \times n} + u u^T] \in \bp_n$ for every $a \in I$ and $u \in
[0, \sqrt{R-a})^n$.
\item For all $n \geq 1$, $f[A] \in \bp_n$ for every $A \in \bp_n(I)$.
\item The function $f$ is absolutely monotonic on $I$.
\end{enumerate}
\end{theorem}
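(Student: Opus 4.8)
The plan is to prove the first equivalence $(a)\Leftrightarrow(b)$ from scratch, and then to deduce the second block of equivalences from it together with Theorems \ref{thm:unif} and \ref{Thorn}.

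For $(a)\Rightarrow(b)$: I would first observe that restricting condition $(a)$ to those matrices all of whose entries lie in $(0,R)$ gives, for every $n$, precisely the hypothesis of Theorem \ref{Thorn} applied to the restriction $f|_{(0,R)}$; hence $f$ is absolutely monotonic on $(0,R)$, and in particular nonnegative and nondecreasing there, so $f^+(0):=\lim_{x\to0^+}f(x)$ exists in $[0,\infty)$. To control $f(0)$, apply $(a)$ with $n=2$, $a=0$, and $u=(t,0)^T$ for small $t>0$: the matrix $f[uu^T]=\begin{pmatrix} f(t^2) & f(0)\\ f(0) & f(0)\end{pmatrix}$ must be positive semidefinite, which forces $f(0)\ge0$ and $f(0)\bigl(f(t^2)-f(0)\bigr)\ge0$; letting $t\to0^+$ yields $0\le f(0)\le f^+(0)$, proving $(b)$.

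For $(b)\Rightarrow(a)$: let $\tilde f\colon[0,R)\to\R$ agree with $f$ on $(0,R)$ and set $\tilde f(0):=f^+(0)$. Then $\tilde f$ is continuous at $0$ and absolutely monotonic on $(0,R)$, hence absolutely monotonic on $[0,R)$, so by Theorem \ref{thm:abs_monotonic_equiv} one may write $\tilde f(x)=\sum_{k\ge0}c_kx^k$ on $[0,R)$ with $c_k\ge0$ and $c_0=f^+(0)$. Fix $A=a{\bf 1}_{n\times n}+uu^T$ with $a\in[0,R)$ and $u\in[0,\sqrt{R-a})^n$; its entries lie in $[a,R)\subset[0,R)$, and one of them vanishes only when $a=0$ and the corresponding $u_iu_j=0$. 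If $A$ has no zero entry (in particular if $a>0$), then $f[A]=\tilde f[A]=\sum_{k\ge0}c_kA^{\circ k}\in\bp_n$ by the Schur product theorem. If $a=0$, reorder the indices so that $u_i>0$ for $i\le m$ and $u_i=0$ otherwise, and put $u':=(u_1,\dots,u_m)^T$; then $f[A]$ has the block form with top-left block $\tilde f[u'u'^T]$ and entry $f(0)$ everywhere else, so the identity \eqref{Eboyd} from the proof of Proposition \ref{prop:general_rank_reduction} reduces $f[A]\in\bp_n$ to the two conditions $f(0)\ge0$ and $\tilde f[u'u'^T]-f(0){\bf 1}_{m\times m}\in\bp_m$. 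The first is part of $(b)$, and the second holds because $\tilde f[u'u'^T]-f(0){\bf 1}_{m\times m}=(c_0-f(0)){\bf 1}_{m\times m}+\sum_{k\ge1}c_k\,(u')^{\circ k}\left((u')^{\circ k}\right)^{T}$ with $c_0-f(0)=f^+(0)-f(0)\ge0$. This gives $(a)$.

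For the second block, $(2)\Leftrightarrow(3)$ is precisely Theorem \ref{thm:unif} with $I=[0,R)$. Since absolute monotonicity on $[0,R)$ entails right-continuity at $0$, and since every matrix $a{\bf 1}_{n\times n}+uu^T$ with $a\ge0$ and $u\in[0,\sqrt{R-a})^n$ lies in $\bp_n([0,R))$, the implication $(2)\Rightarrow(1)$ is immediate. Conversely, $(1)$ contains condition $(a)$, so by the first part $f$ is absolutely monotonic on $(0,R)$ with $0\le f(0)\le f^+(0)$; the right-continuity in $(1)$ then forces $f(0)=f^+(0)$, whence $f\in C([0,R))\cap C^\infty((0,R))$ with all derivatives nonnegative on $(0,R)$, i.e.\ $f$ is absolutely monotonic on $[0,R)$, which is $(3)$. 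The step I expect to be the main obstacle is the $a=0$ case of $(b)\Rightarrow(a)$: because $(b)$ constrains $f(0)$ only through the inequality $0\le f(0)\le f^+(0)$ rather than through continuity, one cannot simply apply the Schur product theorem to $\tilde f$, and must instead exploit the special block structure of $f[uu^T]$ via the Schur-complement identity \eqref{Eboyd}.
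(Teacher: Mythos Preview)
Your proposal is correct and follows essentially the same route as the paper's proof: the $(a)\Rightarrow(b)$ direction uses Horn--Loewner (Theorem~\ref{Thorn}, equivalently Theorem~\ref{thm:rank2_AM}) for absolute monotonicity on $(0,R)$ together with a $2\times2$ test matrix to pin down $0\le f(0)\le f^+(0)$, while $(b)\Rightarrow(a)$ defines the continuous extension $\tilde f$, invokes the Schur product theorem when all entries are positive, and handles the $a=0$ case via the Schur-complement identity~\eqref{Eboyd}. The second block is likewise organized the same way, with your appeal to Theorem~\ref{thm:unif} for $(2)\Leftrightarrow(3)$ being exactly what the paper does implicitly.
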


In particular, the result is more involved for $I = [0,R)$ than for $I =
(0,R)$, since functions preserving the set of matrices of the form $a
{\bf 1}_{n \times n} + u u^T$ need not preserve all matrices in $\bp_n$
for all $n$.

\begin{proof}
If (a) holds, then $f$ is absolutely monotonic on $(0,R)$ by Theorem
\ref{thm:rank2_AM}, hence nonnegative and nondecreasing. Thus the
right-hand limit of $f$ at zero, $f^+(0)$, exists and is nonnegative. Now
define the matrix $A_t := \displaystyle \begin{pmatrix} t & 0\\0 &
0\end{pmatrix} \in \bp_2^1$. Then $\lim_{t \to 0^+} f[A_t] \in \bp_2$,
which implies: $0 \leq f(0) \leq f^+(0)$, proving (b).

Conversely, suppose (b) holds. If $a > 0$, then (a) follows from the $(3)
\Rightarrow (1)$ part of Theorem \ref{thm:rank2_AM}.
Now, assume $a=0$. Suppose $0 \leq f(0) < f^+(0)$ and let $A = u u^T$ for
some $u \in [0,\sqrt{R})^n$. By permuting rows and columns of $A$, we can
assume it is of the form
\begin{equation}
A = \begin{pmatrix}
A' & \mathbf{0}_{n_1 \times n_2} \\
\mathbf{0}_{n_2 \times n_1} & \mathbf{0}_{n_2 \times n_2}
\end{pmatrix}, 
\end{equation}

\noindent where $0 \leq n_1, n_2 \leq n$, and $A'$ has no zero entry. By
equation \eqref{Eboyd}, the matrix $f[A]$ is positive semidefinite if and
only if $f(0) \geq 0$ (which holds by hypothesis) and $(f - f(0))[A'] \in
\bp_n$.
Now note by Remark \ref{rem:abs_mon_at_0} that the function
$\widetilde{f} : [0,R) \rightarrow \R$ obtained by redefining $f$ at $0$
to be $f^+(0)$ is absolutely monotonic on $[0,R)$, and so (1) holds for
$\widetilde{f}$ by the Schur product theorem. Hence the function
$\widetilde{f} - f^+(0)$ is absolutely monotonic on $(0,R)$. Now since $0
\leq f(0) < f^+(0)$ by assumption,
\[
(f - f(0))[A'] = (\widetilde{f} - f^+(0))[A'] + (f^+(0) - f(0)) {\bf
1}_{n \times n} \in \bp_n.
\]

\noindent This implies $f[A] \in \bp_n$ and concludes the proof of the
first equivalence.

We now show the second set of equivalences. That (3) $\Rightarrow$
(1),(2) follows from the right continuity of $f$ at $0$ and the Schur
product theorem as in the $I = (0,R)$ case. If (1) holds, then $f$ is
continuous at $0$, as well as absolutely monotonic on $(0,R)$ by Theorem
\ref{thm:rank2_AM}, which shows (3). We finally show that $(2)
\Rightarrow (1)$. As in the proof of the first equivalence, $0 \leq f(0)
\leq f^+(0)$. Now proceed as in the proof of Theorem \ref{thm:unif} to
conclude that $f^+(0) = f(0)$. 
\end{proof}

\begin{remark}
Vasudeva's proof of Theorem \ref{vasudeva_rank2} can be adapted to
functions with possibly finite domains $f : (0,R) \to \infty$ for $0 < R
\leq \infty$. 
However, Vasudeva's methods do not extend to studying the problem of
preserving positivity with rank constraints.
In contrast, we solve the harder rank-constrained problem for
\textit{fixed} dimension by using a novel approach as described in
Section \ref{S3step}. As a consequence, we prove Theorem
\ref{vasudeva_rank2} as a special case of Theorem \ref{thm:rank2_AM}, and
moreover, by a more intuitive proof than that in \cite{vasudeva79}.
\end{remark}

\section*{Acknowledgment}

The authors would like to thank the referee(s) for going through the
paper in great detail and for providing numerous useful comments and
suggestions. A part of this work was undertaken when the third author was
visiting the University of Sydney.

\subsection*{List of edits to the published version:}
\begin{enumerate}
\item Definition~\ref{D11}: Added the first sentence.

\item Statement of Theorem~\ref{Tltok} --
There are three changes:
(i)~$k$ can be $0$, including in the final sentence;
(ii)~the polynomial $f$ in part~(2) can have a constant term; and
(iii)~the $i_t$ are distinct (and non-negative).

\item The proof of Proposition~\ref{Tvandermonde} was earlier termed
``Proof of Theorem~\ref{Tvandermonde}.'' This is now corrected.

\item Statement of Proposition~\ref{P49}: The integer $m$ is now
specified to be positive.

\item Proof of Proposition \ref{Pgvm}, line 3: The definition of the set
$C$ uses $\mathbb{Q}^n$ -- this should be $\mathbb{Q}^l$. Similarly, the
Cartesian product is over $l$ factors, not $n$.

\item Statements of Lemma~\ref{lem:increase_dim} and
Corollary~\ref{cor:sp_rank2_neg}: The value of $k$ is now allowed to be
zero.

\item Proof of Theorem~\ref{Tltok}:
\begin{enumerate}
\item The first two sentences are new.

\item The vectors ${\bf m}_j$ now comprise the set of all vectors in
$(\Z_{\geq 0} \cap [0, k-1])^l$.

\item The third and fourth sentences after Equation~\eqref{E52} have been
somewhat modified.
\end{enumerate}

\item (For completeness:) While the authors' affiliations have since
changed, they are retained below as they are in the published paper.
\end{enumerate}

\end{document}